\newcommand{\reff}[1]{(\ref{#1})}
\theoremstyle{plain}
\newtheorem{theo}{Theorem}[section]
\newtheorem{theo*}{Theorem}
\newtheorem{cor}[theo]{Corollary}
\newtheorem{prop}[theo]{Proposition}
\newtheorem{lem}[theo]{Lemma}
\newtheorem{defi}[theo]{Definition}
\theoremstyle{remark}
\newtheorem{rem}[theo]{Remark}
\newcommand{\ca}{{\mathcal A}}
\newcommand{\cb}{{\mathcal B}}
\newcommand{\cc}{{\mathcal C}}
\newcommand{\cd}{{\mathcal D}}
\newcommand{\cf}{{\mathcal F}}
\newcommand{\ck}{{\mathcal K}}
\newcommand{\cn}{{\mathcal N}}
\newcommand{\cm}{{\mathcal M}}
\newcommand{\cp}{{\mathcal P}}
\newcommand{\crr}{{\mathcal R}}
\newcommand{\cx}{{\mathbb X}}
\newcommand{\E}{{\mathbb E}}
\newcommand{\N}{{\mathbb N}}
\renewcommand{\P}{{\mathbb P}}
\newcommand{\R}{{\mathbb R}}
\newcommand{\ind}{{\bf 1}}
\newcommand{\argmax}{{\rm argmax}\;}
\newcommand{\supp}{{\rm supp}\;}
\newcommand{\Card}{{\rm Card}\;}
\newcommand{\norm}[1]{\mathop{\parallel\! #1 \! \parallel}\nolimits}
\newcommand{\val}[1]{\mathop{\left| #1 \right|}\nolimits}
\newcommand{\inv}[1]{\mathop{\frac{1}{ #1}}\nolimits}
\newcommand{\expp}[1]{\mathop {\mathrm{e}^{ #1}}}
\newcommand{\Var}{{\rm Var}\;}
\newcommand{\kl}[2]{\mathop{D\left(#1 \| #2 \right)}}
\newcommand{\pen}{{\rm pen}\;}
\newcommand{\Dom}{{\rm Dom}\;}
\newcommand{\inter}{{\rm int}\;}
\newcommand{\cv}{{\rm cv}\;}
\renewcommand{\phi}{\varphi}
\renewcommand{\epsilon}{\varepsilon}
\title[Fast adaptive estimation of log-additive exponential models]{Fast adaptive estimation of log-additive exponential models in Kullback-Leibler divergence}
\date{\today}
 \author{Cristina Butucea}
 \address{
 Cristina Butucea,
 LAMA (UPE-MLV), UPE,  Marne La Vall\'{e}e, France.}
 \email{cristina.butucea@univ-mlv.fr}
 \author{Jean-Fran\c{c}ois Delmas}
 \address{
 Jean-Fran\c{c}ois Delmas,
 CERMICS, \'{E}cole des Ponts, UPE, Champs-sur-Marne, France.}
 \email{delmas@cermics.enpc.fr}
 \author{Anne Dutfoy}
 \address{
 Anne Dutfoy, 
 EDF Research \& Development, Industrial Risk Management Department,  Palaiseau, France.}
 \email{anne.dutfoy@edf.fr}
 \author{Richard Fischer}
 \address{
 Richard Fischer, 
 CERMICS, \'{E}cole des Ponts, UPE, Champs-sur-Marne, France\\
 LAMA (UPE-MLV), UPE,  Marne La Vall\'{e}e, France \\
 EDF Research \& Development, Industrial Risk Management Department, Palaiseau, France.}
 \email{fischerr@cermics.enpc.fr}
\begin{document}

\thanks{This work is partially supported by the French ``Agence Nationale de
 la Recherche'',CIFRE n$^{\circ}$ 1531/2012, and by EDF Research \& Development, Industrial Risk Management Department}

\keywords{adaptive density estimation, aggregation, exponential family, Kullback-Leibler divergence, product form,  Sobolev classes, truncation model }

\subjclass[2010]{62G07, 62G05, 62G20}

 \begin{abstract} 
 
 We study the problem of nonparametric estimation of density functions with a product form on the domain $\triangle=\{( x_1, \ldots, x_d)\in \R^d, 0\leq x_1\leq \dots \leq  x_d \leq 1\}$. Such densities appear in the random truncation model as the joint density function of observations. They are also obtained as maximum entropy distributions of order statistics with given marginals.  We propose an estimation method based on the approximation of the logarithm of the density by a carefully chosen family of basis functions. We show that the method achieves a fast convergence rate in probability with respect to the Kullback-Leibler divergence for densities whose logarithm belongs to a Sobolev function class with known regularity. In the case when the regularity is unknown, we propose an estimation procedure using convex aggregation of the log-densities to obtain adaptability. The performance of this method is illustrated in a simulation study.       
 
\end{abstract}  

\maketitle

 \section{Introduction} \label{sec:intro}

   In this paper, we estimate densities with product form on the simplex $\triangle=\{( x_1, \ldots, x_d)\in
   \R^d, 0\leq x_1\leq \dots \leq  x_d \leq 1\}$ by a nonparametric approach given a sample of $n$ independent observations $\cx^n =(X^1,\hdots, X^n)$. We restrict our attention to densities which can be written in the form, for $x=(x_1, \hdots, x_d) \in \R^d$:
   \begin{equation} \label{eq:intro_prod_form}
     f^0(x)=\exp{\left(\sum_{i=1}^d \ell_{i}^0(x_i) - \mathrm{a}_0\right)} \ind_{\triangle}(x),
   \end{equation}
   with $\ell_i^0$ bounded, centered, measurable functions on $I=[0,1]$ for all $1 \leq i \leq d$, and normalizing constant $\mathrm{a}_0$. Densities of this form arise, in particular, as solutions for the maximum entropy problem for the distribution of order statistics with given marginals, or in the case of the random truncation model.

   The first example is the random truncation model, which was first formulated in \cite{turnbull1976empirical}, and has various applications ranging from astronomy (\cite{lynden1971method}), economics (\cite{herbst1999application}, \cite{guerre2000optimal}) to survival data analysis (\cite{lagakos1988nonparametric}, \cite{joly1998penalized}, \cite{luo2009nonparametric}). For $d=2$, let $(Z_1,Z_2)$ be a pair of independent random variables on $I$ such that $Z_i$ has density function $p_i$ for $i \in \{1,2\}$. Let us suppose that we can only observe realizations of $(Z_1,Z_2)$ if $Z_1 \leq Z_2$. Let $(\bar{Z}_1,\bar{Z}_2)$ denote a pair of random variables distributed as $(Z_1,Z_2)$ conditionally on $Z_1 \leq Z_2$. Then the joint density function  $f^0$ of $(\bar{Z_1},\bar{Z_2})$ is given by, for $x=(x_1,x_2) \in I^2$:
\begin{equation} \label{eq:trunc_joint_dens}
   f^0(x) = \inv{\alpha} p_1(x_1)p_2(x_2)\ind_{\triangle}(x),
\end{equation}
with $\alpha=\int_{I^2} p_1(x_1)p_2(x_2) \ind_{\triangle}(x) \, dx$.
Notice that $f$ is of the form required in \reff{eq:intro_prod_form}:
\begin{equation*} \label{eq:intro_trunc}
   f(x)=\exp(\ell^0_1(x_1)+\ell^0_2(x_2) - \mathrm{a}_0 ) \ind_{\triangle}(x), 
\end{equation*}
with $\ell^0_i$ defined as
 $ \ell^0_i = \log(p_i) - \int_I \log(p_i)$ for $i \in \{1,2\}$.
  According to Corollary 5.7. of \cite{butucea2015maximum}, $f$ is the density of the maximum entropy distribution of order statistics with marginals $\mathbf{f}_1$ and $\mathbf{f}_2$ given by:
  \[
     \mathbf{f}_1(x_1) = \inv{\alpha} p_1(x_1) \int_{x_1}^1 p_2(s) \, ds \quad \text{ and } \quad  \mathbf{f}_2(x_2) = \inv{\alpha} p_2(x_2) \int_{0}^{x_2} p_1(s) \, ds.
  \]
  
  \bigskip
  
   More generally, in \cite{butucea2015maximum}, the authors give a necessary and sufficient condition for the existence of a maximum entropy distribution of order statistics with fixed marginal cumulative distribution functions $\mathbf{F}_i$, $1 \leq i \leq d$. See \cite{butucea2015esrel} for motivations for this problem. Moreover, its explicit expression is given as a function of the marginal distributions.
   Let us suppose, for the sake of simplicity, that all $\mathbf{F}_i$ are absolutely continuous with density function $\mathbf{f}_i$ supported on $I=[0,1]$, and that $\mathbf{F}_{i-1} > \mathbf{F}_i$ on $(0,1)$ for $2 \leq i \leq d$.  Then the maximum entropy density $f_\mathbf{F}$, when it exists, is given by, for $x=(x_1, \hdots, x_d) \in \R^d$:
   \[
      f_{\mathbf{F}}(x)=
            \mathbf{f}_{1} (x_1) \prod_{i=2}^d h_i(x_i) \exp\left(-\int_{x_{i-1}}^{x_i}
           h_i(s) \, ds \right) \ind_{\triangle}(x),
   \] 
   with $h_i=\mathbf{f}_i/(\mathbf{F}_{i-1} - \mathbf{F}_i)$ for $2 \leq i \leq d$. This density is of the form required in \reff{eq:intro_prod_form} with $\ell_i^0$ defined as:
   \[
      \ell_1^0 = \log(\mathbf{f}_1)+K_2 \quad \text{ and } \quad  \ell_i^0=\log\left(h_i \right) - K_i + K_{i+1} \quad \text{for} \quad 2 \leq i \leq d,
   \]
   with $K_i$, $2 \leq i \leq d$ a primitive of $h_i$ chosen such that $\ell_i^0$ are centered, 
   and $K_{d+1}=c$ a constant.

   \bigskip
   
   We present an additive exponential series model specifically designed to estimate such densities. This exponential model is a multivariate version of the exponential series estimator considered in \cite{barron1991approximation} in the univariate setting. Essentially, we approximate the functions $\ell_i^0$ by a family of polynomials $(\phi_{i,k},k \in \N)$,  which are  orthonormal for each $1\leq i \leq d$ with respect to the $i$-th marginal of the Lebesgue measure on the support $\triangle$. The model takes the form, for $\theta = (\theta_{i,k}; 1\leq i \leq d, 1\leq k \leq m_i)$ and $x=(x_1, \hdots, x_d) \in \triangle$:
   \[
    f_\theta=\exp\left( \sum_{i=1}^d \sum_{k=1}^{m_i} \theta_{i,k} \phi_{i,k}(x_i) - \psi(\theta)\right), 
   \]
    with  $\psi(\theta)=\log\left( \int_\triangle  \exp\left( \sum_{i=1}^d \sum_{k=1}^{m_i} \theta_{i,k} \phi_{i,k}(x_i)  \right) \,dx\right)$. Even though the polynomials $(\phi_{i,k}$, $k \in \N)$ are orthonormal for each $1\leq i \leq d$, if we take $i\neq j$, the families $(\phi_{i,k}, k \in \N)$ and $(\phi_{j,k}, k \in \N)$ are not completely orthogonal with respect to the Lebesgue measure on $\triangle$. The exact definition and further properties of these polynomials can be found in the Appendix. We estimate the parameters of the model by $\hat{\theta} = (\hat{\theta}_{i,k}; 1\leq i \leq d, 1\leq k \leq m_i)$, obtained by solving the maximum likelihood equations:
    \[
       \int_\triangle \phi_{i,k}(x_i) f_{\hat{\theta}}(x) \, dx = \inv{n} \sum_{j=1}^n \phi_{i,k} (X^j_i)  \quad \quad \quad \text{ for } 1\leq i \leq d, \, 1\leq k \leq m_i.
    \]
    
   Approximation of log-densities by polynomials appears in \cite{good1963maximum} as an application of the maximum entropy principle, while \cite{crain1977information} shows existence and consistency of the maximum likelihood estimation.
    We measure the quality of the estimator $f_{\hat{\theta}}$ of $f^0$ by the Kullback-Leibler divergence $\kl{ f^0}{ f_{\hat{\theta}}}$ defined as:
    \[
      \kl{ f^0}{ f_{\hat{\theta}}} = \int_\triangle f^0 \log\left(f^0/f_{\hat{\theta}}\right). 
    \]
    Convergence rates for nonparametric density estimators have been given by \cite{hall1987kullback} for kernel density estimators, \cite{barron1991approximation} and \cite{wu2010exponential} for the exponential series estimators, \cite{barron1992distribution} for histogram-based estimators, and \cite{koo1996wavelet} for wavelet-based log-density estimators. Here, we give results for the convergence rate in probability when the functions $\ell_i^0$ belong to a Sobolev space with regularity $r_i >d$ for all $1 \leq i \leq d$. We show that if we take $m=m(n)=(m_1(n),\hdots, m_d(n))$ members of the families $(\phi_{i,k},k \in \N)$, $1 \leq i \leq d$, and let $m_i$ grow with $n$ such that $(\sum_{i=1}^d m_i^{2d})(\sum_{i=1}^d m_i^{-2r_i})$ and $(\sum_{i=1}^d m_i)^{2d+1}/n$ tend to $0$, then the maximum likelihood estimator $f_{\hat{\theta}_{m,n}}$ verifies:
    \[
       \kl{ f^0 }{ f_{\hat{\theta}_{m,n}}}= O_\P\left( \sum_{i=1}^d \left(m_i^{-2r_i} + \frac{m_i}{n}\right)\right).
    \]
    Notice that this is the sum of the same univariate convergence rates as in \cite{barron1991approximation}. By choosing $m_i$ proportional to $ n^{1/(2r_i+1)}$, which gives the optimal convergence rate $O_\P( n^{-2r_i/(2r_i+1)})$ in the univariate case as shown in \cite{yang1999information}, we achieve a convergence rate of $O_\P(n^{-2\min(r)/(2\min(r)+1)})$. Therefore by exploiting the special structure of the underlying density, and carefully choosing the basis functions, we managed to reduce the problem of estimating a $d$-dimensional density to $d$ one-dimensional density estimation problems. We highlight the fact that this constitutes  a significant gain over convergence rates  of general nonparametric multivariate density estimation methods. 
    
    In most cases the smoothness parameters $r_i$, $1 \leq i \leq d$, are not available, therefore a method which adapts to the unknown smoothness is required to estimate the density with the best possible convergence rate. Adaptive methods for function estimation based on a random sample include Lepski's method, model selection, wavelet thresholding and aggregation of estimators. 

  Lepski's method, originating from \cite{lepskiui1990problem}, consists of constructing a grid of regularities, and choosing among the minimax estimators associated to each regularity the best estimator by an iterative procedure based on the available sample. This method was extensively applied for Gaussian white noise model, regression, and density estimation, see \cite{butucea2001exact} and references therein. Adaptation via model selection with a complexity penalization criterion was considered by \cite{birge1996} and \cite{barron1999risk}  for a large variety of models including wavelet-based density estimation. Loss in the Kullback-Leibler distance for model selection was studied in \cite{yang2000mixing} 
  and \cite{catoni1997mixture} for  mixing strategies, and in \cite{zhang2006} for the information complexity minimization strategy. More recently, bandwidth selection for multivariate kernel density estimation was addressed in \cite{goldenshluger2011bandwidth} for $L^s$  risk, $1\leq s < \infty$,  and \cite{lepski2013multivariate} for $L^\infty$ risk.        
  Wavelet based adaptive density estimation with thresholding was considered in \cite{kerkyacharian1996lp} and \cite{donoho1996density}, where an upper bound for the rate of convergence was given for a collection of Besov-spaces.  Linear and convex aggregate estimators appear in the more recent work \cite{rigollet2007linear} with an application to adaptive density estimation in expected $L^2$ risk, with sample splitting. 

   Here we extend the convex aggregation scheme for the estimation of the logarithm of the density proposed in \cite{butucea2016optimal} to achieve adaptability. We take the estimator $f_{\hat{\theta}_{m,n}}$ for different values of $m \in \cm_n$, where $\cm_n$ is a sequence of sets of parameter configurations with increasing cardinality. These estimators are not uniformly bounded as required in \cite{butucea2016optimal}, but we show that they are uniformly bounded in probability and that it does not change the general result.
   The different values of $m$ correspond to different values of the regularity parameters. The convex aggregate estimator $f_\lambda$ takes the form:
   \[
      f_\lambda = \exp\left( \sum_{m\in \cm_n} \lambda_m \left(\sum_{i=1}^d \sum_{k=1}^{m_{i}} \theta_{i,k} \phi_{i,k}(x_i)\right)  -\psi_\lambda\right)\ind_{\triangle},
   \]
   with $\lambda \in \Lambda^+ =\{ \lambda=(\lambda_m, m \in \cm_n), \lambda_m \geq 0 \text{ and } \sum_{m \in \cm_n} \lambda_m=1\}$ and normalizing constant $\psi_\lambda$ given by:
    \[
       \psi_\lambda = \log\left(\int_\triangle \exp\left( \sum_{m \in \cm_n} \lambda_m \left(\sum_{i=1}^d \sum_{k=1}^{m_{i}} \theta_{i,k} \phi_{i,k}(x_i)\right) \right) \, dx \right).
    \]
   To apply the aggregation method, we split our sample $\cx^n$ into two parts $\cx^n_1$  and $\cx^n_2$, with size proportional to $n$. We use the first part to create the estimators $f_{\hat{\theta}_{m,n}}$, then we use the second part to determine the optimal choice of the aggregation parameter $\hat{\lambda}^*_n$. We select $\hat{\lambda}^*_n$ by maximizing  a penalized version of the log-likelihood function. We show that this method gives a sequence of estimators $f_{\hat{\lambda}^*_n}$, free of the smoothness parameters $r_1,\hdots,r_d$,  which verifies:
   \[
      \kl{f^0}{f_{\hat{\lambda}^*_n} } = O_\P \left(  n^{-\frac{2 \min(r)}{2 \min(r) +1}} \right).  
   \]

    The rest of the paper is organized as follows. In Section \ref{sec:not} we introduce the notation used in the rest of the paper. In Section \ref{sec:estim_stat}, we describe the additive exponential series model and the estimation procedure, then we show that the estimator converges to the true underlying density with a convergence rate that is the sum of the convergence rates for the same type of univariate model, see Theorem \ref{theo:main_stat}. We consider an adaptive method with convex aggregation of the logarithms of the previous estimators to adapt to the unknown smoothness of the underlying density in Section \ref{sec:adaptation}, see Theorem \ref{theo:adapt}. We assess the performance of the adaptive estimator via a simulation study in Section \ref{sec:simul}. The definition of the basis functions and their properties used during the proofs are given in Section \ref{sec:orth_poly}. The detailed proofs of the results in Section \ref{sec:estim_stat} and \ref{sec:adaptation} are contained in Sections \ref{sec:prelim}, \ref{sec:proof_theo_stat}  and \ref{sec:proof_theo_adapt}.

 \section{Notation} \label{sec:not}
Let  $I=[0,1]$, $d \geq 2$ and   $\triangle = \{ (x_1, \hdots, x_d) \in I^d, x_1 \leq x_2 \leq \hdots \leq x_d  \}$ denote  the simplex  of $I^d$.  
For  an  arbitrary  real-valued  function  $h_i$  defined  on  $I$ with $1 \leq i \leq d$,  let
$h_{[i]}$  be  the  function  defined   on  $\triangle$  such  that  for
$x=(x_1,\hdots,x_d) \in \triangle$: 
\begin{equation}
   \label{eq:def-hi}
h_{[i]}(x) = h_i(x_i)\ind_\triangle(x).
\end{equation}

Let $q_i$, $1 \leq i \leq d$ be the  one-dimensional marginals of the
Lebesgue measure on $\triangle$:
 \begin{equation} \label{eq:def_qi}
   q_i(dt)  = \inv{(d-i)! (i-1)!} (1-t)^{d-i}  t^{i-1}\, \ind_I(t) \,dt. 
 \end{equation}
If $h_i\in L^1(q_i)$, then we have:
$
\int_\triangle h_{[i]} = \int_I h_i q_i.
$

For a measurable function $f$,  let $\norm{f}_{\infty}$ be the usual sup
norm of $f$ on its domain of definition. For $f$ defined on $\triangle$,
let $\norm{f}_{L^2}=\sqrt{\int_\triangle f^2}$. For  $f$ defined on $I$,
let $\norm{f}_{L^2(q_i)}=\sqrt{\int_I f^2 q_i}$.

For a vector $x =(x_1, \hdots, x_d) \in \R^d$, let $\min(r)$ ($\max(r)$) denote the smallest (largest) component.

Let us denote the support of a probability density $g$ by $\supp(g) =\{x \in \R^d,
g(x)>0\}$. Let $\cp(\triangle)$   denote   the   set  of   probability   densities   on
$\triangle$. For $g,h\in  \cp(\triangle)$, the Kullback-Leibler distance
$\kl{g}{h}$ is defined as:
\[
  \kl{g}{h} = \int_{\triangle} g
\log\left(g/h\right).
\]
Recall that $\kl{g}{h} \in [0, +\infty]$.
\begin{defi}
   \label{defi:prod-f}
   We say that a  probability density  $f^0\in \cp(\triangle)$ has a  product form if
   there  exist  $(\ell_i^0, 1\leq i\leq
   d)$ bounded  measurable functions  defined on  $I$ such  that $\int_I
   \ell_i^0 q_i=0$ for $1\leq i\leq d$ and a.e. on $\triangle$:
\begin{equation}
   \label{eq:Pi-form}
f^0=\exp{\left(\ell^0 - \mathrm{a}_0\right)}\ind_\triangle,
\end{equation}
with $\ell^0=\sum_{i=1}^d \ell^0_{[i]}$ and $\mathrm{a}_0=\log\left(\int_\triangle \exp{ (\ell^0)} \right)$,
that is $f^0(x)=\exp{\left(\sum_{i=1}^d \ell_{i}^0(x_i) - \mathrm{a}_0\right)}$ for
a.e. $x=(x_1, \ldots, x_d)\in \triangle$. 
\end{defi}

 Definition \ref{defi:prod-f} implies that $\supp(f^0) = \triangle$ and $f^0$ is bounded.
 Let $\cx^n=(X^1, \hdots, X^n)$ denote an i.i.d. sample of size $n$ from the density $f^0$.

For $1\leq i\leq  d$, let $(\phi_{i,k}, k\in \N)$ be the  family of
orthonormal polynomials on $I$ with respect to the measure $q_i$; see
Section \ref{sec:orth_poly} for a precise definition of those
polynomials and some of their properties. Recall
$\varphi_{[i],k}(x)=\varphi_{i,k}(x_i)$ for $x=(x_1, \ldots, x_d)\in
\triangle$. Notice that $( \phi_{[i],k}, 1\leq i\leq d, k\in
\N)$ is a family of normal polynomials  with
respect to the Lebesgue measure on $\triangle$, but not orthogonal. 

Let $m=(m_1,\hdots,m_d) \in (\N^*)^d$ and set $\val{m}=\sum_{i=1}^d m_i$.
 We define  the $\R^{\val{m}}$-valued  function $\phi_m =(\phi_{[i],k};
 1\leq k \leq m_i,1\leq i\leq d )$ and the $\R^{{m_i}}$-valued  functions
$\phi_{i,m}=(\phi_{i,k}; 1\leq k \leq m_i)$ for $1\leq i\leq d $. 
For  $\theta=(\theta_{i,k}; 1 \leq k
\leq m_i,1 \leq i \leq d )$ and $\theta'=(\theta'_{i,k}; 1 \leq k
\leq m_i,1 \leq i \leq d )$ elements of $\R^{\val{m}}$, we denote the
scalar product:
\[
\theta \cdot \theta' = \sum_{i=1}^d \sum_{k=1}^{m_i} \theta_{i,k}
\theta'_{i,k} 
\]
and the norm $\norm{\theta}=\sqrt{\theta \cdot \theta}$.  We define the
 function $ \theta\cdot \varphi_m$ as follows, for $x\in \triangle$:
\[
(\theta\cdot \varphi_m)(x)=\theta\cdot \varphi_m(x). 
\]

 For a positive sequence $(a_{n})_{n \in \N}$, the notation $O_\P(a_{n})$ of
 stochastic boundedness for a sequence of random variables $(Y_{n}, n
 \in \N)$ means that for every $\varepsilon>0$, there exists $C_{\varepsilon}>0$ such that:
  \[
   \P\left(\val{Y_{n}/a_{n}} > C_\varepsilon \right) < \varepsilon \quad \text{for
     all } n \in \N.
  \]

\section{Additive exponential series model}
\label{sec:estim_stat}

In this Section, we study the problem of estimation of an unknown density $f^0$ with a product form on the set $\triangle$, as described in \reff{eq:Pi-form}, given the sample $\cx^n$ drawn from $f^0$. Our goal is to give an estimation method based on a sequence of regular exponential models, which suits the special characteristics of the target density $f^0$. Estimating such a density with standard multidimensional nonparametric techniques naturally suffer from the curse of dimensionality, resulting in slow convergence rates for high-dimensional problems.  We show that by taking into consideration that $f^0$ has a product form, we can recover the one-dimensional convergence rate for the density estimation, allowing for fast convergence of the estimator even if $d$ is large.  The quality of the estimators is measured by the Kullback-Leibler distance, as it has strong connections to the maximum entropy framework of \cite{butucea2015maximum}.

We  propose to  estimate  $f^0$ using  the
following additive exponential series model, for $m\in (\N^*)^d$:
 \begin{equation} \label{eq:f_theta}
    f_\theta=\exp\left( \theta \cdot \phi_m -
      \psi(\theta)\right)\ind_{\triangle}, 
 \end{equation}
 with  $\psi(\theta)=\log\left( \int_\triangle  \exp\left( \theta  \cdot
     \phi_m  \right)\right)$.   This  model  is similar  to  the  one
 introduced  in  \cite{wu2010exponential},  but   there  are  two  major
 differences.   First, we  have only  kept the  univariate terms  in the
 multivariate  exponential series estimator  of \cite{wu2010exponential}  
 since the  target probability  density is  the
 product of univariate functions.  Second,  we have restricted our model
 to $\triangle$ instead of the hyper-cube  $I^d$, and we have chosen the
 basis functions  $( ( \phi_{i,k}, k\in  \N), 1\leq i\leq d)$  which are
 appropriate for this support.
 
 \begin{rem}
  In the genaral case, one has to be careful when considering a density $f^0$ with a product form and a support different from $\triangle$.
  Let  $f^0_i$ denote the $i$-th marginal density function of $f^0$. If $\supp(f^0_i) = A \subset\R$ for all $1 \leq i \leq d$, we can apply a strictly 
  monotone mapping of $A$ onto $I$ to obtain a  distribution with a product form supported on $\triangle$. 
   When the supports of the marginals differ, there is no transformation that yields a random vector with a density as in Definition \ref{defi:prod-f}. 
   A possible way to treat this case consists of constructing a family of basis functions which has similar properties with respect to $\supp(f^0)$ as the family $( ( \phi_{i,k}, k\in  \N), 1\leq i\leq d)$ with respect to $\triangle$, which we discuss in detail in Section \ref{sec:orth_poly}. Then we could define an exponential series model with this family of basis functions and support restricted to $\supp(f^0)$ to estimate $f^0$.    
\end{rem}

 Let $m\in (\N^*)^d$. We define the following function on $\R^{\val{m}}$
 taking values in $\R^{\val{m}}$ by:
\begin{equation} \label{eq:alpha_m}
  A_m(\theta)=\int_\triangle \phi_m f_\theta, \quad  \theta   \in
  \R^{\val{m}}.                           
\end{equation}
According to Lemma 3 in \cite{barron1991approximation}, we have the following result on $A_m$.

\begin{lem} \label{lem:Am_bij}
The function $A_m$ is one-to-one from $\R^{\val{m}}$ to $\Omega_m = A_m(\R^{\val{m}})$.  
\end{lem}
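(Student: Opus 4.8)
The plan is to exploit the exponential--family structure of \reff{eq:f_theta}: the map $A_m$ is exactly the gradient of the (strictly) convex function $\psi$, and the gradient of a strictly convex function on $\R^{\val{m}}$ is one-to-one onto its range.

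First I would justify differentiating $\psi$ under the integral sign (legitimate since $\phi_m$ is bounded on the compact set $\triangle$, so $\exp(\theta\cdot\phi_m)$ is smooth in $\theta$ with all derivatives bounded locally uniformly) and record that, for every $\theta\in\R^{\val{m}}$,
\[
\nabla\psi(\theta)=\frac{\int_\triangle \phi_m\,\exp(\theta\cdot\phi_m)}{\int_\triangle \exp(\theta\cdot\phi_m)}=\int_\triangle \phi_m\, f_\theta=A_m(\theta),
\qquad
\nabla^2\psi(\theta)=\int_\triangle \phi_m\,\phi_m^{\top}\,f_\theta - A_m(\theta)A_m(\theta)^{\top},
\]
so that $\nabla^2\psi(\theta)$ is the covariance matrix of the $\R^{\val{m}}$-valued function $\phi_m$ under the probability density $f_\theta$. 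In particular $\psi$ is convex; to conclude it is strictly convex it suffices to show $\nabla^2\psi(\theta)$ is positive definite for every $\theta$, equivalently that for every nonzero $v\in\R^{\val{m}}$ the function $v\cdot\phi_m$ is not a.e.\ constant on $\triangle$ (recall $f_\theta>0$ on $\triangle$, so ``$f_\theta$-a.e.'' and ``Lebesgue-a.e.'' coincide there).

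This non-degeneracy claim is the main point, and the step I expect to be the real obstacle, because the families $(\phi_{[i],k},k)$ and $(\phi_{[j],k},k)$ for $i\neq j$ are \emph{not} orthogonal on $\triangle$. The argument uses the full-dimensionality of $\triangle$: write $v\cdot\phi_m=\sum_{i=1}^d g_i(x_i)$ on $\triangle$, with $g_i=\sum_{k=1}^{m_i}v_{i,k}\phi_{i,k}$ a univariate polynomial. If this sum equals a constant a.e.\ on $\triangle$, then at an interior point of $\triangle$ each coordinate $x_j$ may be varied on a small interval with the others held fixed, forcing $g_j'\equiv 0$ there and hence each $g_j$ to be a constant polynomial. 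Integrating $g_j$ against $q_j$ and using that the $\phi_{j,k}$ with $k\geq 1$ are orthogonal to the constants (they are members of the orthonormal family on $I$ with respect to $q_j$; see Section \ref{sec:orth_poly}), that constant equals $\int_I g_j\,q_j=0$, so $g_j\equiv 0$ and, by orthonormality, $v_{j,k}=0$ for all $k$. Thus $v=0$, which proves strict convexity of $\psi$.

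Finally, strict convexity gives $(\theta-\theta')\cdot\big(\nabla\psi(\theta)-\nabla\psi(\theta')\big)>0$ whenever $\theta\neq\theta'$, hence $A_m(\theta)\neq A_m(\theta')$, i.e.\ $A_m$ is one-to-one from $\R^{\val{m}}$ onto $\Omega_m=A_m(\R^{\val{m}})$. (An equivalent route avoiding convexity: summing the identities $\kl{f_\theta}{f_{\theta'}}=(\theta-\theta')\cdot A_m(\theta)-\psi(\theta)+\psi(\theta')$ and the one with $\theta,\theta'$ swapped telescopes to $\kl{f_\theta}{f_{\theta'}}+\kl{f_{\theta'}}{f_\theta}=(\theta-\theta')\cdot\big(A_m(\theta)-A_m(\theta')\big)$; if $A_m(\theta)=A_m(\theta')$ both nonnegative divergences vanish, so $f_\theta=f_{\theta'}$ a.e.\ on $\triangle$, and then the same basis non-degeneracy argument yields $\theta=\theta'$ — the obstacle is identical.)
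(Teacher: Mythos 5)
Your proof is correct, but it is worth noting how it relates to what the paper actually does: the paper gives no argument for Lemma \ref{lem:Am_bij} at all, it simply invokes Lemma 3 of Barron and Sheu (1991), restated later as Lemma \ref{lem:BS3}. Your route makes the underlying mechanism explicit: $A_m=\nabla\psi$, the Hessian $\nabla^2\psi(\theta)$ is the covariance of $\phi_m$ under $f_\theta$, and strict convexity (hence injectivity of the gradient) reduces to the non-degeneracy claim that no nonzero linear combination $v\cdot\phi_m$ is a.e.\ constant on $\triangle$. That last point is exactly the step specific to this paper's setting, since the families $(\phi_{[i],k})_k$ for different $i$ are not orthogonal on $\triangle$, and your coordinate-wise variation argument on the interior of the simplex, combined with the centering $\int_I \phi_{j,k}\,q_j=0$ for $k\geq 1$, settles it cleanly; in fact the paper itself uses the same positive-definiteness-via-linear-independence argument later, in the proof of Lemma \ref{lem:max_likelihood_as}, so your proof essentially fills in, inside the paper's own framework, what is outsourced to the citation. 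Two cosmetic remarks: the measure $q_j$ has total mass $1/d!$ rather than $1$, so the constant value of $g_j$ equals $d!\int_I g_j\,q_j$ rather than $\int_I g_j\,q_j$ (the conclusion that it vanishes is unaffected); and your parenthetical alternative via the identity $\kl{f_\theta}{f_{\theta'}}+\kl{f_{\theta'}}{f_\theta}=(\theta-\theta')\cdot\bigl(A_m(\theta)-A_m(\theta')\bigr)$ is also valid and rests on the same non-degeneracy fact.
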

We denote by $\Theta_m : \Omega_m \mapsto \R^{\val{m}}$ the inverse of $A_m$. 
The empirical mean of the  sample $\cx^n$ of size $n$ is:
\begin{equation}\label{eq:def_emp_mean}
\hat{\mu}_{m,n}=\inv{n} \sum_{j=1}^n \phi_{m} (X^j).
\end{equation}
In Section \ref{sec:variance} we show that $\hat{\mu}_{m,n} \in \Omega_m$ a.s. when $n \geq 2$.

For $n \geq 2$, we define a.s. the maximum  likelihood  estimator $\hat{f}_{m,n} = f_{\hat{\theta}_{m,n}}$  of  $f^0$  by choosing:
\begin{equation} \label{eq:max_likelihood}
                                                          \hat{\theta}_{m,n} = \Theta_m(\hat{\mu}_{m,n}).
\end{equation}

The  loss   between  the  estimator
$\hat{f}_{m,n}$ and the true underlying density $f^0$ is measured by the
Kullback-Leibler divergence $\kl{f^0}{\hat{f}_{m,n}}$.

For $r \in  \N^*$, let $W^2_r(q_i)$ denote the Sobolev  space of functions in
$L^2(q_i)$, such that the $(r-1)$-th  derivative is absolutely continuous and
the $L^2$ norm  of the $r$-th derivative is finite:
\[
  W^2_r (q_i)= \left\{ h \in L^2(q_i); h^{(r-1)} \text{ is absolutely continuous and } h^{(r)} \in L^2(q_i)  \right\}. 
\]
The main result is
given by the following theorem whose proof is given in Section \ref{sec:proof_stat}.

\begin{theo} \label{theo:main_stat} 
  Let $f^0\in \cp(\triangle)$ be a probability density with a product form, see Definition \ref{defi:prod-f}. 
Assume  the functions $\ell^0_i$, defined in \reff{eq:Pi-form} belong to the Sobolev space $W^2_{r_i}(q_i)$, $r_i \in \N$ with $r_i > d$ for all  $1 \leq i \leq d$. Let $(X^n, n\in \N^*)$ be i.i.d. random variables with density distribution $f^0$.  We consider a sequence $(m(n)=(m_1(n), \hdots, m_d(n)), n \in \N^*)$ such that $\lim_{n\rightarrow \infty} m_i(n) = +\infty$ for all $1 \leq i \leq d$, and which satisfies:
  \begin{equation}\label{eq:cond_m_theo_1}
     \lim_{n \rightarrow \infty} \val{m}^{2d} \left( \sum_{i=1}^d m_i^{-2r_i} \right) = 0,
  \end{equation}
    \begin{equation}\label{eq:cond_m_theo_2}
     \lim_{n \rightarrow \infty} \frac{\val{m}^{2d+1}}{n}  = 0.
    \end{equation}
    The  Kullback-Leibler distance $\kl{f^0}{\hat{f}_{m,n}}$ of the maximum likelihood estimator $\hat{f}_{m,n} $ defined by
    \reff{eq:max_likelihood} to $f^0$ converges in probability to $0$ with the convergence rate:
  \begin{equation} \label{eq:conv_rate}
     \kl{f^0}{\hat{f}_{m,n}} = O_\P\left( \sum_{i=1}^d m_i^{-2r_i} +
     \frac{\val{m}}{n}\right) .    
  \end{equation}
\end{theo}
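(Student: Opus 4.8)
The plan is to follow the strategy of \cite{barron1991approximation} for univariate exponential series, organizing the loss through the information projection of $f^0$ onto the model and exploiting the product structure so that the dimension enters only through polynomial factors in $\val{m}$. Since $f^0$ is bounded with support $\triangle$, its moment vector $\mu^*_m:=\int_\triangle\phi_m f^0$ lies in $\Omega_m$; set $\theta^*_m=\Theta_m(\mu^*_m)$, so that $A_m(\theta^*_m)=\mu^*_m$ and $f_{\theta^*_m}$ has the same $\phi_m$-moments as $f^0$. Writing $\kl{f^0}{f_\theta}=\int_\triangle f^0\log f^0-\theta\cdot\mu^*_m+\psi(\theta)$ from \reff{eq:f_theta}, doing the same for $\theta^*_m$, and expanding $\kl{f_{\theta^*_m}}{f_\theta}=\theta^*_m\cdot\mu^*_m-\psi(\theta^*_m)-\theta\cdot\mu^*_m+\psi(\theta)$, the terms in $\theta^*_m\cdot\mu^*_m$ and $\psi(\theta^*_m)$ cancel and one obtains the Pythagorean identity
\[ \kl{f^0}{f_\theta}=\kl{f^0}{f_{\theta^*_m}}+\kl{f_{\theta^*_m}}{f_\theta},\qquad\theta\in\R^{\val{m}}. \]
In particular $\theta^*_m$ minimizes $\theta\mapsto\kl{f^0}{f_\theta}$, and taking $\theta=\hat{\theta}_{m,n}$ reduces \reff{eq:conv_rate} to an approximation (bias) bound $\kl{f^0}{f_{\theta^*_m}}=O\big(\sum_i m_i^{-2r_i}\big)$ and an estimation (variance) bound $\kl{f_{\theta^*_m}}{\hat{f}_{m,n}}=O_\P\big(\val{m}/n\big)$.

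For the bias, by the minimizing property it suffices to bound $\kl{f^0}{f_{\bar\theta_m}}$ for a convenient $\bar\theta_m$: take $\bar\theta_{i,k}=\int_I\ell^0_i\,\phi_{i,k}\,q_i$, the coefficients of the $L^2(q_i)$-projection of $\ell^0_i$ onto $\mathrm{span}(\phi_{i,1},\dots,\phi_{i,m_i})$, legitimate because $\ell^0_i$ is centered. With $g_m=\bar\theta_m\cdot\phi_m$ one has $\ell^0-g_m=\sum_{i=1}^d\big(\ell^0_i-\sum_{k\le m_i}\bar\theta_{i,k}\phi_{i,k}\big)_{[i]}$. A standard bound controlling the Kullback--Leibler divergence of two log-densities by the squared $L^2$ distance of their logarithms --- valid once $\norm{\ell^0-g_m}_\infty$ is bounded --- gives $\kl{f^0}{f_{\bar\theta_m}}\le C\,\norm{\ell^0-g_m}_{L^2(f^0)}^2\le C\norm{f^0}_\infty\sum_{i=1}^d\norm{\ell^0_i-\sum_{k\le m_i}\bar\theta_{i,k}\phi_{i,k}}_{L^2(q_i)}^2=O\big(\sum_i m_i^{-2r_i}\big)$, the last step being the classical degree-$m_i$ approximation rate in $L^2(q_i)$ for $W^2_{r_i}(q_i)$, obtained from the polynomial estimates of Section \ref{sec:orth_poly}. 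The hypothesis $r_i>d$ is used precisely here: combined with \reff{eq:cond_m_theo_1} and the same polynomial estimates it forces $\norm{\ell^0-g_m}_\infty\to0$, hence its boundedness.

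For the variance, $\kl{f_{\theta^*_m}}{f_\theta}$ is the Bregman divergence of the convex function $\psi$, so a second-order Taylor expansion gives $\kl{f_{\theta^*_m}}{\hat{f}_{m,n}}=\tfrac12(\hat{\theta}_{m,n}-\theta^*_m)^\top J_m(\tilde\theta)(\hat{\theta}_{m,n}-\theta^*_m)$ for some $\tilde\theta\in[\theta^*_m,\hat{\theta}_{m,n}]$, where $J_m(\theta)=\nabla^2\psi(\theta)$ is the covariance matrix of $\phi_m$ under $f_\theta$; likewise $\hat{\mu}_{m,n}-\mu^*_m=J_m(\bar\theta)(\hat{\theta}_{m,n}-\theta^*_m)$ for some $\bar\theta$ on that segment. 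The key input, to be established in Section \ref{sec:variance}, is that there are constants $0<c_1\le c_2$ depending only on $f^0$ such that all eigenvalues of $J_m(\theta)$ lie in $[c_1,c_2]$ whenever $f_\theta$ stays within a fixed multiplicative factor of $f^0$; this combines the fact that the Gram matrix of $(\phi_{[i],k})$ with respect to the Lebesgue measure on $\triangle$ is well conditioned uniformly in $m$ --- exactly where the non-orthogonality across coordinates $i\neq j$ must be controlled via Section \ref{sec:orth_poly} --- with $f_\theta\asymp f^0$. Granting this, $\kl{f_{\theta^*_m}}{\hat{f}_{m,n}}\le\tfrac{c_2}{2c_1^2}\norm{\hat{\mu}_{m,n}-\mu^*_m}^2$, and since
\[ \superE_{f^0}\big[\norm{\hat{\mu}_{m,n}-\mu^*_m}^2\big]=\tfrac1n\sum_{i=1}^d\sum_{k\le m_i}\mathrm{Var}_{f^0}\big(\phi_{i,k}(X^1_i)\big)\le\tfrac1n\sum_{i=1}^d\sum_{k\le m_i}\int_\triangle\phi_{[i],k}^2 f^0\le\norm{f^0}_\infty\,\frac{\val{m}}{n}, \]
Chebyshev's inequality yields $\norm{\hat{\mu}_{m,n}-\mu^*_m}^2=O_\P(\val{m}/n)$, which gives the claimed bound.

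The main obstacle is to justify the previous paragraph, namely that $\hat{\theta}_{m,n}$ falls, with probability tending to $1$, in the region where $f_{\hat{\theta}_{m,n}}\asymp f^0$ so that the eigenvalue bounds apply --- and this is where conditions \reff{eq:cond_m_theo_1}--\reff{eq:cond_m_theo_2} are needed. One needs $\norm{(\hat{\theta}_{m,n}-\theta^*_m)\cdot\phi_m}_\infty\to0$ in probability; combining the (crude form of the) bound $\norm{\hat{\theta}_{m,n}-\theta^*_m}=O_\P(\sqrt{\val{m}/n})$ with the uniform polynomial bound $\norm{\phi_m}_\infty=O(\val{m}^{d})$ from Section \ref{sec:orth_poly} gives a sup-norm fluctuation of order $O_\P(\val{m}^{2d+1}/n)$, which vanishes exactly by \reff{eq:cond_m_theo_2}, while \reff{eq:cond_m_theo_1} plays the same role for the deterministic part $\norm{\theta^*_m\cdot\phi_m-\ell^0}_\infty$. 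Since the eigenvalue control and the localization of $\hat{\theta}_{m,n}$ depend on one another, the clean way to carry this out is a bootstrap: a first, rough control of $\norm{\hat{\mu}_{m,n}-\mu^*_m}$ places $\hat{\theta}_{m,n}$ in a slightly enlarged neighbourhood of $\theta^*_m$ via a quantitative inversion of $A_m$ (using Lemma \ref{lem:Am_bij} and a lower bound on $J_m$), which then validates the Taylor estimate and upgrades it to the sharp rate $O_\P(\val{m}/n)$. Carrying out this localization, together with the polynomial and Gram-matrix estimates of Section \ref{sec:orth_poly}, is the technical heart of the proof.
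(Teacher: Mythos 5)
Your skeleton is the paper's own: decompose via the information projection and the Pythagorean identity (Lemma \ref{lem:BS3}), bound the bias by projecting each $\ell^0_i$ onto the span of $(\phi_{i,k},k\leq m_i)$ and using the KL-versus-weighted-$L^2$ comparison together with the Jacobi approximation rates (this is Proposition \ref{prop:general_1} with Corollaries \ref{cor:delta_m} and \ref{cor:gamma_m}), and bound the variance by Chebyshev on the moment vector plus a quantitative local inversion of $A_m$, the dimension entering only through $\kappa_m=O(\val{m}^d)$ (Lemma \ref{lem:Am}) and the $1/d$ eigenvalue bound behind Lemma \ref{lem:thetaphi_theta}. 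Your Hessian-eigenvalue/Taylor formulation of the variance step is an equivalent repackaging of the paper's Lemma \ref{lem:BS5} and Proposition \ref{prop:general_2}, and you place conditions \reff{eq:cond_m_theo_1} and \reff{eq:cond_m_theo_2} exactly where the paper consumes them.

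The gap is that the two steps you yourself call the technical heart are exactly what carries the theorem, and your sketch of them is not yet a proof. First, the quantitative inversion does not follow from Lemma \ref{lem:Am_bij} plus ``a lower bound on $\nabla^2\psi$'': the lower bound available is of the form $e^{-\norm{\log(f_{\theta})}_{\infty}-2\sqrt{d}\,\kappa_m\norm{\theta-\theta'}}/d$ (cf.\ Lemma \ref{lem:BS4}), i.e.\ it degrades exponentially in $\kappa_m\norm{\theta-\theta'}$, so one needs the ball/barrier argument of Lemma \ref{lem:BS5} with a threshold of order $e^{-\norm{\log(f_\theta)}_\infty}/(d^{3/2}\kappa_m)$ on the moment discrepancy; your ``bootstrap'' is precisely this lemma, asserted rather than established, and it is where $\val{m}^{2d+1}/n\to 0$ is used (through $\delta_{m,n}\propto\kappa_m\sqrt{\val{m}/n}$). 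Second, the deterministic localization --- that $\norm{\log(f_{\theta^*_m}/f^0)}_\infty$ stays bounded so the eigenvalue bounds apply at and around $\theta^*_m$ --- does not follow from \reff{eq:cond_m_theo_1} ``playing the same role'': the paper proves it by applying the inversion at the projection coefficients $\theta^0$, after bounding the moment gap $\norm{A_m(\theta^0)-\int_\triangle\phi_m f^0}$ by $d\norm{f^0-f^0_m}_{L^2}\lesssim\Delta_m$ via Bessel's inequality coordinatewise; this is what makes $\kappa_m\Delta_m\to 0$, i.e.\ \reff{eq:cond_m_theo_1}, the right condition (see \reff{eq:def_eps_m} and \reff{eq:logf*_ftheta*}). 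That Bessel step is the missing idea in your outline. Minor further points, all fixable: membership of $\hat{\mu}_{m,n}$ in $\Omega_m$ (existence of the MLE, Lemma \ref{lem:max_likelihood_as}) is not addressed; the identity $\hat{\mu}_{m,n}-\mu^*_m=\nabla^2\psi(\bar\theta)(\hat{\theta}_{m,n}-\theta^*_m)$ needs the integrated Hessian rather than a single intermediate point; and the sup-norm fluctuation is $O_\P\bigl(\sqrt{\val{m}^{2d+1}/n}\bigr)$, not $O_\P(\val{m}^{2d+1}/n)$, which still vanishes under \reff{eq:cond_m_theo_2}.
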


\begin{rem}
   \label{rem:optim}
  Let us take $(m^{\circ}(n)=(m_1^{\circ}(n), \hdots, m_d^{\circ}(n)),n \in \N^*)$ with $m^{\circ}_i(n)=\lfloor n^{1/(2r_i+1)} \rfloor$. This choice constitutes a balance between the bias and the variance term.  Then the conditions
  \reff{eq:cond_m_theo_1} and \reff{eq:cond_m_theo_2} are satisfied, and
  we obtain that :
  \[
  \kl{f^0}{\hat{f}_{m^{\circ},n}} = O_\P\left(\sum_{i=1}^d n^{-2r_i/(2r_i+1)}\right) = O_\P\left(n^{-2 \min(r)/(2\min(r)+1)} \right).
  \]
  Thus the convergence rate corresponds to the least smooth $\ell^0_i$. This rate can also be obtained with a choice where all $m_i$ are the same. Namely, with $(m^*(n)=(v^*(n), \hdots, v^*(n)),n \in \N^*)$ and $v^*(n)=\lfloor n^{1/(2 \min(r)+1)} \rfloor$.
\end{rem}

 For $r=(r_1, \hdots, r_d) \in (\N^*)^d$, $r_i > d$ for $1 \leq i \leq d$, and a constant $\kappa >0$, let :
 \begin{equation} \label{eq:def_KrK}
\ck_r(\kappa)=\left\{ f^0=\exp \left(\sum_{i=1}^d\ell_{[i]}^0-\mathrm{a}_0\right) \in \cp(\triangle); \norm{\ell_i^0}_\infty \leq \kappa, \norm{(\ell_i^0)^{(r_i)}}_{L^2(q_i)} \leq \kappa  \right\}.
\end{equation}
 The constants $\mathfrak{A}_1$ and $\mathfrak{A}_2$, appearing in the upper bounds during the proof of Theorem \ref{theo:main_stat} (more precisely in Propositions \ref{prop:general_1} and \ref{prop:general_2}), are uniformly bounded on $\ck_r(\kappa)$, thanks to Corollary \ref{cor:gamma_m} and $\norm{\log(f^0)}_\infty \leq 2 d \kappa + \val{\log(d!)}$, which is due to \reff{eq:f_l_a_1}. This yields the following corollary for the uniform  convergence in probability on the set $\ck_r(\kappa)$ of densities:
 
 \begin{cor} \label{cor:unif_conv}
 Under the assumptions of Theorem \ref{theo:main_stat}, we get the following result:
 \[
    \lim_{K\rightarrow \infty} \limsup_{n \rightarrow \infty} \sup_{f^0 \in \ck_r(\kappa)} \P\left(\kl{f^0}{\hat{f}_{m,n}} \geq \left(\sum_{i=1}^d m_i^{-2r_i} + \frac{\val{m}}{n}\right)K\right) =0.
 \]
\end{cor}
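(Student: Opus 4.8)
The plan is to revisit the proof of Theorem~\ref{theo:main_stat} and verify that every estimate obtained there holds uniformly over the class $\ck_r(\kappa)$ defined in \reff{eq:def_KrK}. In that proof the loss $\kl{f^0}{\hat{f}_{m,n}}$ is split into a deterministic approximation term and a stochastic term. The approximation term is controlled by Proposition~\ref{prop:general_1} and is bounded, up to the constant $\mathfrak{A}_1$, by $\sum_{i=1}^d m_i^{-2r_i}$; the only features of $f^0$ it uses are $\norm{\ell_i^0}_\infty \leq \kappa$ and $\norm{(\ell_i^0)^{(r_i)}}_{L^2(q_i)} \leq \kappa$, which hold by the very definition of $\ck_r(\kappa)$. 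The stochastic term is controlled by Proposition~\ref{prop:general_2} in terms of the constant $\mathfrak{A}_2$ and of the deviation $\norm{\hat{\mu}_{m,n} - \int_\triangle \phi_m f^0}$, on the almost sure event $\{\hat{\mu}_{m,n} \in \Omega_m\}$, which holds for $n \geq 2$.

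As noted in the Remark preceding the statement, $\mathfrak{A}_1$ and $\mathfrak{A}_2$ are bounded by quantities depending only on $d$, $r$ and $\kappa$: one uses $\norm{\log(f^0)}_\infty \leq 2d\kappa + \val{\log(d!)}$, which follows from \reff{eq:f_l_a_1}, together with Corollary~\ref{cor:gamma_m} for the spectral and sup-norm quantities entering $\mathfrak{A}_2$. Hence there exist constants $C_1, C_2 > 0$ depending only on $(d,r,\kappa)$ such that, for every $f^0 \in \ck_r(\kappa)$ and every $n$ large enough,
\[
\norm{\hat{\mu}_{m,n} - \int_\triangle \phi_m f^0}^2 \leq C_2\,\frac{\val{m}}{n} \quad\Longrightarrow\quad \kl{f^0}{\hat{f}_{m,n}} \leq C_1\Big(\sum_{i=1}^d m_i^{-2r_i} + \frac{\val{m}}{n}\Big).
\]

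It then remains to bound, uniformly over $\ck_r(\kappa)$, the probability of the complementary deviation event, and this is exactly the Bernstein-type step already carried out in the proof of Theorem~\ref{theo:main_stat}. The coordinates of $\phi_m$ are the polynomials $\phi_{[i],k}$, whose sup-norms on $\triangle$ are controlled by a function of $(d,\val{m})$ only (Corollary~\ref{cor:gamma_m}) and whose variances under $f^0$ are at most $\norm{f^0}_\infty$ times a universal constant; therefore the tail bound for $\P\big(\norm{\hat{\mu}_{m,n}-\int_\triangle\phi_m f^0}^2 > t\,\val{m}/n\big)$ depends on $f^0$ only through quantities already shown to be uniform on $\ck_r(\kappa)$. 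Choosing $t = t(K)$ large enough, depending only on $(d,r,\kappa)$, and using \reff{eq:cond_m_theo_2}, this yields
\[
\sup_{f^0 \in \ck_r(\kappa)} \P\Big(\kl{f^0}{\hat{f}_{m,n}} \geq K\Big(\sum_{i=1}^d m_i^{-2r_i} + \frac{\val{m}}{n}\Big)\Big) \leq \varepsilon_n(K),
\]
with $\limsup_{n\to\infty}\varepsilon_n(K) \to 0$ as $K\to\infty$, which is the assertion.

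I expect the difficulty here to be bookkeeping rather than conceptual. One has to re-read the proof of Theorem~\ref{theo:main_stat} and check that each implicit constant, each occurrence of $O_\P$, and in particular the high-probability event on which $\hat{\theta}_{m,n}$ is well defined and the second-order expansion of $\psi$ around $\Theta_m(\hat{\mu}_{m,n})$ is valid, can all be chosen as functions of $(d,r,\kappa)$ alone. No probabilistic ingredient beyond those already used for Theorem~\ref{theo:main_stat} is needed.
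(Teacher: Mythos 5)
Your proposal is correct and takes essentially the paper's own route: the paper justifies the corollary exactly by observing that in the proof of Theorem \ref{theo:main_stat} the constants $\mathfrak{A}_1$, $\mathfrak{A}_2$ and the probability bound $\expp{\norm{\log(f^0)}_\infty}/K$ of Propositions \ref{prop:general_1} and \ref{prop:general_2} (as well as the thresholds $\varepsilon_m\leq 1$, $\delta_{m,n}\leq 1$) depend on $f^0$ only through $\gamma_m$, $\Delta_m$ and $\norm{\log(f^0)}_\infty$, all uniformly bounded on $\ck_r(\kappa)$ by Corollary \ref{cor:gamma_m}, Lemma \ref{lem:delta_mi} and \reff{eq:f_l_a_1}. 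Two cosmetic slips: the deviation step in Proposition \ref{prop:general_2} is a Chebyshev bound rather than a Bernstein one (Bernstein only enters the adaptive part), and the sup-norm control of the basis polynomials is \reff{eq:sup_phi_ik} rather than Corollary \ref{cor:gamma_m}; neither affects the uniformity argument.
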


\begin{rem}
 Since we let $r_i$ vary for each $1\leq i \leq d$, our class of densities $\ck_r(\kappa)$ has an anisotropic feature. Estimation of anisotropic multivariate functions for $L^s$ risk, $1\leq s \leq \infty$, was considered in multiple papers. For a Gaussian white noise model, \cite{kerkyacharian2001nonlinear} obtains minimax convergence rates on anisotropic Besov classes for $L^s$ risk, $1\leq s < \infty$ ,while \cite{bertin2004asymptotically} gives the minimax rate of convergence on anisotropic H\"{o}lder classes for the $L^\infty$ risk. For kernel density estimation, results on the minimax convergence rate for anisotropic Nikol'skii classes for $L^s$ risk, $1\leq s < \infty$, can be found in \cite{goldenshluger2011bandwidth}. These papers conclude in general, that if the considered class has smoothness parameters $\tilde{r}_i$ for the $i$-th coordinate, $1 \leq i \leq d$ , then the optimal convergence rate becomes $n^{-2\tilde{R}/(2\tilde{R}+1)}$ (multiplied with a logarithmic factor for $L^\infty$ risk), 
with $\tilde{R}$ defined by the equation $1/\tilde{R} = \sum_{i=1}^d 1/\tilde{r}_i$. Since $\tilde{R} < \tilde{r}_i$ for all $1\leq i \leq d$, the convergence rate $n^{-2 \min(r)/(2\min(r)+1)}$ is strictly better than the convergence rate for these anisotropic classes. In the isotropic case, when $r_i=r$ for all $1\leq i \leq d$, the minimax convergence rate specializes to $n^{-2r/(2r+d)}$ (which was obtained in \cite{wu2010exponential} as an upper bound).
 This rate decreases exponentially when the dimension $d$ increases. However, by exploiting the multiplicative structure of the model, we managed to obtain the univariate convergence rate $n^{-2r/(2r+1)}$, which is minimax optimal, see \cite{yang1999information}.  
\end{rem}

\section{Adaptive estimation} \label{sec:adaptation}

Notice that the choice of the optimal series of estimators $\hat{f}_{m^*,n}$ with $m^*$ defined in Remark \ref{rem:optim} requires the knowledge of $\min(r)$ at least. When this knowledge is not available, 
we propose an adaptive method based on the proposed estimators in Section \ref{sec:estim_stat}, which can mimic asymptotically the behaviour of the optimal choice. 
Let us introduce some notation first. We separate the sample $\cx^n$ into two parts $\cx^n_1$ and $\cx^n_2$ of size $n_1= \lfloor C_e n \rfloor$ and $n_2 = n-\lfloor C_e n \rfloor$ respectively, with some constant $C_e \in (0,1)$. The first part of the sample  will be used to create our estimators, and the second half will be used in the aggregation procedure.
Let $(N_n, n \in \N^*)$ be a sequence of non-decreasing positive integers depending on $n$ such that $\lim_{n\rightarrow \infty} N_n= + \infty$. Let us denote:
\begin{equation} \label{eq:def_Mn}
 \cn_n = \left\{  \lfloor n^{1/(2(d+j)+1)} \rfloor , 1 \leq j \leq N_n \right\} \quad \text{ and } \quad \cm_n=\left\{ m=(v,\hdots, v) \in \R^d, v \in \cn_n \right\}. 
\end{equation}
 For $m \in \cm_n$ let $\hat{f}_{m,n}$ be the additive exponential series estimator based on the first half of the sample, namely:
   \[
      \hat{f}_{m,n}= \exp\left(\hat{\theta}_{m,n} \cdot \phi_{m}-\psi(\hat{\theta}_{m,n}) \right)\ind_{\triangle},
   \]
   with $\hat{\theta}_{m,n}$ given by \reff{eq:max_likelihood} using the sample $\cx^n_1$ (replacing $n$ with $n_1$ in the definition \reff{eq:def_emp_mean} of $\hat{\mu}_{m,n}$). 
    Let :  
   \begin{equation*} \label{eq:def_Mn'}   
    \cf_n = \{ \hat{f}_{m,n}, m \in \cm_n \}
   \end{equation*}
   denote the set of different estimators obtained by this procedure.    Notice that $\Card(\cf_n) \leq \Card(\cm_n) \leq N_n$. 
Recall that by Remark \ref{rem:optim}, we have that for $r = (r_1, \hdots, r_d)$ with $r_i > d$  and $n \geq \bar{n}$, where $\bar{n}$ is given by:
\begin{equation} \label{eq:def_bar_n}
    \bar{n} = \min\{n \in \N, N_{n} \geq \min(r)-d+1\},
\end{equation}
 the sequence of estimators $\hat{f}_{m^*,n}$, with $m^*=m^*(n)=(v^*, \hdots, v^*) \in \cm_n$ given by $v^* = \lfloor n^{1/(2 \min(r)+1)} \rfloor$, achieves the optimal convergence rate  $O_\P( n^{-2\min(r)/(2 \min(r)+1)})$. By letting $N_n$ go to infinity, we ensure that for every combination of regularity parameters $r = (r_1, \hdots, r_d)$ with $r_i > d$, the sequence of optimal estimators $\hat{f}_{m^*,n}$ is included in the sets $\cf_n$ for $n$ large enough.  
   
   We use the second part of the sample $\cx_2^n$ to create an aggregate estimator based on $\cf_n$, which asymptotically mimics the performance of the optimal sequence  $\hat{f}_{m^*,n}$.
   We will write $\hat{\ell}_{m,n} =\hat{\theta}_{m,n} \cdot \phi_{m}$ to ease notation. We define the convex combination  $\hat{\ell}_\lambda$ of the functions $\hat{\ell}_{m,n}$, $m \in \cm_n$:
   \[
      \hat{\ell}_\lambda = \sum_{m \in \cm_n} \lambda_m \hat{\ell}_{m,n},
   \]
   with aggregation weights $\lambda \in \Lambda^+ =\{ \lambda=(\lambda_m, m \in \cm_n) \in \R^{\cm_n}, \lambda_m \geq 0 \text{ and } \sum_{m \in \cm_n} \lambda_m=1\}$. For such a convex combination, we define the probability density function $f_\lambda$ as:
   \begin{equation} \label{eq:aggr_f}
      f_\lambda = \exp(\hat{\ell}_\lambda -\psi_\lambda)\ind_{\triangle},
   \end{equation}
    with $\psi_\lambda = \log\left(\int_\triangle \exp(\hat{\ell}_\lambda) \right)$. We apply the convex aggregation method for log-densities developed in \cite{butucea2016optimal} to get  an aggregate estimator which achieves adaptability. Notice that the reference probability measure in this paper corresponds to $d!\ind_{\triangle}(x)dx$. This implies that $\psi_\lambda$ here differs from the $\psi_\lambda$ of \cite{butucea2016optimal} by the constant $\log(d!)$, but this does not affect the calculations. The aggregation weights are chosen by maximizing the penalized maximum likelihood criterion $H_n$ defined as:  
    \begin{equation}\label{eq:def_Hn}
      H_{n}(\lambda)=\inv{ n_2} \sum_{X^j \in \cx^n_2} \hat{\ell}_\lambda(X^{j}) - \psi_\lambda - \inv{2} \pen(\lambda), 
    \end{equation}
    with the penalizing function 
    $
       \pen(\lambda)=  \sum_{m \in \cm_n} \lambda_m \kl{f_\lambda}{ \hat{f}_{m,n}}.
    $
    The convex aggregate estimator $f_{\hat{\lambda}^*_n}$ is obtained by setting:
    \begin{equation} \label{eq:aggr_lambda}
      \hat{\lambda}^*_n  = \mathop{\argmax}_{\lambda \in \Lambda^+ } H_{n}(\lambda).
    \end{equation}

     The main result of this section is given by the next theorem which asserts that if we choose $N_n=o(\log(n))$ such that $\lim_{n \rightarrow \infty} N_n = +\infty$, the series of convex aggregate estimators $f_{\hat{\lambda}^*_n}$ converge to $f^0$ with the optimal convergence rate, i.e. as if the smoothness was known. 
    
\begin{theo} \label{theo:adapt}  Let $f^0\in \cp(\triangle)$ be a probability density with a product form given  by \reff{eq:Pi-form}. 
Assume  the functions $\ell^0_i$ belongs to the Sobolev space $W^2_{r_i}(q_i)$, $r_i \in \N$ with $r_i > d$ for all  $1 \leq i \leq d$. Let $(X^n, n\in \N^*)$ be i.i.d. random variables with density
 $f^0$. Let $N_n=o(\log(n))$ such that $\lim_{n \rightarrow \infty} N_n = +\infty$. The convex aggregate estimator  $f_{\hat{\lambda}^*_n}$ defined by \reff{eq:aggr_f} with $\hat{\lambda}^*_n$ given by \reff{eq:aggr_lambda} converges to $f^0$  in probability  with the convergence rate:
\begin{equation} \label{eq:kl_adapt}
        \kl{f^0}{f_{\hat{\lambda}^*_n} } = O_\P \left(  n^{-\frac{2 \min(r)}{2 \min(r) +1}} \right). 
\end{equation}

    \end{theo}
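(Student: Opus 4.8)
The plan is to derive \reff{eq:kl_adapt} by combining the convex aggregation oracle inequality for log-densities of \cite{butucea2016optimal} with the non-adaptive rate of Theorem \ref{theo:main_stat} evaluated at the optimal configuration $m^*$. Throughout I would work conditionally on the first subsample $\cx^n_1$, so that $\cf_n=\{\hat f_{m,n}\,;\, m\in\cm_n\}$ is a fixed (given $\cx^n_1$) family of reference densities and the penalized maximum likelihood aggregation \reff{eq:def_Hn}--\reff{eq:aggr_lambda} is performed with the independent subsample $\cx^n_2$ of size $n_2\asymp n$. The additive constant $\log(d!)$ by which $\psi_\lambda$ differs from the normalization in \cite{butucea2016optimal} cancels in all the relevant quantities, so their result applies verbatim once the boundedness hypothesis is checked.

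\textbf{Step 1: uniform boundedness in probability of $\cf_n$.} First I would exhibit a constant $B$ such that, on an event of probability tending to $1$, $\sup_{m\in\cm_n}\norm{\hat\ell_{m,n}}_\infty\le B$. Writing $\hat\ell_{m,n}=\bar\theta_m\cdot\phi_m+(\hat\theta_{m,n}-\bar\theta_m)\cdot\phi_m$, where $\bar\theta_m$ is the parameter of the best approximation of $f^0$ within the model \reff{eq:f_theta} (the information projection, i.e. $A_m(\bar\theta_m)=\int_\triangle\phi_m f^0$), the first term is uniformly bounded because $\ell^0_i\in W^2_{r_i}(q_i)$ with $r_i>d$ ensures uniform convergence of the orthonormal expansions (using the growth of $\norm{\phi_{i,k}}_\infty$ from Section \ref{sec:orth_poly}); the second term is controlled by $\norm{(\hat\theta_{m,n}-\bar\theta_m)\cdot\phi_m}_\infty\le\norm{\hat\theta_{m,n}-\bar\theta_m}\,\norm{\phi_m}_\infty$ together with the deviation bounds on $\hat\theta_{m,n}$ from Section \ref{sec:prelim} and the polynomial bound on $\norm{\phi_m}_\infty$, which together are $o_\P(1)$ uniformly over $m\in\cm_n$ by a union bound over the at most $N_n=o(\log n)$ configurations in $\cm_n$. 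This is where the discrepancy with \cite{butucea2016optimal} (whose result assumes a deterministic uniform bound) is resolved.

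\textbf{Step 2: oracle inequality and control of the two terms.} On the event of Step 1, I would invoke the main theorem of \cite{butucea2016optimal} for the penalized MLE convex aggregate over the uniformly bounded family $\cf_n$: with $\cx^n_1$-conditional probability tending to $1$,
\[
  \kl{f^0}{f_{\hat\lambda^*_n}}\ \le\ c_1\,\inf_{m\in\cm_n}\kl{f^0}{\hat f_{m,n}}\ +\ c_2\,\frac{\log\big(\Card(\cm_n)+1\big)}{n_2},
\]
for constants $c_1\ge1$, $c_2$ depending only on $B$, where I use that $\inf_{\lambda\in\Lambda^+}\kl{f^0}{f_\lambda}$ is bounded above by the value at a vertex $\lambda=\delta_m$, for which $\pen(\delta_m)=0$. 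Since $N_n\to\infty$, for $n\ge\bar n$ as in \reff{eq:def_bar_n} the configuration $m^*(n)=(v^*,\dots,v^*)$, $v^*=\lfloor n^{1/(2\min(r)+1)}\rfloor$, lies in $\cm_n$, hence $\inf_{m\in\cm_n}\kl{f^0}{\hat f_{m,n}}\le\kl{f^0}{\hat f_{m^*,n}}=O_\P\big(n^{-2\min(r)/(2\min(r)+1)}\big)$ by Remark \ref{rem:optim} and Theorem \ref{theo:main_stat} applied to the subsample of size $n_1=\lfloor C_e n\rfloor\asymp n$ (conditions \reff{eq:cond_m_theo_1}--\reff{eq:cond_m_theo_2} hold at $m^*$ because $\min(r)>d$). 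For the remainder, $\Card(\cm_n)\le N_n=o(\log n)$ and $n_2\asymp n$ give $\log(\Card(\cm_n)+1)/n_2=o(\log\log n/n)$, which is $o\big(n^{-2\min(r)/(2\min(r)+1)}\big)$ because $n\cdot n^{-2\min(r)/(2\min(r)+1)}\to\infty$. Combining these estimates and then removing the conditioning on $\cx^n_1$ (the event of Step 1 has probability $\to1$, and the right-hand side above is $O_\P$ with respect to $\cx^n_1$) yields \reff{eq:kl_adapt}.

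\textbf{Main obstacle.} The crux is Step 1: \cite{butucea2016optimal} assumes a deterministic uniform bound on the aggregated log-densities, whereas here $\norm{\hat\ell_{m,n}}_\infty$ is random and a priori grows with $\val{m}$; producing a high-probability bound that is uniform over all $m\in\cm_n$, and verifying that the constants $c_1,c_2$ in the oracle inequality can then be chosen independent of $n$, is the delicate part, and it is precisely what dictates the slow growth $N_n=o(\log n)$. A secondary technical point is the bookkeeping of the two-sample split: the reference log-densities are built from $\cx^n_1$ while the weights $\hat\lambda^*_n$ are selected from $\cx^n_2$, so the whole argument must be carried out conditionally on $\cx^n_1$ before integrating out.
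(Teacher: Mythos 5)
Your proposal follows essentially the same route as the paper: the paper also conditions on the split, proves a high-probability uniform bound $\norm{\hat\ell_{m,n}}_\infty \leq 4+4\gamma+2\norm{\log(f^0)}_\infty$ over $m\in\cm_n$ via a union bound over the $N_n$ models (Lemma \ref{lem:Mn_exist}, using Bernstein's inequality and Lemma \ref{lem:BS5} rather than a direct bound on $\norm{\hat\theta_{m,n}-\bar\theta_m}\,\norm{\phi_m}_\infty$, which is the same mechanism), then applies the sharp oracle inequality of \cite{butucea2016optimal} on that event and controls $\min_{m\in\cm_n}\kl{f^0}{\hat f_{m,n}}$ through $m^*\in\cm_n$ and Theorem \ref{theo:main_stat}, with the residual $\log(N_n)/n_2$ negligible since $N_n=o(\log n)$. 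Your sketch is correct in all essentials and matches the paper's decomposition; the only cosmetic difference is that the paper's oracle inequality has leading constant $1$ and its boundedness step passes through the information projection $f_{\theta^*_m}$ (via $\varepsilon_m\le 1$ and \reff{eq:lem_5_2}) in addition to the orthogonal-projection bound of Corollary \ref{cor:gamma_m}.
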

   
 The proof of this theorem is provided in Section \ref{sec:proof_theo_adapt}. Similarly to Corollary \ref{cor:unif_conv}, we have uniform convergence over sets of densities with increasing regularity. Recall the definition \reff{eq:def_KrK} of the set $\ck_r(\kappa)$. Let $\crr_n = \{ j, d+1 \leq j \leq R_n\}$, where $R_n$ satisfies the three inequalities:
 \begin{align}
   \label{eq:R_n_1} R_n & \leq N_n + d ,   \\
   \label{eq:R_n_2} R_n & \leq \left\lfloor n^{\inv{2(d+N_n)+1}} \right\rfloor,   \\
   \label{eq:R_n_3} R_n & \leq \frac{\log(n)}{2\log(\log(N_n))} - \inv{2} \cdot
 \end{align}
 
 \begin{cor} \label{cor:adapt_est}
  Under the assumptions of Theorem \ref{theo:adapt}, we get the following result:
 \[
    \lim_{K\rightarrow \infty} \limsup_{n \rightarrow \infty} \sup_{r \in (\crr_n)^d}  \sup_{f^0 \in \ck_r(\kappa)} \P\left(\kl{f^0}{f_{\hat{\lambda}^*_n}} \geq \left( n^{-\frac{2 \min(r)}{2 \min(r)+1}}\right)K\right) =0.
 \]
 \end{cor}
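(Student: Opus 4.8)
The plan is to upgrade the pointwise-in-$f^0$ convergence of Theorem \ref{theo:adapt} to a uniform statement over the nested family of Sobolev balls $\ck_r(\kappa)$, in the same way that Corollary \ref{cor:unif_conv} upgrades Theorem \ref{theo:main_stat}. The key observation is that the proof of Theorem \ref{theo:adapt} produces, for each $f^0$, a bound of the form $\kl{f^0}{f_{\hat{\lambda}^*_n}} \le C\, n^{-2\min(r)/(2\min(r)+1)}$ with probability tending to one, where the ``constant'' $C$ and the threshold probabilities depend on $f^0$ only through a finite list of quantities: the sup-norm bounds $\norm{\ell_i^0}_\infty$ (hence $\norm{\log(f^0)}_\infty$ via \reff{eq:f_l_a_1}), the Sobolev seminorms $\norm{(\ell_i^0)^{(r_i)}}_{L^2(q_i)}$, and the constants $\mathfrak{A}_1$, $\mathfrak{A}_2$ controlling the single-model estimators (uniformly bounded on $\ck_r(\kappa)$ by the remark preceding Corollary \ref{cor:unif_conv}). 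All of these are $\le \kappa$ (or functions of $\kappa$ and $d$ alone) on $\ck_r(\kappa)$, so the bound is in fact uniform once we check that the \emph{oracle index} $m^*$ lies in $\cm_n$ for all the relevant regularities simultaneously — this is precisely what the constraints \reff{eq:R_n_1}--\reff{eq:R_n_3} on $R_n$ guarantee.

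First I would isolate, inside the proof of Theorem \ref{theo:adapt}, the deterministic part of the argument: the aggregation oracle inequality from \cite{butucea2016optimal} gives, on an event $\ce_n$ of probability $\ge 1-\delta_n$, a bound
\[
  \kl{f^0}{f_{\hat{\lambda}^*_n}} \le \kl{f^0}{\hat{f}_{m^*,n}} + \text{(remainder term of order } \log(\Card \cf_n)/n_2 \text{)},
\]
and the remainder is $O(N_n/n) = o(n^{-1})$ uniformly since $N_n = o(\log n)$. The term $\kl{f^0}{\hat{f}_{m^*,n}}$ is then controlled uniformly over $\ck_r(\kappa)$ by Corollary \ref{cor:unif_conv} applied with $m = m^*$, giving $O_\P(n^{-2\min(r)/(2\min(r)+1)})$ with constants depending only on $\kappa$ and $d$. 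The only subtlety is that $m^* = (v^*,\dots,v^*)$ with $v^* = \lfloor n^{1/(2\min(r)+1)} \rfloor$ must actually belong to $\cm_n$: this requires $\min(r) - d \in \{1,\dots,N_n\}$, i.e. $\min(r) \le N_n + d$; since $r_i \in \crr_n$ forces $r_i \le R_n \le N_n + d$ by \reff{eq:R_n_1}, this holds for every $r \in (\crr_n)^d$.

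Next I would track how the ``constant $K$'' must be chosen as a function of $\kappa$, $d$ and $R_n$ to absorb the probabilities $\delta_n$: the failure probabilities in \cite{butucea2016optimal} and in the single-model analysis are polynomial in $\Card(\cf_n) \le N_n$ and in the sup-norm of the log-densities, and the conditions \reff{eq:R_n_2}--\reff{eq:R_n_3} are exactly what is needed so that $\sup_{r\in(\crr_n)^d}\delta_n \to 0$ while the oracle index stays available — in particular \reff{eq:R_n_3} controls $R_n$ against $\log(n)/\log\log(N_n)$, which is what keeps $(\log N_n)^{R_n}$-type terms negligible. Taking the supremum over the finitely many $r \in (\crr_n)^d$ of the resulting bounds, and then letting $n\to\infty$ followed by $K\to\infty$, yields the claim exactly as in Corollary \ref{cor:unif_conv}.

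The main obstacle, as I see it, is bookkeeping rather than any new idea: one must verify that \emph{every} constant appearing in the proof of Theorem \ref{theo:adapt} — not only those from the single-model Propositions \ref{prop:general_1} and \ref{prop:general_2} but also those hidden in the aggregation oracle inequality and in the ``uniform boundedness in probability'' substitute for the boundedness hypothesis of \cite{butucea2016optimal} — depends on $f^0$ only through $\norm{\ell_i^0}_\infty$ and $\norm{(\ell_i^0)^{(r_i)}}_{L^2(q_i)}$, and is therefore bounded on $\ck_r(\kappa)$ uniformly in $r \in (\crr_n)^d$ once the index-availability constraints \reff{eq:R_n_1}--\reff{eq:R_n_3} are imposed. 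Once that dependence is made explicit the uniform statement follows with no further work.
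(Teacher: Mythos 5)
Your proposal is correct and follows essentially the same route as the paper, whose proof of Corollary \ref{cor:adapt_est} consists precisely in rerunning the proof of Theorem \ref{theo:adapt} while observing that every constant depends on $f^0$ only through $\norm{\ell_i^0}_\infty$ and $\norm{(\ell_i^0)^{(r_i)}}_{L^2(q_i)}$ (hence is uniform on $\ck_r(\kappa)$), with \reff{eq:R_n_1} guaranteeing that the oracle index $m^*$ lies in $\cm_n$ for every $r\in(\crr_n)^d$, \reff{eq:R_n_2} guaranteeing the uniform bound $\gamma_m\leq\gamma$ over all $m\in\cm_n$ and $r\in(\crr_n)^d$ (which is what drives the uniform boundedness in probability of the $\hat{\ell}_{m,n}$ on $\ca_n$), and \reff{eq:R_n_3} making the aggregation remainder $\beta(\log(N_n)+x)/n_2$ uniformly $O\bigl(n^{-2\min(r)/(2\min(r)+1)}\bigr)$. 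One minor slip: that remainder is not $o(n^{-1})$ (it is of order $\log(N_n)/n$ with $N_n\to\infty$), but this is harmless since, as you note afterwards, \reff{eq:R_n_3} is exactly what renders it negligible relative to the target rate.
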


\begin{rem}
   For example when $N_n=\log(n)/(2\log(\log(n)))$, then \reff{eq:R_n_1}, \reff{eq:R_n_2} and \reff{eq:R_n_3} are satisfied with $R_n = N_n$ for $n$ large enough. 
\end{rem}

 \section{Simulation study : random truncation model} \label{sec:simul}

In this section we present the results of Monte Carlo simulation studies on the performance of the additive exponential series estimator. We take the example of the random truncation model introduced in Section \ref{sec:intro} with $d=2$, which is used in many applications. This model naturally satisfies our model assumptions. 

Let $Z=(Z_1,Z_2)$ be a pair of independent random variable with density functions $p_1$, $p_2$ respectively such that $\triangle \subset \supp(p)$, where $p(x_1,x_2) = p_1(x_1)p_2(x_2)$ is the joint density function of $Z$. Suppose that we only observe pairs $(Z_1,Z_2)$ if $0 \leq Z_1 \leq Z_2 \leq 1$. Then the joint density function $f$ of the observable pairs is given by, for $x=(x_1,x_2) \in \R^2$ :
\[
   f(x) = \frac{p_1(x_1)p_2(x_2) }{\int_\triangle p(y) \, dy} \ind_{\triangle}(x).
\]
This corresponds to the form \reff{eq:trunc_joint_dens} with $g_1, g_2$ given by:
\[
  g_1=\frac{p_1 \ind_I}{\int_I p_1}  \quad \text{ and }  \quad g_2=\frac{p_2 \ind_I}{\int_I p_2} \cdot
\]
We will choose the densities $p_1, p_2$ from the following distributions:
\begin{itemize}
 \item  Normal($\mu, \sigma^2$) with $\mu \in \R, \sigma > 0$:
         \[
           f_{\mu,\sigma^2} (t) = \inv{\sqrt{2\pi\sigma^2}} \expp{-\frac{(t-\mu)^2}{2\sigma^2}},
         \]
 \item NormalMix($\mu_1, \sigma_1^2, \mu_2, \sigma_2^2, w$) with $w \in (0,1)$:
         \[
            f(t) = w f_{\mu_1,\sigma_1^2} (t) + (1-w)f_{\mu_2,\sigma_2^2} (t)  ,
         \]
 \item Beta($\alpha, \beta, a,b$) with $0<\alpha<\beta$, $a < 0$, $b>1$ :
        \[
           f(t) = \frac{(t-a)^{\alpha-1}(b-t)^{\beta-\alpha-1}}{(b-a)^{\beta-1} B(\alpha,\beta-\alpha)} \ind_{(a,b)}(t),
        \]
\item Gumbel($\alpha,\beta$) with $\alpha>0$, $\beta \in \R$:
        \[
           f(t) = \alpha \expp{-\alpha(t-\beta)-\expp{-\alpha(t-\beta)}}.
        \]

\end{itemize}

The exact choices for densities $p_1, p_2$ are given in Table \ref{tab:marg_param}. Figure \ref{fig:marginal_pdf} shows the resulting density functions $g_1$ and $g_2$ for each case.
% while Figure \ref{fig:joint_pdf} shows the corresponding joint density functions.

\begin{table}[htbp]
\begin{center}
\begin{tabular}{|c|c|c|}\hline
Model & $p_1$ & $p_2$ \\\hline
 Beta & $\text{Beta}(1,6,-1,2)$ & $\text{Beta}(3,5,-1,2)$ \\
 Gumbel & $\text{Gumbel}(4,0.3)$ &$\text{Gumbel}(2.4,0.7)$ \\
 Normal mix & \text{NormalMix}$(0.2,0.1,0.6,0.1,0.5)$ & $\text{Normal}(0.8,0.2)$ \\
 \hline
\end{tabular}
\end{center}
\caption{Distributions for the left-truncated model used in the simulation study.}
\label{tab:marg_param}
\end{table}

 \begin{figure}[htbp]
 \centering
 \subfigure[Beta]{\includegraphics[width=5cm]{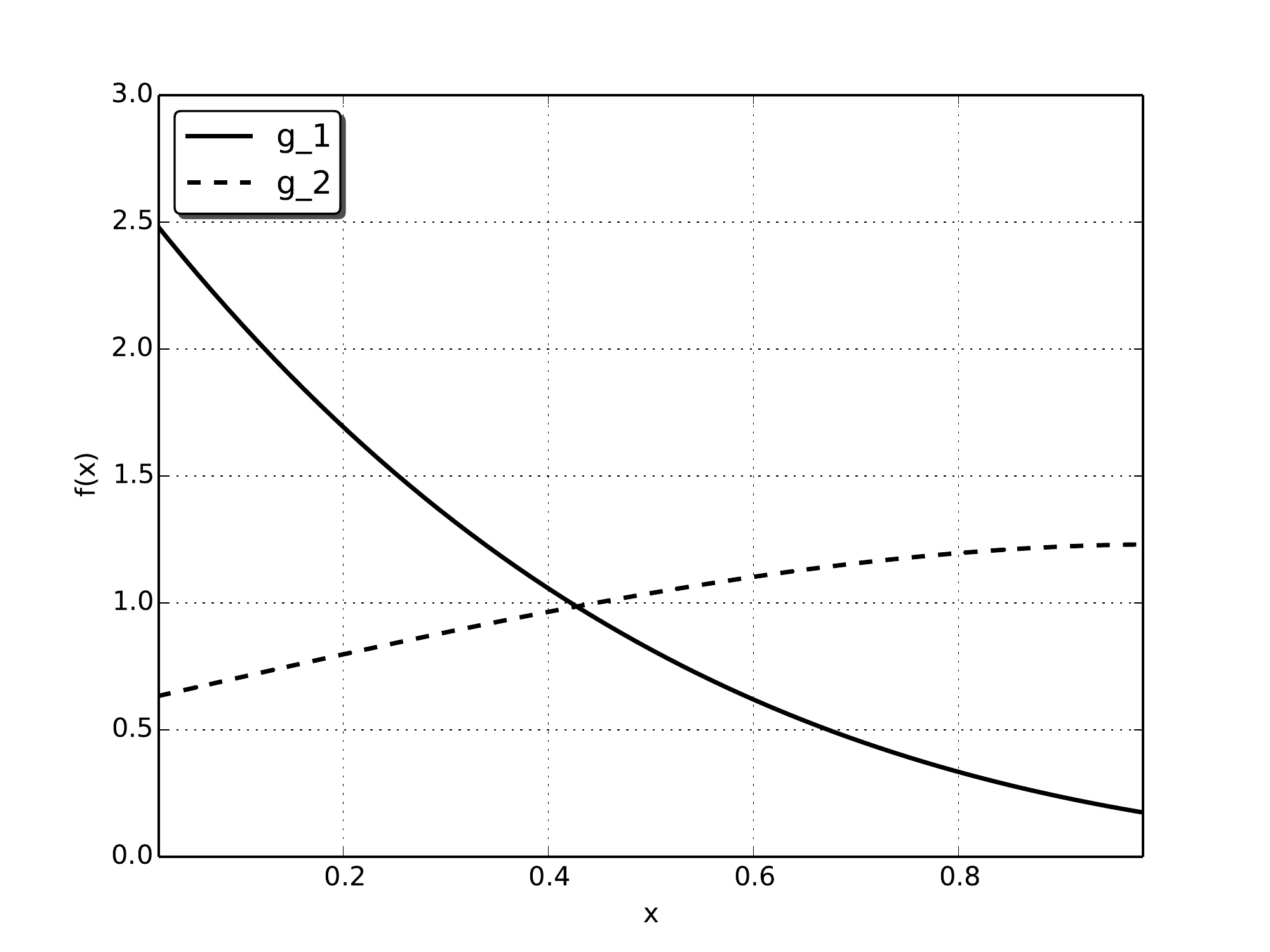}}
 \subfigure[Gumbel]{\includegraphics[width=5cm]{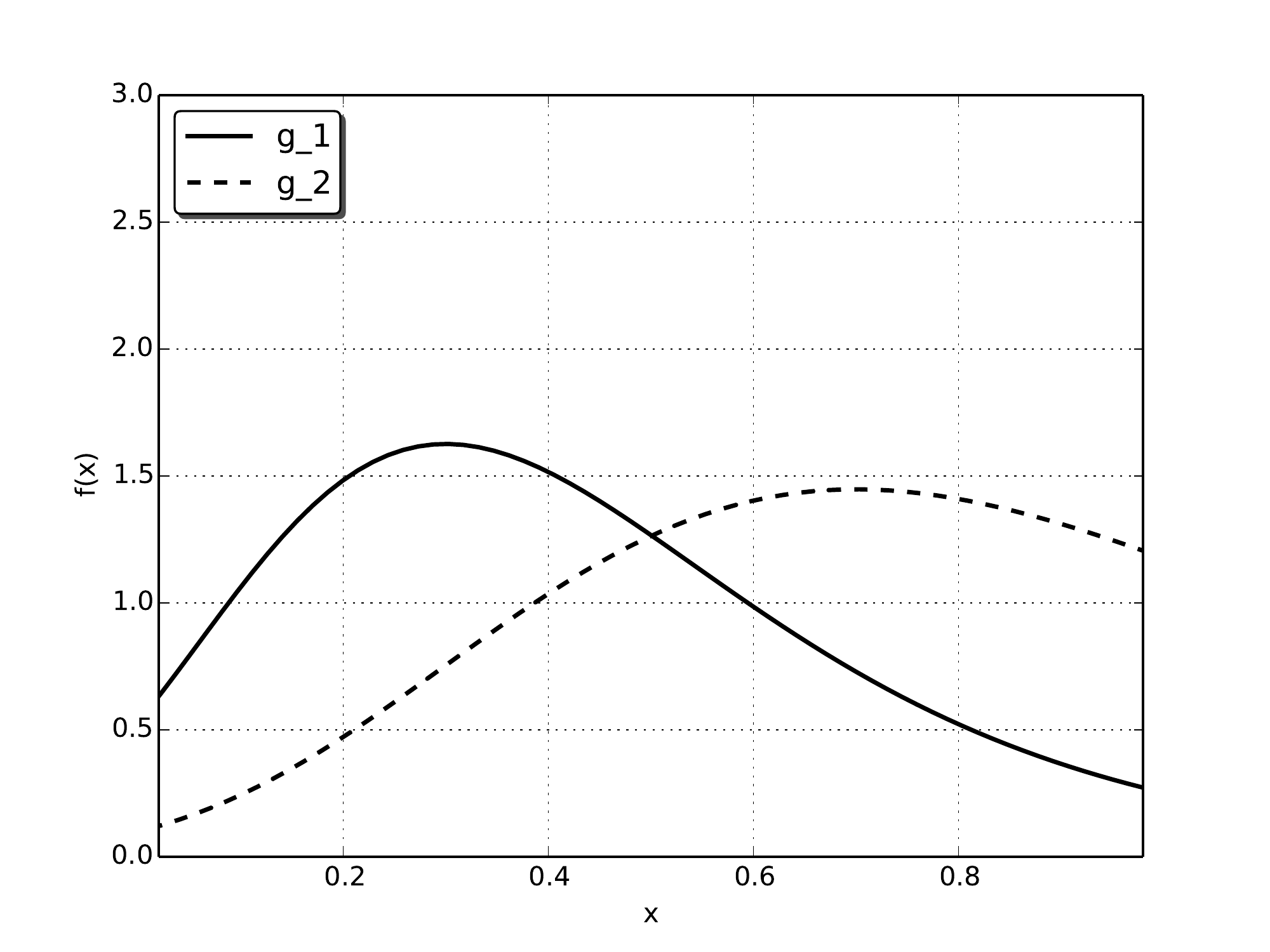}}
 \subfigure[Normal mix]{\includegraphics[width=5cm]{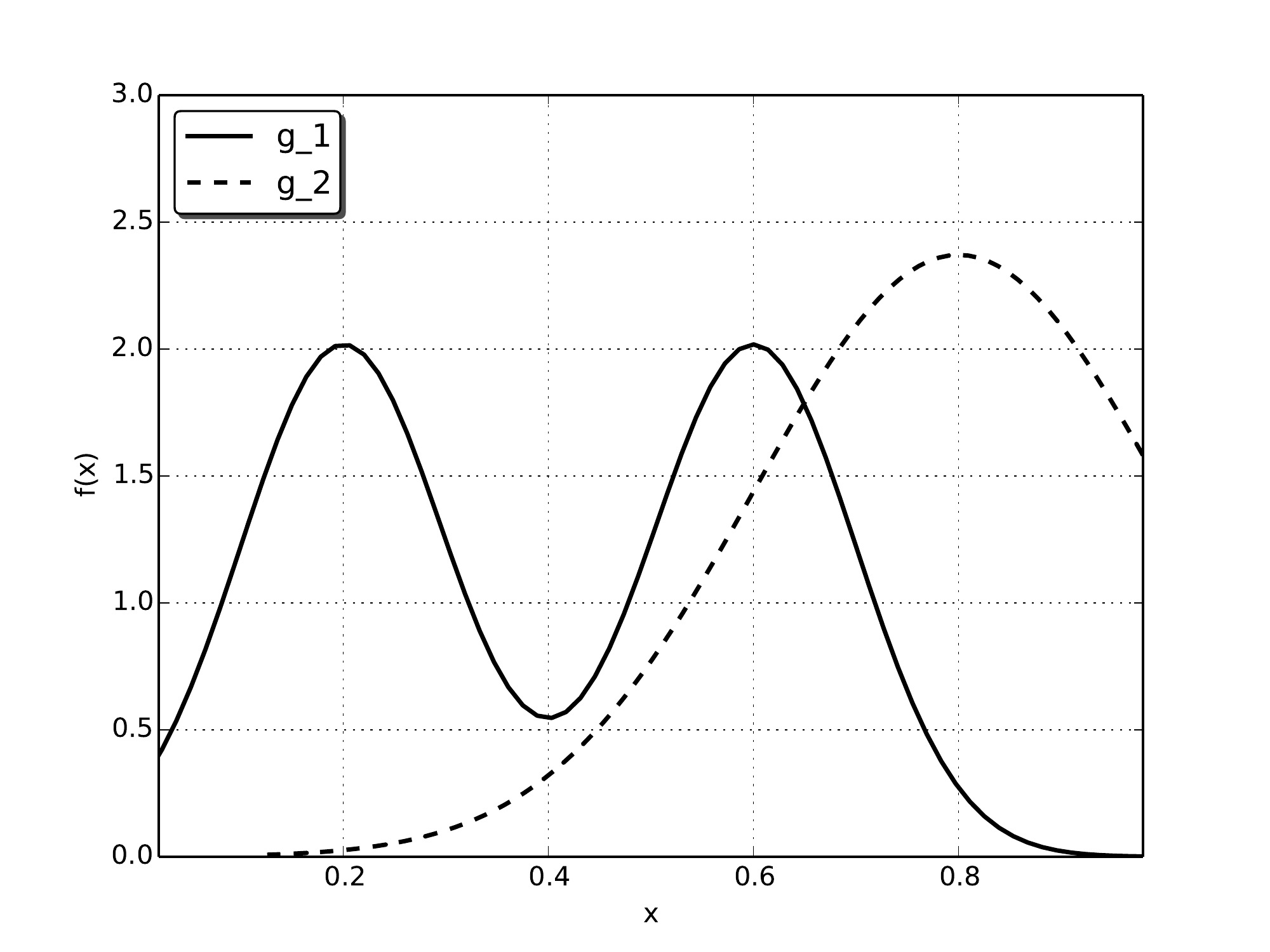}}
\caption{Density functions $g_1, g_2$ of the left-truncated models used in the simulation study.}
\label{fig:marginal_pdf}
\end{figure}

To calculate the parameters $\hat{\theta}_{m,n}$, we recall that $\hat{\theta}_{m,n}$ is the solution of the equation \reff{eq:max_likelihood}, therefore can be also characterized as:
\begin{equation} \label{eq:sim_opt_prob}
   \hat{\theta}_{m,n} = \argmax_{\theta\in \R^{\val{m}}} \theta \cdot \hat{\mu}_{m,n} - \psi(\theta),
\end{equation}
with $\hat{\mu}_{m,n}$ defined by \reff{eq:def_emp_mean}, see Lemma \ref{lem:BS3} . We use a numerical optimisation method to solve \reff{eq:sim_opt_prob} and obtain the parameters $\hat{\theta}_{m,n}$. 
We estimate our model with $m_1=m_2=\bar{m}$, and $\bar{m}=1,2,3,4$. We compute the final estimator based on the convex aggregation method proposed in Section \ref{sec:adaptation}.  
We ran $100$ estimations with increasing sample sizes $n \in \{ 200, 500, 1000\}$, and we calculated the average Kullback-Leibler distance as well as the $L^2$ distance between $f^0$ and its estimator. We used $C_e = 80\%$ of the sample to calculate the initial estimators, and the remaining $20\%$ to perform the aggregation.  The distances were calculated by numerical integration. We compare the results with a truncated kernel density estimator with Gaussian kernel functions and bandwidth selection based on Scott's rule. The results are summarized in Table \ref{tab:est_results_kl} and Table \ref{tab:est_results_L2}.

\begin{table}[htbp]
\caption{Average Kullback-Leibler distances for the additive exponential series estimator (AESE) and the truncated kernel estimator (Kernel) based on $100$ samples of size $n$. Variances provided in parenthesis.}
\begin{center}
\begin{tabular}{|l|c|c|c|c|c|c|}
\hline
\multicolumn{ 1}{|c|}{KL distances} & \multicolumn{ 2}{c|}{n=200} & \multicolumn{ 2}{c|}{n=500} & \multicolumn{ 2}{c|}{n=1000} \\ \cline{ 2- 7}
\multicolumn{ 1}{|l|}{} & \multicolumn{1}{c|}{AESE} & \multicolumn{1}{c|}{Kernel} & \multicolumn{1}{c|}{AESE} & \multicolumn{1}{c|}{Kernel} & \multicolumn{1}{c|}{AESE} & \multicolumn{1}{c|}{Kernel} \\ \hline
\multicolumn{ 1}{|c|}{Beta} & 0.0137 & 0.0524 & 0.0048 & 0.0395 & 0.0028& 0.0339 \\ %\cline{ 2- 7}
\multicolumn{ 1}{|l|}{} &\small (8.94E-05) & \small(1.73E-04) & \small(9.51E-06) & \small(4.61E-05) & \small(3.50E-06) & \small(2.14E-05) \\ \hline
\multicolumn{ 1}{|c|}{Gumbel} & 0.0204 & 0.0249 & 0.0089 & 0.0180 & 0.0050 & 0.0154 \\ %\cline{ 2- 7}
\multicolumn{ 1}{|l|}{} & \small(1.48E-04) & \small(8.03E-05) & \small(2.88E-05) & \small(2.07E-05) &\small (6.70E-06) &\small (1.03E-05) \\ \hline
\multicolumn{ 1}{|c|}{Normal mix} & 0.0545 & 0.0774 & 0.0337 & 0.0559 & 0.0259 & 0.0433 \\ %\cline{ 2- 7}
\multicolumn{ 1}{|l|}{} & \small(4.51E-04) & \small(7.29E-05) & \small(1.88E-04) & \small(2.95E-05) & \small(2.50E-05) & \small(1.52E-05) \\ \hline

\end{tabular}
\end{center}
\label{tab:est_results_kl}
\end{table}

\begin{table}[htbp]
\caption{Average $L^2$ distances for the additive exponential series estimator (AESE) and the truncated kernel estimator (Kernel) based on $100$ samples of size $n$. Variances provided in parenthesis.}
\begin{center}
\begin{tabular}{|l|c|c|c|c|c|c|}
\hline
\multicolumn{ 1}{|c|}{$\mathbb{L}^2$  distances} & \multicolumn{ 2}{c|}{n=200} & \multicolumn{ 2}{c|}{n=500} & \multicolumn{ 2}{c|}{n=1000} \\ \cline{ 2- 7}
\multicolumn{ 1}{|l|}{} & \multicolumn{1}{c|}{AESE} & \multicolumn{1}{c|}{Kernel} & \multicolumn{1}{c|}{AESE} & \multicolumn{1}{c|}{Kernel} & \multicolumn{1}{c|}{AESE} & \multicolumn{1}{c|}{Kernel} \\ \hline
\multicolumn{ 1}{|c|}{Beta} & 0.0536 & 0.2107 & 0.0200 & 0.1660 & 0.0120 & 0.1429\\ %\cline{ 2- 7}
\multicolumn{ 1}{|l|}{} & \small(1.42E-03) & \small(2.60E-03) & \small(2.27E-04) & \small(8.04E-04) & \small(7.45E-05) & \small(3.52E-04) \\ \hline
\multicolumn{ 1}{|c|}{Gumbel} & 0.0683 & 0.0856 & 0.0297 & 0.0621 & 0.0166 & 0.0522 \\ %\cline{ 2- 7}
\multicolumn{ 1}{|l|}{} & \small(1.95E-03) & \small(9.94E-04) & \small(3.61E-04) & \small(2.49E-04) & \small(8.74E-05) & \small(1.19E-04) \\ \hline
\multicolumn{ 1}{|c|}{Normal mix} & 0.2314 & 0.3534 & 0.1489 & 0.2545 & 0.1112 & 0.1952 \\ %\cline{ 2- 7}
\multicolumn{ 1}{|l|}{} & \small(1.17E-02) & \small(1.43E-03) & \small(5.53E-03) & \small(6.95E-04) & \small(9.25E-04) & \small(3.83E-04) \\ \hline

\end{tabular}
\end{center}
\label{tab:est_results_L2}
\end{table}

%%%% Beta distribution %%%%%

 \begin{figure}[H]
 \centering
 \subfigure{\includegraphics[width=3.5cm]{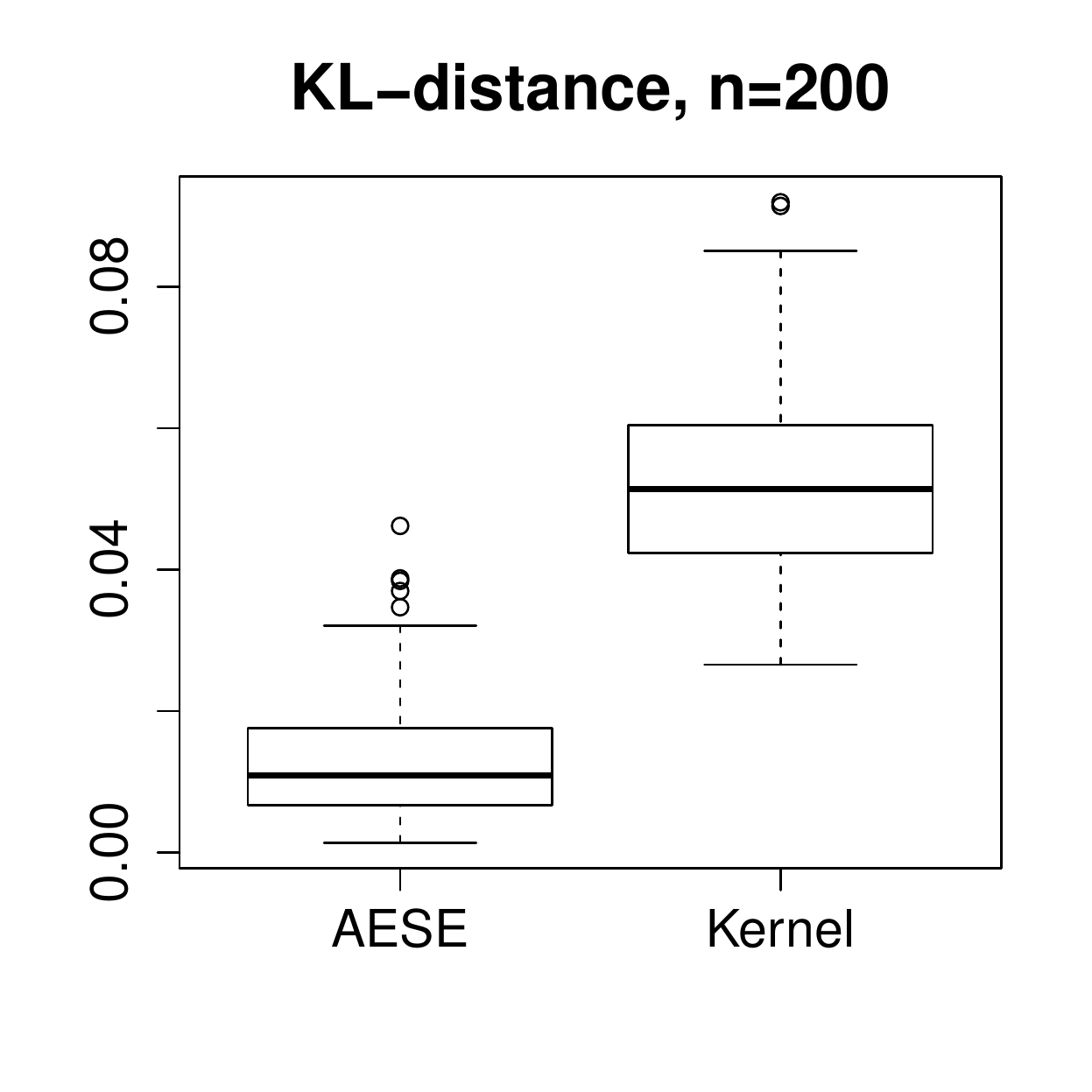}}
 \subfigure{\includegraphics[width=3.5cm]{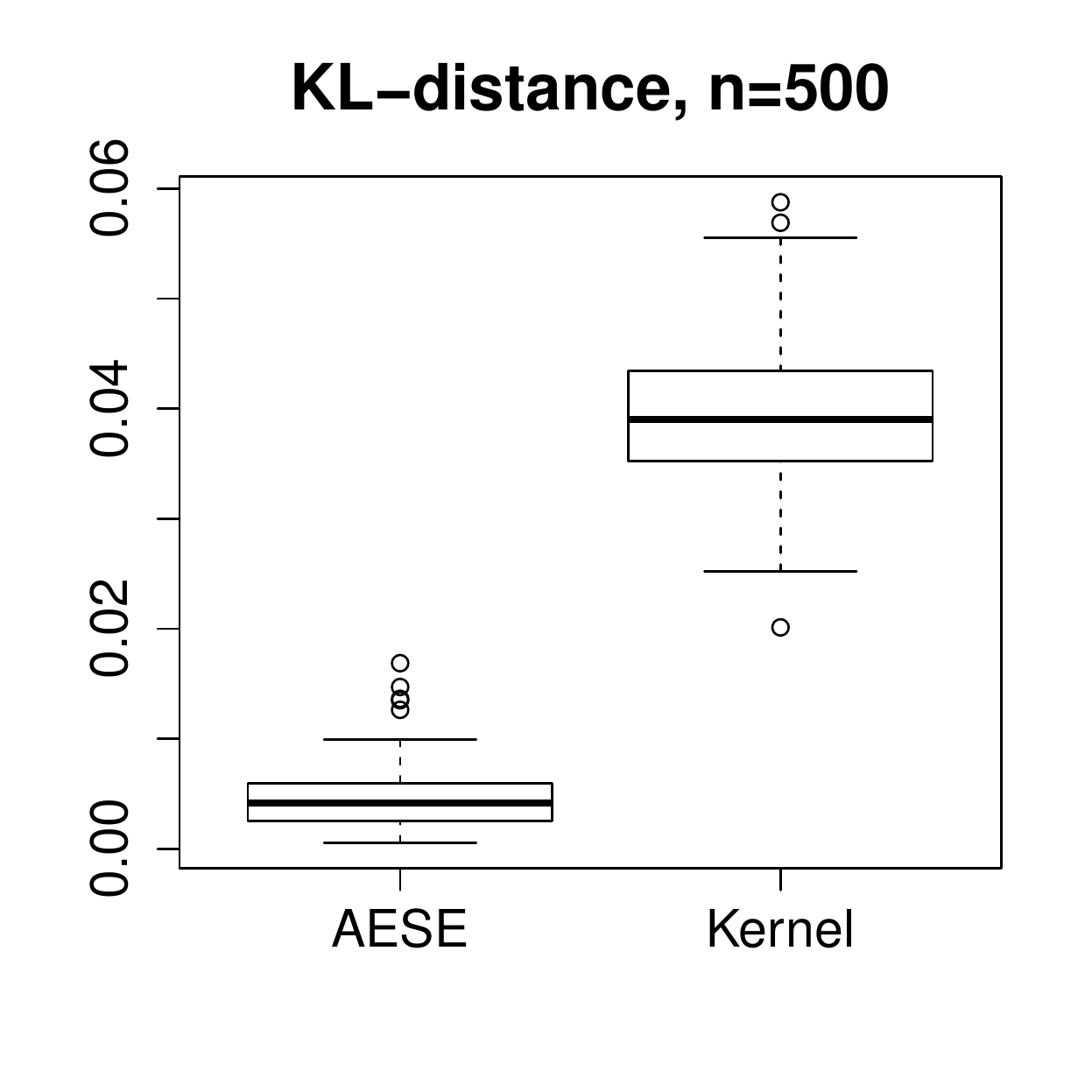}}
 \subfigure{\includegraphics[width=3.5cm]{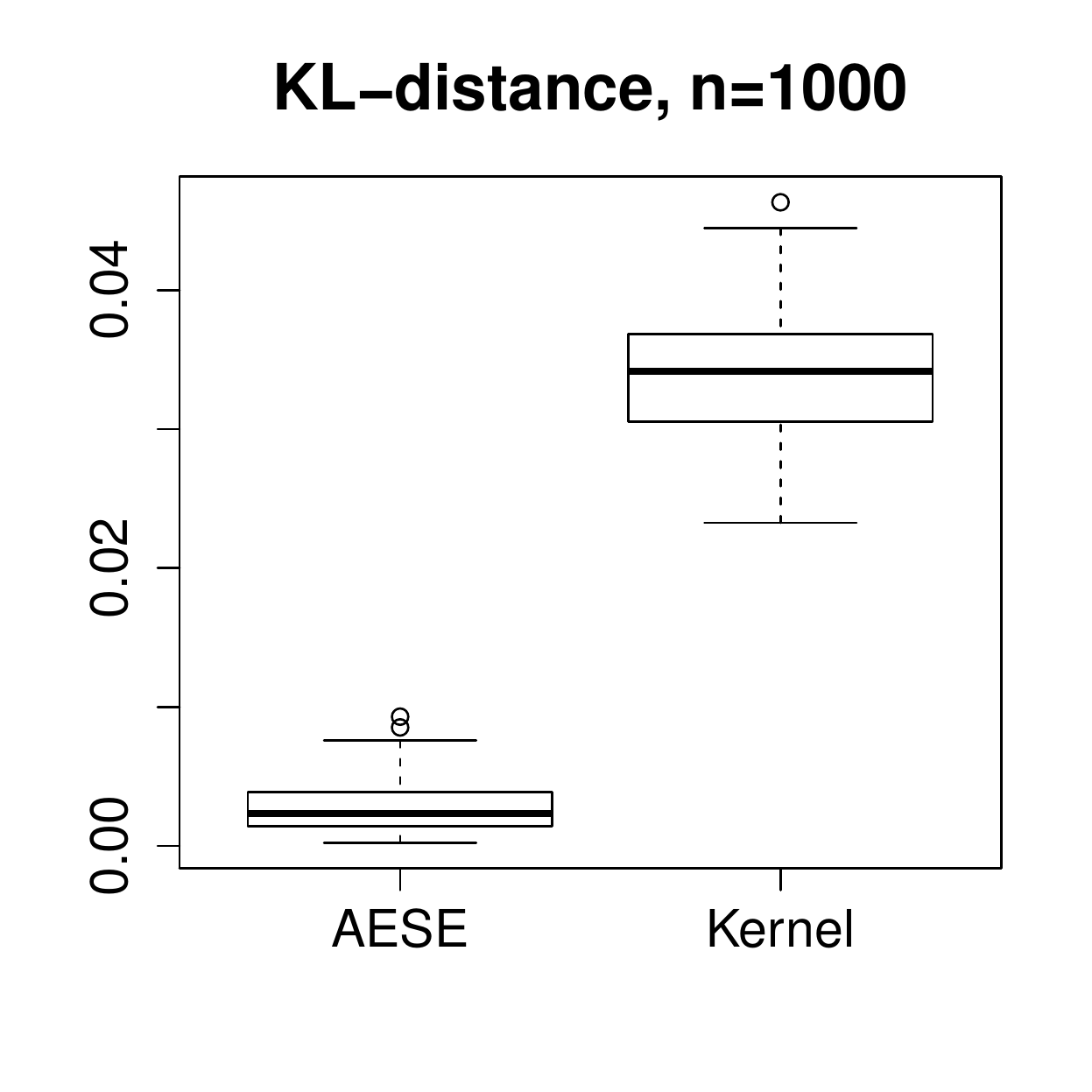}}
 
  \subfigure{\includegraphics[width=3.5cm]{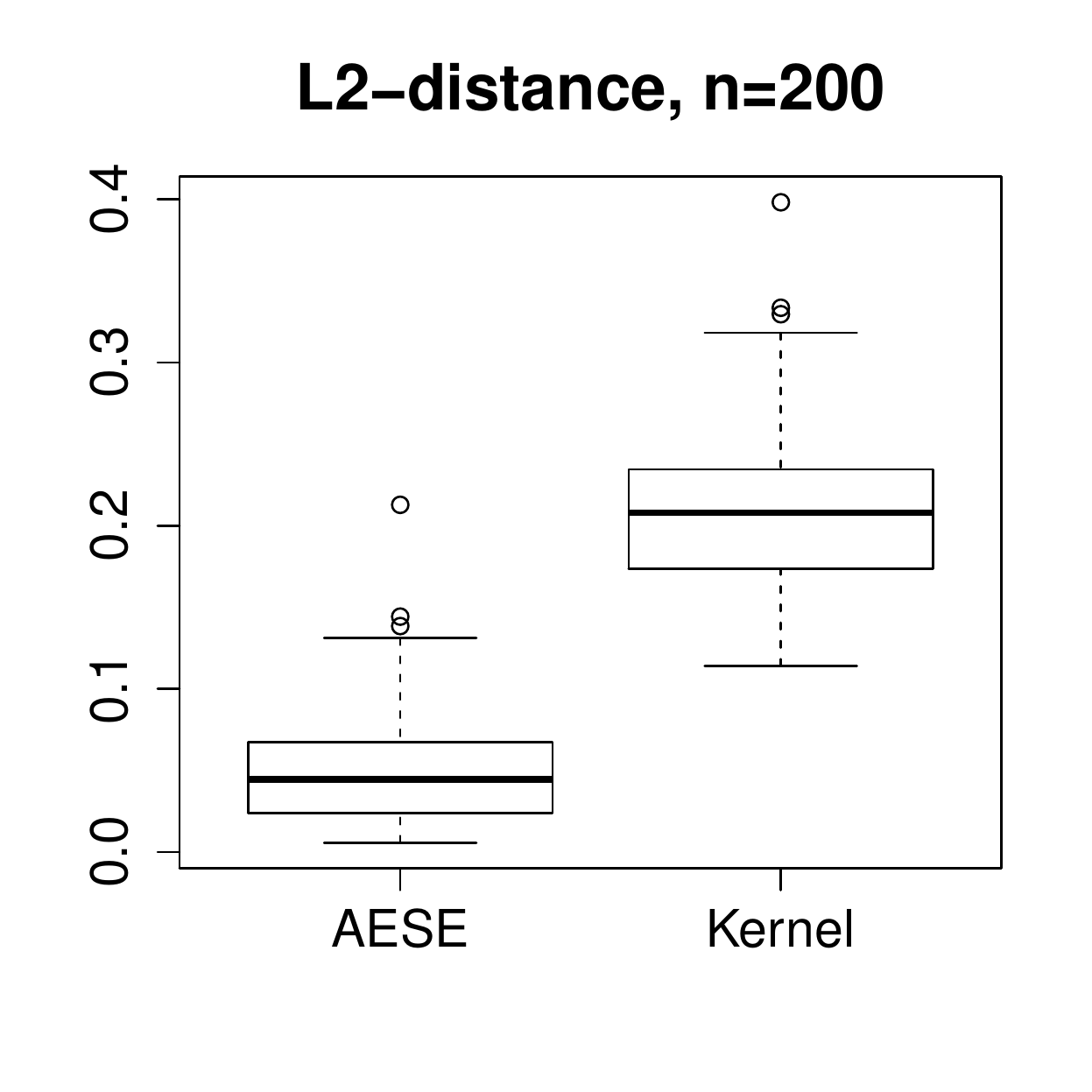}}
 \subfigure{\includegraphics[width=3.5cm]{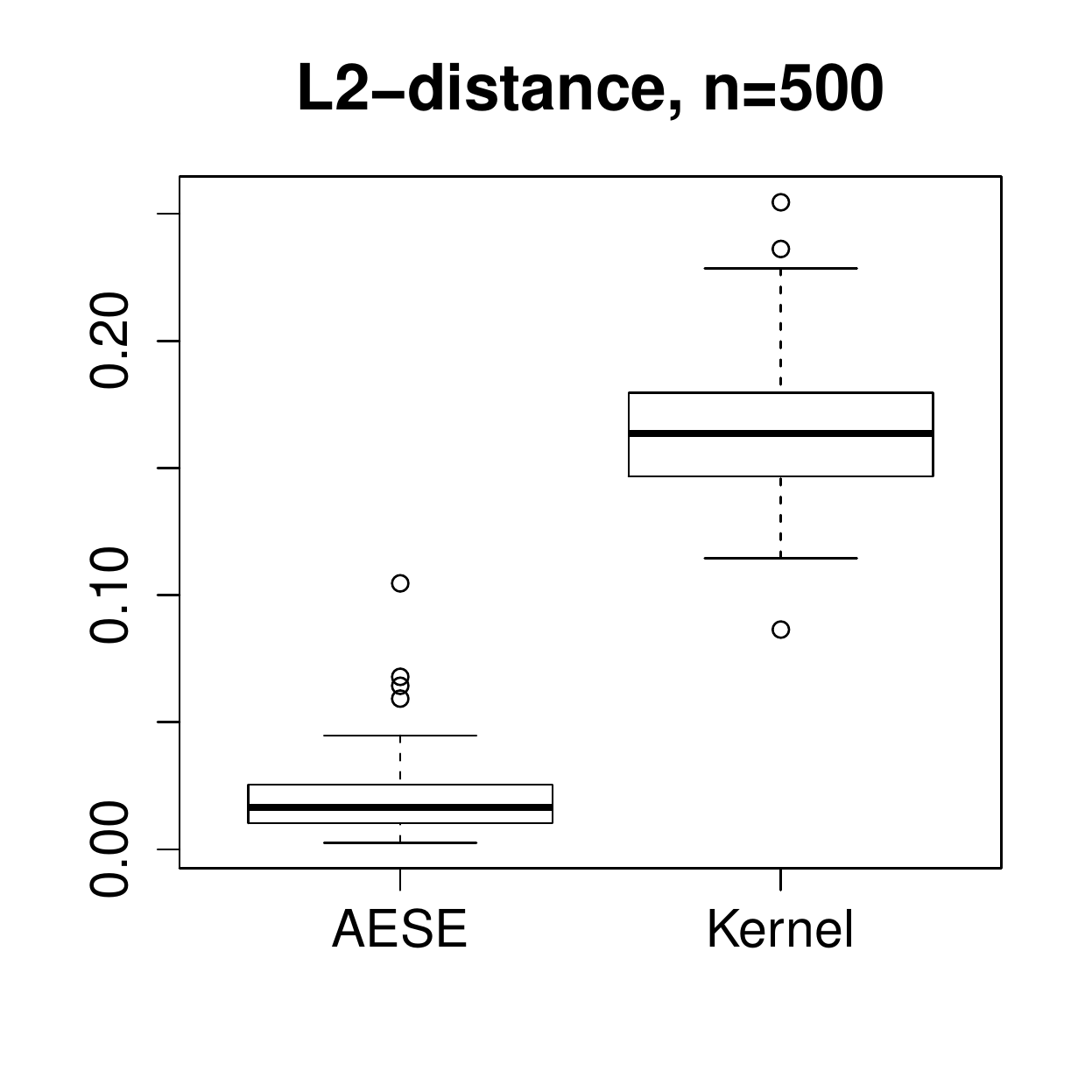}}
 \subfigure{\includegraphics[width=3.5cm]{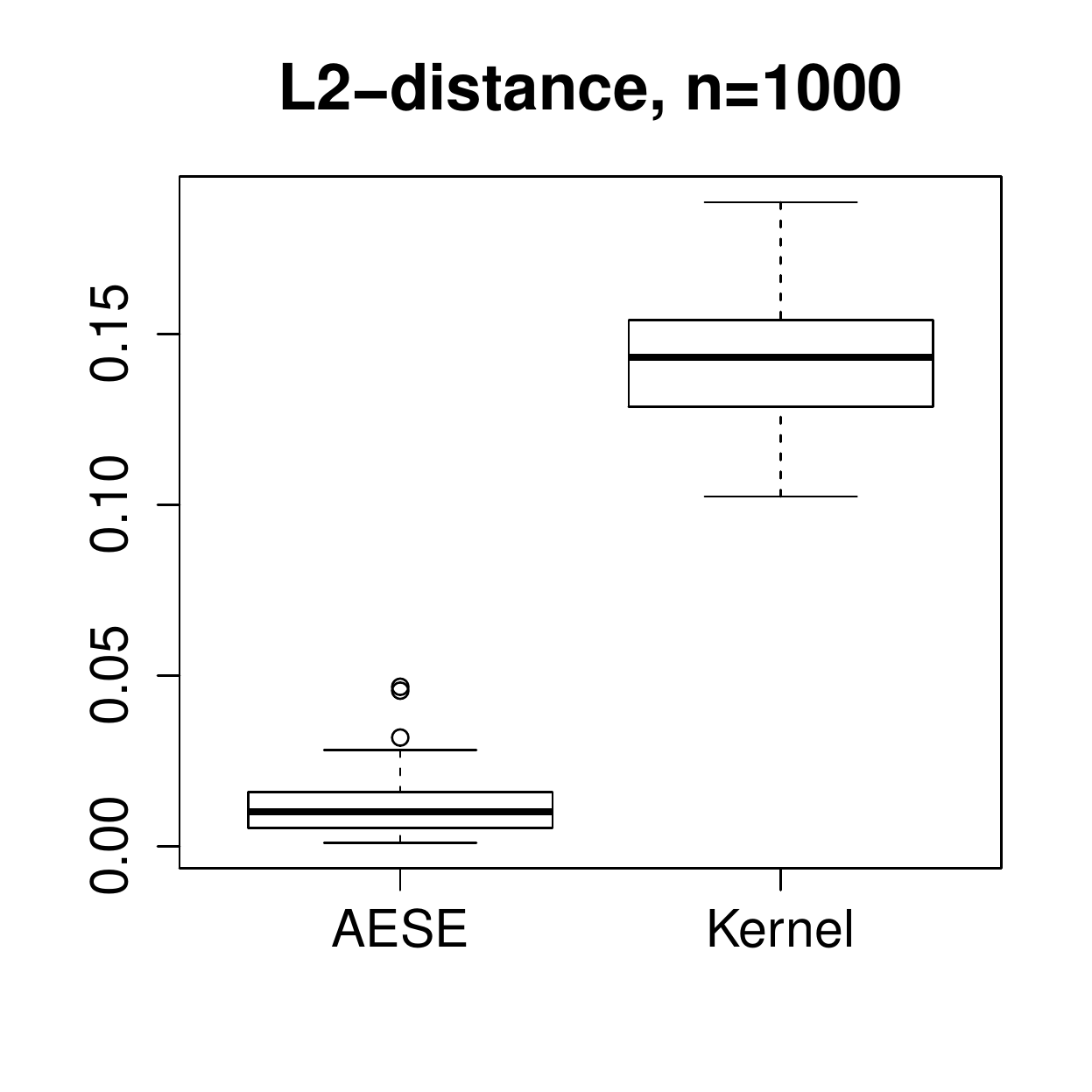}}
 \caption{Boxplot of the Kullback-Leibler and $L^2$ distances for the additive exponential series estimator (AESE) and the truncated kernel estimators with Beta marginals.}
 \label{fig:boxplot_beta_KL}
 \end{figure}

  \begin{figure}[H]
 \centering
 \subfigure[True density]{\includegraphics[width=4cm]{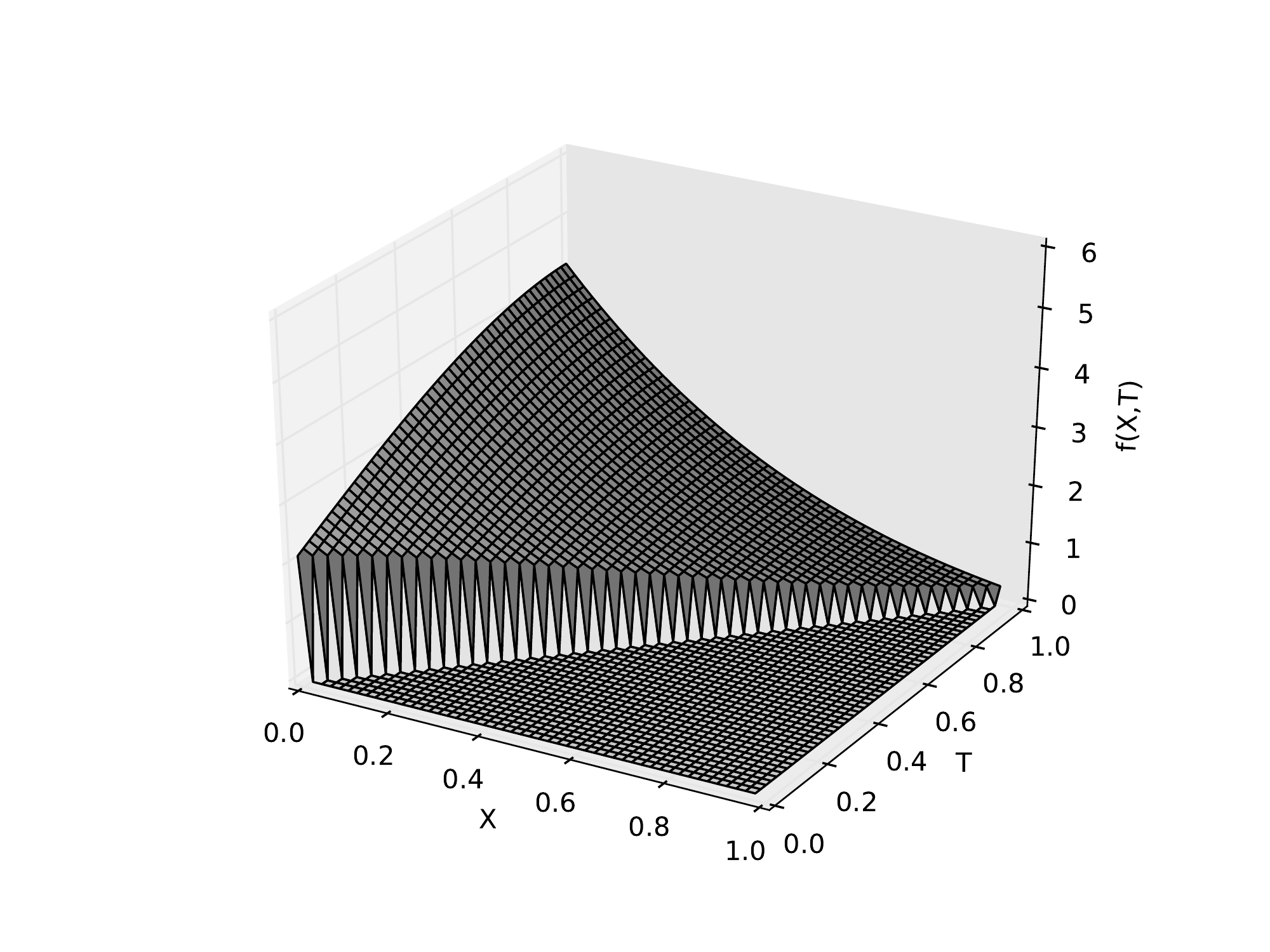}}
 \subfigure[AESE]{\includegraphics[width=4cm]{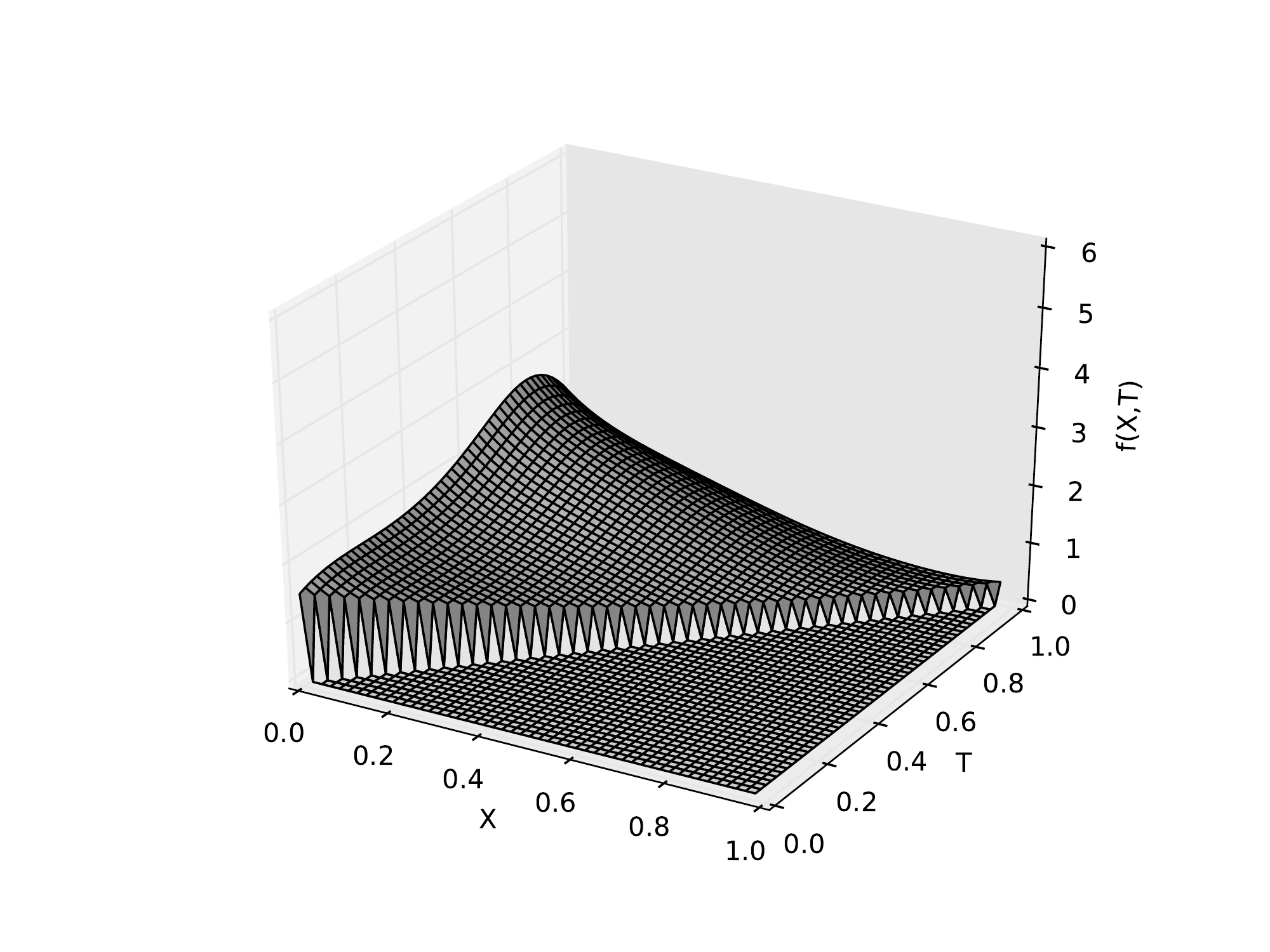}}
 \subfigure[Kernel]{\includegraphics[width=4cm]{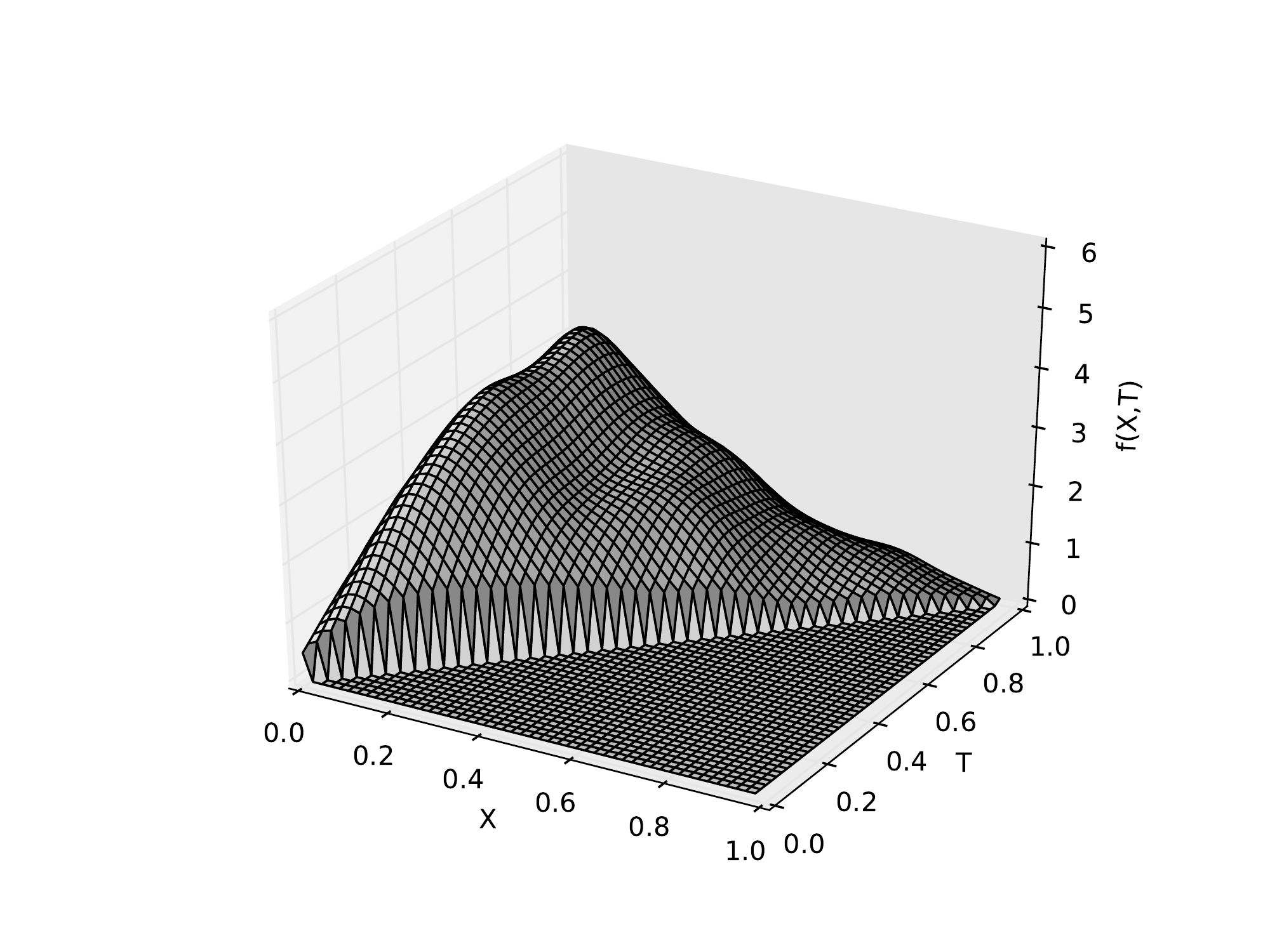}}

 \subfigure[True density]{\includegraphics[width=4cm]{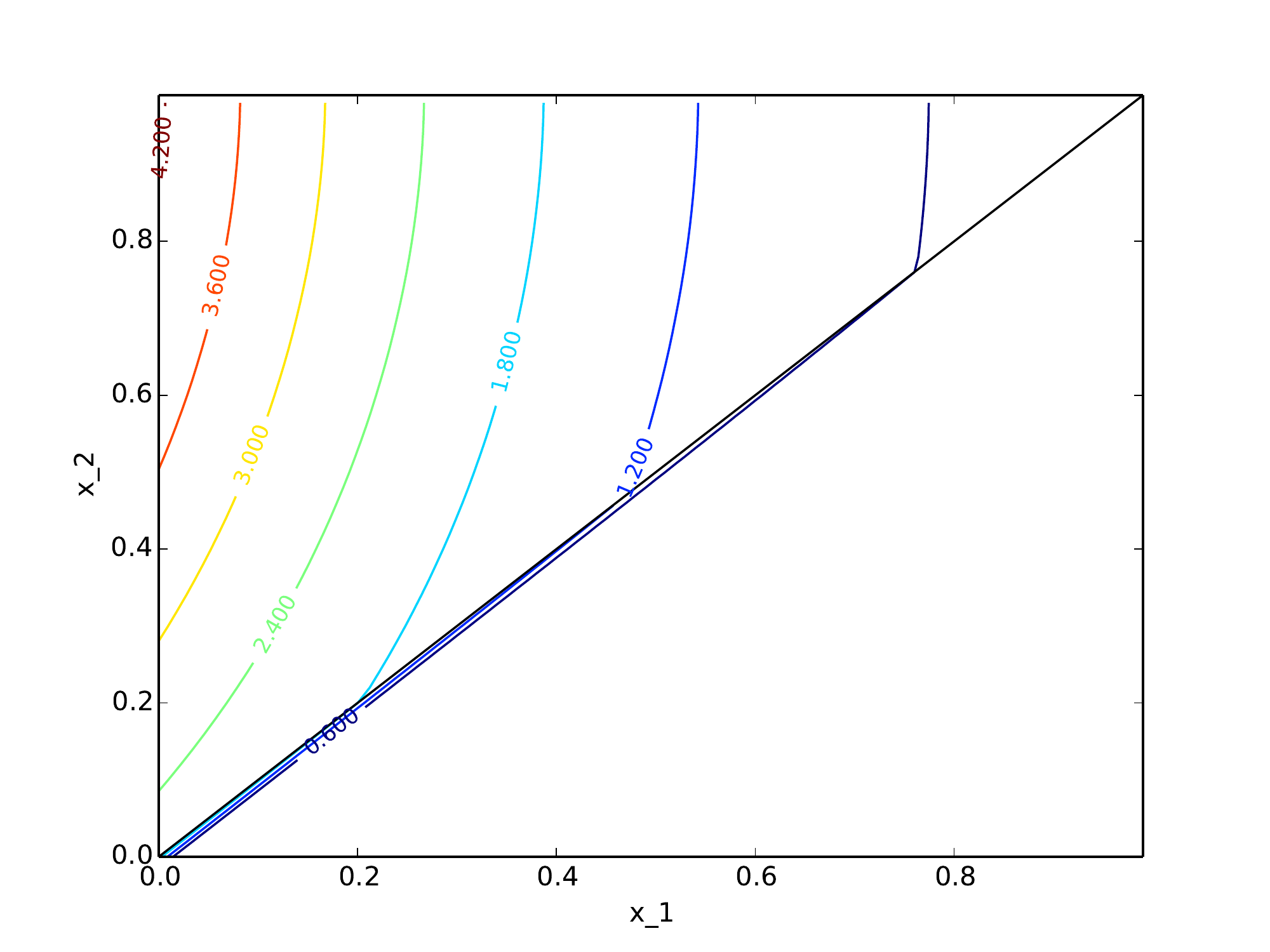}}
 \subfigure[AESE]{\includegraphics[width=4cm]{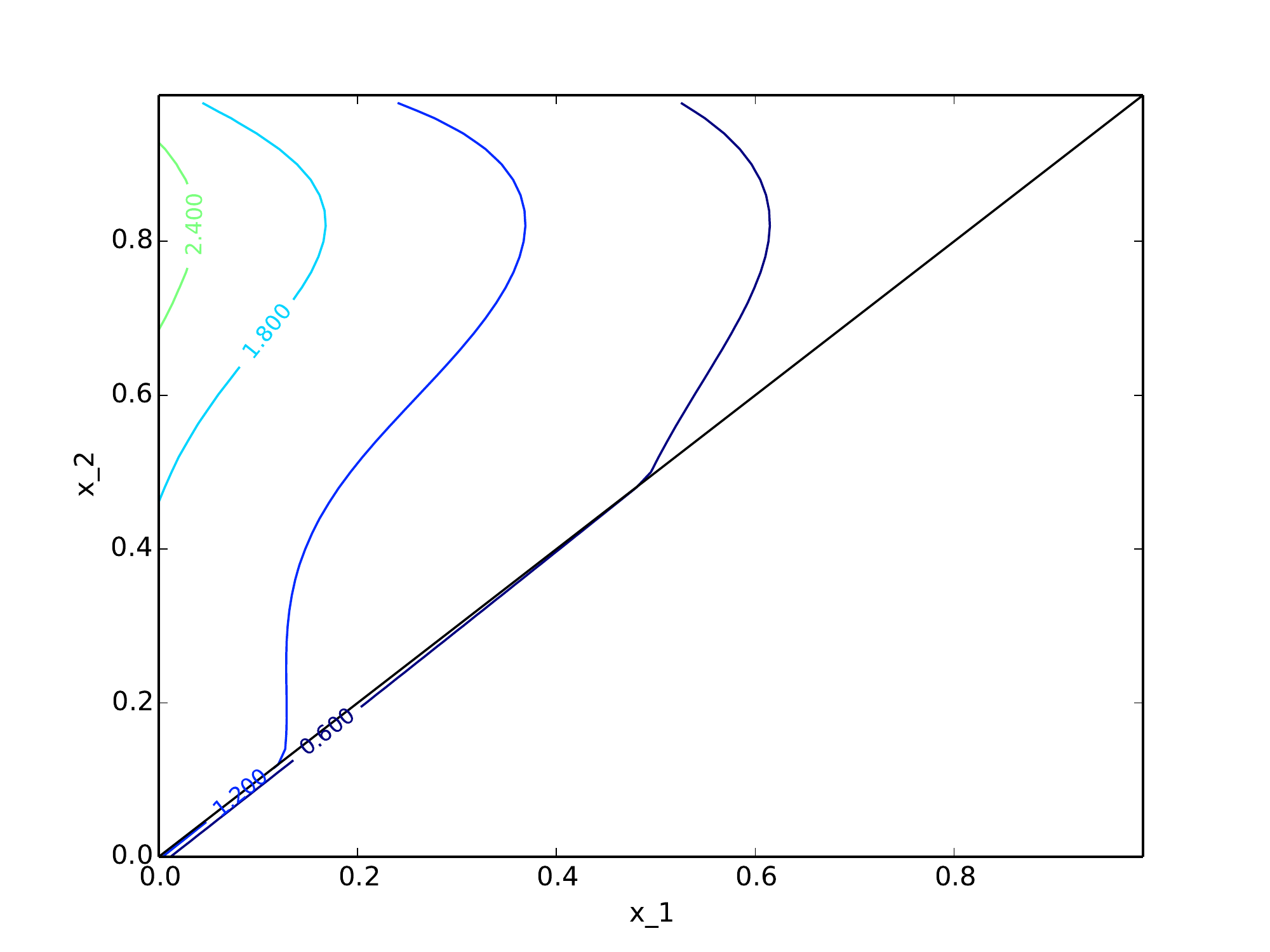}}
 \subfigure[Kernel]{\includegraphics[width=4cm]{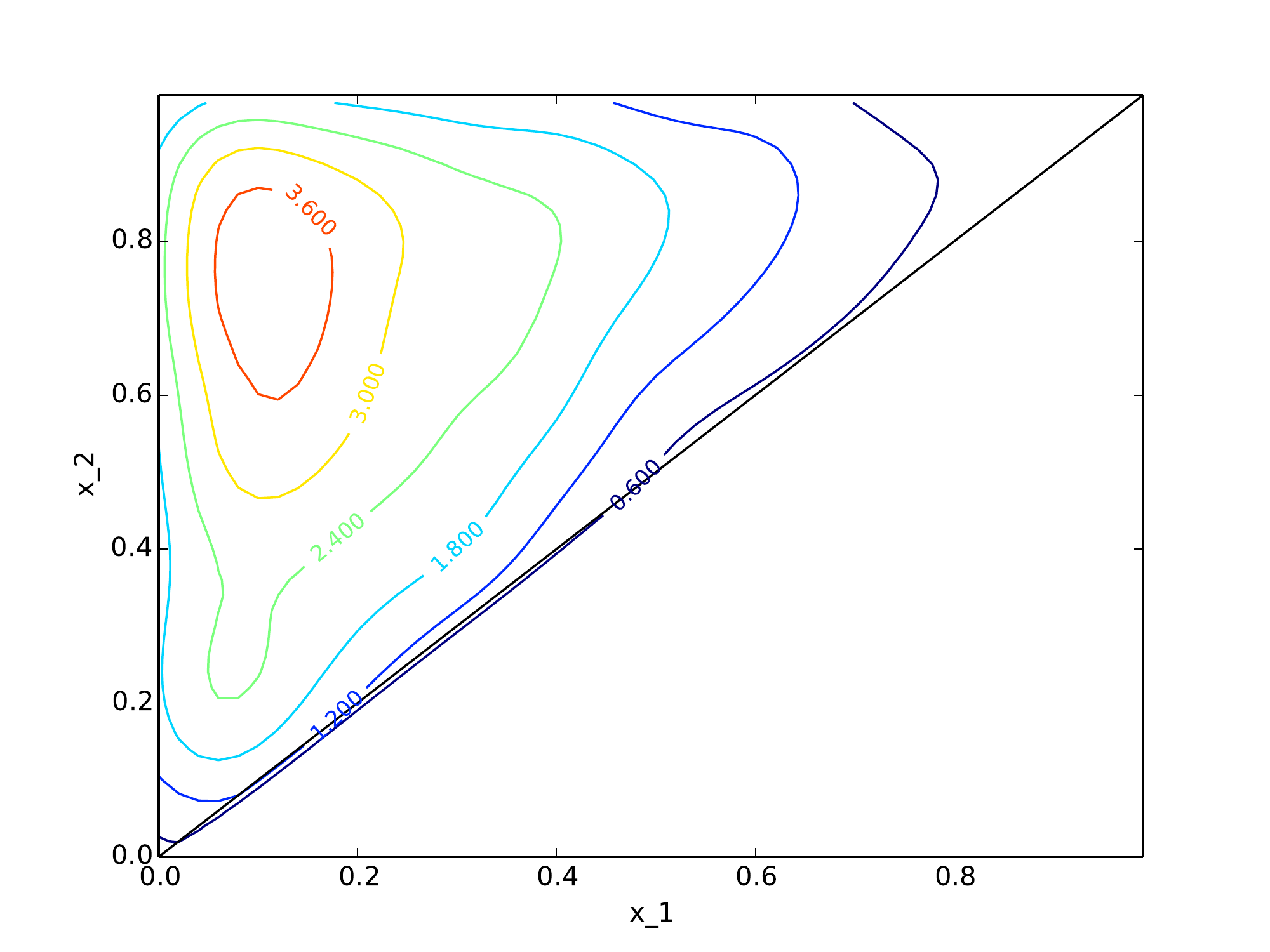}}
 \caption{Joint density functions of the true density and its estimators with Beta marginals.}
 \label{fig:kernel_ESE_beta_cont}
 \end{figure}

 %%%% Gumbel distribution %%%%%%%

 \begin{figure}[H]
 \centering
 \subfigure{\includegraphics[width=3.5cm]{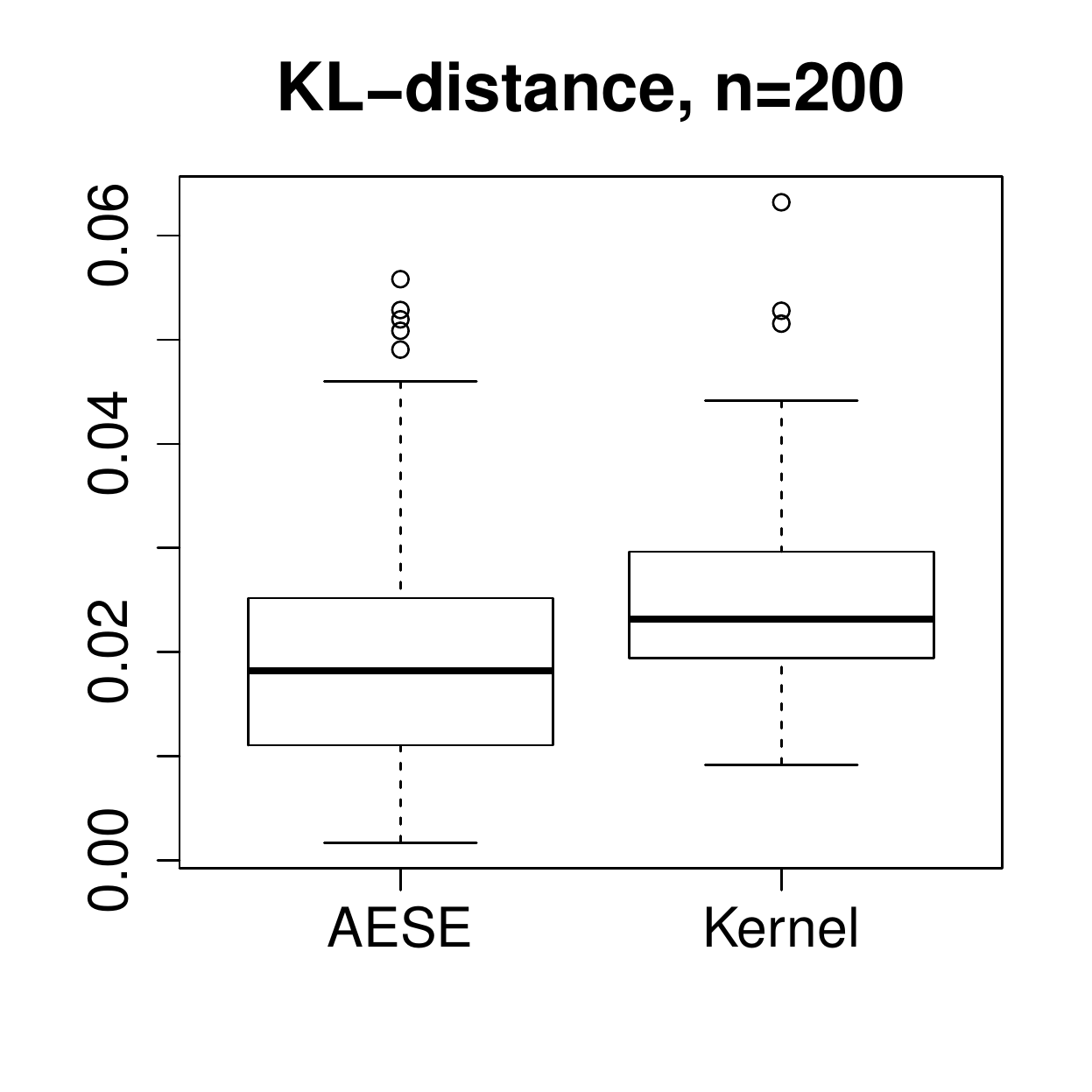}}
 \subfigure{\includegraphics[width=3.5cm]{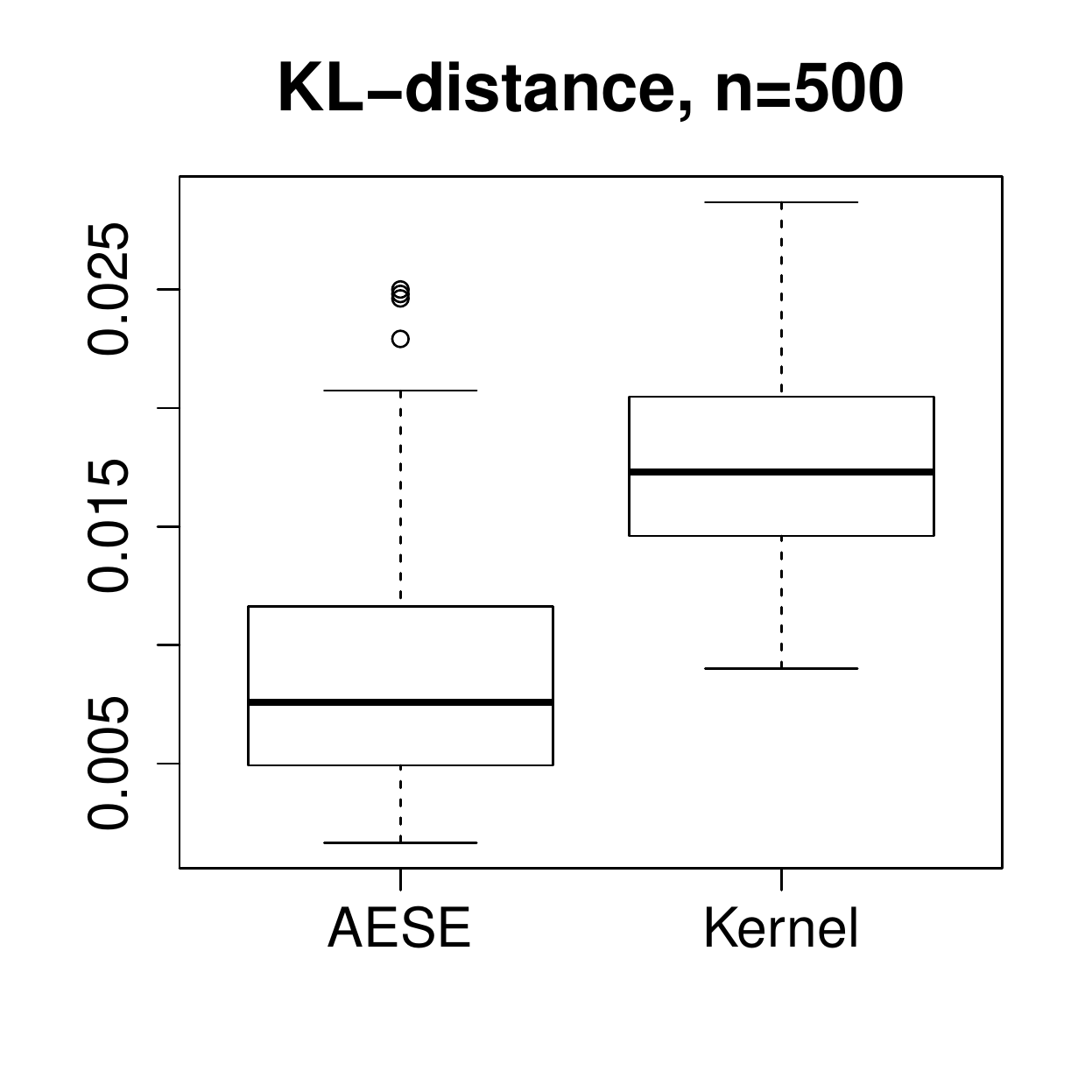}}
 \subfigure{\includegraphics[width=3.5cm]{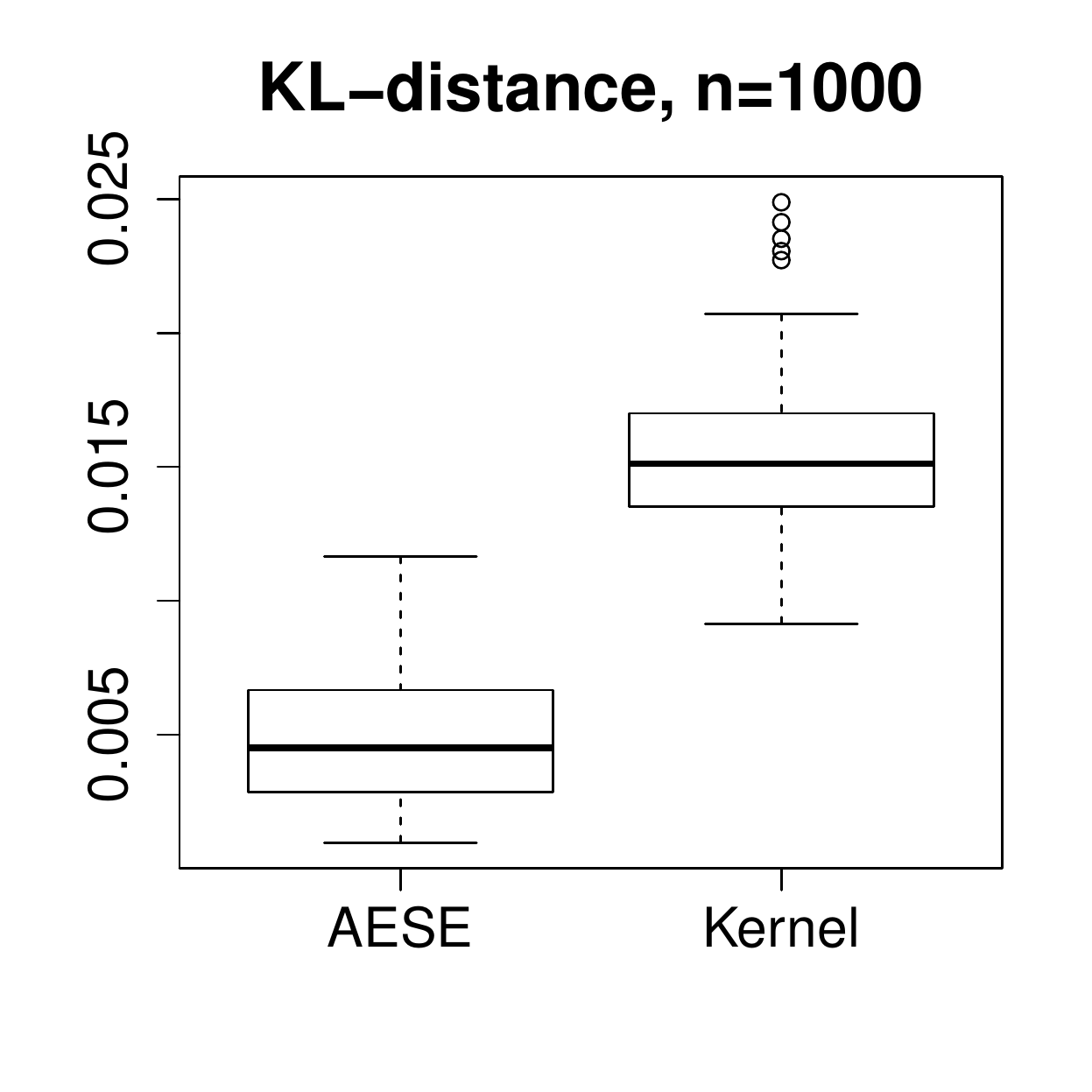}}
 
 \subfigure{\includegraphics[width=3.5cm]{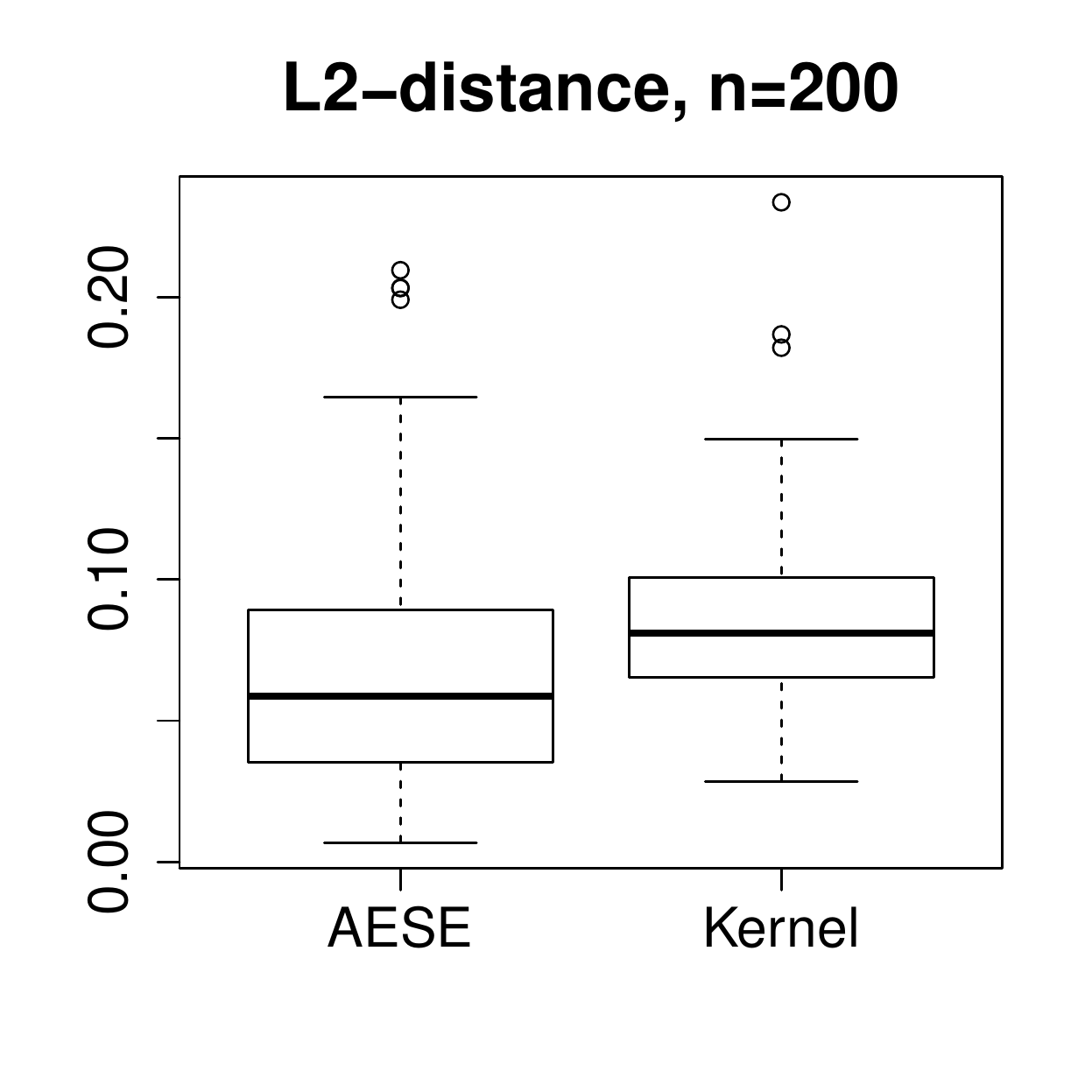}}
 \subfigure{\includegraphics[width=3.5cm]{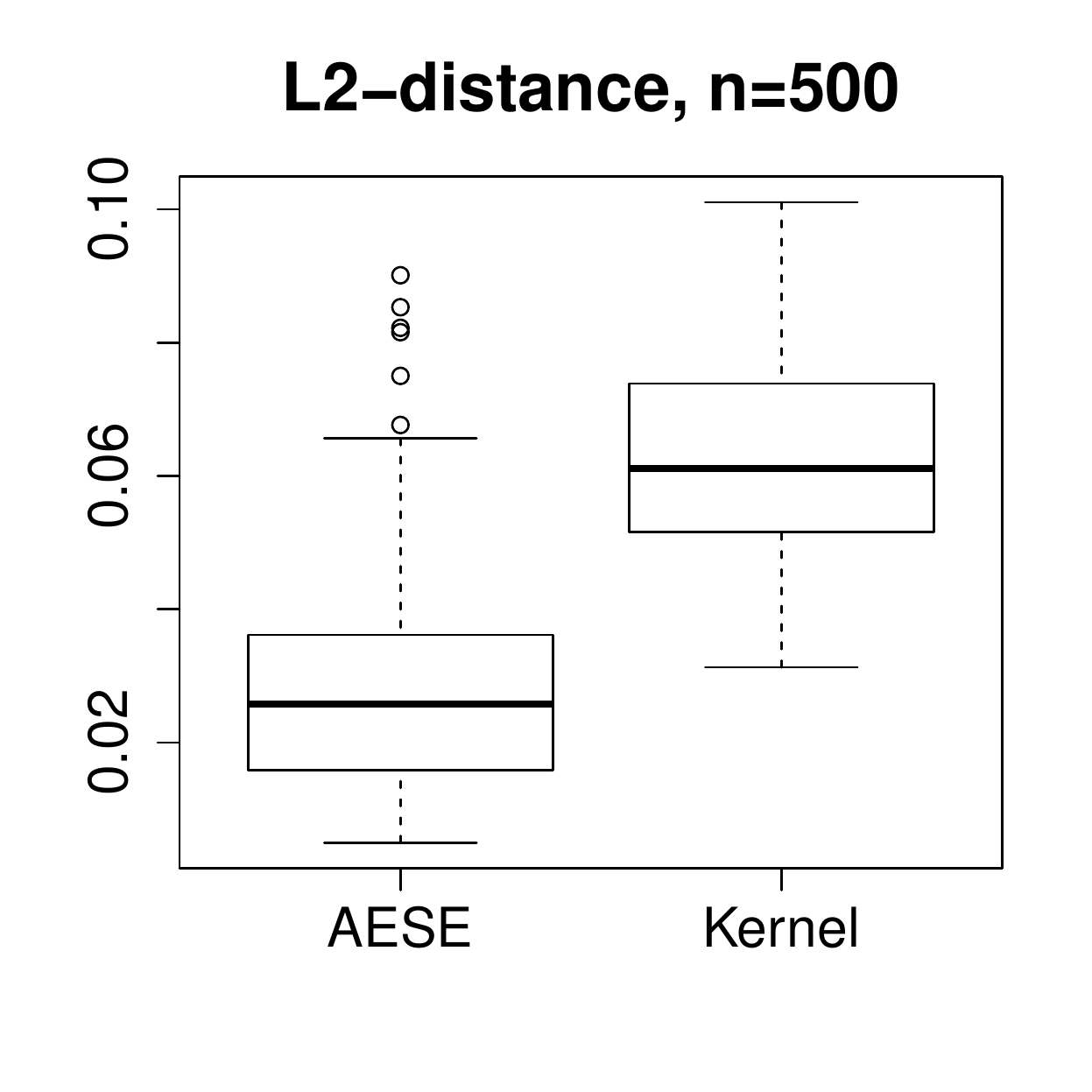}}
 \subfigure{\includegraphics[width=3.5cm]{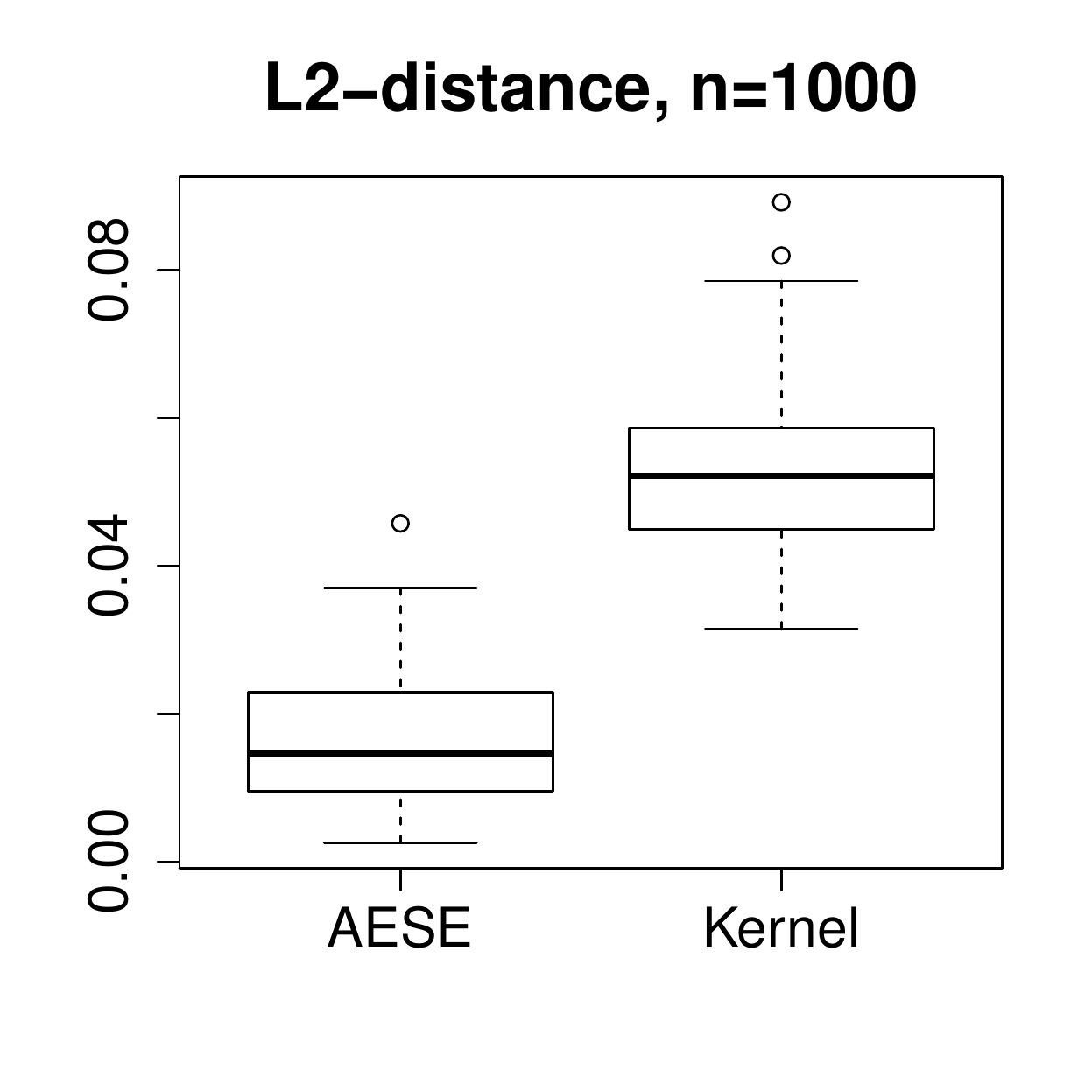}}
 \caption{Boxplot of the Kullback-Leibler and $L^2$ distances for the additive exponential series estimator (AESE) and the truncated kernel estimators with Gumbel marginals.}
 \label{fig:boxplot_gumbel_KL}
 \end{figure}

  \begin{figure}[H]
 \centering
 \subfigure[True density]{\includegraphics[width=4cm]{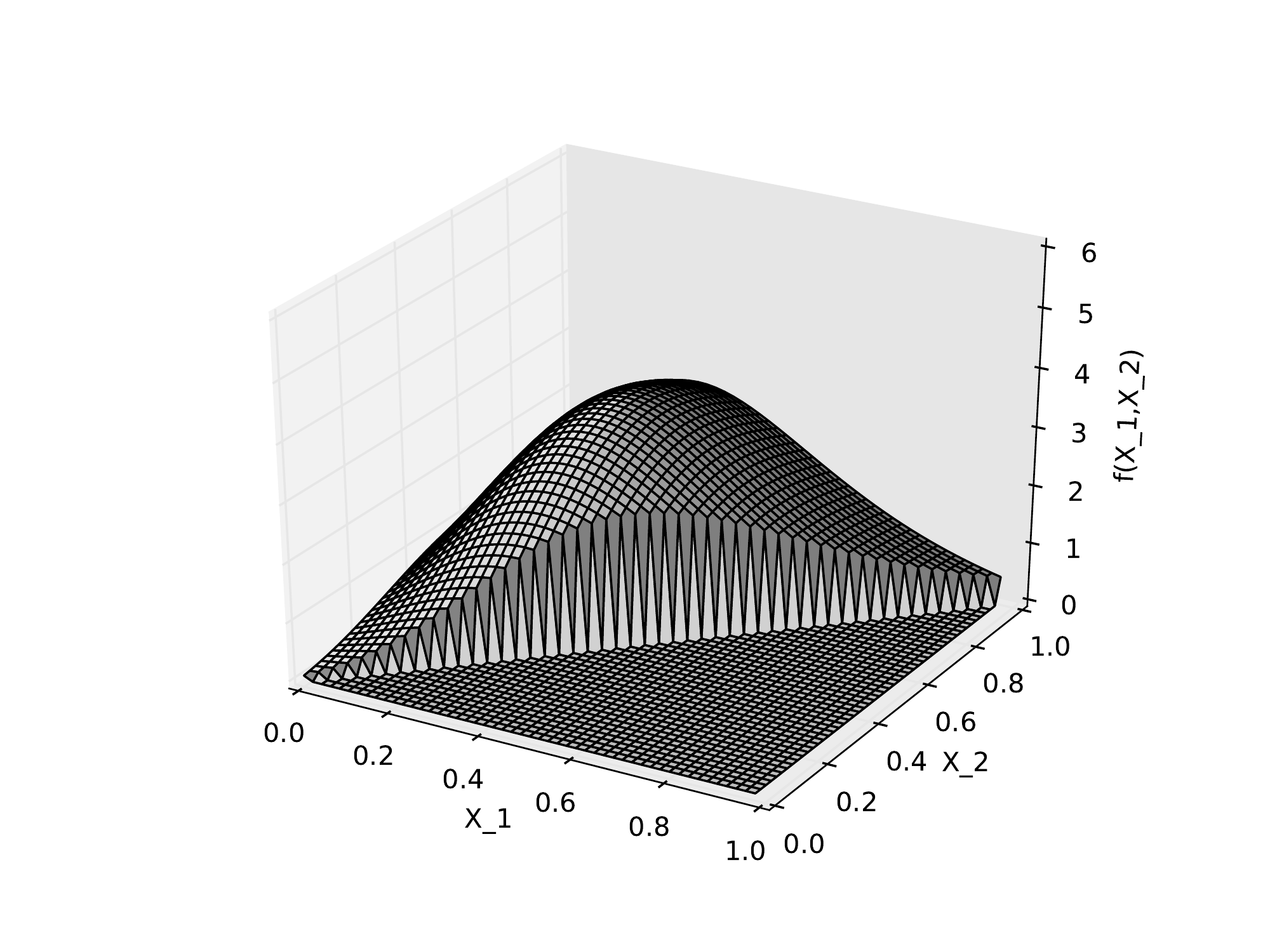}}
 \subfigure[AESE]{\includegraphics[width=4cm]{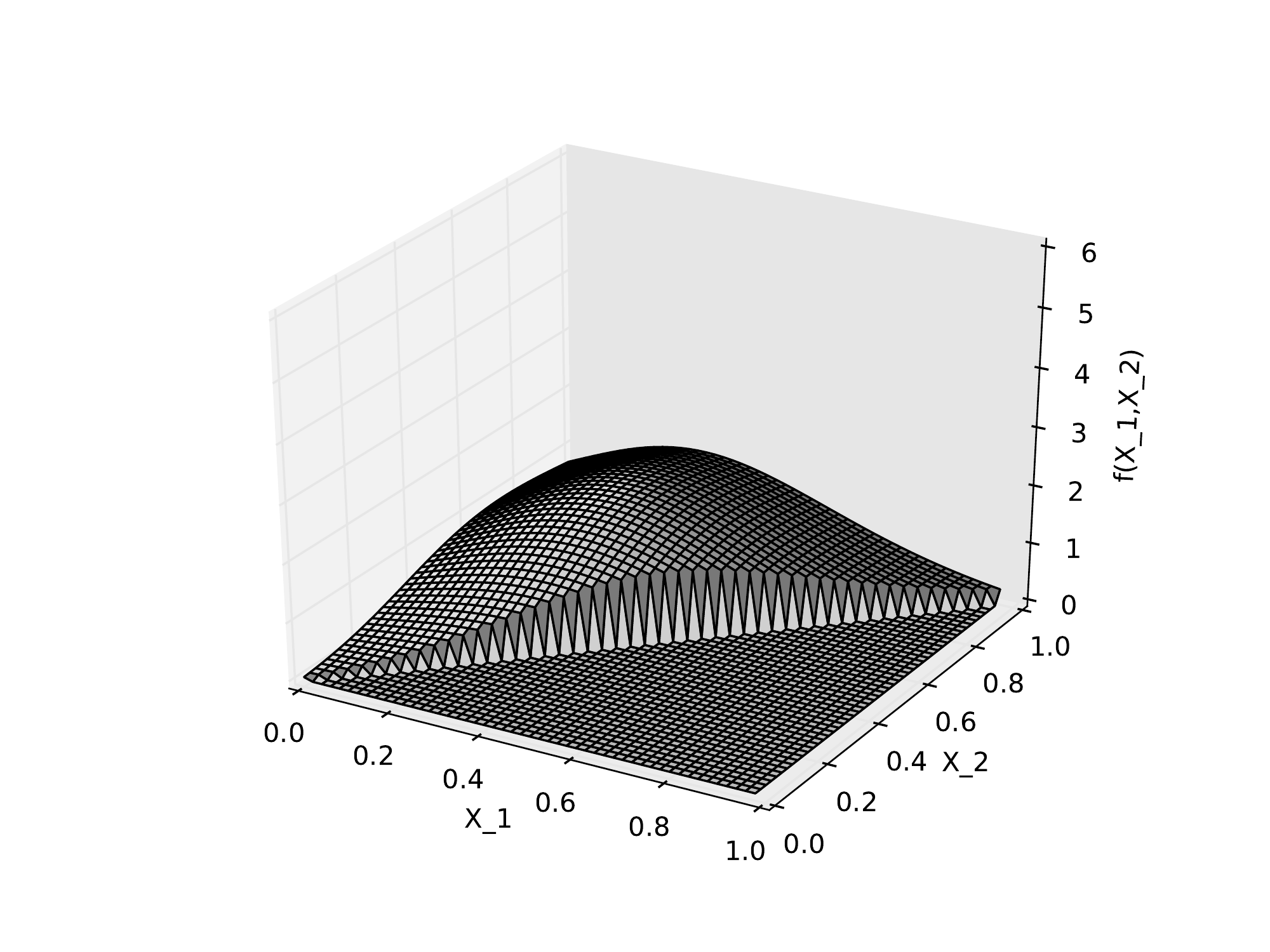}}
 \subfigure[Kernel]{\includegraphics[width=4cm]{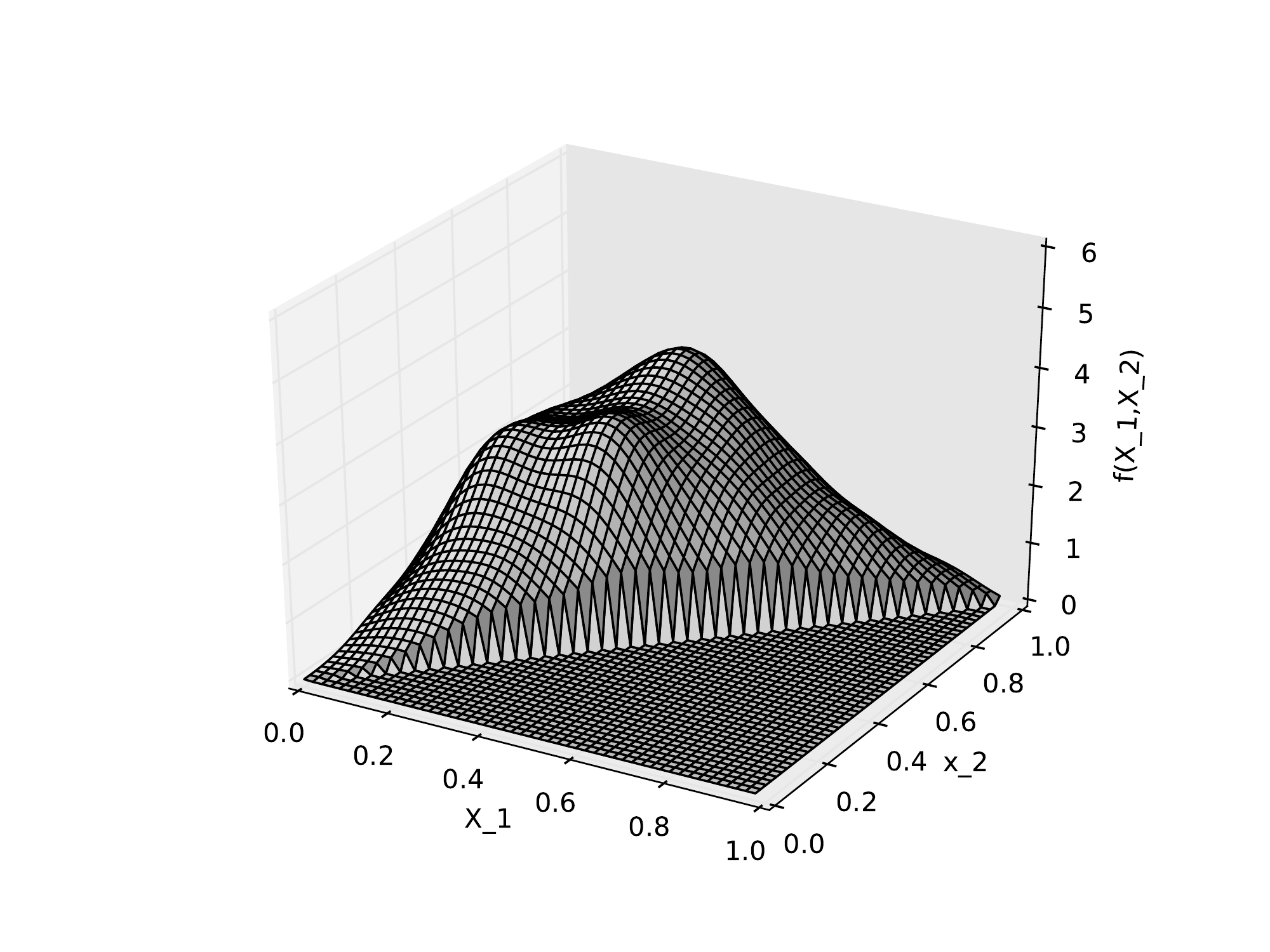}}

 \subfigure[True density]{\includegraphics[width=4cm]{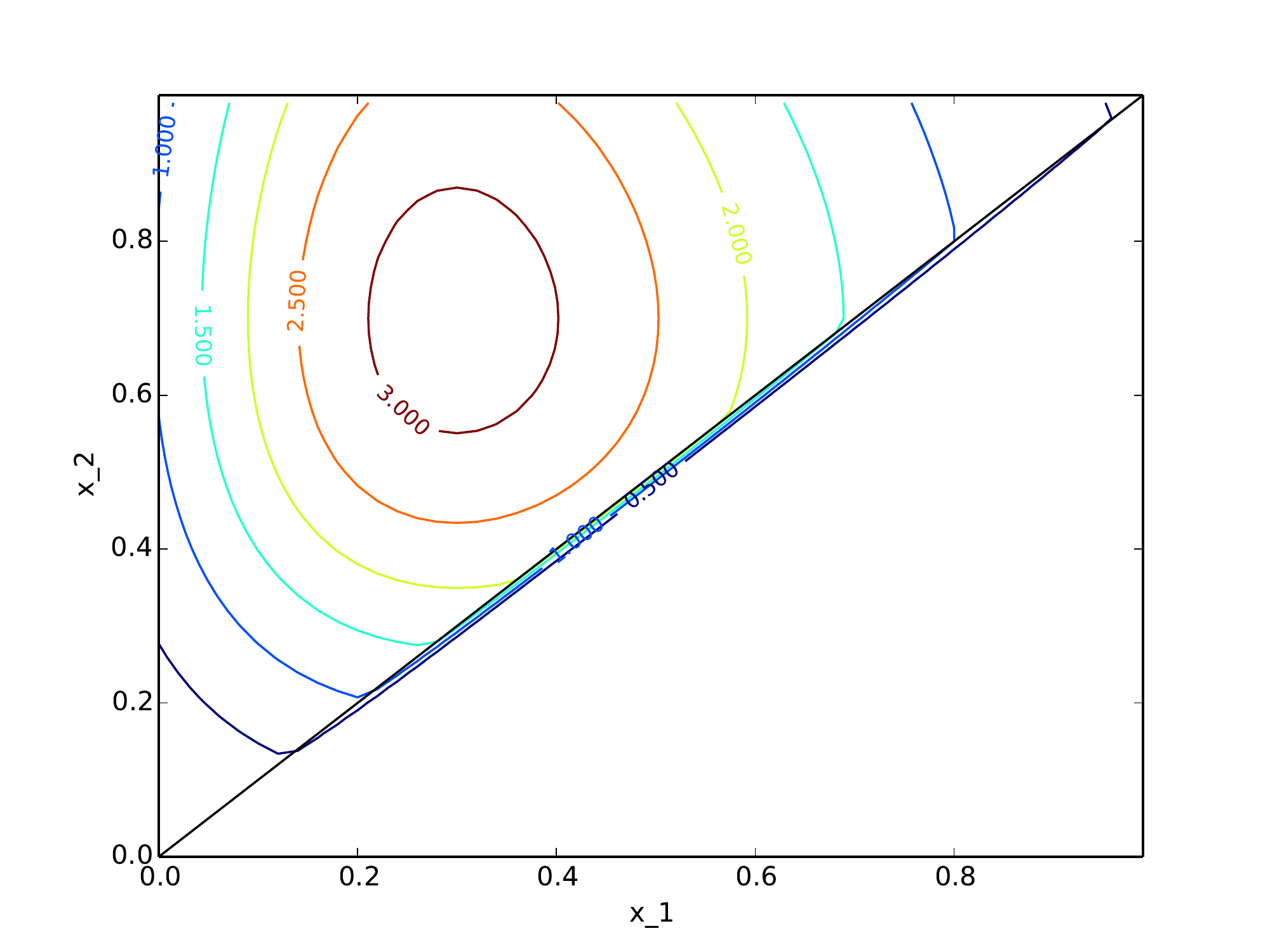}}
 \subfigure[AESE]{\includegraphics[width=4cm]{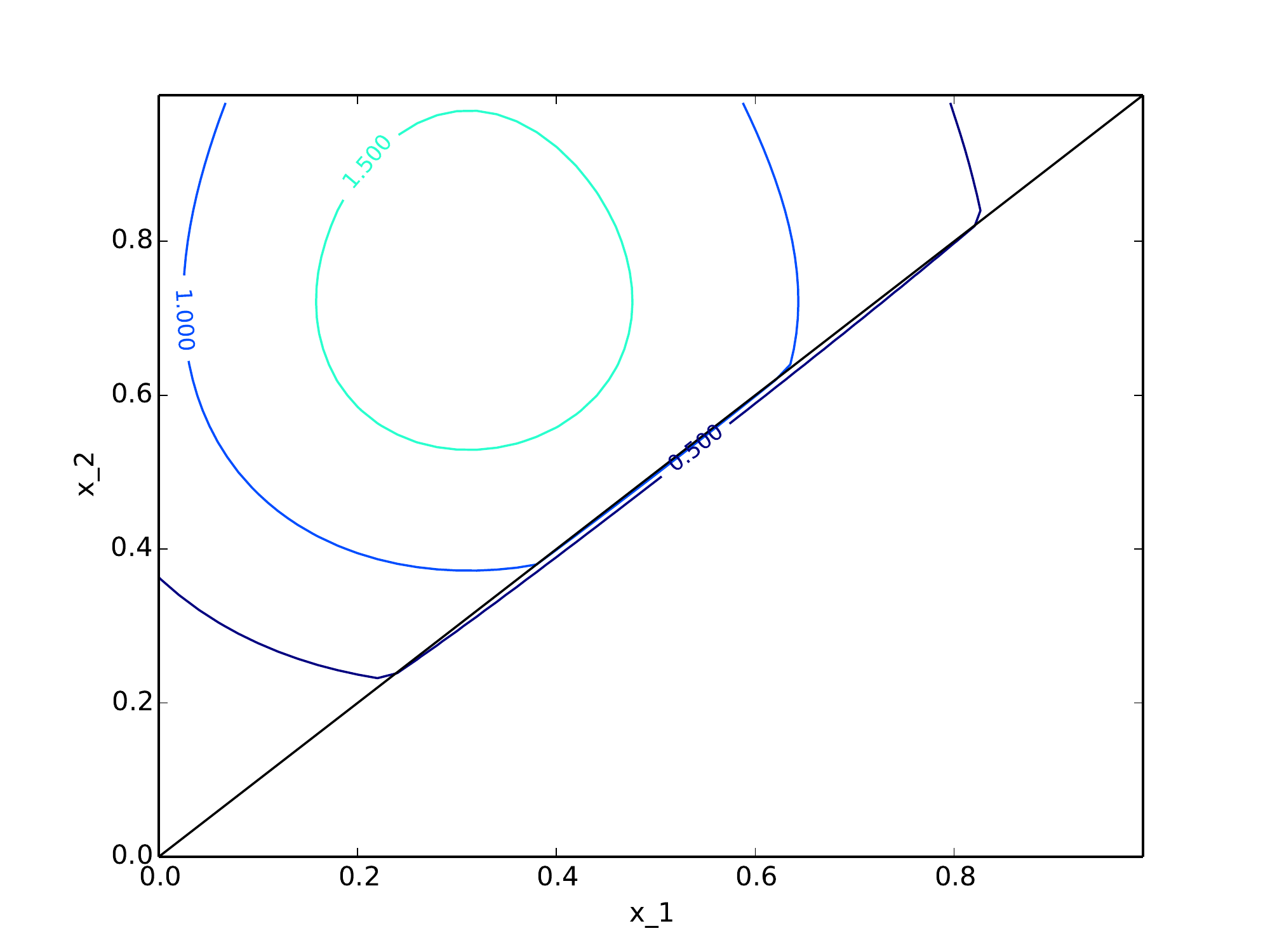}}
 \subfigure[Kernel]{\includegraphics[width=4cm]{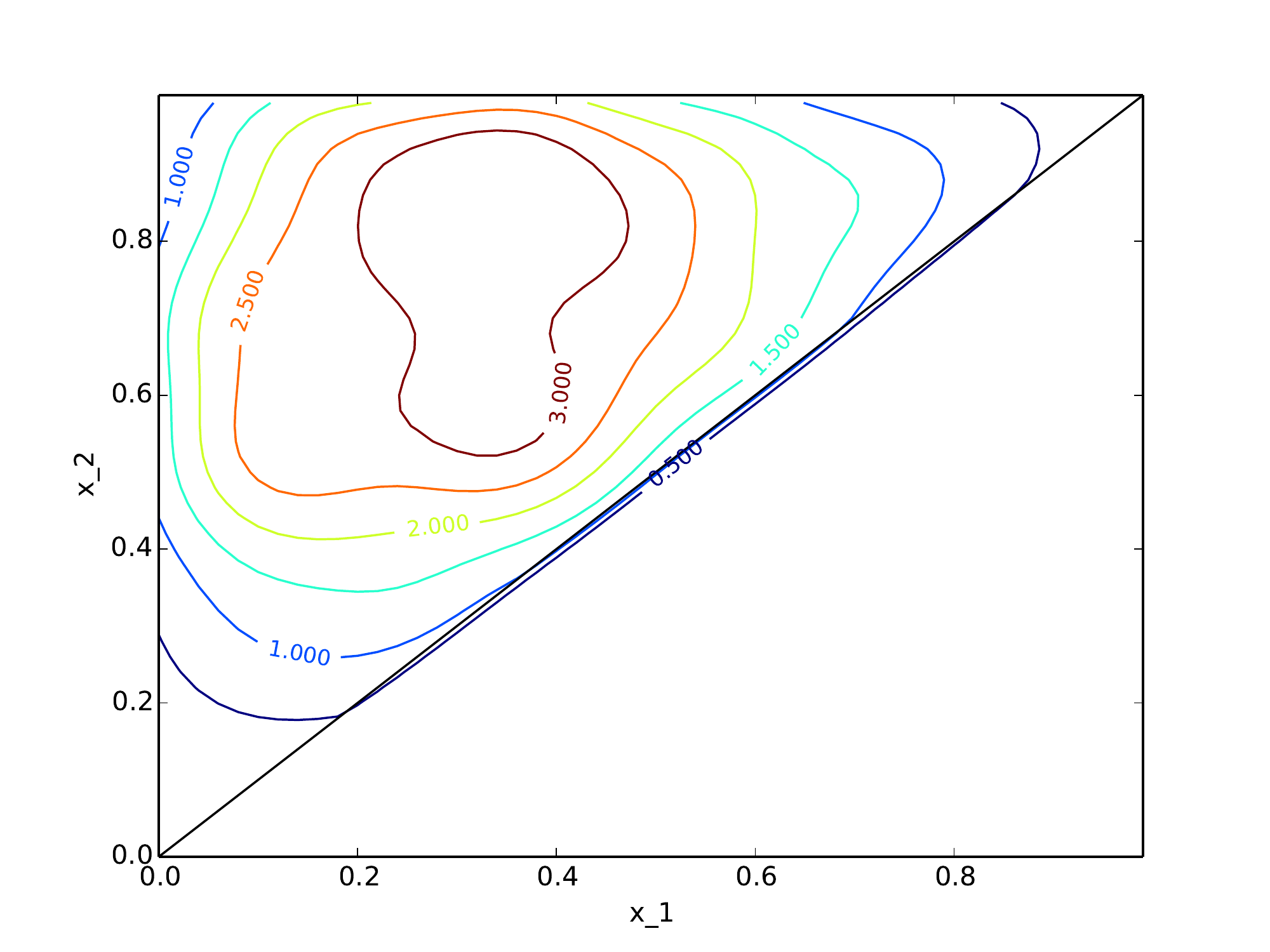}}
 \caption{Joint density functions of the true density and its estimators with Gumbel marginals.}
 \label{fig:kernel_ESE_gumbel_cont}
 \end{figure}

 %%%% Normal bimodal %%%%%

  \begin{figure}[H]
 \centering
 \subfigure{\includegraphics[width=3.5cm]{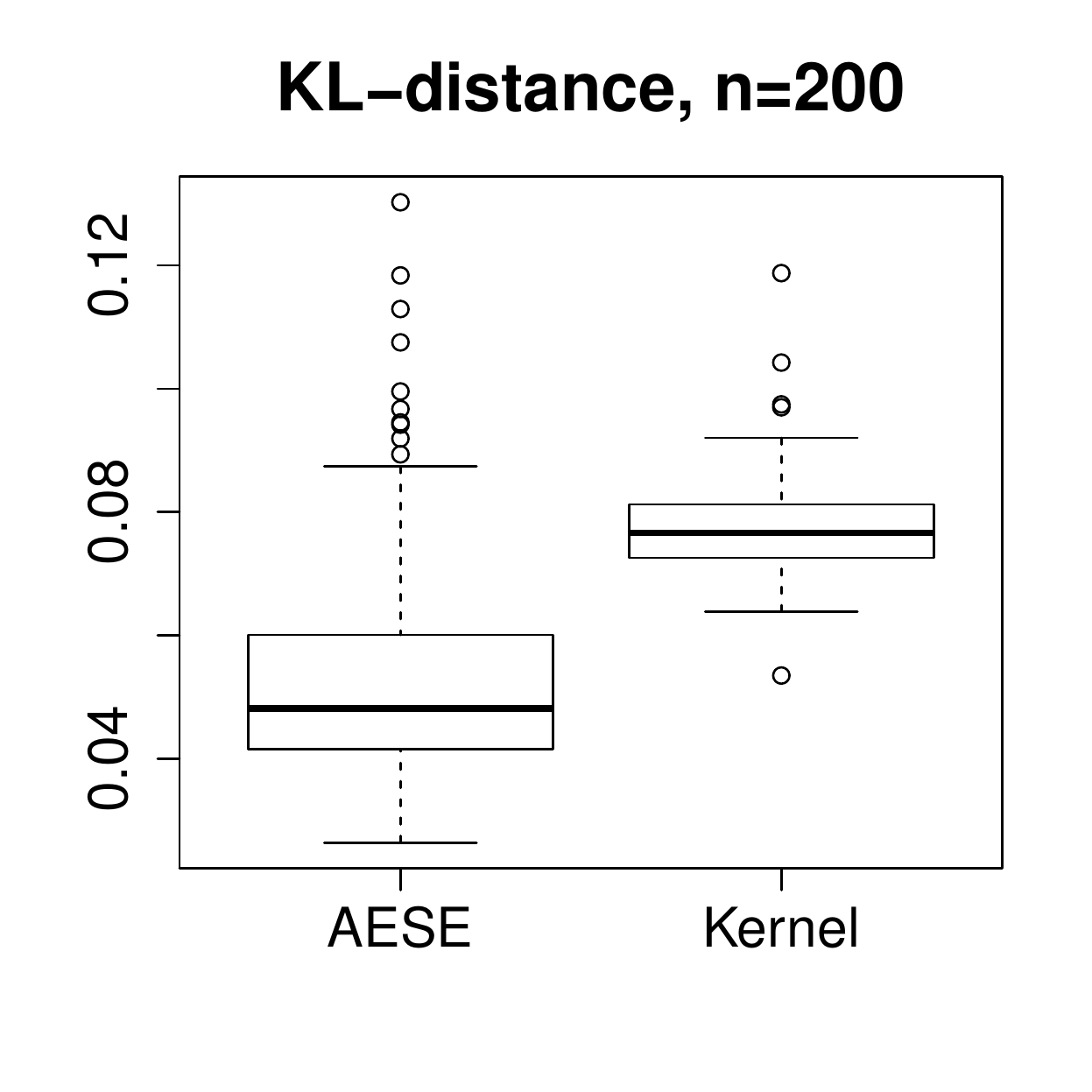}}
 \subfigure{\includegraphics[width=3.5cm]{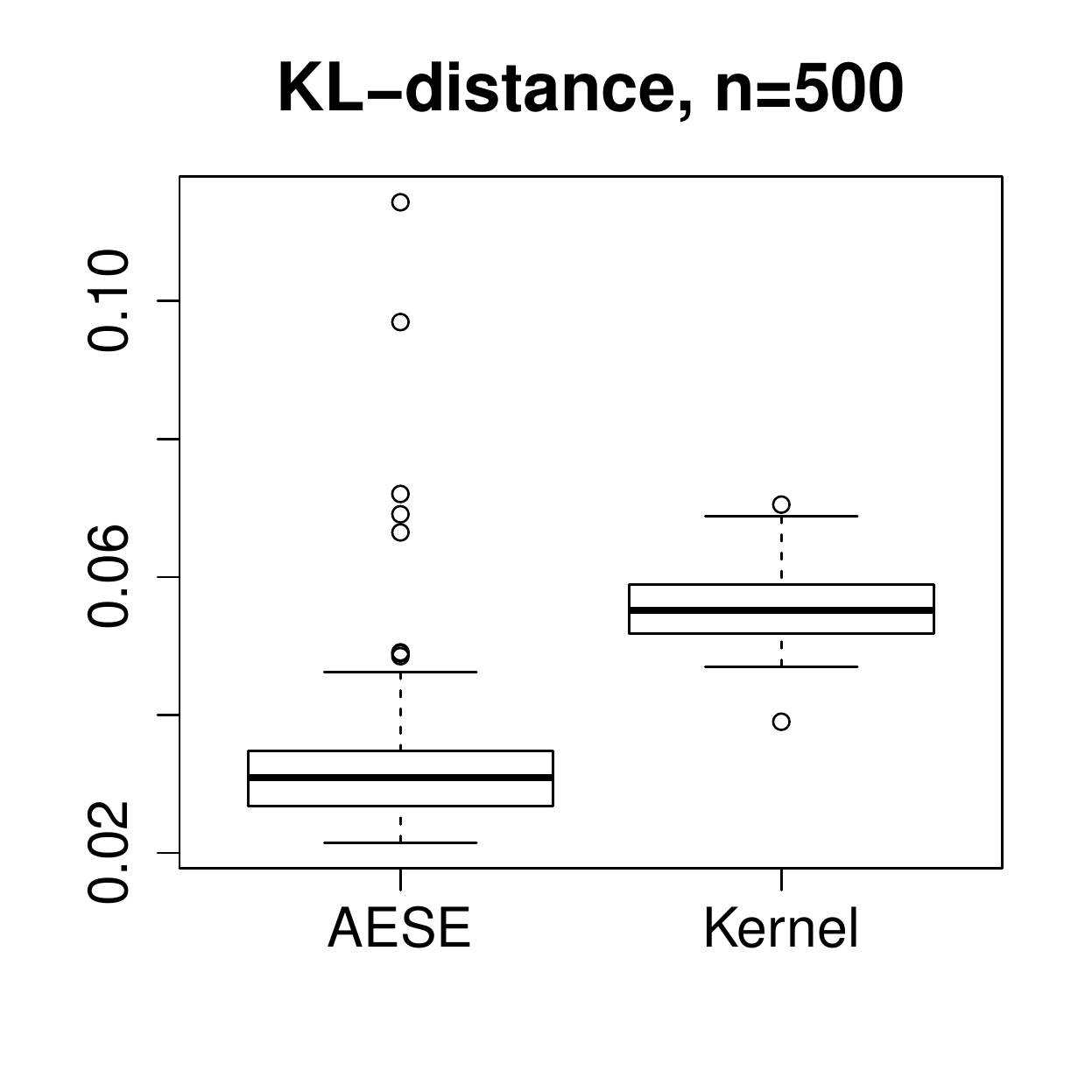}}
 \subfigure{\includegraphics[width=3.5cm]{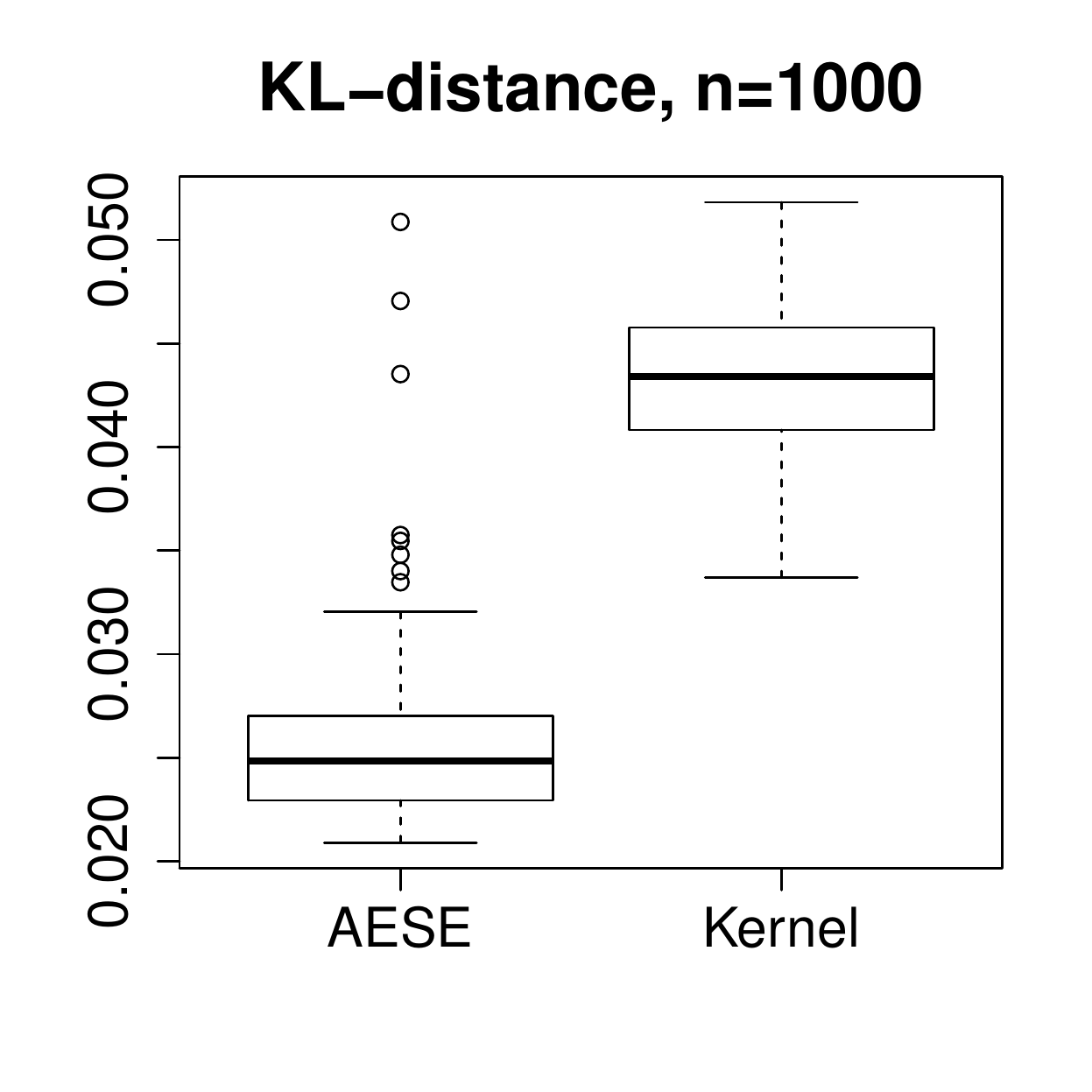}}
 
 \subfigure{\includegraphics[width=3.5cm]{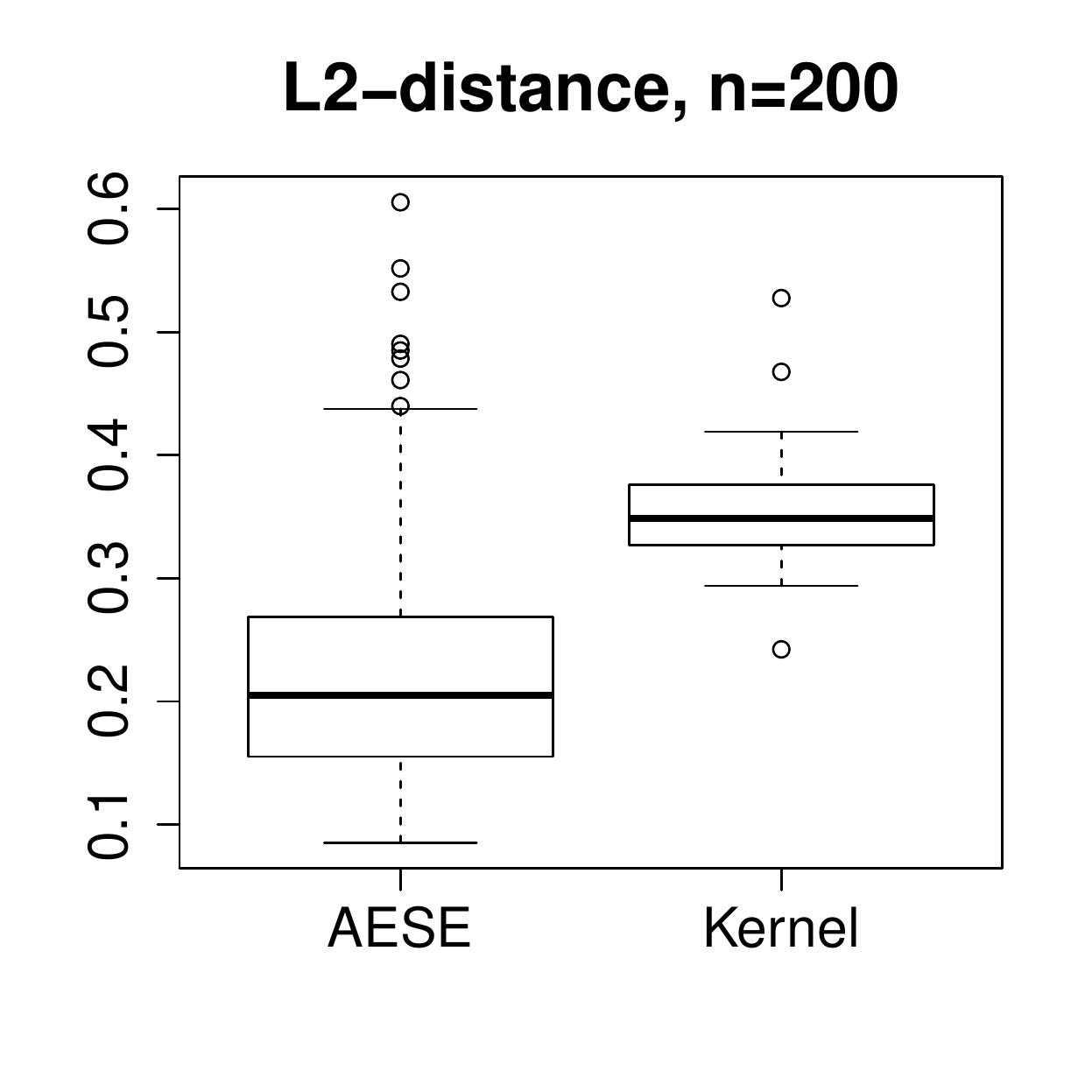}}
 \subfigure{\includegraphics[width=3.5cm]{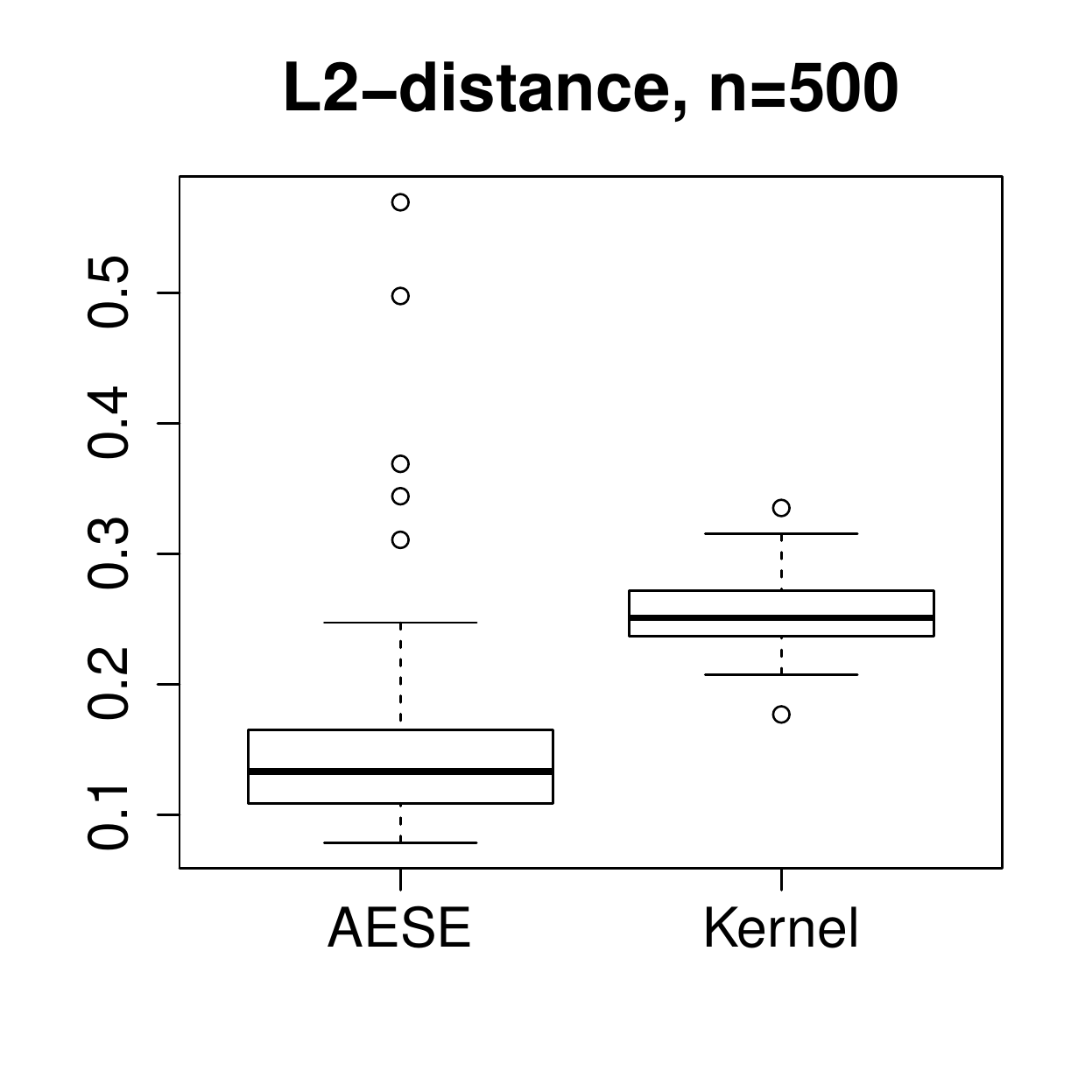}}
 \subfigure{\includegraphics[width=3.5cm]{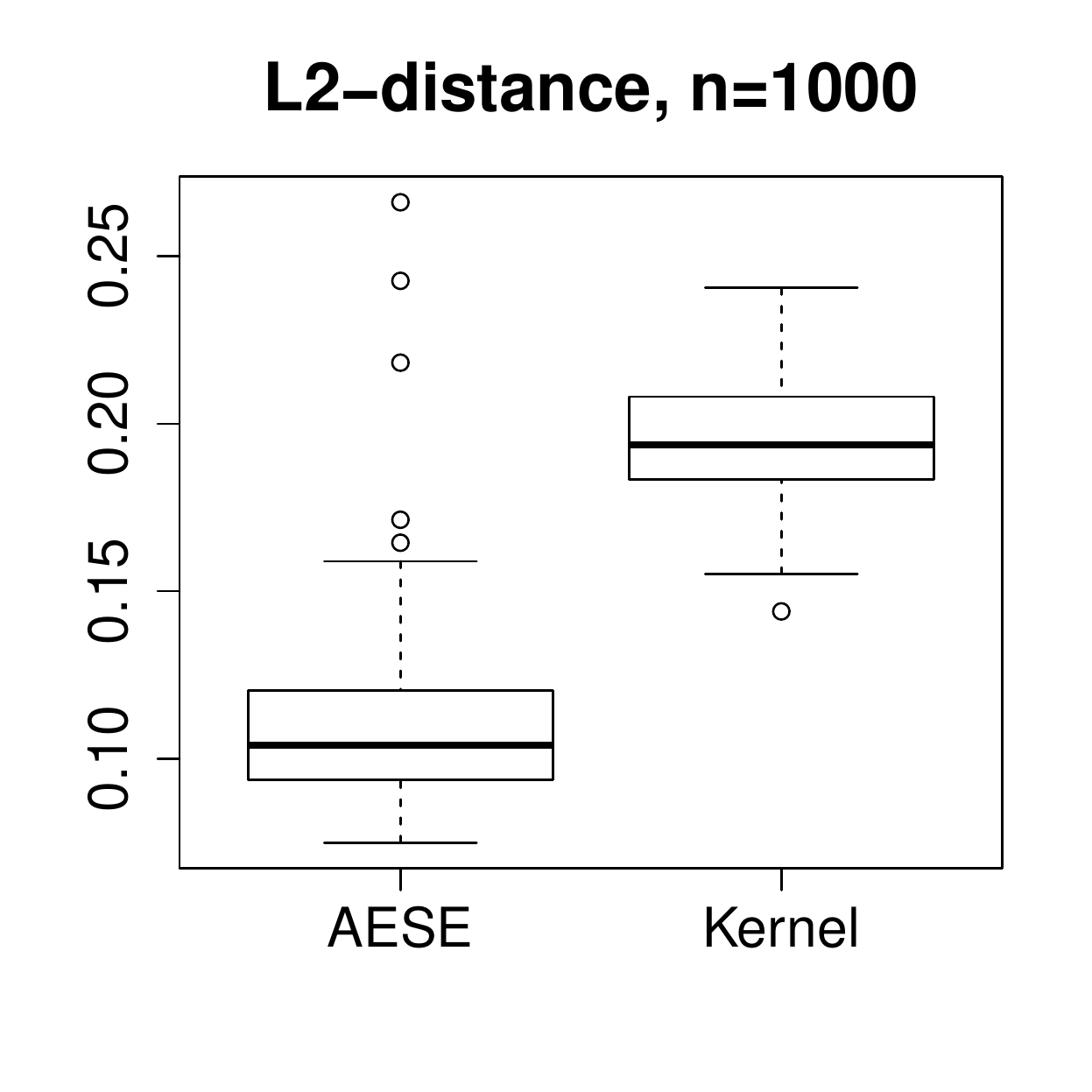}}
 
  \caption{Boxplot of the Kullback-Leibler and $L^2$ distances for the additive exponential series estimator (AESE) and the truncated kernel estimators with Normal mix marginals.}
 \label{fig:boxplot_normal_bim_KL}
 \end{figure}

 \begin{figure}[H]
 \centering
 \subfigure[True density]{\includegraphics[width=4cm]{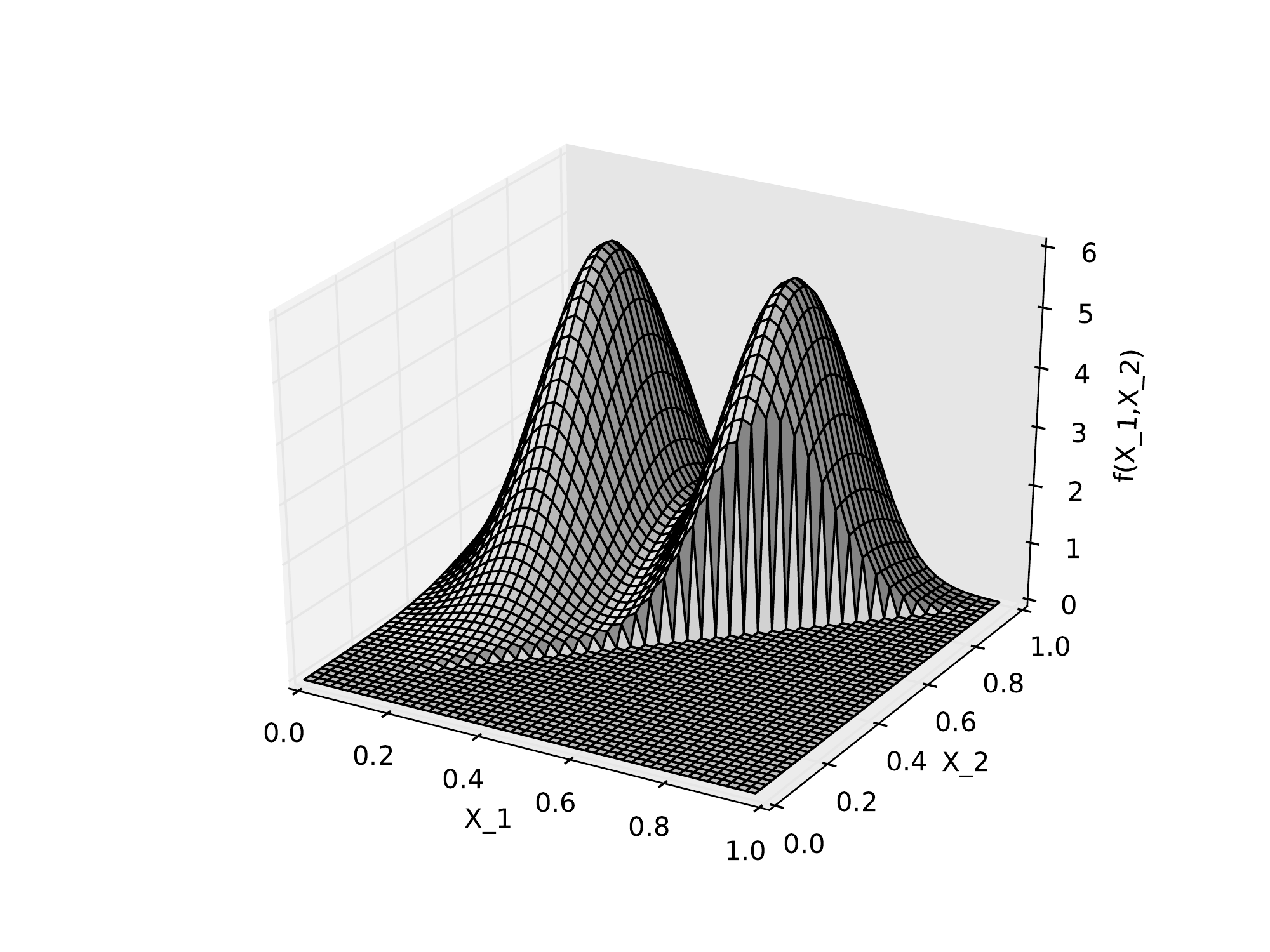}}
 \subfigure[AESE]{\includegraphics[width=4cm]{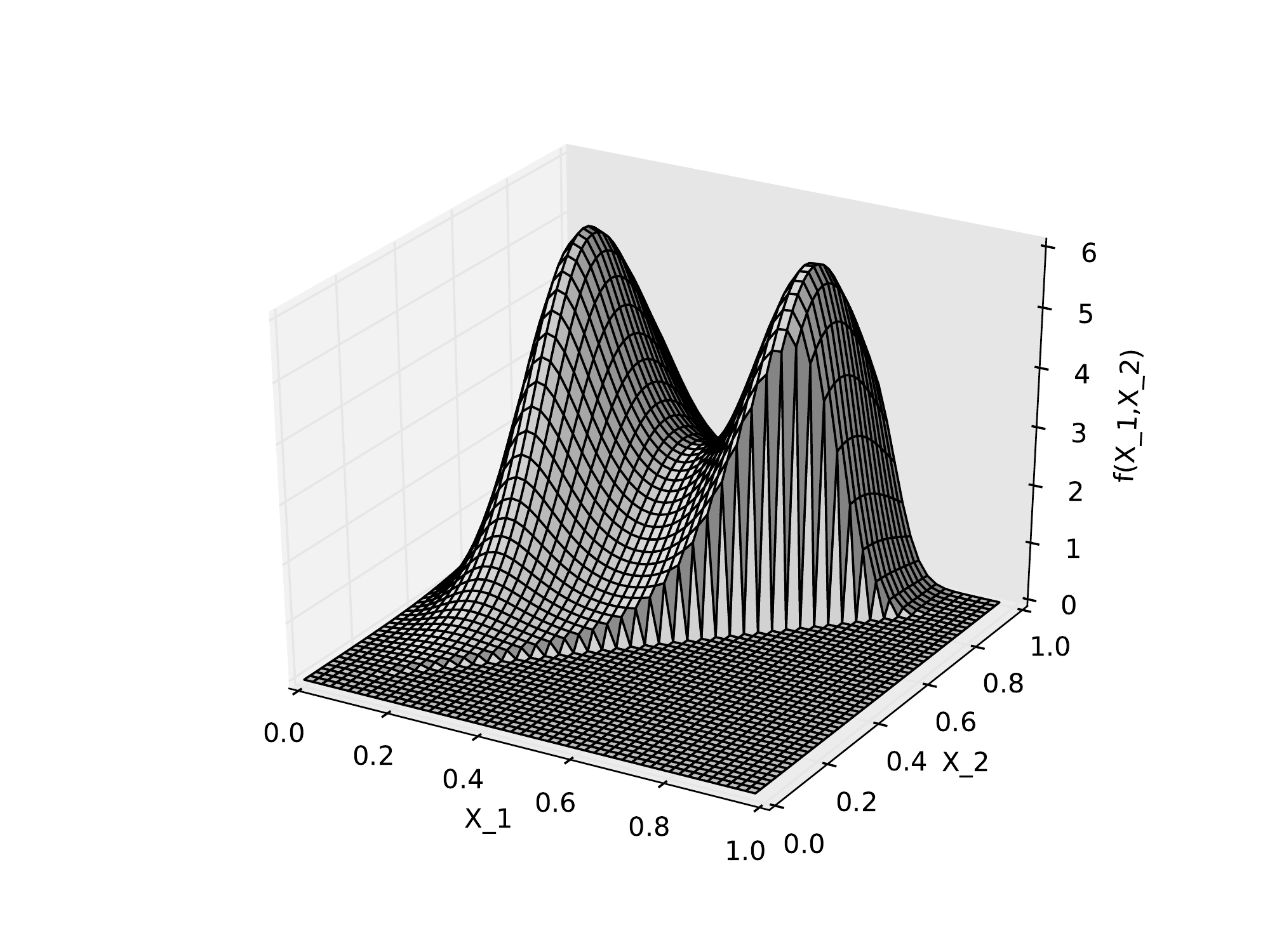}}
 \subfigure[Kernel]{\includegraphics[width=4cm]{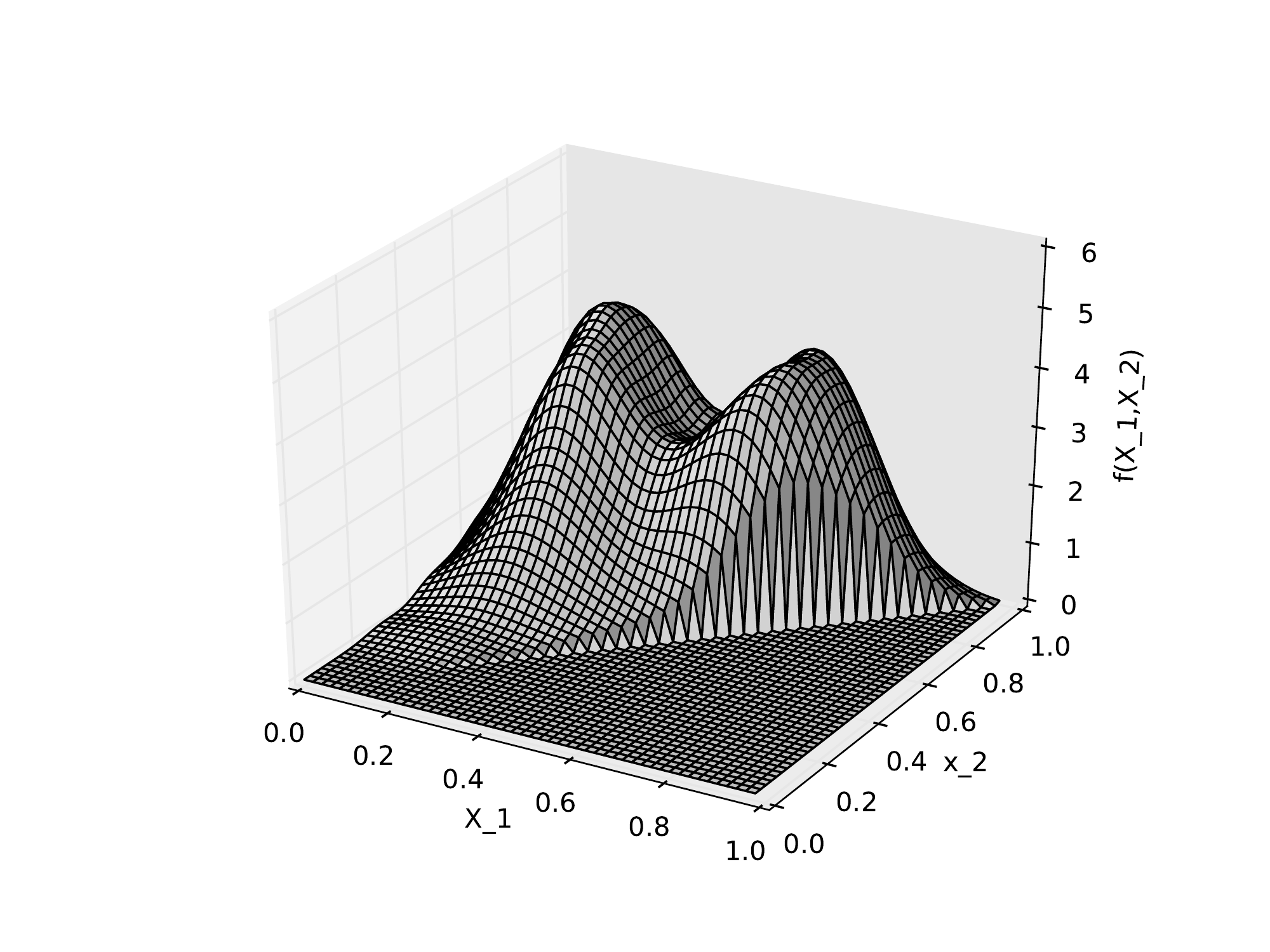}}

 \subfigure[True density]{\includegraphics[width=4cm]{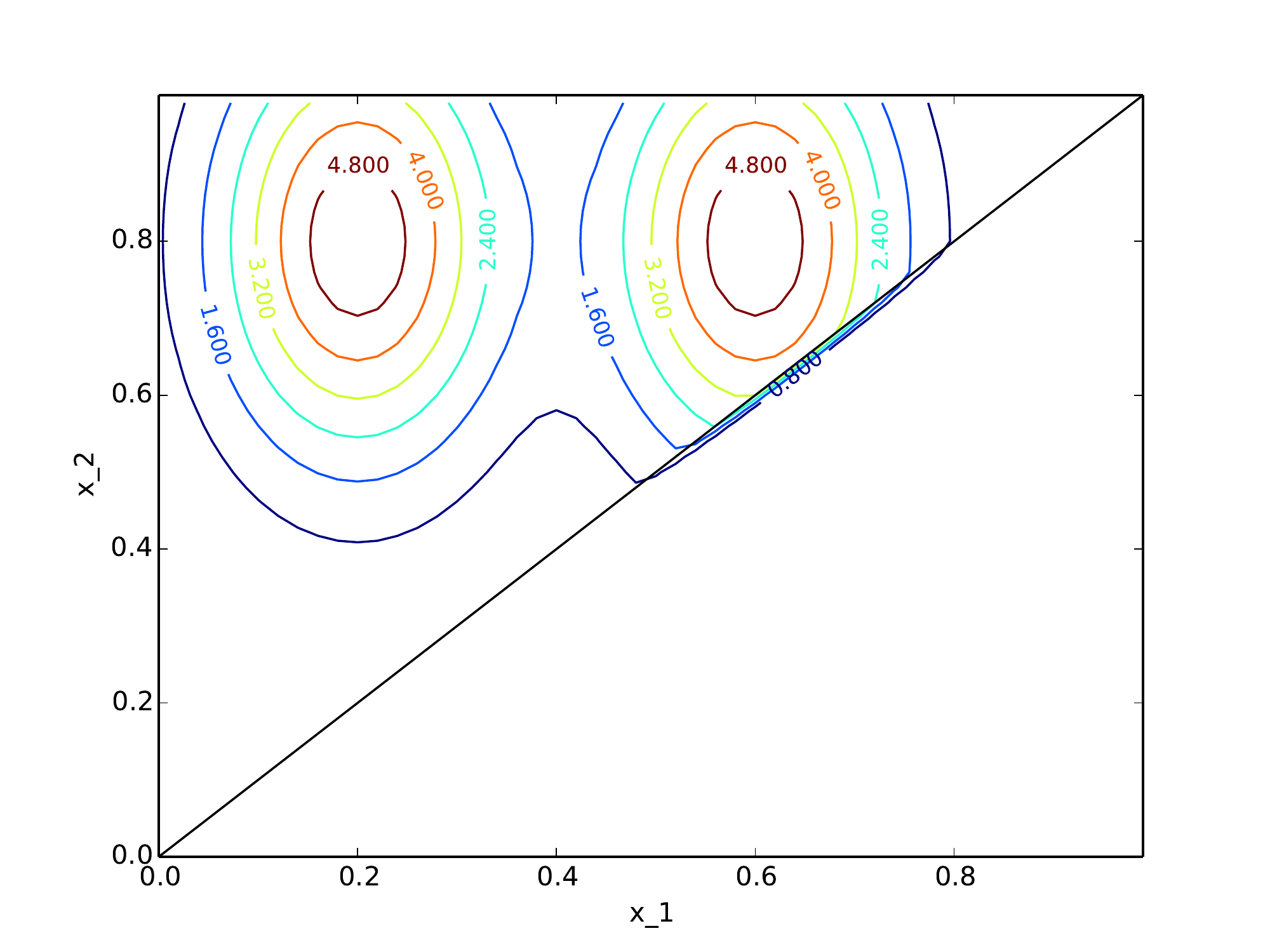}}
 \subfigure[AESE]{\includegraphics[width=4cm]{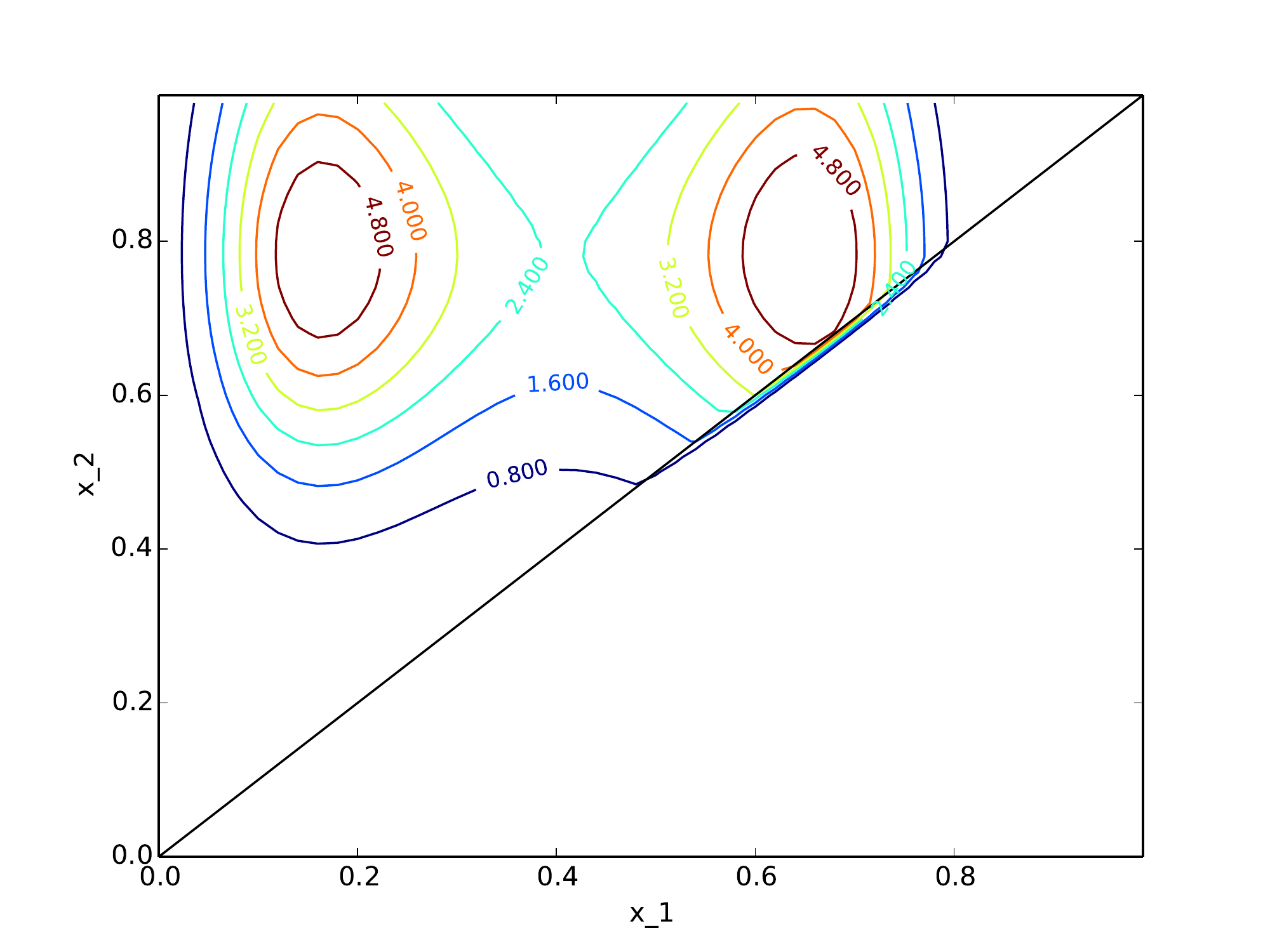}}
 \subfigure[Kernel]{\includegraphics[width=4cm]{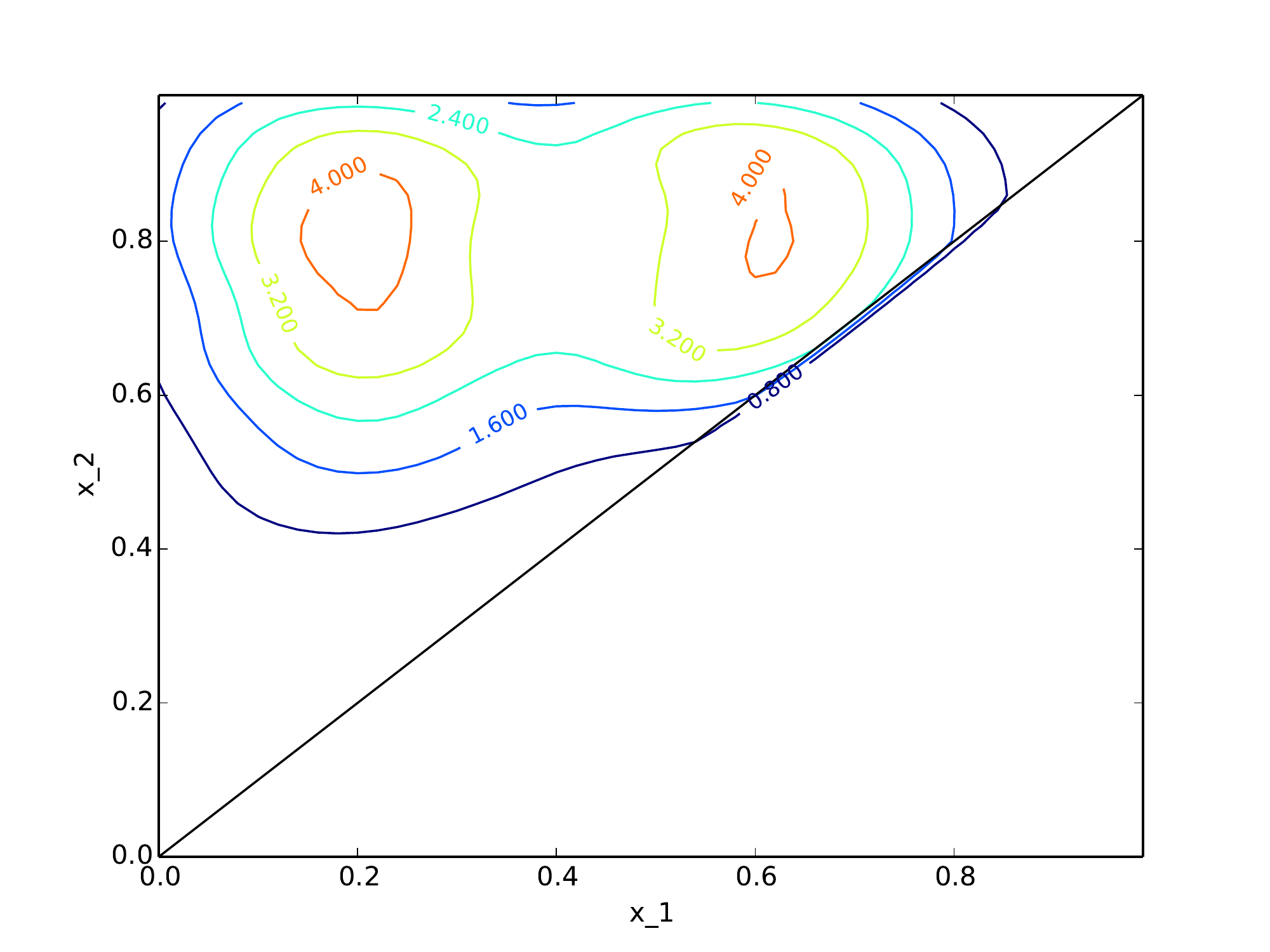}}
 \caption{Joint density functions of the true density and its estimators with Normal mix marginals.}
 \label{fig:kernel_ESE_normal_bim_cont}
 \end{figure}
 
 We can conclude that the additive exponential series estimator outperforms the kernel density estimator both with respect to the Kullback-Leibler distance and the $L^2$ distance. As expected, the performance of both methods increases with the sample size. The boxplot of the $100$ values of the Kullback-Leibler and $L^2$ distance for the different sample sizes can be found in Figures \ref{fig:boxplot_beta_KL}, \ref{fig:boxplot_gumbel_KL} and \ref{fig:boxplot_normal_bim_KL}. Figures \ref{fig:kernel_ESE_beta_cont}, \ref{fig:kernel_ESE_gumbel_cont} and \ref{fig:kernel_ESE_normal_bim_cont} illustrate the different estimators compared to the true joint density function for the three cases obtained with a sample size of $1000$. We can observe that the additive exponential series method leads to a smooth estimator compared to the kernel method. 
 
 \begin{rem}
  The additive exponential series model encompasses a lot of popular choices for the marginals $p_1, p_2$. For example, the exponential distribution is included in the model for $m_i=1$, and the normal distribution is included for $m_i=2$. Thus we expect that if we choose exponential or normal distributions for $p_1, p_2$, we obtain even better results for the additive exponential series estimator, which was confirmed by the numerical experiments (not included here for brevity).   
\end{rem}

 \section{Appendix: Orthonormal series of polynomials}  \label{sec:orth_poly} 

\subsection{Jacobi polynomials}
The following results can be found in  \cite{abramowitz1970handbook}
p. 774. 
   The Jacobi polynomials $(P^{(\alpha,\beta)}_k, k \in \N )$ for
   $\alpha,\beta \in (-1, +\infty )$ are  series of orthogonal
   polynomials  with respect to the measure $w_{\alpha, \beta}(t)
   \ind_{[-1, 1]}(t) \, dt$, with $w_{\alpha,\beta}(t)=(1-t)^{\alpha} (1+t)^{\beta}$ for $t\in [-1, 1]$.
They are
given by Rodrigues' formula, for $t\in [-1, 1]$, $k\in \N$:
\[
     P^{(\alpha,\beta)}_k(t)=\frac{(-1)^k}{2^k k! w_{\alpha,\beta}(t)}
      \frac{d^k}{dt^k}\left[ w_{\alpha,\beta}(t)(1-t^2)^k\right]. 
\]
The normalizing constants  are given by:
\begin{equation} \label{eq:Jac_norm_10}
   \int_{-1}^1  P^{(\alpha,\beta)}_k(t)P^{(\alpha,\beta)}_\ell(t) w_{\alpha,\beta}(t)\,
   dt = \ind_{\{k=\ell\}} \frac{2^{\alpha+\beta+1}}{2k+\alpha+\beta+1} \frac{\Gamma(k+\alpha+1)\Gamma(k+\beta+1)}{\Gamma(k+\alpha+\beta+1)k!} \cdot
\end{equation}

In what follows, we will be interested in Jacobi polynomials with $\alpha=d-i$ and $\beta=i-1$, which are orthogonal to the weight function $w_{d-i,i-1}(t)=\ind_{[-1, 1]}(t)(1-t)^{d-i} (1+t)^{i-1}$. The leading coefficient of $P^{(d-i,i-1)}_k$ is:
\begin{equation}
   \label{eq:omega}
\omega'_{i,k}=\frac{(2k+d-1)!}{2^k k!(k+d-1)!}\cdot
\end{equation}
Let $r\in \N^*$. Recall that $P^{(\alpha,\beta)}_k$ has degree $k$. The derivatives of the Jacobi polynomials $P^{(d-i,i-1)}_k$, $r \leq
k$,  verify, for $t\in I$ (see Proposition 1.4.15 of \cite{dunkl2001orthogonal}): 
\begin{equation} \label{eq:jac_deriv}
        \frac{d^{r}}{dt^{r}}P^{(d-i,i-1)}_k(t) =
        \frac{(k+d-1+r)!}{2^{r}( k+d-1)!}
        P^{(d-i+r,i-1+r)}_{k-r}(t).
\end{equation}
 We also have: 
      \begin{equation} \label{eq:jac_sup}
        \sup_{t\in [-1,1]} \val{P^{(d-i,i-1)}_k(t)}= \max\left(\frac{(k+d-i)!}{k!(d-i)!},\frac{(k+i-1)!}{k!(i-1)!} \right).
      \end{equation}

\subsection{Definition of the basis functions} 

Based on the Jacobi polynomials, we define a shifted version, normalized
and adapted to the interval $I=[0,1]$.

\begin{defi} \label{def:phi}
  For $1 \leq i \leq d$, $k \in \N$, we define for $t\in I$:
\[
     \phi_{i,k}(t) = \rho_{i,k}\sqrt{(d-i)!(i-1)!} \, P^{(d-i,i-1)}_k(2t-1),
\]
 with
\begin{equation}
   \label{eq:def-rik}
\rho_{i,k}=\sqrt{(2k+d)k!(k+d-1)!/((k+d-i)!(k+i-1)!)}.
\end{equation}
\end{defi}
Recall the definition \reff{eq:def_qi} of  the marginals $q_i$ of the Lebesgue
measure  on  the   simplex.  According  to  the   following  Lemma,  the
polynomials  $(\phi_{i,k},   k\in  \N)$  form  an   orthonormal  basis  of
$L^2(q_i)$  for all  $1\leq i\leq  d$. Notice  that $\varphi_{i,k}$  has
degree $k$.

\begin{lem} \label{prop:phi_P_equiv}
  For $1 \leq i \leq d$, $k,\ell \in \N$, we have:
\[
     \int_I \phi_{i,k} \phi_{i,\ell}\, q_i = \ind_{\{k=\ell\}}.
\]
\end{lem}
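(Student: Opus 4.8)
The plan is to reduce the claimed orthonormality of the $\phi_{i,k}$ with respect to $q_i$ to the known orthogonality relation \reff{eq:Jac_norm_10} for the Jacobi polynomials $P^{(\alpha,\beta)}_k$ with the specific parameters $\alpha = d-i$, $\beta = i-1$. The key observation is that the measure $q_i$ defined in \reff{eq:def_qi} is, up to the change of variables $t \mapsto 2t-1$ mapping $I = [0,1]$ onto $[-1,1]$, a rescaled copy of the Jacobi weight $w_{d-i,i-1}(s)\,ds = (1-s)^{d-i}(1+s)^{i-1}\,ds$. So the first step is to perform this substitution carefully and track the constants.

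Concretely, write $s = 2t-1$, so $dt = ds/2$, $1-t = (1-s)/2$ and $t = (1+s)/2$. Then
\[
\int_I \phi_{i,k}\phi_{i,\ell}\, q_i = \frac{\rho_{i,k}\rho_{i,\ell}(d-i)!(i-1)!}{(d-i)!(i-1)!}\int_0^1 P^{(d-i,i-1)}_k(2t-1)P^{(d-i,i-1)}_\ell(2t-1)(1-t)^{d-i}t^{i-1}\,dt,
\]
and after substitution the integral becomes $2^{-(d-i)-(i-1)-1}\int_{-1}^1 P^{(d-i,i-1)}_k(s)P^{(d-i,i-1)}_\ell(s)\,w_{d-i,i-1}(s)\,ds = 2^{-d}\int_{-1}^1 (\cdots)$. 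For $k \neq \ell$ this vanishes by \reff{eq:Jac_norm_10}, giving the off-diagonal case immediately. For $k = \ell$, I substitute the value from \reff{eq:Jac_norm_10} with $\alpha+\beta+1 = d$:
\[
\int_{-1}^1 \bigl(P^{(d-i,i-1)}_k(s)\bigr)^2 w_{d-i,i-1}(s)\,ds = \frac{2^{d}}{2k+d}\cdot\frac{\Gamma(k+d-i+1)\Gamma(k+i)}{\Gamma(k+d)\,k!} = \frac{2^d}{2k+d}\cdot\frac{(k+d-i)!\,(k+i-1)!}{(k+d-1)!\,k!}.
\]

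Then the diagonal entry equals $\rho_{i,k}^2 \cdot 2^{-d} \cdot \frac{2^d}{2k+d}\cdot\frac{(k+d-i)!(k+i-1)!}{(k+d-1)!\,k!}$, and plugging in the definition \reff{eq:def-rik} of $\rho_{i,k}$, namely $\rho_{i,k}^2 = (2k+d)\,k!\,(k+d-1)!/\bigl((k+d-i)!(k+i-1)!\bigr)$, everything cancels to give exactly $1$. So the proof is essentially a bookkeeping exercise: the normalizing factor $\rho_{i,k}$ was designed precisely to make this cancellation happen. I do not anticipate a genuine obstacle here; the only thing to be careful about is matching the Gamma-function arguments in \reff{eq:Jac_norm_10} ($\Gamma(k+\alpha+1) = \Gamma(k+d-i+1)$ and $\Gamma(k+\beta+1) = \Gamma(k+i)$, $\Gamma(k+\alpha+\beta+1) = \Gamma(k+d)$) with the factorials appearing in $\rho_{i,k}$, and noting that the prefactors $\sqrt{(d-i)!(i-1)!}$ in Definition \ref{def:phi} cancel against the $(d-i)!(i-1)!$ that $q_i$ carries by \reff{eq:def_qi}, so they play no role in the normalization and only serve to make the weight match the Jacobi form exactly.
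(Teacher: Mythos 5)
Your proposal is correct and follows exactly the paper's own argument: the prefactors $\sqrt{(d-i)!(i-1)!}$ cancel against the constant in $q_i$, the substitution $s=2t-1$ produces the factor $2^{-d}$ and converts the integral into the Jacobi orthogonality relation \reff{eq:Jac_norm_10} with $\alpha=d-i$, $\beta=i-1$, and the normalization $\rho_{i,k}$ from \reff{eq:def-rik} makes the diagonal term equal to $1$. The bookkeeping of the Gamma-function arguments and the constant $2^d/(2k+d)$ is accurate, so nothing is missing.
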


\begin{proof} We have, for $k,\ell \in \N$:
   \begin{align*}
    \int_I \phi_{i,k} \phi_{i,\ell}\, q_i &
    =\rho_{i,k}\rho_{i,\ell} \int_0^1  P^{(d-i,i-1)}_k(2t-1)
    P^{(d-i,i-1)}_\ell(2t-1) (1-t)^{d-i} t^{i-1} \, dt \\ 
                    & = \frac{\rho_{i,k}\rho_{i,\ell}}{2^d}  \int_{-1}^1
                    P^{(d-i,i-1)}_k(s) P^{(d-i,i-1)}_\ell(s)
                    w_{d-i,i-1}(s)\, ds\\ 
                    & =  \ind_{\{k=\ell\}},
   \end{align*}
where we used \reff{eq:Jac_norm_10} for the last equality. 
\end{proof}

\subsection{Mixed scalar products}

Recall notation \reff{eq:def-hi}, so that 
$\varphi_{[i],k}(x)=\varphi_{i,k}(x_i)$ for $x=(x_1, \ldots, x_d)\in
\triangle$. Notice that $( \phi_{[i],k}, k\in
\N)$ is a family of orthonormal polynomials  with
respect to the Lebesgue measure on $\triangle$, for all $1\leq i\leq d$.

We give the  mixed scalar products of $(\phi_{[i],k},k \in \N)$ and
 $(\phi_{[j],\ell},\ell \in \N)$, $1\leq i<j \leq d$ with respect to the
 Lebesgue measure on the simplex $\triangle$.

 \begin{lem} \label{lem:phi_ki_phi_lj}
For  $1\leq i<j \leq d$ and  $k,\ell \in \N$, we have:
\[
     \int_\triangle \phi_{[i],k}\, \phi_{[j],\ell} = \ind_{\{k=\ell\}}
     \sqrt{\frac{(j-1)!(d-i)!}{(i-1)!(d-j)!}}
     \sqrt{\frac{(k+d-j)!(k+i-1)!}{(k+d-i)!(k+j-1)!}}\cdot
\]
We also have $0 \leq \int_\triangle \phi_{[i],k}\, \phi_{[j],\ell} \leq 1$
for all $k,\ell \in \N$. 
 \end{lem}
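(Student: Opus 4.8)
The plan is to integrate out, in $\int_\triangle \phi_{[i],k}\,\phi_{[j],\ell}$, all coordinates except $x_i$ and $x_j$, thereby reducing to a one-dimensional integral against the marginal $q_i$ (or $q_j$). Since $i<j$, computing the volumes of the three slabs $\{0\le x_1\le\cdots\le x_{i-1}\le x_i\}$, $\{x_i\le x_{i+1}\le\cdots\le x_{j-1}\le x_j\}$ and $\{x_j\le x_{j+1}\le\cdots\le x_d\le1\}$ identifies the joint $(i,j)$-marginal of the Lebesgue measure on $\triangle$, so that, with the notation \reff{eq:def-hi},
\[
\int_\triangle \phi_{[i],k}\,\phi_{[j],\ell}
= \int_0^1\!\!\int_0^t \phi_{i,k}(s)\,\phi_{j,\ell}(t)\,
\frac{s^{i-1}(t-s)^{j-i-1}(1-t)^{d-j}}{(i-1)!\,(j-i-1)!\,(d-j)!}\;ds\,dt ;
\]
one checks via a Beta integral that integrating $t$, resp.\ $s$, out of this measure gives back $q_i$, resp.\ $q_j$, as in \reff{eq:def_qi}.

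Next, fixing $s$ and substituting $t=s+(1-s)u$ in the inner $t$-integral, the powers of $1-s$ combine to $(1-s)^{d-i}$, leaving
\[
\int_s^1 \phi_{j,\ell}(t)\,\frac{(t-s)^{j-i-1}(1-t)^{d-j}}{(j-i-1)!\,(d-j)!}\,dt
= (1-s)^{d-i}R_\ell(s),\qquad
R_\ell(s)=\int_0^1\phi_{j,\ell}\bigl(u+(1-u)s\bigr)\,\frac{u^{j-i-1}(1-u)^{d-j}}{(j-i-1)!\,(d-j)!}\,du ,
\]
where $R_\ell$ is a polynomial in $s$ of degree at most $\ell$, since $u\mapsto u+(1-u)s$ is affine in $s$. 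Because $s^{i-1}(1-s)^{d-i}/(i-1)!\;ds=(d-i)!\,q_i(ds)$, this gives $\int_\triangle \phi_{[i],k}\phi_{[j],\ell}=(d-i)!\int_I \phi_{i,k}\,R_\ell\,q_i$. By the orthonormality of $(\phi_{i,m},m\in\N)$ in $L^2(q_i)$ (Lemma \ref{prop:phi_P_equiv}) and $\deg R_\ell\le\ell$, this vanishes whenever $\ell<k$; the mirror computation (integrate $s$ out first, ending with $q_j$ and a polynomial of degree $\le k$) gives vanishing when $k<\ell$. Hence the scalar product is $0$ unless $k=\ell$.

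For $k=\ell$ only the top-degree monomial of $R_k$ survives against $\phi_{i,k}$, and $\int_I \phi_{i,k}(s)\,s^k\,q_i(ds)=1/c_{i,k}$, where $c_{i,k}$ denotes the leading coefficient of $\phi_{i,k}$. A Beta integral gives the leading coefficient of $R_k$ as $c_{j,k}\int_0^1(1-u)^k\,\frac{u^{j-i-1}(1-u)^{d-j}}{(j-i-1)!\,(d-j)!}\,du=c_{j,k}\,(k+d-j)!/\bigl((d-j)!\,(k+d-i)!\bigr)$. By Definition \ref{def:phi} together with \reff{eq:omega} and \reff{eq:def-rik}, $c_{i,k}=2^k\,\omega'_{i,k}\,\rho_{i,k}\sqrt{(d-i)!\,(i-1)!}$ with $\omega'_{i,k}$ independent of $i$, so that
\[
\int_\triangle \phi_{[i],k}\,\phi_{[j],k}
=\frac{(d-i)!\,(k+d-j)!}{(d-j)!\,(k+d-i)!}\cdot\frac{c_{j,k}}{c_{i,k}}
=\frac{(d-i)!\,(k+d-j)!}{(d-j)!\,(k+d-i)!}\cdot\frac{\rho_{j,k}}{\rho_{i,k}}\sqrt{\frac{(d-j)!\,(j-1)!}{(d-i)!\,(i-1)!}} ,
\]
and inserting $\rho_{j,k}/\rho_{i,k}=\sqrt{(k+d-i)!\,(k+i-1)!/\bigl((k+d-j)!\,(k+j-1)!\bigr)}$ from \reff{eq:def-rik} and collecting all factorials under one square root yields exactly the claimed value $\sqrt{(j-1)!\,(d-i)!/\bigl((i-1)!\,(d-j)!\bigr)}\;\sqrt{(k+d-j)!\,(k+i-1)!/\bigl((k+d-i)!\,(k+j-1)!\bigr)}$.

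Finally, nonnegativity of $\int_\triangle\phi_{[i],k}\phi_{[j],\ell}$ is immediate from this closed form (it is $0$, or a product of square roots of positive rationals), and the bound $\le1$ follows from Cauchy--Schwarz in $L^2(\triangle)$ together with $\norm{\phi_{[i],k}}_{L^2}^2=\int_I\phi_{i,k}^2\,q_i=1$; alternatively, each of the two radicands above is a product of fractions of the form $m/(m+k)\le1$, since $i<j$. The step I expect to be most delicate is the factorial bookkeeping for $k=\ell$ --- in particular, tracking the auxiliary marginal measures produced by each partial integration and verifying that all powers of $1-s$ (resp.\ $s$) cancel, so that a genuine $q_i$- (resp.\ $q_j$-) integral is recovered.
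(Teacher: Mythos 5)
Your proof is correct and follows essentially the same route as the paper: reduce to the $(x_i,x_j)$-marginal of Lebesgue measure on $\triangle$, integrate one variable out (you scale toward $q_i$ via $t=s+(1-s)u$, the paper toward $q_j$ via $x_i=x_j t$ --- mirror images) to obtain a polynomial of controlled degree, kill $k\neq\ell$ by orthogonality, and settle $k=\ell$ by leading-coefficient bookkeeping, with $0\leq\cdot\leq 1$ from the closed form and Cauchy--Schwarz. One small slip: ``each of the two radicands is a product of fractions $m/(m+k)\leq 1$'' is not literally true (the first radicand $\tfrac{(j-1)!(d-i)!}{(i-1)!(d-j)!}$ is $\geq 1$); one must regroup factorials across the two square roots as the paper does, but your Cauchy--Schwarz argument already gives the bound, so nothing is missing.
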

 
 \begin{proof}
We have:
    \begin{align*}
    \int_\triangle \phi_{[i],k}\, \phi_{[j],\ell} 
& = \int_0^1 \left(\int_0^{x_j} \frac{x_i^{i-1}}{(i-1)!}
  \frac{(x_j-x_i)^{j-i-1}}{(j-i-1)!} \phi_{i,k}(x_i) \, dx_i\right)
\phi_{j,\ell}(x_j) \frac{(1-x_j)^{d-j}}{(d-j)!}  \, dx_j \\ 
         & = \int_I r_k \phi_{j,\ell}\, q_j,
    \end{align*}
    with $r_k$ a polynomial defined on $I$ given by:
\[
r_k(s)  =  (j-1)! \int_0^{1} \frac{t^{i-1}}{(i-1)!}
\frac{(1-t)^{j-i-1}}{(j-i-1)!} \phi_{i,k}(s t) \, dt. 
\]
Notice that $r_k$ is a polynomial of degree at most $k$ as $\phi_{i,k}$ is a polynomial with degree $k$. Therefore if $k<\ell$ ,
we   have   $\int_\triangle    \phi_{[i],k}\phi_{[j],\ell}   =0$   since
$\phi_{j,\ell}$ is orthogonal (with respect to the measure $q_j$) to any
polynomial of degree less than $\ell$. Similar calculations show that if
$k>\ell$, the integral is also $0$.
    
    Let us consider now the case $k=\ell$. We compute  the 
    coefficient $\nu_{k}$ of $t^k$ in the polynomial $r_k$. We deduce from
    \reff{eq:omega} that the leading coefficient $\omega_{i,k}$ of $\phi_{i,k}$ is given by:
\[
       \omega_{i,k}= \rho_{i,k} \sqrt{(d-i)!(i-1)!} \omega'_{i,k}=
       \rho_{i,k} \sqrt{(d-i)!(i-1)!} \frac{(2k+d-1)!}{k!(k+d-1)!}\cdot
\]
    Using this we obtain for $\nu_{k}$ :
\begin{align*}
       \nu_{k} & = (j-1)!  \omega_{i,k} \int_0^{1}
       \frac{t^{k+i-1}}{(i-1)!} \frac{(1-t)^{j-i-1}}{(j-i-1)!} \, dt \\ 
                   & =  \omega_{i,k} \frac{(k+i-1)!(j-1)!
                   }{(k+j-1)!(i-1)!},
\end{align*}
and thus $r_k$ has degree $k$.
    The orthonormality of $(\phi_{j,k}, k \in \N)$ ensures that  $\int_I
    r_k \phi_{j,k}\,  q_j=\nu_{k} 
    /\omega_{j,k}$. Therefore, we obtain:
\[
\int_\triangle \varphi_{[i], k} \varphi_{[j],k}=
\frac{\nu_{k}
}{\omega_{j,k}}=\sqrt{\frac{(j-1)!(d-i)!}{(i-1)!(d-j)!}}
\sqrt{\frac{(k+d-j)!(k+i-1)!}{(k+d-i)!(k+j-1)!}} \cdot
\]
Since  $(j-1)!/(i-1)! \leq  (k+j-1)!/(k+i-1)!$, and  $(d-i)!/(d-j)! \leq
(k+d-i)!/(k+d-j)!$, we can conclude that
$
     0 \leq    \int_\triangle \phi_{[i],k}\phi_{[j],k}\leq  1. 
$

 \end{proof}

 This shows that the family of functions $\phi=(\phi_{i,k}, 1\leq i \leq
 d,  k \in \N)$ is not orthogonal with respect to the Lebesgue measure
 on $\triangle$. For $k
 \in \N^*$,  let us  consider the  matrix $R_k \in  \R^{d \times  d}$ with
 elements:
 \begin{equation} \label{eq:def_Rk}
 R_k(i,j)=  \int_\triangle \phi_{[i],k}\phi_{[j],k}. 
 \end{equation}
 If $Y=(Y_{1}, \hdots, Y_{d})$ is uniformly distributed on $\triangle$, then  $R_k$ is the correlation matrix of the random variable $(\phi_{1,k}(Y_{1}), \hdots,\phi_{d,k}(Y_{d})) $. Therefore it is symmetric and positive semi-definite.
   Let $\lambda_{k,1} \leq \hdots \leq \lambda_{k,d}$ denote the eigenvalues of $R_k$. We aim to find a lower bound for these eigenvalues which is independent of $k$. 
  
\begin{lem} 
\label{lem:min_eigen_Rk}  
For $k  \in \N^*$, the  smallest eigenvalue $\lambda_{k,d}$ of  $R_k$ is
given by:
\[
   \lambda_{k,d} = \frac{k}{k+d-1},
\]
and we have $ \lambda_{k,d} \geq 1/d$.
\end{lem}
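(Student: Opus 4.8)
The plan is to turn $R_k$ into an explicit matrix, exhibit the extremal eigenvector in closed form, and then certify via total positivity that its eigenvalue is the smallest one. Set $t_i=\binom{k+i-1}{k}/\binom{k+d-i}{k}$ and $c_i=\sqrt{t_i}$. Rearranging the formula in Lemma~\ref{lem:phi_ki_phi_lj} (together with $R_k(i,i)=1$) one checks that $R_k(i,j)=c_{\min(i,j)}/c_{\max(i,j)}$, i.e. $R_k=D^{-1}GD^{-1}$ with $D=\mathrm{diag}(c_1,\dots,c_d)$ and $G(i,j)=\min(t_i,t_j)$. Since $t_{i+1}/t_i=(k+i)(k+d-i)/(i(d-i))>1$ for $1\le i\le d-1$, the nodes satisfy $0<t_1<\dots<t_d$, so $G$ is the classical ``min'' Gram matrix on distinct positive nodes.

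Next I would guess the eigenvector $x^*$ with $x^*_i=(-1)^{d-i}\,c_i\,\binom{k+d-1}{d-i}$ and verify $R_kx^*=\frac{k}{k+d-1}x^*$. The verification rests on two elementary identities: the factorial identity $t_i\binom{k+d-1}{d-i}=\binom{k+d-1}{i-1}$, which turns $c_ix^*_i$ and $x^*_i/c_i$ into signed binomial coefficients, and the partial-sum identity $\sum_{m=0}^{p}(-1)^m\binom{N}{m}=(-1)^p\binom{N-1}{p}$, which collapses the two sums in $(R_kx^*)_i=c_i^{-1}\sum_{j\le i}c_jx^*_j+c_i\sum_{j>i}x^*_j/c_j$ into multiples of $\binom{k+d-2}{i-1}$ and $\binom{k+d-2}{d-i-1}$; a short computation with factorials then produces the eigenvalue $\frac{k}{k+d-1}$, so this value is an eigenvalue.

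It remains to see that $\frac{k}{k+d-1}$ is the \emph{smallest} eigenvalue, and this is the delicate step. The matrix $G(i,j)=\min(t_i,t_j)$ is totally nonnegative, nonsingular, and has strictly positive sub- and super-diagonal entries, hence is an oscillation matrix in the sense of Gantmacher--Krein; conjugation by the positive diagonal $D^{-1}$ preserves all three properties, so $R_k$ is an oscillation matrix too. The Gantmacher--Krein oscillation theorem then gives that $R_k$ has $d$ simple positive eigenvalues and that the eigenvector attached to the smallest one changes sign exactly $d-1$ times. Since $c_i>0$ and $\binom{k+d-1}{d-i}>0$, the vector $x^*$ has a strictly alternating sign pattern, i.e. exactly $d-1$ sign changes, so $x^*$ is that eigenvector and $\lambda_{k,d}=\frac{k}{k+d-1}$. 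The bound $\lambda_{k,d}\ge1/d$ is then immediate, since $\frac{k}{k+d-1}-\frac1d=\frac{(k-1)(d-1)}{d(k+d-1)}\ge0$ for $k\ge1$.

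The routine parts are the first two steps: rewriting the lemma, and the binomial bookkeeping in the eigenvector check. The main obstacle is exactly the passage from ``eigenvalue'' to ``smallest eigenvalue'', which needs a genuine structural input. I would use the oscillation-matrix argument above; equivalent routes would be to show that $R_k^{-1}$ is tridiagonal with negative off-diagonal entries and apply Perron--Frobenius to $\mathrm{diag}((-1)^i)\,R_k^{-1}\,\mathrm{diag}((-1)^i)$ (whose Perron vector is $\mathrm{diag}((-1)^i)x^*>0$), or to prove directly the equivalent sharp discrete Hardy-type inequality $\sum_{i=1}^d (u_i-u_{i-1})^2/(t_i-t_{i-1})\le\frac{k+d-1}{k}\sum_{i=1}^d u_i^2/t_i$ with $u_0=t_0=0$. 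What makes the oscillation route clean is having the alternating eigenvector $x^*$ in closed form, which the second step supplies.
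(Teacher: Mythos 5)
Your route is correct, and it is genuinely different from the paper's. Your closed form checks out: with $t_i=\binom{k+i-1}{k}/\binom{k+d-i}{k}$ one indeed gets $R_k(i,j)=\sqrt{t_{\min(i,j)}/t_{\max(i,j)}}$ from Lemma \ref{lem:phi_ki_phi_lj}, the two binomial identities you quote do make $R_kx^*=\frac{k}{k+d-1}x^*$ come out exactly, and the Gantmacher--Krein step is sound (the min-matrix on strictly increasing positive nodes is totally nonnegative with determinant $t_1\prod_i(t_{i+1}-t_i)>0$ and positive sub/superdiagonals, hence oscillatory, and positive diagonal conjugation preserves all of this), so the strictly alternating sign pattern of $x^*$ pins $\frac{k}{k+d-1}$ down as the smallest eigenvalue. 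The paper proceeds differently: it writes down the inverse $R_k^{-1}$ explicitly as a symmetric tridiagonal matrix with negative off-diagonal entries, passes to the entrywise nonnegative matrix $M_k$ with the same characteristic polynomial, exhibits the positive eigenvector of $M_k$ for $\lambda^*=(k+d-1)/k$, and invokes Perron--Frobenius for irreducible nonnegative matrices to conclude $\lambda^*$ is the largest eigenvalue of $R_k^{-1}$ --- essentially the first ``equivalent route'' you mention at the end, including the observation that $\mathrm{diag}((-1)^{i-1})v$ is the corresponding eigenvector of $R_k^{-1}$. The trade-off: the paper's argument only needs Perron--Frobenius but rests on the unproved-in-detail claim that $R_k^{-1}$ has the stated tridiagonal form, whereas yours works directly with $R_k$ (no inverse needed, and the Green's/min-matrix structure is derived transparently from Lemma \ref{lem:phi_ki_phi_lj}) at the cost of importing oscillation-matrix theory. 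For the final bound the paper uses monotonicity of $k/(k+d-1)$ in $k$, while you verify $\frac{k}{k+d-1}-\frac1d=\frac{(k-1)(d-1)}{d(k+d-1)}\ge 0$ directly; both are fine.
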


\begin{proof}
It is easy to
  check that the  inverse $R_{k}^{-1}$ of $R_k$ exists  and is symmetric
  tridiagonal with diagonal  entries $D_i$, $1 \leq i \leq  d$ and lower
  (and upper) diagonal elements $Q_i$, $1 \leq i \leq d-1$ given by:
\[
     D_i=\frac{(k+d-1)(k+1)+2(i-1)(d-i)}{k(k+d)} \quad \text{and}\quad 
    Q_{i}=-\frac{\sqrt{i(d-i)(k+i)(k+d-i)}}{k(k+d)}\cdot
\]
The matrix $R_{k}^{-1}$ is positive definite, since all of its principal minors 
have a positive determinant.
In particular, this ensures that the eigenvalues of $R_k$ and $R^{-1}_{k}$
are all positive.
Let $c_i(\lambda)$, $1\leq i \leq d$ denote the $i$-th leading principal
minor  of  the  matrix  $R^{-1}_k-\lambda   I_d$,  where  $I_d$  is  the
$d$-dimensional  identity matrix.   The  eigenvalues  of $R^{-1}_k$  are
exactly the roots of the characteristic polynomial $c_d(\lambda)$. Since
$R^{-1}_k$  is   symmetric  and  tridiagonal,  we   have  the  following
recurrence relation for $c_i(\lambda)$, $1\leq i \leq d$:
   \[
      c_{i}(\lambda)=(D_i-\lambda) c_{i-1}(\lambda)- Q^2_{i-1} c_{i-2}(\lambda),
   \]
   with initial values  $c_0(\lambda)=1$, $c_{-1}(\lambda)=0$. 

Let $M_k$ be the symmetric tridiagonal matrix $d\times d$ with diagonal
entries  $D_i$, $1  \leq i  \leq d$  and lower  (and upper)  diagonal
   elements $|Q_i|$, $1 \leq i \leq d-1$. Notice the characteristic
   polynomial of $M_k$ is also $c_d(\lambda)$. So $M_k$ and $R_k^{-1}$
   have the same eigenvalues. 

It is easy to check that $\lambda^*=(k+d-1)/k$ is an eigenvalue of $M_k$
with corresponding eigenvector $v=(v_1, \ldots, v_d)$ given by, for
$1\leq i\leq d$: 
\[
v_i=\sqrt{\frac{(d-1)!}{(d-i)!}\frac{(k+d-1)!}{(k+d-i)!}
  \frac{k!}{(k+i-1)!} \frac{1 }{(i-1)!}}\cdot
\]
(One can check that $v'=(v'_1, \ldots, v'_d)$, with $v'_i=(-1)^{i-1} v_i$, is
an eigenvector of $R_k^{-1}$ with eigenvalue $\lambda^*$.) 

 The matrix $M_k$ has non-negative elements, with positive elements in
 the diagonal, sub- and superdiagonal. Therefore $M_k$ is irreducible,
 and we can apply the Perron-Frobenius theorem for non-negative,
 irreducible matrices: the largest eigenvalue of $M_k$ has multiplicity
 one and is the only eigenvalue with corresponding eigenvector $x$ such
 that $x>0$. Since $v>0$, we deduce that $\lambda^*$ is the largest
 eigenvalue of $M_k$. It is also the largest eigenvalue of
 $R_k^{-1}$. Thus $1/\lambda^*=k/(k+d-1)$ is the lowest eigenvalue of
 $R_k$. 

Since $\lambda_{k,d}$ is increasing in $k$, we have the uniform lower bound $1/d$. 
  \end{proof}

  \begin{rem}
   We conjecture that the eigenvalues $\lambda_{k,i}$ of $R_k$ are given by, for $1 \leq i \leq d$:
 \[
    \lambda_{k,i}=\frac{k(k+d)}{(k+i)(k+i-1)}\cdot
 \]
\end{rem}

  \subsection{Bounds between different norms} \label{sec:bounds}
  
  In this Section, we will  give inequalities between different types of
  norms  for  functions  defined   on  the  simplex  $\triangle$.  These
  inequalities    are    used    during    the    proof    of    Theorem
  \ref{theo:main_stat}. Let $m=(m_1, \hdots, m_d) \in (\N^*)^d$. Recall  the notation $\phi_m$ and  $\theta \cdot
  \phi_m$    with    $\theta  = (\theta_{i,k};1\leq k  \leq m_i, 1\leq
  i\leq d)    \in    \R^{\val{m}}$    from    Section 
  \ref{sec:estim_stat}.

  For $1\leq  i\leq d$, we set $\theta_i =  (\theta_{i,k},1\leq k  \leq
  m_i)  \in \R^{m_i}$,  
  $\phi_{i,m}= (\phi_{i,k},1\leq k  \leq m_i)$ and:
\[
\theta_i\cdot    \phi_{i,m}=\sum_{k=1}^{m_i}     \theta_{i,k}
  \varphi_{i,k}\quad\text{and}\quad \theta_i\cdot
  \phi_{[i],m}=\sum_{k=1}^{m_i}  \theta_{i,k} 
  \varphi_{[i],k},
\]
with $\phi_{[i],m}= (\phi_{[i],k},1\leq  k  \leq m_i)$. In particular,
we have $\varphi_m=\sum_{i=1}^d \phi_{[i],m}$ and $\theta \cdot
  \phi_m=\sum_{i=1}^d\theta_i\cdot
  \phi_{[i],m}$. We first give lower and upper bounds on $\norm{\theta \cdot \phi_m}_{L^2}$.

  \begin{lem}\label{lem:thetaphi_theta}
   For all $\theta \in \R^{\val{m}}$ we have:
   \[
   \frac{\norm{\theta} }{ \sqrt{d} }   \leq \norm{\theta\cdot \phi_m}_{L^2}
  \leq \sqrt{d} \norm{\theta}.
   \]
  \end{lem}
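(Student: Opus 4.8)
The plan is to compute $\norm{\theta\cdot\phi_m}_{L^2}^2$ by expanding the square and exploiting the block structure of the Gram matrix of the family $(\phi_{[i],k})$ with respect to the Lebesgue measure on $\triangle$. Writing $\theta\cdot\phi_m=\sum_{i=1}^d\theta_i\cdot\phi_{[i],m}$, I would use Lemma \ref{prop:phi_P_equiv} (which gives $\int_\triangle \phi_{[i],k}\phi_{[i],\ell}=\int_I\phi_{i,k}\phi_{i,\ell}\,q_i=\ind_{\{k=\ell\}}$) for the diagonal blocks and Lemma \ref{lem:phi_ki_phi_lj} for the off-diagonal blocks. The key observation from Lemma \ref{lem:phi_ki_phi_lj} is that $\int_\triangle\phi_{[i],k}\phi_{[j],\ell}$ vanishes unless $k=\ell$, so cross-terms only couple coefficients sharing the same polynomial degree $k$. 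Hence
\[
\norm{\theta\cdot\phi_m}_{L^2}^2 = \sum_{k\geq 1} w_k^\top R_k\, w_k,
\]
where $w_k=(\theta_{i,k}\ind_{\{k\leq m_i\}})_{1\leq i\leq d}\in\R^d$ and $R_k$ is the correlation matrix introduced in \reff{eq:def_Rk} (with the convention that rows/columns corresponding to $k>m_i$ are dropped, which only removes a principal submatrix and preserves the eigenvalue bounds by interlacing).

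Next I would invoke the spectral bounds on $R_k$: since $R_k$ is a correlation matrix its diagonal entries are $1$ and it is positive semidefinite, so its largest eigenvalue is at most $d$ (the trace), giving $w_k^\top R_k w_k\leq d\,\norm{w_k}^2$; and by Lemma \ref{lem:min_eigen_Rk} its smallest eigenvalue is $\lambda_{k,d}=k/(k+d-1)\geq 1/d$, giving $w_k^\top R_k w_k\geq (1/d)\norm{w_k}^2$. Restricting to a principal submatrix only improves the lower eigenvalue bound and lowers the upper one by Cauchy interlacing, so the same inequalities hold for the actual blocks appearing. Summing over $k$ and using $\sum_k\norm{w_k}^2=\norm{\theta}^2$ yields
\[
\frac{\norm{\theta}^2}{d}\leq\norm{\theta\cdot\phi_m}_{L^2}^2\leq d\,\norm{\theta}^2,
\]
and taking square roots gives the claim.

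The main obstacle is bookkeeping rather than depth: one must be careful that the relevant matrix when $m_i$ are not all equal is a principal submatrix of $R_k$ (obtained by deleting the indices $i$ with $m_i<k$), and then justify that this submatrix still has smallest eigenvalue $\geq 1/d$ and spectral radius $\leq d$. This follows from Cauchy's interlacing theorem together with Lemma \ref{lem:min_eigen_Rk} and the fact that a principal submatrix of a correlation matrix is again a correlation matrix (so its trace equals its dimension, which is at most $d$). An alternative, avoiding interlacing altogether, is to note that for any symmetric positive semidefinite $d\times d$ matrix with unit diagonal one has $\|R_k\|_{\mathrm{op}}\leq d$ directly, and that the lower bound $w_k^\top R_k w_k\geq\lambda_{k,d}\norm{w_k}^2$ extends to zero-padded vectors; I would present whichever of these two routes is shortest.
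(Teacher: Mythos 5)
Your proof is correct and, for the lower bound (the substantive half), follows exactly the paper's route: expand $\norm{\theta\cdot\phi_m}_{L^2}^2$ using the orthonormality in $L^2(q_i)$ and Lemma \ref{lem:phi_ki_phi_lj}, group the terms by degree $k$ into quadratic forms in the zero-padded vectors, and apply the eigenvalue bound $\lambda_{k,d}\geq 1/d$ of Lemma \ref{lem:min_eigen_Rk}. The only deviation is the upper bound, where the paper simply uses the triangle inequality, the identity $\norm{\theta_i\cdot\phi_{i,m}}_{L^2(q_i)}=\norm{\theta_i}$ and Cauchy--Schwarz, whereas you bound $\lambda_{\max}(R_k)$ by the trace $d$ of the correlation matrix $R_k$; both give the constant $\sqrt{d}$, your version is equally valid, and the interlacing discussion is unnecessary once you work with zero-padded vectors.
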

  
  \begin{proof}
    For the upper bound, one simply has, by the triangle inequality and
    the orthonormality:
    \[
      \norm{\theta\cdot \phi_m}_{L^2} \leq \sum_{i=1}^d
      \norm{\theta_i \cdot \phi_{i,m}}_{L^2(q_i)} = \sum_{i=1}^d
      \norm{\theta_i} \leq \sqrt{d} \norm{\theta}. 
    \]
   For the lower bound, we have:
   \begin{equation} \label{eq:theta_phi_min}
   \norm{\theta \cdot \phi_m }_{L^2}^2 
 = \sum_{i=1}^d \sum_{k=1}^{m_i} \theta_{i,k}^2  + 2\sum_{i<j}
         \sum_{k=1}^{\min(m_i,m_j)} \theta_{i,k} \theta_{j,k}
         \int_\triangle \phi_{[i],k}\phi_{[j],k},  
   \end{equation}
where we used the normality of $\varphi_{[i],k}$ with respect to the
Lebesgue measure on $\triangle$ and Lemma \ref{lem:phi_ki_phi_lj} for
the cross products. 
    We can rewrite  this  in a matrix form:
    \[
      \norm{\theta \cdot \phi _m}_{L^2}^2  \geq 
      \sum_{k=1}^{\max (m) } (\theta^*_k )^T R_k \theta^*_k,
    \]
    where $R_k \in \R^{d\times d}$ is given by \reff{eq:def_Rk} and
    $\theta^*_k = (\theta^*_{1,k}, \hdots, \theta^*_{d,k}) \in \R^d$ is defined, for $1 \leq i \leq d$, 
    $1\leq k \leq \max (m)$, as: 
    \[
        \theta^*_{i,k}=\theta_{i,k}\ind_{\{k \leq m_i\}}. 
    \]
    Since,   according   to   Lemma  \ref{lem:min_eigen_Rk},   all   the
    eigenvalues of $R_k$ are uniformly larger than $1/d$, this gives:
    \[
      \norm{\theta\cdot \phi_m }_{L^2}^2  \geq \inv{d}\sum_{k=1}^{\max (m)}
      \norm{\theta^*_k}^2=
      \frac{\norm{\theta}^2}{d}\cdot       
    \]
    This concludes the proof. 
  \end{proof}

We give an inequality between different norms for polynomials defined
on $I$. 
     \begin{lem} \label{lem:poly}
       If $h$ is a polynomial of degree less then or equal to $n$ on
       $I$, then we have for all $1\leq i \leq d$: 
       \[
          \norm{h}_\infty \leq \sqrt{2(d-1)!} (n+d)^{d}\norm{h}_{L^2(q_i)}
       \]
     \end{lem}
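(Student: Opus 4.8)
The plan is to expand $h$ in the orthonormal basis of $L^2(q_i)$ given by the polynomials $(\phi_{i,k},k\in\N)$ and then control the sup norm through the sup norms of the basis elements. Since $\phi_{i,k}$ has degree exactly $k$, the family $(\phi_{i,0},\dots,\phi_{i,n})$ is a basis of the $(n+1)$-dimensional space of polynomials of degree at most $n$, so we may write $h=\sum_{k=0}^n c_k\,\phi_{i,k}$ with $c_k=\int_I h\,\phi_{i,k}\,q_i$; by Lemma \ref{prop:phi_P_equiv}, $\norm{h}_{L^2(q_i)}^2=\sum_{k=0}^n c_k^2$. The triangle inequality followed by Cauchy--Schwarz then gives, for every $t\in I$,
\[
  \val{h(t)}\le \sum_{k=0}^n \val{c_k}\,\norm{\phi_{i,k}}_\infty \le \norm{h}_{L^2(q_i)}\Big(\sum_{k=0}^n \norm{\phi_{i,k}}_\infty^2\Big)^{1/2},
\]
so that everything reduces to bounding $\sum_{k=0}^n \norm{\phi_{i,k}}_\infty^2$.

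For this I would substitute Definition \ref{def:phi} and \reff{eq:def-rik} into the Jacobi estimate \reff{eq:jac_sup}. A short computation yields
\[
  \norm{\phi_{i,k}}_\infty^2 = (2k+d)\,\frac{(k+d-1)!}{k!}\,\max\!\left(\frac{(i-1)!\,(k+d-i)!}{(d-i)!\,(k+i-1)!},\ \frac{(d-i)!\,(k+i-1)!}{(i-1)!\,(k+d-i)!}\right).
\]
The two arguments of the maximum are reciprocals of one another, and the reflection $t\mapsto 1-t$ sends $q_i$ to $q_{d+1-i}$, hence leaves $\norm{\phi_{i,k}}_\infty$ unchanged; so without loss of generality one may assume $i-1\le d-i$, in which case the maximum equals the first term and $(i-1)!\le(d-i)!$. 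Bounding $(k+d-1)!/k!=\prod_{j=1}^{d-1}(k+j)\le(k+d)^{d-1}$, $(k+d-i)!/(k+i-1)!\le(k+d)^{d-1}$, and $2k+d\le 2(k+d)$, one obtains the uniform estimate $\norm{\phi_{i,k}}_\infty^2\le 2(k+d)^{2d-1}$.

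Summing over $k$, $\sum_{k=0}^n\norm{\phi_{i,k}}_\infty^2\le 2(n+1)(n+d)^{2d-1}\le 2(n+d)^{2d}$, and plugging this into the first display gives $\norm{h}_\infty\le \sqrt2\,(n+d)^d\,\norm{h}_{L^2(q_i)}$, which is in fact slightly stronger than the asserted inequality since $(d-1)!\ge1$. The only point requiring care is the factorial bookkeeping in the middle step; the case split $i-1\le d-i$ versus $i-1>d-i$ is innocuous thanks to the reflection symmetry of the $q_i$, so in effect there is only one computation to perform.
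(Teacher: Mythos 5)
Your argument is correct and is essentially the paper's own proof: expand $h$ in the orthonormal basis $(\phi_{i,k})$, apply Cauchy--Schwarz, and bound $\sum_{k\le n}\norm{\phi_{i,k}}_\infty^2$ via the Jacobi sup-norm estimate \reff{eq:jac_sup}, exactly as in the proof of Lemma \ref{lem:poly} using \reff{eq:CS_poly} and \reff{eq:sup_phi_ik}. The only difference is that by handling the maximum via the $i\mapsto d+1-i$ symmetry instead of the crude factor $(d-1)!$ you obtain the slightly sharper constant $\sqrt{2}$, which of course implies the stated bound.
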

     \begin{proof}
      There exists $(\beta_k, 0\leq k\leq n)$ such that $h= \sum_{k=0}^n
      \beta_k \phi_{i,k}$.  By the Cauchy-Schwarz inequality, we have: 
      \begin{equation} \label{eq:CS_poly}    
         \val{h} \leq
\left(\sum_{k=0}^n \beta^2_k\right)^{1/2}
\left(\sum_{k=0}^n \phi^2_{i,k} \right)^{1/2}.
       \end{equation}

We deduce from Definition \ref{def:phi} of $\phi_{i,k}$ and
\reff{eq:jac_sup} that:
      \[
     \norm{\phi_{i,k}}_\infty = \sqrt{\frac{(2k+d)(k+d-1)!}{k!}}
     \max\left( \sqrt{\frac{(i-1)!(k+d-i)!}{(d-i)!(k+i-1)!}}, \sqrt{\frac{(d-i)!(k+i-1)!}{(i-1)!(k+d-i)!}}
       \right). 
      \]
      For all $1\leq i \leq d$, we have the uniform upper bound:
 \begin{equation} \label{eq:sup_phi_ik}
   \norm{\phi_{i,k}}_\infty \leq \sqrt{(d-1)!}\sqrt{2k+d} \frac{(k+d-1)!}{k!}\cdot
 \end{equation}
      This implies that for $t \in I$:
\[
       \sum_{k=0}^n \phi^2_{i,k}(t)  
 \leq \sum_{k=0}^n  \norm{\phi^2_{i,k}}_\infty    
\leq (d-1)!
\sum_{k=0}^n  (2k+d)  \left( \frac{(k+d-1)!}{k!} \right)^2
 \leq 2(d-1)! (n+d)^{2d}.
\]
Bessel's inequality implies that $\sum_{k=0}^n \beta^2_k
\leq \norm{h}_{L^2(q_i)} ^2$. We conclude the proof using
\reff{eq:CS_poly}. 
\end{proof}

We  recall   the  notation  $S_m$   of  the  linear  space   spanned  by
$(\phi_{[i],k}; 1 \leq k \leq m_i, 1  \leq i \leq d)$, and the different
norms introduced in Section \ref{sec:prelim}.

\begin{lem} \label{lem:Am}      
Let $m\in (\N^*)^d$ and $\kappa_m=\sqrt{2d!}\sqrt{\sum_{i=1}^d
  (m_i+d)^{2d}}$. Then we have  for every $g \in S_{m}$: 
$
\norm{g}_{\infty} \leq \kappa_m \norm{g}_{L^2}.
$
     \end{lem}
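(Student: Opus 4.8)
The plan is to reduce the sup-norm bound on the simplex $\triangle$ to one-dimensional bounds on $I$, coordinate by coordinate, and then chain the three preceding lemmas. Any $g \in S_m$ can be written as $g = \theta\cdot\phi_m = \sum_{i=1}^d g_{[i]}$ for some $\theta \in \R^{\val{m}}$, where $g_i = \theta_i\cdot\phi_{i,m} = \sum_{k=1}^{m_i}\theta_{i,k}\phi_{i,k}$ is a polynomial on $I$ of degree at most $m_i$. Since $g(x) = \sum_{i=1}^d g_i(x_i)$ for $x = (x_1,\ldots,x_d)\in\triangle$, with each coordinate $x_i$ ranging over $I$, the triangle inequality gives $\norm{g}_\infty \le \sum_{i=1}^d \norm{g_i}_\infty$, the univariate sup norms being taken over $I$.

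First I would apply Lemma \ref{lem:poly} to each $g_i$ with $n = m_i$, which yields $\norm{g_i}_\infty \le \sqrt{2(d-1)!}\,(m_i+d)^d\,\norm{g_i}_{L^2(q_i)}$. By the orthonormality of $(\phi_{i,k}, k\in\N)$ in $L^2(q_i)$ (Lemma \ref{prop:phi_P_equiv}) one has $\norm{g_i}_{L^2(q_i)} = \norm{\theta_i}$. Summing over $i$ and applying the Cauchy--Schwarz inequality to $\sum_{i=1}^d (m_i+d)^d\norm{\theta_i}$ gives
\[
   \norm{g}_\infty \le \sqrt{2(d-1)!}\left(\sum_{i=1}^d (m_i+d)^{2d}\right)^{1/2}\left(\sum_{i=1}^d \norm{\theta_i}^2\right)^{1/2} = \sqrt{2(d-1)!}\left(\sum_{i=1}^d (m_i+d)^{2d}\right)^{1/2}\norm{\theta}.
\]
Finally I would invoke the lower bound $\norm{\theta} \le \sqrt{d}\,\norm{\theta\cdot\phi_m}_{L^2} = \sqrt{d}\,\norm{g}_{L^2}$ from Lemma \ref{lem:thetaphi_theta}. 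Combining the two estimates and using $\sqrt{2(d-1)!}\cdot\sqrt{d} = \sqrt{2d!}$ produces $\norm{g}_\infty \le \sqrt{2d!}\left(\sum_{i=1}^d (m_i+d)^{2d}\right)^{1/2}\norm{g}_{L^2} = \kappa_m\norm{g}_{L^2}$, which is the claim.

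There is no genuine obstacle here; the argument is just a careful concatenation. The only two points that deserve a word of attention are the initial reduction — one must observe that the restriction of $g$ to $\triangle$ is literally a sum of univariate polynomials evaluated at the coordinates, so that its sup norm over $\triangle$ is dominated by the sup norms over $I$ of the components $g_i$ — and the last step, where Lemma \ref{lem:thetaphi_theta} (which absorbs the non-orthogonality of the mixed family via the uniform eigenvalue bound of Lemma \ref{lem:min_eigen_Rk}) is exactly what allows passing from $\norm{\theta}$ back to $\norm{g}_{L^2}$ at the cost of only the dimensional factor $\sqrt{d}$.
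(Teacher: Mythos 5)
Your proof is correct and follows essentially the same route as the paper: triangle inequality over the additive components, Lemma \ref{lem:poly} on each univariate piece, Cauchy--Schwarz combined with the orthonormality identity $\norm{g_i}_{L^2(q_i)}=\norm{\theta_i}$, and finally the lower bound of Lemma \ref{lem:thetaphi_theta} to return from $\norm{\theta}$ to $\norm{g}_{L^2}$. The only difference is the immaterial order in which orthonormality and Cauchy--Schwarz are invoked.
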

     \begin{proof}
Let $g \in S_{m}$. We can write $g=\theta \cdot \varphi_m$ for a unique
$\theta\in \R^{\val{m}}$. Let $g_i=\theta_i\cdot \varphi_{i,m}$ so that  $g=\sum_{i=1}^d g_{[i]}$, where $g_i$ is a
polynomial defined on $I$ of degree at most $m_i$ for all $1\leq i\leq
d$. We have:
\begin{align*}
\norm{g}_{\infty} 
& \leq \sum_{i=1}^d \norm{g_{i}}_\infty  \\
& \leq \sqrt{2(d-1)!} \sum_{i=1}^d (m_i+d)^{d}
\norm{g_{i}}_{L^2(q_i)} \\ 
& \leq \frac{\kappa_m}{\sqrt{d}} 
\left(\sum_{i=1}^d \norm{g_{i}}^2_{L^2(q_i)}\right)^{1/2} \\ 
& =  \frac{\kappa_m}{\sqrt{d}} 
\norm{\theta}\\
& \leq   \kappa_m
\norm{\theta\cdot \varphi_m}_{L^2}\\
&= \kappa_m\norm{g}_{L^2}.
\end{align*}
where  we   used  Lemma   \ref{lem:poly}  for  the   second  inequality,
Cauchy-Schwarz  for   the  third   inequality,   and Lemma
\ref{lem:thetaphi_theta}    for     the    fourth     inequality.
\end{proof}

      \begin{rem} \label{cor:Am}
      For $d$ fixed, $\kappa_m$ as a function of $m$ verifies:
      \[
        \kappa_m= O \left( \sqrt{\sum_{i=1}^d m_i^{2d}}\right) = O(\val{m}^d).
      \]
     \end{rem}
     
\subsection{Bounds on approximations}

     Now we bound the $L^2$ and $L^\infty$ norm of the approximation
     error of additive functions where each component belongs to a
     Sobolev space.   Let $m=(m_1, \hdots, m_d) \in (\N^*)^d$, $r=(r_1, \hdots, r_d) \in (\N^*)^d$ such that $m_i +1 \geq r_i$ for all $1 \leq i \leq d$. Let $\ell = \sum_{i=1}^d \ell_{[i]}$ with $\ell_i \in W^2_{r_i}(q_i)$ and $\int_I \ell_i q_i =0$ for $1 \leq i \leq d$. Let $\ell_{i,m_i}$ be the orthogonal projection in $L^2(q_i)$ of $\ell_i$ on the span of $(\phi_{i,k}, 0 \leq k \leq m_i)$ given by $\ell_{i,m_i} =\sum_{k=1}^{m_i} \left( \int_I \ell_i \phi_{i,k} q_i \right) \phi_{i,k}$.  Then $\ell_m = \sum_{i=1}^d \ell_{[i],m_i}$ is the approximation of $\ell$ on $S_m$ given by \reff{eq:Sm}. We start by giving a bound on the $L^2(q_i)$ norm of the error when we approximate $\ell_i$ by $\ell_{i,m_i}$.

     \begin{lem} \label{lem:delta_mi}
     For each $1 \leq i \leq d$, $m_i +1 \geq r_i$ and $\ell_i \in W^2_{r_i}(q_i)$ , we have:
     \begin{equation}\label{eq:delta_mi_lem}
        \norm{\ell_i - \ell_{i,m_i}}_{L^2(q_i)}^2 \leq \frac{2^{-2r_i}(m_i+1-r_i)!(m_i+d)!}{(m_i+1)! (m_i+d+r_i)!}  \norm{\ell^{(r_i)}_{i}}^2_{L^2(q_i)}.
     \end{equation}

     \end{lem}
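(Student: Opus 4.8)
The plan is to combine Parseval's identity in $L^2(q_i)$ with the differentiation formula \reff{eq:jac_deriv} for Jacobi polynomials. Write $\ell_i=\sum_{k\ge 0}c_{k}\phi_{i,k}$ in $L^2(q_i)$, with $c_{k}=\int_I\ell_i\phi_{i,k}\,q_i$; then $\ell_{i,m_i}=\sum_{k=1}^{m_i}c_k\phi_{i,k}$ and $\norm{\ell_i-\ell_{i,m_i}}^2_{L^2(q_i)}=\sum_{k>m_i}c_k^2$, so it suffices to bound this tail in terms of the $r_i$-th derivative of $\ell_i$. To that end I introduce the measure $\mu_i(dt)=\frac{1}{(d-i+r_i)!(i-1+r_i)!}(1-t)^{d-i+r_i}t^{i-1+r_i}\ind_I(t)\,dt$ and the polynomials $(\tilde\phi_{i,j},j\ge 0)$ built from $P^{(d-i+r_i,i-1+r_i)}_j(2t-1)$ exactly as in Definition \ref{def:phi}; by the computation of Lemma \ref{prop:phi_P_equiv} applied with these shifted parameters, $(\tilde\phi_{i,j})$ is an orthonormal basis of $L^2(\mu_i)$.

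Differentiating Definition \ref{def:phi} $r_i$ times (the argument $2t-1$ is affine) and inserting \reff{eq:jac_deriv} gives, for $k\ge r_i$, a relation $\phi_{i,k}^{(r_i)}=\gamma_{i,k}\,\tilde\phi_{i,k-r_i}$, with a positive constant whose square, after a short computation using the explicit form \reff{eq:def-rik} of $\rho_{i,k}$ and of its shifted analogue, equals
\[
  \gamma_{i,k}^2=\frac{k!}{(k-r_i)!}\,\frac{(k+d+r_i-1)!}{(k+d-1)!}\,\frac{(d-i)!(i-1)!}{(d-i+r_i)!(i-1+r_i)!}.
\]
Being a product of two increasing products of $r_i$ consecutive integers (times a constant), $k\mapsto\gamma_{i,k}^2$ is nondecreasing on $\{k\ge r_i\}$.

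Next I identify the $L^2(\mu_i)$-coefficients of $\ell_i^{(r_i)}$: I claim $\int_I\ell_i^{(r_i)}\tilde\phi_{i,j}\,\mu_i=\gamma_{i,j+r_i}\,c_{j+r_i}$ for all $j\ge 0$. Since $\ell_i\in W^2_{r_i}(q_i)$, the functions $\ell_i,\dots,\ell_i^{(r_i-1)}$ are absolutely continuous, hence bounded on $I$, and $\ell_i^{(r_i)}\in L^2(q_i)\subset L^2(\mu_i)$ because $d\mu_i/dq_i$ is bounded. Integrating by parts $r_i$ times, all boundary terms vanish since the weight of $\mu_i$ and its derivatives of order $<r_i$ vanish at both endpoints (the exponents $d-i+r_i$ and $i-1+r_i$ are at least $r_i$, as $d\ge i\ge 1$). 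The resulting integrand is, by a Rodrigues-type consequence of \reff{eq:jac_deriv}, proportional to $\phi_{i,j+r_i}\,q_i$; the proportionality constant is fixed to be $\gamma_{i,j+r_i}$ by testing the identity on $\ell_i=\phi_{i,j+r_i}$ (a polynomial, so no analytic issue). Parseval in $L^2(\mu_i)$ then gives $\sum_{j\ge 0}\gamma_{i,j+r_i}^2c_{j+r_i}^2=\norm{\ell_i^{(r_i)}}^2_{L^2(\mu_i)}$.

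Finally, since $m_i+1\ge r_i$ and $\gamma_{i,k}^2$ is nondecreasing, $\gamma_{i,k}^2\ge\gamma_{i,m_i+1}^2$ for every $k>m_i$, hence
\[
  \sum_{k>m_i}c_k^2\le\frac{1}{\gamma_{i,m_i+1}^2}\sum_{k>m_i}\gamma_{i,k}^2c_k^2\le\frac{1}{\gamma_{i,m_i+1}^2}\,\norm{\ell_i^{(r_i)}}^2_{L^2(\mu_i)}.
\]
To return to $L^2(q_i)$ I use $d\mu_i/dq_i=\frac{(d-i)!(i-1)!}{(d-i+r_i)!(i-1+r_i)!}\,(t(1-t))^{r_i}$ and $t(1-t)\le 1/4$ on $I$, which gives $\norm{\ell_i^{(r_i)}}^2_{L^2(\mu_i)}\le\frac{(d-i)!(i-1)!}{(d-i+r_i)!(i-1+r_i)!}\,2^{-2r_i}\norm{\ell_i^{(r_i)}}^2_{L^2(q_i)}$; substituting the explicit value of $\gamma_{i,m_i+1}^2$ and simplifying (the factors involving $i$ cancel) produces exactly the bound \reff{eq:delta_mi_lem}. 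The only genuinely delicate point is the bookkeeping of the normalization constants — deriving $\gamma_{i,k}$ and $\gamma_{i,m_i+1}^2$ correctly and checking the cancellations — together with verifying that the integration-by-parts boundary terms vanish; beyond that I expect no conceptual obstacle.
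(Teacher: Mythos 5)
Your argument is correct and is essentially the paper's own proof: the paper likewise uses \reff{eq:jac_deriv} to view $(\phi_{i,k}^{(r_i)},k\ge r_i)$ as an orthogonal system for the shifted weight $(1-t)^{d-i+r_i}t^{i-1+r_i}$ with norming constants $\kappa_{i,k}$ (your $\gamma_{i,k}$ up to the normalization of $\mu_i$), pulls the smallest constant $\kappa_{i,m_i+1}$ out of the tail, and compares the shifted weight to $q_i$ via $t(1-t)\le 1/4$ to land on exactly \reff{eq:delta_mi_lem}. Your integration-by-parts identification of the coefficients of $\ell_i^{(r_i)}$ (and using only Bessel's inequality) just makes explicit a step the paper states tersely via Parseval, so the two proofs coincide in substance.
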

     
     \begin{proof}
Notice that \reff{eq:jac_deriv} implies that the series $(\phi^{(r_i)}_{i,k},k \geq r_i)$ is orthogonal on $I$ with respect to the weight function $v_i(t)=(1-t)^{d-i+r_i}t^{i-1+r_i}$, and the normalizing constants $\kappa_{i,k} \geq 0$ are given by:
      \begin{align} \label{eq:kappa_ik}
       \nonumber \kappa_{i,k}^2 & = \int_0^1 \left( \phi^{(r_i)}_{i,k}(t) \right)^2 v_i(t) \,dt \\
       \nonumber       & = \rho_{i,k}^2 (d-i)!(i-1)! \int_0^1 \left( \frac{d^{r_i}}{dt^{r_i}}P^{(d-i,i-1)}_k(2t-1) \right)^2 v_i(t) \,dt \\
       \nonumber       & = \rho_{i,k}^2 (d-i)!(i-1)! \frac{((k+d-1+r_i)!)^2}{2^{d+2r_i} ((k+d-1)!)^2} \int_{-1}^1 \left(P^{(d-i+r_i,i-1+r_i)}_{k-r_i}(s) \right)^2 w_{d-i+r_i,i-1+r_i}(s) \,ds \\
              & = (d-i)!(i-1)!\frac{k!(k+d-1+r_i)!}{(k-r_i)!(k+d-1)!},
      \end{align}      
      where we used the definition of $\phi_{i,k}$ for the second equality, \reff{eq:jac_deriv} for the third equality and \reff{eq:Jac_norm_10} for the fourth equality. Notice that $\kappa_{i,k}$ is non-decreasing as a function of $k$. 
      Since $\ell_i - \ell_{i,m_i} = \sum_{k=m_i+1}^{\infty} \beta_{i,k} \phi_{i,k}$, we have:
      \begin{equation} \label{eq:delta_mi}
        \norm{\ell_i - \ell_{i,m_i}}_{L^2(q_i)}^2  = \sum_{k=m_i+1}^\infty \beta_{i,k}^2 
                                         \leq \inv{\kappa_{i,m_i+1}^2} \sum_{k=m_i+1}^\infty \kappa_{i,k}^2 \beta_{i,k}^2 \leq \inv{\kappa_{i,m_i+1}^2} \sum_{k=r_i}^\infty \kappa_{i,k}^2 \beta_{i,k}^2,
      \end{equation}
      where the first inequality is due to the monotonicity of $\kappa_{i,k}$ as $k$ increases. Thanks to \reff{eq:jac_deriv} and the definition of $\kappa_{i,k}$, we get that $(\phi_{i,k}^{(r_i)}/\kappa_{i,k}, k \geq r_i)$ is an orthonormal basis of $L^2(v_i)$. Therefore, we have 
      \begin{equation} \label{eq:sum_kappa_beta}
      \sum_{k=r_i}^\infty \kappa_{i,k}^2 \beta_{i,k}^2  = \int_0^1 \left( \ell^{(r_i)}_{i}(t) \right)^2 v_i(t) \,dt 
      \leq  \frac{(d-i)!(i-1)!}{2^{2r_i}} \norm{\ell^{(r_i)}_{i}}^2_{L^2(q_i)},
      \end{equation}
      since $\sup_{t\in I} q_i(t)/v_i(t) = (d-i)!(i-1)!/2^{2r_i}$. This and \reff{eq:delta_mi} implies \reff{eq:delta_mi_lem}.
            
     \end{proof}

     Lemma \ref{lem:delta_mi}  yields a simple bound on the $L^2$  norm of the approximation error $\ell-\ell_m$.   
     \begin{cor} \label{cor:delta_m}
      For $m=(m_1, \hdots, m_d)$, $m_i +1 \geq r_i$ and $\ell_i \in W^2_{r_i}(q_i)$ for all $1 \leq i \leq d$, we get:
      \[
        \norm{\ell-\ell_m}_{L^2} = O\left(\sqrt{\sum_{i=1}^d m_i^{-2r_i}}\right).
      \]
     \end{cor}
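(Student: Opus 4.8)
The plan is to reduce the estimate to the one-dimensional bound of Lemma \ref{lem:delta_mi} via the triangle inequality. Writing $\ell - \ell_m = \sum_{i=1}^d (\ell_{[i]} - \ell_{[i],m_i})$, the triangle inequality in $L^2(\triangle)$ gives
\[
\norm{\ell - \ell_m}_{L^2} \leq \sum_{i=1}^d \norm{\ell_{[i]} - \ell_{[i],m_i}}_{L^2}.
\]
Since $\ell_{[i]} - \ell_{[i],m_i} = (\ell_i - \ell_{i,m_i})_{[i]}$, the identity $\int_\triangle h_{[i]}^2 = \int_I h_i^2 \, q_i$ recalled after \reff{eq:def_qi} shows that the $i$-th summand equals $\norm{\ell_i - \ell_{i,m_i}}_{L^2(q_i)}$, which is precisely the quantity controlled in Lemma \ref{lem:delta_mi}. (Note that $\ell_m$ is not the orthogonal projection of $\ell$ onto $S_m$, since the $\phi_{[i],k}$ are not mutually orthogonal, so the triangle inequality rather than Pythagoras is the natural tool here.)

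It then remains to check that the factorial prefactor appearing in \reff{eq:delta_mi_lem} is $O(m_i^{-2r_i})$ for $r_i$ and $d$ fixed. Expanding the ratios,
\[
\frac{(m_i+1-r_i)!}{(m_i+1)!} = \prod_{\ell=0}^{r_i-1} \frac{1}{m_i+1-\ell} \leq \frac{1}{(m_i+2-r_i)^{r_i}}, \qquad \frac{(m_i+d)!}{(m_i+d+r_i)!} = \prod_{\ell=1}^{r_i} \frac{1}{m_i+d+\ell} \leq \frac{1}{(m_i+d+1)^{r_i}},
\]
so that, using also $2^{-2r_i} \leq 1$, the prefactor is at most $(m_i+2-r_i)^{-r_i}(m_i+d+1)^{-r_i}$, which is $O(m_i^{-2r_i})$ since $r_i$ and $d$ are fixed. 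As $\norm{\ell_i^{(r_i)}}_{L^2(q_i)}$ is finite because $\ell_i \in W^2_{r_i}(q_i)$, Lemma \ref{lem:delta_mi} yields $\norm{\ell_i - \ell_{i,m_i}}_{L^2(q_i)} = O(m_i^{-r_i})$.

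Combining with the first display,
\[
\norm{\ell - \ell_m}_{L^2} = O\!\left( \sum_{i=1}^d m_i^{-r_i} \right) = O\!\left( \sqrt{\sum_{i=1}^d m_i^{-2r_i}} \right),
\]
where the last equality is the Cauchy--Schwarz inequality $\sum_{i=1}^d a_i \leq \sqrt{d}\, \left(\sum_{i=1}^d a_i^2\right)^{1/2}$ applied with $a_i = m_i^{-r_i}$, the factor $\sqrt{d}$ being absorbed into the constant. There is no genuine obstacle in this argument; the only point requiring a little care is the elementary asymptotics of the factorial prefactor, which is routine.
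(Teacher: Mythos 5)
Your proof is correct and follows essentially the same route as the paper: the triangle inequality reduces the bound to the one-dimensional errors $\norm{\ell_i-\ell_{i,m_i}}_{L^2(q_i)}$, which Lemma \ref{lem:delta_mi} controls, and the $\ell^1$--$\ell^2$ comparison gives the stated form. You merely spell out two steps the paper leaves implicit (the identification of $\norm{\ell_{[i]}-\ell_{[i],m_i}}_{L^2}$ with the $L^2(q_i)$ norm and the asymptotics of the factorial prefactor), both of which are handled correctly.
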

     \begin{proof}
      We have:
      \[
          \norm{\ell-\ell_m}_{L^2} \leq  \sum_{i=1}^d \norm{\ell_i-\ell_{i,m_i}}_{L^2(q_i)} =O\left(\sum_{i=1}^d m_i^{-r_i}\right) =O\left(\sqrt{\sum_{i=1}^d m_i^{-2r_i}}\right),
      \]    
      where we used \reff{eq:delta_mi_lem} for the first equality.
     \end{proof}

 Lastly, we bound the $L^\infty$ norm of the approximation error. 
 \begin{lem} \label{lem:gamma_mi}
  For each $1 \leq i \leq d$, $m_i +1 \geq r_i > d$ and $\ell_i \in W^2_{r_i}(q_i)$, we have:
     \begin{equation}\label{eq:gamma_mi_lem}
      \norm{\ell_i - \ell_{i,m_i}}_\infty \leq  \frac{2^{-r_i}\sqrt{2(d-1)!}\expp{r_i}}{\sqrt{2r_i-2d-1}} \inv{(m_i+r_i)^{r_i -d-\inv{2} }}  \norm{\ell^{(r_i)}_{i}}_{L^2(q_i)}.
     \end{equation}
  
 \end{lem}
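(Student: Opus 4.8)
The plan is to expand $\ell_i-\ell_{i,m_i}$ in the orthonormal basis $(\phi_{i,k})_k$ of $L^2(q_i)$ and to estimate the sup-norm of its tail by a Cauchy--Schwarz inequality reweighted by the normalizing constants $\kappa_{i,k}$ of the derivatives $\phi_{i,k}^{(r_i)}$ appearing in \reff{eq:kappa_ik}. Writing $\ell_i-\ell_{i,m_i}=\sum_{k>m_i}\beta_{i,k}\phi_{i,k}$ with $\beta_{i,k}=\int_I\ell_i\phi_{i,k}\,q_i$ (note $\beta_{i,0}=0$ since $\int_I\ell_i\,q_i=0$), and splitting $\beta_{i,k}\phi_{i,k}(t)=(\kappa_{i,k}\beta_{i,k})\,(\phi_{i,k}(t)/\kappa_{i,k})$, Cauchy--Schwarz gives, for every $t\in I$,
\[
\val{\ell_i(t)-\ell_{i,m_i}(t)}\le\Bigl(\sum_{k>m_i}\kappa_{i,k}^2\beta_{i,k}^2\Bigr)^{1/2}\Bigl(\sum_{k>m_i}\frac{\phi_{i,k}(t)^2}{\kappa_{i,k}^2}\Bigr)^{1/2}.
\]
Since $m_i+1\ge r_i$, the index set $\{k>m_i\}$ is contained in $\{k\ge r_i\}$, so the first factor is controlled exactly as in the proof of Lemma~\ref{lem:delta_mi}, via \reff{eq:sum_kappa_beta}: it is at most $2^{-2r_i}(d-i)!(i-1)!\,\norm{\ell_i^{(r_i)}}_{L^2(q_i)}^2$.

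For the second factor, I would bound $\phi_{i,k}(t)^2$ by $\norm{\phi_{i,k}}_\infty^2$ using \reff{eq:sup_phi_ik} and insert the explicit value \reff{eq:kappa_ik} of $\kappa_{i,k}^2$, which leaves
\[
\frac{\phi_{i,k}(t)^2}{\kappa_{i,k}^2}\le\frac{(d-1)!}{(d-i)!(i-1)!}\,b_k,\qquad
b_k:=\frac{(2k+d)\,\bigl((k+d-1)!\bigr)^3(k-r_i)!}{(k!)^3\,(k+d-1+r_i)!}.
\]
The crucial point is the pointwise bound $b_k\le 2e^{2r_i}(k+r_i)^{-(2r_i-2d)}$, valid for $k\ge r_i$. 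Granting it, and since $2r_i-2d-1>0$ (this is where $r_i>d$ enters), comparing the series with $\int_{m_i}^{\infty}(x+r_i)^{-(2r_i-2d)}\,dx$ yields $\sum_{k>m_i}b_k\le 2e^{2r_i}\bigl((2r_i-2d-1)(m_i+r_i)^{2r_i-2d-1}\bigr)^{-1}$; in particular the series $\sum_{k>m_i}\phi_{i,k}(t)^2/\kappa_{i,k}^2$ converges and $\ell_i-\ell_{i,m_i}$ converges uniformly. Multiplying the two factors, the surviving $(d-i)!(i-1)!$ cancels against the one from \reff{eq:sum_kappa_beta}, so that
\[
\norm{\ell_i-\ell_{i,m_i}}_\infty^2\le 2^{-2r_i}(d-1)!\,\norm{\ell_i^{(r_i)}}_{L^2(q_i)}^2\cdot\frac{2e^{2r_i}}{(2r_i-2d-1)\,(m_i+r_i)^{2r_i-2d-1}},
\]
and taking square roots gives exactly \reff{eq:gamma_mi_lem}.

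It remains to establish the bound on $b_k$, which is the only delicate step. Writing $b_k=(2k+d)\bigl((k+d-1)!/k!\bigr)^2\bigl(\tfrac{(k+d+r_i-1)!}{(k+d-1)!}\cdot\tfrac{k!}{(k-r_i)!}\bigr)^{-1}$, the numerator is disposed of elementarily, $(2k+d)\bigl((k+d-1)!/k!\bigr)^2\le 2(k+d-1)^{2d-1}\le 2(k+r_i)^{2d}$ (using $d\ge 2$), so it suffices to prove that the product of $2r_i$ consecutive integers satisfies
\[
\frac{(k+d+r_i-1)!}{(k+d-1)!}\cdot\frac{k!}{(k-r_i)!}=\prod_{l=1}^{r_i}(k+d-1+l)(k-r_i+l)\ \ge\ e^{-2r_i}(k+r_i)^{2r_i}\qquad\text{for }k\ge r_i.
\]
Applying Stirling's formula to the four factorials, the powers of $e$ combine to the factor $e^{-2r_i}$ and the inequality reduces to comparing $(k+r_i)^{2r_i}$ with a ratio of terms of the form $n^n$. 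This is the main obstacle, since the inequality is essentially tight when $k$ is of order $r_i$ and becomes comfortable only for large $k$, so the polynomial Stirling corrections have to be tracked with their correct sign; a short case distinction on the size of $k/r_i$ (treating $k=r_i$, where $(k-r_i)!=1$, via $r_i!\ge (r_i/e)^{r_i}$) handles it.
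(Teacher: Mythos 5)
Your proposal is correct and follows essentially the same route as the paper's proof: expansion in the basis $(\phi_{i,k})$, Cauchy--Schwarz reweighted by the constants $\kappa_{i,k}$ of \reff{eq:kappa_ik}, the bounds \reff{eq:sup_phi_ik} and \reff{eq:sum_kappa_beta}, and the comparison of the tail series with $(2r_i-2d-1)^{-1}(m_i+r_i)^{-(2r_i-2d-1)}$. The ``delicate'' factorial estimate you isolate is exactly the bound $\kappa_{i,k}^2\geq (d-i)!(i-1)!\,\expp{-2r_i}(k+r_i)^{2r_i}$ that the paper invokes without proof; it is true (e.g.\ by comparing $\sum_{l}\log$ of the $2r_i$ consecutive factors with $\int_{k-r_i}^{k+r_i}\log x\,dx$), so your sketch via Stirling with a case distinction is a viable, if heavier, way to close the same step.
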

 
 \begin{proof}
  We recall the constants $\kappa_{i,k}$, $1 \leq i \leq d$, $1 \leq k \leq m_i$ given by \reff{eq:kappa_ik}.
  Since $\ell_i - \ell_{i,m_i} = \sum_{k=m_i+1}^{\infty} \beta_{i,k} \phi_{i,k}$ we have:
      \begin{align*}
       \norm{\ell_i-\ell_{i,m_i}}_\infty & =  \left\|\sum_{k=m_i+1}^\infty \beta_{i,k} \phi_{i,k} \right\|_\infty \\
                                   & \leq \sum_{k=m_i+1}^\infty \val{\beta_{i,k}}  \left\| \phi_{i,k} \right\|_\infty \\
                                   & \leq  \sqrt{ \sum_{k=m_i+1}^\infty \frac{\left\|\phi_{i,k}\right\|^2_\infty }{\kappa^2_{i,k}}} \sqrt{\sum_{k=m_i+1}^\infty \kappa^2_{i,k} \beta^2_{i,k}} \\
                                   & \leq  \sqrt{\sum_{k=m_i+1}^\infty \frac{2(d-1)!(k+d)^{2d}}{\kappa_{i,k}^2}} \sqrt{\frac{(d-i)!(i-1)!}{2^{2r_i}}} \norm{\ell^{(r_i)}_{i}}_{L^2(q_i)}  \\
                                   & \leq  \sqrt{\sum_{k=m_i+1}^\infty \frac{2(d-1)!}{(d-i)!(i-1)!} \frac{\expp{2r_i}}{(k+r_i)^{2r_i-2d}}} \sqrt{\frac{(d-i)!(i-1)!}{2^{2r_i}}} \norm{\ell^{(r_i)}_{i}}_{L^2(q_i)}  \\
                                   & \leq  \frac{2^{-r_i} \sqrt{2(d-1)!}\expp{r_i}}{\sqrt{2r_i-2d-1} \sqrt{(m_i+r_i)^{2r_i-2d-1}}}  \norm{\ell^{(r_i)}_{i}}_{L^2(q_i)},                             
      \end{align*}
      where we used Cauchy-Schwarz for the second inequality, \reff{eq:sup_phi_ik} and \reff{eq:sum_kappa_beta} for the third inequality, $\kappa^2_{i,k} \geq (d-i)!(i-1)! (k+r_i)^{2r_i} \expp{-2r_i}$ for the fourth inequality, and $\sum_{k=m_i+1}^\infty (k+r_i)^{-2r_i+2d} \leq (2r_i-2d-1)^{-1}(m_i+r_i)^{-2r_i+2d+1}$ for the fifth inequality.
 \end{proof}
 \begin{cor} \label{cor:gamma_m}
    There exists a constant $\cc>0$ such that for all  $\ell_i \in W^2_{r_i}(q_i)$ and $m_i+1 \geq r_i > d$ for all $1 \leq i \leq d$, we have:  
    \[
       \norm{\ell - \ell_m}_\infty \leq \cc \sum_{i=1}^d  \norm{\ell^{(r_i)}_{i}}_{L^2(q_i)}.
    \]
 \end{cor}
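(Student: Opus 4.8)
The plan is to reduce the estimate to Lemma~\ref{lem:gamma_mi} applied in each coordinate, and then verify that the constant produced there is in fact uniform in $i$, $r_i$ and $m_i$. Writing $\ell - \ell_m = \sum_{i=1}^d(\ell_{[i]} - \ell_{[i],m_i})$ and using that $\norm{h_{[i]}}_\infty \le \norm{h_i}_\infty$ for any function $h_i$ on $I$ (the indicator $\ind_\triangle$ only shrinks the supremum), the triangle inequality gives
\[
   \norm{\ell - \ell_m}_\infty \le \sum_{i=1}^d \norm{\ell_i - \ell_{i,m_i}}_\infty .
\]
Applying Lemma~\ref{lem:gamma_mi} to each summand, it remains to bound
\[
   c(d,r_i,m_i) := \frac{2^{-r_i}\sqrt{2(d-1)!}\,\mathrm{e}^{r_i}}{\sqrt{2r_i - 2d - 1}}\,\inv{(m_i+r_i)^{\,r_i - d - 1/2}}
\]
by a constant depending only on $d$.

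For this I would use the standing assumptions. Since $r_i>d$ and $r_i$ is an integer, $r_i \ge d+1$, so $2r_i - 2d - 1 \ge 1$ and the exponent $r_i - d - 1/2 \ge 1/2$ is positive; and since $m_i + 1 \ge r_i$, we have $m_i + r_i \ge 2r_i - 1 \ge 1$. Hence $\sqrt{2r_i - 2d - 1} \ge 1$ and $(m_i+r_i)^{\,r_i - d - 1/2} \ge (2r_i-1)^{\,r_i - d - 1/2}$, so
\[
   c(d,r_i,m_i) \le \sqrt{2(d-1)!}\;(2r_i - 1)^{\,d+1/2}\left(\frac{\mathrm{e}/2}{2r_i-1}\right)^{r_i}.
\]
Viewed as a sequence in the integer $r_i \ge d+1$, the right-hand side is finite for every $r_i$ and tends to $0$ as $r_i \to \infty$, because the factor $(2r_i-1)^{-r_i}$ decays super-exponentially and thus dominates both $(\mathrm{e}/2)^{r_i}$ and the polynomial factor $(2r_i-1)^{d+1/2}$. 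A sequence of nonnegative reals converging to $0$ is bounded; let $\cc=\cc(d)$ be an upper bound. Combined with the displayed triangle inequality this yields $\norm{\ell - \ell_m}_\infty \le \cc \sum_{i=1}^d \norm{\ell_i^{(r_i)}}_{L^2(q_i)}$, as claimed.

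The argument is elementary, so there is no real obstacle; the only point requiring a moment's care is the last display, namely observing that the extra factor $(2r_i-1)^{-(r_i - d - 1/2)}$ obtained from the bound $m_i+r_i \ge 2r_i-1$ more than compensates for the growing factor $(\mathrm{e}/2)^{r_i}$ that comes from the crude lower bound $\kappa_{i,k}^2 \ge (d-i)!(i-1)!\,(k+r_i)^{2r_i}\mathrm{e}^{-2r_i}$ used inside Lemma~\ref{lem:gamma_mi}. It is precisely this cancellation that makes $\cc$ independent of the (possibly large) smoothness $r_i$, which is what the corollary needs for the uniform statements such as Corollary~\ref{cor:unif_conv}.
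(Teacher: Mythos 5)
Your proof is correct and follows essentially the same route as the paper: triangle inequality over the coordinates, Lemma \ref{lem:gamma_mi} for each term, the substitution $m_i+r_i\geq 2r_i-1$ coming from $m_i+1\geq r_i$, and uniform boundedness in $r_i\geq d+1$ of the resulting constant. The only difference is that you spell out explicitly why $\sqrt{2(d-1)!}\,(2r_i-1)^{d+1/2}\bigl(\tfrac{\mathrm{e}/2}{2r_i-1}\bigr)^{r_i}$ is bounded, a point the paper simply asserts.
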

 \begin{proof} Notice that for $m_i+1 \geq r_i > d$, we have:
 \begin{equation*}
  \frac{2^{-r_i}\sqrt{2(d-1)!}\expp{r_i}}{\sqrt{2r_i-2d-1}} \inv{(m_i+r_i)^{r_i -d-\inv{2} }} \leq  \frac{2^{-r_i}\sqrt{2(d-1)!}\expp{r_i}}{\sqrt{2r_i-2d-1}} \inv{(2r_i-1)^{r_i -d-\inv{2} }},
 \end{equation*}
   and that the right hand side is bounded by a constant $\cc>0$ for all $r_i \in \N^*$. Therefore:
   \[
     \norm{\ell - \ell_m}_\infty \leq \sum_{i=1}^d \norm{\ell_i-\ell_{i,m_i}}_\infty \leq \cc \sum_{i=1}^d  \norm{\ell^{(r_i)}_{i}}_{L^2(q_i)}.
   \]
 \end{proof}

\bibliographystyle{abbrv}
\bibliography{Max_entr_ord_biblio}

 \section{Preliminary elements for the proof of Theorem \ref{theo:main_stat}} \label{sec:prelim}
 
  We adapt the results from \cite{barron1991approximation} to our setting.  
  Let us recall Lemmas 1 and 2 of \cite{barron1991approximation}.

 \begin{lem}[Lemma 1 of \cite{barron1991approximation}] \label{lem:BS1}
  Let $g,h \in \cp(\triangle)$. If $\norm{\log(g/h)}_\infty< +\infty$, then we have:
  \begin{equation} \label{eq:KL_min_logsq}
    \kl{g}{h} \geq \inv{2} \expp{-\norm{\log(g/h)}_\infty} \int_\triangle g \log^2\left(g/h\right),
  \end{equation}
  and for any $\kappa \in \R$:

  \begin{equation} \label{eq:KL_maj_logsq}
    \kl{g}{h} \leq \inv{2} \expp{\norm{\log(g/h)-\kappa}_\infty} \int_\triangle g \left( \log\left(g/h\right)-\kappa \right)^2,
  \end{equation}

 \begin{equation}\label{eq:sq_maj_logsq}
  \int_\triangle \frac{(g-h)^2}{g} \leq \expp{2\left(\norm{\log(g/h)-\kappa}_\infty-\kappa \right)} \int_\triangle g \left(\log\left(g/h\right)-\kappa \right)^2.
 \end{equation}
 \end{lem}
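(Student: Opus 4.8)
The plan is to reproduce the elementary proof of Lemma~1 of \cite{barron1991approximation} in the present notation. Write $u=\log(g/h)$; the hypothesis $\norm{u}_\infty<+\infty$ makes all integrals below finite, gives $h/g=\expp{-u}$, and, since $g$ and $h$ are probability densities on $\triangle$, yields the normalization identity $\int_\triangle g\,(1-\expp{-u})=\int_\triangle(g-h)=0$, i.e.\ $\int_\triangle g\,\expp{-u}=1$. The strategy in each of the three cases is the same: reduce the inequality to a pointwise bound for a function of one real variable, integrate it against $g$ (after a constant shift, for the upper bounds), and pull the exponential factors out of the integral using $\norm{u}_\infty<+\infty$.

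For \reff{eq:KL_min_logsq} I would first use the normalization identity to rewrite $\kl{g}{h}=\int_\triangle g\,u=\int_\triangle g\,\chi(u)$, where $\chi(t)=t-1+\expp{-t}\ge0$, and then prove the elementary inequality $\chi(t)\ge\frac12 t^2\expp{-\val t}$ for all $t\in\R$: for $t\ge0$ from $\chi(t)=\int_0^t(1-\expp{-s})\,ds\ge\int_0^t s\expp{-s}\,ds=1-(1+t)\expp{-t}$ together with $\expp{t}\ge1+t+t^2/2$, and for $t<0$ by an analogous rearrangement of $\expp{s}(\expp{s}-1-s)\ge\expp{s}s^2/2\ge s^2/2$ with $s=-t$. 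Integrating against $g$ and using that $\expp{-\val u}\ge\expp{-\norm u_\infty}$ holds $g$-a.e.\ gives \reff{eq:KL_min_logsq}.

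For the upper bounds I would fix $\kappa\in\R$, set $v=u-\kappa$ and $\tilde h=\expp{\kappa}h$, so that $g=\tilde h\,\expp{v}$, $\int_\triangle\tilde h=\expp{\kappa}$ and $\int_\triangle\tilde h\,\expp{v}=1$, and expand $\psi(t)=(t-1)\expp{t}+1$ against these relations to obtain
\[
  \kl{g}{h}=\int_\triangle\tilde h\,\psi(v)+\bigl(1+\kappa-\expp{\kappa}\bigr)\le\int_\triangle\tilde h\,\psi(v),
\]
the inequality because $\expp{\kappa}\ge1+\kappa$. For \reff{eq:KL_maj_logsq} I would then use $\psi(t)\le\frac12 t^2\expp{t}\expp{\val t}$ for all $t$ (for $t\ge0$: $\psi(t)=\int_0^t s\expp{s}\,ds\le\frac12 t^2\expp{t}$; for $t<0$: $\psi(t)=-\int_t^0 s\expp{s}\,ds\le\frac12 t^2$, since $s\expp{s}\ge s$ there), so that
\[
  \int_\triangle\tilde h\,\psi(v)\le\frac12\expp{\norm v_\infty}\int_\triangle\tilde h\,\expp{v}v^2=\frac12\expp{\norm v_\infty}\int_\triangle g\,v^2,
\]
which is \reff{eq:KL_maj_logsq} with $v=\log(g/h)-\kappa$. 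For \reff{eq:sq_maj_logsq} the key remark is that the normalization lets one write the left-hand side as a variance:
\[
  \int_\triangle\frac{(g-h)^2}{g}=\int_\triangle g\,(1-\expp{-u})^2=\mathrm{Var}_g(\expp{-u})=\expp{-2\kappa}\,\mathrm{Var}_g(\expp{-v})\le\expp{-2\kappa}\int_\triangle g\,(\expp{-v}-1)^2,
\]
using $\mathrm{Var}_g(X)\le\int_\triangle g\,(X-1)^2$; then $\val{\expp{-v}-1}\le\val v\,\expp{\val v}\le\val v\,\expp{\norm v_\infty}$ $g$-a.e.\ produces the prefactor $\expp{2(\norm v_\infty-\kappa)}$, giving \reff{eq:sq_maj_logsq}.

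There is no real obstacle: the whole argument is one-variable calculus once the two algebraic identities for $\kl{g}{h}$ (through $\chi$, and through $\psi$ after the shift, both relying on $\int_\triangle g=\int_\triangle h=1$) and the variance rewriting of $\int_\triangle(g-h)^2/g$ are set up. The only parts needing care are the bookkeeping of the reweighting $\tilde h=\expp{\kappa}h$ that makes the $\kappa$-dependent identity work, and calibrating the convexity-type bounds around $\expp{t}\ge1+t+t^2/2$ so that the exponential prefactors come out exactly as stated.
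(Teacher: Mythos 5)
Your proof is correct: the identity $\int_\triangle g\,\expp{-u}=1$, the pointwise bounds $\chi(t)\ge \tfrac12 t^2\expp{-\val{t}}$ and $\psi(t)\le \tfrac12 t^2 \expp{t}\expp{\val{t}}$, the cancellation $\int_\triangle \tilde h\,\psi(v)+(1+\kappa-\expp{\kappa})=\kl{g}{h}$, and the variance rewriting all check out. The paper itself gives no proof of this statement (it simply quotes Lemma 1 of Barron and Sheu), and your argument is a faithful self-contained reconstruction of that standard elementary proof, so there is nothing to add.
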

 Lemma \ref{lem:BS1} readily implies the following Corollary.
 
 \begin{cor}  \label{cor:L1-L2}
   Let $g, h\in \cp(\triangle)$.  If $\norm{\log(g/h)}_\infty< +\infty$, then we have, for any constant $\kappa \in \R$:
   \begin{equation} \label{eq:cor_KL}
     \kl{g}{h} \leq \inv{2} \expp{\norm{\log(g/h)-\kappa}_\infty} \norm{g}_\infty \int_\triangle \left( \log\left(g/h\right)-\kappa \right)^2,
   \end{equation}
   and:
   \begin{equation} \label{eq:cor_L2}
      \norm{ g-h}_{L^2} \leq \norm{g}_\infty \expp{\left(\norm{\log(g/h)-\kappa}_\infty-\kappa\right)} \norm{  \log\left(g/h\right)-\kappa}_{L^2}.
   \end{equation}
 \end{cor}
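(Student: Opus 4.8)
The plan is to derive both inequalities directly from Lemma \ref{lem:BS1}, the only new ingredient being the crude bound $g \le \norm{g}_\infty$, which is legitimate because the hypothesis $\norm{\log(g/h)}_\infty<+\infty$ forces $g$ and $h$ to be comparable a.e.\ on $\triangle$; in particular $g$ is bounded and bounded away from $0$ on its support, so all the integrals below are finite and the division by $g$ is meaningful.

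For \reff{eq:cor_KL}, I would start from \reff{eq:KL_maj_logsq} of Lemma \ref{lem:BS1},
\[
\kl{g}{h} \le \inv{2}\,\expp{\norm{\log(g/h)-\kappa}_\infty}\int_\triangle g\,\left(\log(g/h)-\kappa\right)^2,
\]
and bound the factor $g$ in the integrand by $\norm{g}_\infty$, pulling it out of the integral. This gives \reff{eq:cor_KL} at once.

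For \reff{eq:cor_L2}, I would begin with \reff{eq:sq_maj_logsq}. Writing $(g-h)^2 = \dfrac{(g-h)^2}{g}\cdot g \le \norm{g}_\infty\,\dfrac{(g-h)^2}{g}$ and integrating over $\triangle$ yields
\[
\norm{g-h}_{L^2}^2 = \int_\triangle (g-h)^2 \le \norm{g}_\infty \int_\triangle \frac{(g-h)^2}{g} \le \norm{g}_\infty\, \expp{2\left(\norm{\log(g/h)-\kappa}_\infty-\kappa\right)}\int_\triangle g\,\left(\log(g/h)-\kappa\right)^2.
\]
Bounding $g \le \norm{g}_\infty$ once more in the remaining integral turns the right-hand side into $\norm{g}_\infty^2\, \expp{2\left(\norm{\log(g/h)-\kappa}_\infty-\kappa\right)}\,\norm{\log(g/h)-\kappa}_{L^2}^2$, and taking square roots gives \reff{eq:cor_L2}.

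There is essentially no obstacle: the corollary is a mechanical consequence of Lemma \ref{lem:BS1}, obtained by replacing the weighted norms $\int_\triangle g\,(\cdot)^2$ appearing there by the plain $L^2(\triangle)$ norm at the cost of a factor $\norm{g}_\infty$ --- once for the Kullback--Leibler bound, and twice (hence $\norm{g}_\infty^2$ before taking the square root) for the $L^2$ bound. The only point deserving a line of care is recording that $\norm{\log(g/h)}_\infty<+\infty$ makes all displayed integrals finite and the division by $g$ valid.
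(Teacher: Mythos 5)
Your proof is correct and is exactly the argument the paper intends: the paper gives no separate proof (it says Lemma \ref{lem:BS1} ``readily implies'' the corollary), and your derivation --- bounding $g\le\norm{g}_\infty$ once in \reff{eq:KL_maj_logsq}, and in \reff{eq:sq_maj_logsq} writing $\int_\triangle(g-h)^2\le\norm{g}_\infty\int_\triangle(g-h)^2/g$ plus one more factor $\norm{g}_\infty$ before taking square roots --- is precisely that routine deduction. One small caveat: your side remark that $\norm{\log(g/h)}_\infty<+\infty$ makes $g$ bounded and bounded away from zero is an over-claim (it only controls the ratio $g/h$), but nothing in the argument relies on it, since both inequalities hold trivially when $\norm{g}_\infty=+\infty$ and the bounds are valid in $[0,+\infty]$.
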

 
 Recall Definition \ref{defi:prod-f} for densities $f^0$ with a product form on $\triangle$. We give a few bounds between the $L^\infty$ norms of 
 $\log(f^0)$, $\ell ^0$ and the constant $\mathrm{a}_0$. 

 \begin{lem} \label{lem:f_l_a}
  Let $f^0 \in \cp(\triangle)$ given by Definition \ref{defi:prod-f}. Then we have:
  \begin{equation} \label{eq:f_l_a_1}
  \hspace{8mm} \val{\mathrm{a}_0} \leq \norm{\ell^0}_\infty + \val{\log(d!)}, \hspace{12mm} \norm{\log(f^0)}_\infty  \leq 2 \norm{\ell^0}_\infty + \val{\log(d!)},
  \end{equation} 
  \begin{equation} \label{eq:f_l_a_2}
   \val{\mathrm{a}_0} \leq \norm{\log(f^0)}_\infty , \hspace{30mm} \norm{\ell^0}_\infty \leq 2 \norm{\log(f^0)}_\infty.
  \end{equation}
 \end{lem}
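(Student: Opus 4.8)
The plan is to use the identity $\log(f^0)=\ell^0-\mathrm{a}_0$, valid a.e.\ on $\triangle$ by Definition \ref{defi:prod-f}, together with the triangle inequality for the sup norm, and to estimate $\val{\mathrm{a}_0}$ in two complementary ways: one producing \reff{eq:f_l_a_1}, the other \reff{eq:f_l_a_2}. The two preliminary facts I would record first are that the Lebesgue volume of $\triangle$ equals $1/d!$ (equivalently, each $q_i$ has total mass $1/d!$), and that $\int_\triangle \ell^0\,dx=\sum_{i=1}^d\int_I\ell^0_i\,q_i=0$ by the centering condition $\int_I\ell^0_i q_i=0$ and the identity $\int_\triangle h_{[i]}=\int_I h_i q_i$ recalled in Section \ref{sec:not}.

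For \reff{eq:f_l_a_1}, I would start from $\expp{\mathrm{a}_0}=\int_\triangle\expp{\ell^0}\,dx$. Since $\expp{-\norm{\ell^0}_\infty}\leq\expp{\ell^0(x)}\leq\expp{\norm{\ell^0}_\infty}$ for $x\in\triangle$, integrating over $\triangle$ gives
\[
\frac{1}{d!}\,\expp{-\norm{\ell^0}_\infty}\leq\expp{\mathrm{a}_0}\leq\frac{1}{d!}\,\expp{\norm{\ell^0}_\infty},
\]
and taking logarithms (using $\log(d!)\geq 0$) yields $\val{\mathrm{a}_0}\leq\norm{\ell^0}_\infty+\val{\log(d!)}$. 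The second bound of \reff{eq:f_l_a_1} then follows from $\norm{\log(f^0)}_\infty=\norm{\ell^0-\mathrm{a}_0}_\infty\leq\norm{\ell^0}_\infty+\val{\mathrm{a}_0}$.

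For \reff{eq:f_l_a_2}, I would instead integrate the identity $\log(f^0)=\ell^0-\mathrm{a}_0$ over $\triangle$ against Lebesgue measure: since $\int_\triangle\ell^0\,dx=0$, this gives $\int_\triangle\log(f^0)\,dx=-\mathrm{a}_0/d!$, so that $\val{\mathrm{a}_0}/d!\leq\int_\triangle\val{\log(f^0)}\,dx\leq\norm{\log(f^0)}_\infty/d!$, i.e.\ $\val{\mathrm{a}_0}\leq\norm{\log(f^0)}_\infty$; and then $\norm{\ell^0}_\infty=\norm{\log(f^0)+\mathrm{a}_0}_\infty\leq 2\norm{\log(f^0)}_\infty$ by the triangle inequality. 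There is no real obstacle in this proof; the only point that requires a little care is keeping track of the normalization constant $1/d!=\val{\triangle}$, which is precisely the source of the $\val{\log(d!)}$ terms appearing in \reff{eq:f_l_a_1}.
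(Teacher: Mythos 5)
Your proof is correct and follows the same route the paper sketches: bounding $\ell^0$ by $\norm{\ell^0}_\infty$ in $\mathrm{a}_0=\log\int_\triangle \expp{\ell^0}$ (with the volume $\val{\triangle}=1/d!$ producing the $\val{\log(d!)}$ term) for \reff{eq:f_l_a_1}, and using $\int_\triangle \ell^0=0$ to get $\val{\mathrm{a}_0}\leq \norm{\log(f^0)}_\infty$ for \reff{eq:f_l_a_2}, with the remaining bounds by the triangle inequality. Your write-up simply makes explicit the details the paper leaves implicit.
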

 \begin{proof}
 The first part of \reff{eq:f_l_a_1} can be obtained by bounding $\ell^0$ with $\norm{\ell^0}_\infty$ in the definition of $\mathrm{a}_0$. The second part is a direct consequence of this.
 The first part of \reff{eq:f_l_a_2} can be deduced from the fact that  $\int_\triangle \ell^0 =0$. The second part is again a direct consequence of the first part.
  
 \end{proof}

Let $m\in (\N^*)^d$. Recall the application $A_m$ defined in
\reff{eq:alpha_m} and set $\Omega_m=A_m(\R^{\val{m}})$. For $\alpha\in
\R^{\val{m}}$, we define the function $\mathscr{F}_\alpha$ on $\R^{\val{m}}$ by:
\begin{equation}
   \label{eq:def-F}
\mathscr{F}_\alpha(\theta)=\theta \cdot \alpha - \psi(\theta).
 \end{equation} 
 Recall also the additive exponential series model $f_\theta$ given by \reff{eq:f_theta}.

\begin{lem}[Lemma 3 of \cite{barron1991approximation}] \label{lem:BS3}
Let $m\in (\N^*)^d$. The application $A_m$ is one-to-one from
$\R^{\val{m}}$ onto $\Omega_m$, with inverse say $\Theta_m$.   Let 
$f\in \cp(\triangle)$ such that $\alpha=\int_\triangle
\varphi_m f$ belongs to $\Omega_m$. Then for all $\theta\in
\R^{\val{m}}$, we have with $\theta^*=\Theta_m(\alpha)$:
  \begin{equation}\label{eq:pyth_eq}
    \kl{f}{f_\theta}= \kl{f}{f_{\theta^*}}+\kl{f_{\theta^*}}{f_\theta}.
  \end{equation}
Furthermore, $\theta^ *$ achieves $\max_{\theta\in \R^{\val{m}}}
\mathscr{F}_\alpha (\theta)$ as well as  $\min_{\theta\in \R^{\val{m}}}
\kl{f}{f_\theta}$. 
 \end{lem}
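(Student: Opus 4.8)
The statement is the classical Pythagorean (information projection) identity for exponential families; the plan is to follow Barron--Sheu, adapted to the reference measure $\ind_\triangle\,dx$ and the non-orthogonal basis $\phi_m$ used here. The starting point is the elementary identity that, for any $g\in\cp(\triangle)$ and any $\theta\in\R^{\val{m}}$, since $\log f_\theta=\theta\cdot\phi_m-\psi(\theta)$ is bounded on $\triangle$ (each $\phi_{i,k}$ is a polynomial, hence bounded on the compact simplex),
\[
\kl{g}{f_\theta}=\int_\triangle g\log g-\int_\triangle g\,(\theta\cdot\phi_m)+\psi(\theta),
\]
the integral $\int_\triangle g\log g$ being well defined in $(-\infty,+\infty]$ because $x\log x$ is bounded below and $\triangle$ has finite Lebesgue measure. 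If moreover $\int_\triangle\phi_m\,g=\beta$, this reads $\kl{g}{f_\theta}=\int_\triangle g\log g-\mathscr{F}_\beta(\theta)$ with $\mathscr{F}_\beta$ as in \reff{eq:def-F}.

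Next I would record the analytic properties of $\psi$. Differentiating $\psi(\theta)=\log\int_\triangle \expp{\theta\cdot\phi_m}$ under the integral sign --- legitimate since $\phi_m$ is bounded on $\triangle$ --- gives $\nabla\psi(\theta)=\int_\triangle\phi_m f_\theta=A_m(\theta)$, and H\"older's inequality shows $\psi$ is convex; hence $\mathscr{F}_\alpha$ is concave and differentiable with $\nabla\mathscr{F}_\alpha(\theta)=\alpha-A_m(\theta)$. Taking $g=f$ (so $\beta=\alpha$) above yields $\kl{f}{f_\theta}=\int_\triangle f\log f-\mathscr{F}_\alpha(\theta)$, so minimizing $\theta\mapsto\kl{f}{f_\theta}$ is the same as maximizing $\mathscr{F}_\alpha$. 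Its stationarity equation $\nabla\mathscr{F}_\alpha(\theta)=0$ is exactly $A_m(\theta)=\alpha$, which, since $\alpha\in\Omega_m$ and $A_m$ is one-to-one by Lemma \ref{lem:Am_bij}, has the unique solution $\theta^*=\Theta_m(\alpha)$; by concavity this stationary point is the global maximizer, which gives the last assertion of the lemma (with the convention that when $\int_\triangle f\log f=+\infty$ the minimum is $+\infty$, attained everywhere).

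For the identity \reff{eq:pyth_eq} I would apply the same computation to $g=f_{\theta^*}$. Since $\int_\triangle\phi_m f_{\theta^*}=A_m(\theta^*)=\alpha$, we get $\kl{f_{\theta^*}}{f_\theta}=\int_\triangle f_{\theta^*}\log f_{\theta^*}-\mathscr{F}_\alpha(\theta)$; and $\int_\triangle f_{\theta^*}\log f_{\theta^*}=\theta^*\cdot\alpha-\psi(\theta^*)=\mathscr{F}_\alpha(\theta^*)$, whence $\kl{f_{\theta^*}}{f_\theta}=\mathscr{F}_\alpha(\theta^*)-\mathscr{F}_\alpha(\theta)$. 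Subtracting $\kl{f}{f_\theta}=\int_\triangle f\log f-\mathscr{F}_\alpha(\theta)$ and $\kl{f}{f_{\theta^*}}=\int_\triangle f\log f-\mathscr{F}_\alpha(\theta^*)$ then gives $\kl{f}{f_\theta}-\kl{f}{f_{\theta^*}}=\mathscr{F}_\alpha(\theta^*)-\mathscr{F}_\alpha(\theta)=\kl{f_{\theta^*}}{f_\theta}$, which is \reff{eq:pyth_eq}; when $\int_\triangle f\log f=+\infty$ both sides are $+\infty$ since $\kl{f_{\theta^*}}{f_\theta}$ is finite. No step is genuinely difficult here: the only points requiring a little care are the differentiation under the integral sign and the possibility that $\int_\triangle f\log f$ is infinite, both handled by the boundedness of the polynomials $\phi_{i,k}$ on $\triangle$ together with $\mathrm{Leb}(\triangle)=1/d!<\infty$.
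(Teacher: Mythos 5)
Your proof is correct: the decomposition $\kl{g}{f_\theta}=\int_\triangle g\log g-\mathscr{F}_\beta(\theta)$, the identification $\nabla\mathscr{F}_\alpha(\theta)=\alpha-A_m(\theta)$ with concavity of $\mathscr{F}_\alpha$, and the evaluation $\int_\triangle f_{\theta^*}\log f_{\theta^*}=\mathscr{F}_\alpha(\theta^*)$ give exactly the Pythagorean identity \reff{eq:pyth_eq} and the two optimality claims, and you handle the bounded-$\log f_\theta$ and $\int f\log f=+\infty$ issues properly. The paper itself offers no proof of this lemma --- it is imported verbatim as Lemma 3 of Barron and Sheu --- and your argument is essentially the standard one from that reference, so there is nothing to reconcile.
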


\begin{defi}
   \label{defi:info-proj}
 Let $m\in (\N^*)^d$. For $f \in \cp(\triangle)$ such that $\alpha=\int_\triangle
\varphi_m f\in \Omega_m$, the  probability density
$f_{\theta^*}$, with $\theta^*=\Theta_m(\alpha)$ (that is $\int_\triangle
\varphi_m f =\int_\triangle
\varphi_m f_{\theta^*}$), is called the
information projection of $f$. 
\end{defi}
The information projection of a density $f$ is the closest density in
the exponential family \reff{eq:f_theta} with respect to the
Kullback-Leibler distance to $f$.  
 
We consider the linear space of real valued functions defined on
$\triangle$ and generated by
$\varphi_m$:
\begin{equation} \label{eq:Sm}
S_m=\{\theta \cdot \varphi_m; \theta \in \R^{\val{m}}\}.
\end{equation}
Let $\kappa_m=\sqrt{2d!}\sqrt{\sum_{i=1}^d (m_i+d)^{2d}}$.
The following Lemma summarizes Lemmas \ref{lem:thetaphi_theta} and
\ref{lem:Am}. 

\begin{lem}
   \label{lem:rappel}
Let $m\in (\N^*)^d$. We have for all $g\in S_{m}$:
 \begin{equation} \label{eq:cond_norm}
   \norm{g}_{\infty} \leq  \kappa_m \norm{g}_{L^2},
 \end{equation}
For all $\theta\in \R^{\val{m}}$, we have:
 \begin{equation} \label{eq:upper_theta_phim}
 \frac{\norm{\theta }}{\sqrt{d}}  \leq \norm{\theta \cdot \phi_m}_{L^2}
 \leq \sqrt{d} \norm{\theta }. 
 \end{equation}
\end{lem}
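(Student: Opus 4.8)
The plan is short: this lemma is just a consolidation of two results already proved in the Appendix, so I would simply invoke them. Inequality \reff{eq:upper_theta_phim} is verbatim the statement of Lemma \ref{lem:thetaphi_theta}, and inequality \reff{eq:cond_norm} is verbatim the statement of Lemma \ref{lem:Am} (with the same constant $\kappa_m=\sqrt{2d!}\sqrt{\sum_{i=1}^d(m_i+d)^{2d}}$). Hence the proof reduces to a one-line reference.

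For orientation, let me recall how the two underlying estimates go, in the order I would use them. For \reff{eq:upper_theta_phim}, write $\theta\cdot\phi_m=\sum_{i=1}^d\theta_i\cdot\phi_{[i],m}$; the upper bound follows from the triangle inequality in $L^2(\triangle)$, the isometry $\norm{\theta_i\cdot\phi_{i,m}}_{L^2(q_i)}=\norm{\theta_i}$ coming from orthonormality of $(\phi_{i,k})$ with respect to $q_i$, and $\sum_i\norm{\theta_i}\le\sqrt d\,\norm{\theta}$ by Cauchy--Schwarz. For the lower bound, expanding $\norm{\theta\cdot\phi_m}_{L^2}^2$ gives $\norm{\theta}^2$ plus the cross terms $\int_\triangle\phi_{[i],k}\phi_{[j],k}$, which assemble into $\sum_k(\theta^*_k)^TR_k\theta^*_k$ with $R_k$ the correlation matrices \reff{eq:def_Rk}; one then invokes the uniform spectral bound $\lambda_{k,d}\ge 1/d$ from Lemma \ref{lem:min_eigen_Rk}.

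For \reff{eq:cond_norm}, decompose $g\in S_m$ as $g=\sum_{i=1}^d g_{[i]}$ with $g_i$ a polynomial on $I$ of degree at most $m_i$, bound each $\norm{g_i}_\infty$ by the Nikolskii-type inequality of Lemma \ref{lem:poly}, apply Cauchy--Schwarz over $i$, and pass from $\big(\sum_i\norm{g_i}_{L^2(q_i)}^2\big)^{1/2}=\norm{\theta}$ to $\norm{g}_{L^2}=\norm{\theta\cdot\phi_m}_{L^2}$ via the lower bound in \reff{eq:upper_theta_phim}; this produces exactly the constant $\kappa_m$.

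The only genuine obstacle in the whole chain is the eigenvalue estimate for $R_k$ (handled in Lemma \ref{lem:min_eigen_Rk} via Perron--Frobenius applied to the associated non-negative irreducible tridiagonal matrix); everything else is routine. Since both pieces are already available, the proof of Lemma \ref{lem:rappel} itself is nothing more than combining Lemmas \ref{lem:thetaphi_theta} and \ref{lem:Am}.
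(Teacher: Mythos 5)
Your proposal is correct and matches the paper exactly: Lemma \ref{lem:rappel} is stated there merely as a summary of Lemmas \ref{lem:thetaphi_theta} and \ref{lem:Am}, so invoking those two results (with the same constant $\kappa_m$) is precisely the intended proof. Your recap of how the underlying estimates are obtained also follows the paper's own arguments faithfully.
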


Now we  give upper  and lower bounds  for the  Kullback-Leibler distance
between   two  members   of  the   exponential  family   $f_\theta$  and
$f_{\theta'}$    in    terms     of    the    Euclidean    distance
$\norm{\theta-{\theta'}}$.  Notice that for all $\theta \in \R^{\val{m}}$,
$\norm{\log(f_\theta)}_\infty
=\sup_{x \in \triangle} \val{\log(f_\theta(x))}$ is finite. 

 \begin{lem} \label{lem:BS4}
   Let $m\in (\N^*)^d$. For $\theta,\theta' \in \R^{\val{m}}$, we have:
   \begin{align} 
\label{eq:lem_4_1}
      \norm{\log(f_\theta/f_{\theta'})}_{\infty} 
&\leq 2 \sqrt{d}\,  \kappa_m \norm{\theta-\theta'},\\
 \label{eq:lem_4_2}
      \kl{f_\theta}{f_{\theta'}} &
\leq  \frac{d}{2}\, \expp{\norm{\log(f_{\theta})}_{\infty}+ \sqrt{d}\, 
  \kappa_m \norm{\theta-\theta'}}  \norm{\theta-\theta'}^2, \\
 \label{eq:lem_4_3}
       \kl{f_\theta}{f_{\theta'}} 
&\geq  \inv{2d} \, \expp{-\norm{\log(f_{\theta})}_{\infty}-2
  \sqrt{d}\,  \kappa_m \norm{\theta-\theta'}}  \norm{\theta-\theta'}^2. 
\end{align}
 \end{lem}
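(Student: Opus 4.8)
The plan is to express $\log(f_\theta/f_{\theta'})$ explicitly and reduce all three bounds to estimates on the single function $g:=(\theta-\theta')\cdot\phi_m$, which lies in the linear space $S_m$, and on the scalar $c:=\psi(\theta)-\psi(\theta')$. Writing out the densities \reff{eq:f_theta} gives $\log(f_\theta/f_{\theta'})=g-c$ on $\triangle$. By \reff{eq:cond_norm} and \reff{eq:upper_theta_phim} of Lemma \ref{lem:rappel}, $\norm{g}_\infty\leq\kappa_m\norm{g}_{L^2}\leq\sqrt{d}\,\kappa_m\norm{\theta-\theta'}$. Next, factoring $\int_\triangle\expp{\theta\cdot\phi_m}=\expp{\psi(\theta')}\int_\triangle\expp{g}f_{\theta'}$ yields the identity $\expp{c}=\int_\triangle\expp{g}f_{\theta'}$; since $f_{\theta'}$ is a probability density on $\triangle$, this forces $\min_\triangle g\leq c\leq\max_\triangle g$, hence $\val{c}\leq\norm{g}_\infty$. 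Combining, $\norm{\log(f_\theta/f_{\theta'})}_\infty\leq\norm{g}_\infty+\val{c}\leq 2\norm{g}_\infty\leq 2\sqrt{d}\,\kappa_m\norm{\theta-\theta'}$, which is \reff{eq:lem_4_1}.

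For the upper bound \reff{eq:lem_4_2}, apply \reff{eq:KL_maj_logsq} of Lemma \ref{lem:BS1} with $g\leftarrow f_\theta$, $h\leftarrow f_{\theta'}$ and the free constant $\kappa=-c$, so that $\log(f_\theta/f_{\theta'})-\kappa=g$; this gives $\kl{f_\theta}{f_{\theta'}}\leq\tfrac12\expp{\norm{g}_\infty}\int_\triangle f_\theta\,g^2\leq\tfrac12\expp{\norm{g}_\infty}\norm{f_\theta}_\infty\norm{g}_{L^2}^2$. Bounding $\norm{f_\theta}_\infty\leq\expp{\norm{\log(f_\theta)}_\infty}$, then $\norm{g}_{L^2}^2\leq d\norm{\theta-\theta'}^2$ and $\norm{g}_\infty\leq\sqrt{d}\,\kappa_m\norm{\theta-\theta'}$ as above, yields \reff{eq:lem_4_2}.

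For the lower bound \reff{eq:lem_4_3}, apply \reff{eq:KL_min_logsq} with the same substitution to get $\kl{f_\theta}{f_{\theta'}}\geq\tfrac12\expp{-\norm{\log(f_\theta/f_{\theta'})}_\infty}\int_\triangle f_\theta\,(g-c)^2$. Here I would use that only indices $k\geq1$ occur in $\phi_m$, so each $\phi_{[i],k}$ integrates to $0$ against the Lebesgue measure on $\triangle$ (because $\int_I\phi_{i,k}\,q_i=0$ for $k\geq1$), hence $\int_\triangle g=0$ and $\int_\triangle(g-c)^2=\norm{g}_{L^2}^2+c^2\int_\triangle 1\geq\norm{g}_{L^2}^2\geq\norm{\theta-\theta'}^2/d$ by \reff{eq:upper_theta_phim}. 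Since $f_\theta\geq\expp{-\norm{\log(f_\theta)}_\infty}$ pointwise on $\triangle$, and $\norm{\log(f_\theta/f_{\theta'})}_\infty\leq2\sqrt{d}\,\kappa_m\norm{\theta-\theta'}$ by \reff{eq:lem_4_1}, we obtain \reff{eq:lem_4_3}.

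This is essentially a transcription of Lemma 4 of \cite{barron1991approximation} to the simplex setting, so no single step is genuinely difficult; the points requiring care are the choice $\kappa=-c$ that cancels the normalizing constant, the observation $\int_\triangle g=0$ that discards the dependence on $c$ in the lower bound, and keeping track of the fact that only $\norm{\log(f_\theta)}_\infty$ (and not $\norm{\log(f_{\theta'})}_\infty$) enters, because the density integrated against in \reff{eq:KL_maj_logsq}--\reff{eq:KL_min_logsq} is $f_\theta$.
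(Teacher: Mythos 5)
Your proposal is correct and follows essentially the same route as the paper: it reduces everything to $g=(\theta-\theta')\cdot\phi_m$, bounds $\val{\psi(\theta)-\psi(\theta')}$ by $\norm{g}_\infty$, controls $\norm{g}_\infty$ and $\norm{g}_{L^2}$ via Lemma \ref{lem:rappel}, and applies \reff{eq:KL_maj_logsq} with $\kappa=\psi(\theta')-\psi(\theta)$ and \reff{eq:KL_min_logsq} together with the fact that the $\phi_{[i],k}$, $k\geq 1$, have zero mean on $\triangle$, exactly as in the paper's proof.
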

 
 \begin{proof}
Since $       \psi(\theta')-\psi(\theta) = \log \left( \int_\triangle
         \expp{(\theta'-\theta) \cdot \phi_m} f_\theta \right)$, 
we get  $\val{\psi(\theta')-\psi(\theta)} \leq \norm{(\theta'-\theta)
  \cdot \phi_m}_{\infty}$.  This implies that: 
\begin{align*}
      \norm{\log(f_\theta/f_{\theta'})}_{\infty} 
& \leq 2 \norm{(\theta-\theta') \cdot \phi_m}_{\infty} \\
& \leq 2  \kappa_m \norm{(\theta-\theta') \cdot \phi_m}_{L^2} \\
& \leq 2 \sqrt{d} \,  \kappa_m \norm{\theta-\theta'},
\end{align*}
where  we  used \reff{eq:f_theta} for the first inequality,  \reff{eq:cond_norm}   for  the  second  and
\reff{eq:upper_theta_phim}   for  the   third.   To   prove
\reff{eq:lem_4_2}, we  use  \reff{eq:KL_maj_logsq} with $\kappa=\psi(\theta')-\psi(\theta)$. This gives:
\begin{align*}
       \kl{f_\theta}{f_{\theta'}} 
& \leq \inv{2} \expp{\norm{(\theta-\theta') \cdot \phi_m}_{\infty}}
\int_\triangle f_\theta \left( (\theta-\theta') \cdot \phi_m \right)^2
\\ 
& \leq \inv{2} \, \expp{\norm{\log(f_{\theta})}_{\infty}+ \sqrt{d}\, 
  \kappa_m \norm{\theta-\theta'}} \norm{(\theta-\theta') \cdot \phi_m
}_{L^2}^2 \\ 
& \leq \frac{d}{2}\,  \expp{\norm{\log(f_{\theta})}_{\infty}+ \sqrt{d}\,
  \kappa_m \norm{\theta-\theta'}} \norm{\theta-\theta'}^2, 
\end{align*}
where we used \reff{eq:cond_norm} and \reff{eq:upper_theta_phim} for the
second inequality, and \reff{eq:upper_theta_phim} for the third.  To prove
\reff{eq:lem_4_3}, we use \reff{eq:KL_min_logsq}. We obtain:
\begin{align*}
     \kl{f_\theta}{f_{\theta'}} 
& \geq \inv{2} \expp{-\norm{\log(f_\theta/f_{\theta'}) }_{\infty}}
\int_\triangle 
f_\theta \left( (\theta-\theta') \cdot \phi_m
  -(\psi(\theta)-\psi(\theta'))\right)^2 \\  
& \geq \inv{2}\expp{-\norm{\log(f_{\theta})}_{\infty}
-2\sqrt{d}\,  \kappa_m \norm{\theta-\theta'}} \int_\triangle  \left(
  (\theta-\theta') \cdot \phi_m -(\psi(\theta)-\psi(\theta'))\right)^2
\\ 
& \geq \inv{2}\expp{-\norm{\log(f_{\theta})}_{\infty}
-2\sqrt{d}\,  \kappa_m \norm{\theta-\theta'}} 
\norm{(\theta-\theta')
  \cdot \phi_m }_{L^2}^2 \\ 
& \geq \inv{2d}\expp{-\norm{\log(f_{\theta})}_{\infty}
-2\sqrt{d} \, \kappa_m \norm{\theta-\theta'}} 
\norm{\theta-\theta' }^2, 
\end{align*}
where we used \reff{eq:lem_4_1} for the second inequality, the fact that
the functions  $(\phi_{[i],k}, 1\leq i  \leq d,  1\leq k \leq  m_i)$ are
orthogonal to the constant function with respect to the Lebesgue measure
on $\triangle$ for the  third inequality, and \reff{eq:upper_theta_phim}
for the fourth inequality.
\end{proof}
 
 Now we will show that the application $\Theta_m$  is locally Lipschitz. 
 
\begin{lem}\label{lem:BS5}
  Let $m\in (\N^*)^d$ and $\theta \in \R^{\val{m}}$. If  $\alpha \in \R^{\val{m}}$ satisfies:
\begin{equation} \label{eq:cond_BS5}
     \norm{A_m(\theta)-\alpha} \leq
     \frac{\expp{-(1+\norm{\log(f_{\theta})}_{\infty})}}{6
       d^{\frac{3}{2}} \kappa_m}, 
\end{equation}
  Then $\alpha$ belongs to $\Omega_m$ and $\theta^*=\Theta_m(\alpha)$
  exists. Let $\tau$ be such that:
  $$6d^{\frac{3}{2}}
  \expp{1+\norm{\log(f_{\theta})}_{\infty}}
  \kappa_{m}\norm{A_m(\theta)-\alpha}\leq \tau \leq 1.$$
  Then $\theta^*$ satisfies:  
\begin{align} 
\label{eq:lem_5_1}
    \norm{\theta-\theta^*} 
&\leq 3 d \expp{\tau+\norm{\log(f_{\theta})}_{\infty}}
\norm{A_m(\theta)-\alpha},    \\
\label{eq:lem_5_2}
    \norm{\log(f_\theta/f_{\theta^*})}_\infty 
&\leq  6d^{\frac{3}{2}}  \expp{\tau+\norm{\log(f_{\theta})}_{\infty}}  \kappa_m\norm{A_m(\theta)-\alpha}  \leq \tau,\\
 \label{eq:lem_5_3}
    \kl{f_\theta}{f_{\theta^*}} 
&\leq 3d  \expp{\tau+\norm{\log(f_{\theta})}_{\infty}}
\norm{A_m(\theta)-\alpha}^2. 
\end{align}
    
 \end{lem}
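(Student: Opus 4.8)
The statement of Lemma \ref{lem:BS5} is essentially a quantitative inverse function theorem for the map $A_m$: it says that if $\alpha$ is close to $A_m(\theta)$, then $\alpha \in \Omega_m$ and $\Theta_m(\alpha)$ is close to $\theta$, with explicit constants. The standard route — and the one I would follow, adapting Lemma 4 of \cite{barron1991approximation} — is to study the Jacobian of $A_m$ at $\theta$, show it is uniformly invertible on a neighborhood, and then run a contraction/Newton-type argument. Concretely, the derivative $DA_m(\theta)$ is the covariance matrix of $\phi_m$ under $f_\theta$, i.e. $DA_m(\theta) = \int_\triangle \phi_m \phi_m^T f_\theta - \left(\int_\triangle \phi_m f_\theta\right)\left(\int_\triangle \phi_m f_\theta\right)^T$, and for any $u \in \R^{\val{m}}$ one has $u^T DA_m(\theta) u = \Var_{f_\theta}(u \cdot \phi_m)$. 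Using $\norm{\log(f_\theta)}_\infty < \infty$ together with Lemma \ref{lem:rappel} (specifically \reff{eq:upper_theta_phim} relating $\norm{u}$ to $\norm{u\cdot\phi_m}_{L^2}$) and the boundedness $\expp{-\norm{\log f_\theta}_\infty} \leq f_\theta$, one gets a two-sided bound $c(\theta)\norm{u}^2 \leq u^T DA_m(\theta) u \leq C(\theta) \norm{u}^2$ with $c(\theta) \asymp d^{-1}\expp{-\norm{\log f_\theta}_\infty}$; this is the key spectral estimate.

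First I would make the reduction to the exponential-family setting explicit: recall $A_m(\theta) = \nabla \psi(\theta)$, so $DA_m(\theta) = \nabla^2\psi(\theta)$ is positive definite, and $\mathscr{F}_\alpha$ from \reff{eq:def-F} is concave with $\nabla \mathscr{F}_\alpha(\theta) = \alpha - A_m(\theta)$. By Lemma \ref{lem:BS3}, finding $\Theta_m(\alpha)$ is the same as maximizing $\mathscr{F}_\alpha$. So the plan is: (i) establish the lower bound on the smallest eigenvalue of $\nabla^2\psi$ on a ball around $\theta$, controlling how $\norm{\log(f_{\theta'})}_\infty$ varies via \reff{eq:lem_4_1}; (ii) deduce that the gradient map $A_m$ is a bijection from that ball onto its image, which contains a ball around $A_m(\theta)$ of the radius appearing in \reff{eq:cond_BS5} — this gives $\alpha \in \Omega_m$ and the existence of $\theta^* = \Theta_m(\alpha)$; (iii) from $\nabla\mathscr{F}_\alpha(\theta^*) = 0$ and $\nabla\mathscr{F}_\alpha(\theta) = \alpha - A_m(\theta)$, write $\alpha - A_m(\theta) = \int_0^1 \nabla^2\psi(\theta + s(\theta^* - \theta))(\theta^* - \theta)\, ds$ and invert, using the eigenvalue bound, to get \reff{eq:lem_5_1}; (iv) feed \reff{eq:lem_5_1} into \reff{eq:lem_4_1} to obtain \reff{eq:lem_5_2}, and into \reff{eq:lem_4_2} to obtain \reff{eq:lem_5_3}.

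The bookkeeping of constants is where the real work lies, and the main obstacle is a mild circularity: the eigenvalue bound $c(\theta')$ depends on $\norm{\log(f_{\theta'})}_\infty$, which by \reff{eq:lem_4_1} is at most $\norm{\log(f_\theta)}_\infty + 2\sqrt{d}\,\kappa_m\norm{\theta - \theta'}$, so one must restrict to a ball on which $\norm{\theta - \theta'}$ is small enough that this added term is $\leq \tau \leq 1$. The condition \reff{eq:cond_BS5} is precisely calibrated (the factor $6d^{3/2}\kappa_m$, the $\expp{-(1+\norm{\log f_\theta}_\infty)}$) so that the Newton iterate stays inside such a ball; verifying this closes the loop. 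I would handle it by fixing $\tau$ as in the hypothesis, working on the ball $B = \{\theta' : \norm{\theta' - \theta} \leq 3d\expp{\tau + \norm{\log f_\theta}_\infty}\norm{A_m(\theta) - \alpha}\}$, checking via \reff{eq:cond_BS5} that the radius times $2\sqrt d\,\kappa_m$ is $\leq \tau$, hence $\expp{\norm{\log f_{\theta'}}_\infty} \leq \expp{\tau + \norm{\log f_\theta}_\infty}$ on $B$; then a standard fixed-point argument for $\theta' \mapsto \theta' + [\nabla^2\psi(\theta)]^{-1}(\alpha - A_m(\theta'))$ (or simply invoking that a $C^1$ map with uniformly invertible Jacobian on a convex set is a diffeomorphism onto its image) delivers $\theta^* \in B$, which is exactly \reff{eq:lem_5_1}. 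The remaining two inequalities are then immediate substitutions.
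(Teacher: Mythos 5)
Your overall plan (invert the Jacobian $\nabla^2\psi=DA_m$, lower-bound its spectrum by $\expp{-\norm{\log(f_{\theta'})}_{\infty}}/d$ via orthogonality to constants and \reff{eq:upper_theta_phim}, and calibrate the ball $B$ so that $\norm{\log(f_{\theta'}/f_\theta)}_\infty\leq \tau$ on it) is a reasonable alternative in spirit to the paper's argument, which never inverts the Hessian: the paper writes $\mathscr{F}_\alpha(\theta)-\mathscr{F}_\alpha(\theta')=\kl{f_\theta}{f_{\theta'}}-(\theta-\theta')\cdot(A_m(\theta)-\alpha)$, lower-bounds the Kullback--Leibler term by \reff{eq:lem_4_3}, shows that $\mathscr{F}_\alpha$ is strictly smaller on the sphere $\partial B_r$, $r=3d\expp{\tau+\norm{\log(f_\theta)}_\infty}\norm{A_m(\theta)-\alpha}$, than at the center, and concludes that an interior maximizer $\theta^*$ exists with $\nabla\mathscr{F}_\alpha(\theta^*)=0$, i.e. $\alpha=A_m(\theta^*)\in\Omega_m$; all three bounds then follow from $\theta^*\in B_r$ and from first-order optimality. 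However, two steps of your proposal do not go through as written. First, the existence step: the frozen-Hessian iteration $\theta'\mapsto\theta'+[\nabla^2\psi(\theta)]^{-1}(\alpha-A_m(\theta'))$ need not be a contraction under the stated hypotheses, because on $B$ you only control $\norm{\log(f_{\theta'}/f_\theta)}_\infty\leq\tau\leq 1$, which gives $\expp{-\tau}\nabla^2\psi(\theta)\preceq\nabla^2\psi(\theta')\preceq\expp{\tau}\nabla^2\psi(\theta)$, and $\expp{1}-1>1$, so the derivative of your map can have operator norm above $1$; and the fallback ``a $C^1$ map with invertible Jacobian on a convex set is a diffeomorphism onto its image'' does not by itself show that $\alpha$ lies in that image, which is precisely the quantitative content needed here. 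The fix is exactly the paper's (or, equivalently, constrained maximization of the strongly concave $\mathscr{F}_\alpha$ over the closed ball, with the eigenvalue bound forcing the maximizer to be interior).

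Second, and more decisively, your step (iv) does not yield \reff{eq:lem_5_3} with the stated constant. Plugging $\norm{\theta-\theta^*}\leq r$ into \reff{eq:lem_4_2}, and using $\sqrt{d}\,\kappa_m r\leq \tau/2$ from the hypothesis on $\tau$, gives at best $\kl{f_\theta}{f_{\theta^*}}\leq \frac{9d^3}{2}\expp{3\norm{\log(f_\theta)}_\infty+\frac{5}{2}\tau}\norm{A_m(\theta)-\alpha}^2$, which is strictly weaker than the claimed bound $3d\,\expp{\tau+\norm{\log(f_\theta)}_\infty}\norm{A_m(\theta)-\alpha}^2$ (worse polynomial factor and, more importantly, a factor $\expp{3\norm{\log(f_\theta)}_\infty}$ instead of $\expp{\norm{\log(f_\theta)}_\infty}$); these explicit constants are reused downstream, e.g. in Proposition \ref{prop:general_2}. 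The paper instead exploits optimality of $\theta^*$: since $\mathscr{F}_\alpha(\theta)\leq\mathscr{F}_\alpha(\theta^*)$, the identity above gives $\kl{f_\theta}{f_{\theta^*}}\leq(\theta-\theta^*)\cdot(A_m(\theta)-\alpha)\leq\norm{\theta-\theta^*}\,\norm{A_m(\theta)-\alpha}$, and then \reff{eq:lem_5_1} delivers exactly the stated constant. So to complete your proof you should derive \reff{eq:lem_5_3} from this optimality argument rather than from \reff{eq:lem_4_2}, and make the existence step quantitative as above.
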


\begin{proof}
  Suppose that $\alpha \neq A_m(\theta)$ (otherwise the results are
  trivial). Recall $\mathscr{F}_\alpha$ defined in \reff{eq:def-F}. 
We have, for all $\theta' \in \R^{\val{m}}$:
\begin{align} \label{eq:BS5_F-F}
     \nonumber \mathscr{F}_\alpha(\theta)-\mathscr{F}_\alpha(\theta') 
& =(\theta-\theta')\cdot\alpha + \psi(\theta') -\psi(\theta) \\ 
& =  \kl{f_\theta}{f_{\theta'}}- (\theta-\theta') \cdot
(A_m(\theta)-\alpha) .
\end{align}
Using \reff{eq:lem_4_3} and the Cauchy-Schwarz inequality, we obtain the strict inequality:
\begin{equation*}
      \mathscr{F}_\alpha(\theta)-\mathscr{F}_\alpha(\theta')  >
\inv{3d} \expp{-\norm{\log(f_{\theta})}_{\infty} -2
        \sqrt{d} \, \kappa_m \norm{\theta-\theta'}} \norm{\theta-\theta'}^2 -
      \norm{\theta-\theta'}\norm{A_m(\theta)-\alpha} . 
\end{equation*}
We  consider the ball centered at $\theta$: $B_r=\{ \theta' \in
\R^{\val{m}},  \norm{\theta-\theta'}$ $ \leq r\}$ with radius $r = 3 d
\expp{\tau+\norm{\log(f_{\theta})}_{\infty}}
\norm{A_m(\theta)-\alpha}$. For all $\theta'\in \partial B_r$, we have:
\[
      \mathscr{F}_\alpha(\theta)-\mathscr{F}_\alpha(\theta') > \left(\expp{\tau-6 d^{\frac{3}{2}}
          \kappa_m
          \norm{A_m(\theta)-\alpha} \expp{\tau+\norm{\log(f_{\theta})}_{\infty}} }-1\right) 3 d
      \expp{\tau+\norm{\log(f_{\theta})}_{\infty}}
      \norm{A_m(\theta)-\alpha}^2 .
\]
The right hand side is non-negative as   $6d^{\frac{3}{2}}    \expp{1+\norm{\log(f_{\theta})}_{\infty}}
\kappa_m\norm{A_m(\theta)-\alpha}\leq \tau \leq 1$, see the
condition on $\tau$. Thus, the value
of  $\mathscr{F}_\alpha$ at  $\theta$, an  interior point  of $B_r$,  is larger  than the
values of $\mathscr{F}_\alpha$  on  $ \partial B_r$. Therefore $\mathscr{F}_\alpha$
is  maximal at a point, say $\theta^*$, in the interior  of $B_r$.  Since the gradient of
$\mathscr{F}_\alpha$ at $\theta^*$ equals $0$, we have $\nabla \mathscr{F}_\alpha(\theta^*)=
\alpha-\int_\triangle   \phi_m   f_{\theta^*}=0$,   which   means   that
$\alpha\in \Omega_m$ and  $\theta^*=\Theta_m(\alpha)$.  Since $\theta^*$
is inside $B_r$,   we  get   \reff{eq:lem_5_1}.   The   upper  bound
\reff{eq:lem_5_2} is  due to  \reff{eq:lem_4_1} of  Lemma \ref{lem:BS4}.
To prove  \reff{eq:lem_5_3}, we use  \reff{eq:BS5_F-F} and the  fact that
$\mathscr{F}_\alpha(\theta)-\mathscr{F}_\alpha(\theta^*) \leq 0$, which gives:
   \begin{equation*}
     \kl{f_\theta}{f_{\theta^*}}  \leq (\theta-\theta^*) \cdot (A_m(\theta)-\alpha) \leq \norm{\theta-\theta^*} \norm{A_m(\theta)-\alpha}  \leq 3d \expp{\tau+\norm{\log(f_{\theta})}_{\infty}} \norm{A_m(\theta)-\alpha}^2  .
   \end{equation*}
 \end{proof}

\section{Proof of Theorem \ref{theo:main_stat}} \label{sec:proof_theo_stat}

In this Section, we first show that the information projection $f_{\theta^*}$ of $f^0$ onto $\{f_\theta,  \theta \in \R^{\val{m}}  \}$ exists for all $m \in (\N^*)^d$. Moreover, the maximum likelihood estimator $\hat{\theta}_{m,n}$, defined in \reff{eq:max_likelihood} based on an i.i.d sample $\cx^n$, verifies almost surely $\hat{\theta}_{m,n} = \Theta_m(\hat{\mu}_{m,n})$ for $n \geq 2$ 
  with $\hat{\mu}_{m,n}$ the empirical mean given by \reff{eq:def_emp_mean}. Recall $\Omega_m = A_m(\R^{\val{m}})$ with $A_m$ defined by \reff{eq:alpha_m}.
  \begin{lem}\label{lem:max_likelihood_as}
    The mean $\alpha = \int_\triangle \phi_m f^0$ verifies $\alpha \in \Omega_m$ and the empirical mean $\hat{\mu}_{m,n}$ verifies $\hat{\mu}_{m,n} \in \Omega_m$ almost surely when $n \geq 2$. 
  \end{lem}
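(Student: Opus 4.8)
The plan is to identify $\Omega_m$ with the interior of the compact convex body $K_m:=\overline{\mathrm{conv}}\bigl(\phi_m(\triangle)\bigr)\subset\R^{\val{m}}$ and then to place both $\alpha=\int_\triangle\phi_m f^0$ and $\hat{\mu}_{m,n}=\frac1n\sum_{j=1}^n\phi_m(X^j)$ inside $\mathrm{int}(K_m)$. The preliminary observation is that $(f_\theta,\theta\in\R^{\val{m}})$ is a minimal exponential family (and regular, since $\psi$ is finite on all of $\R^{\val{m}}$): if $c+\sum_{i,k}v_{i,k}\phi_{[i],k}\equiv0$ on $\triangle$, then restricting $x$ to the diagonal $x_1=\dots=x_d$ and to the edges $\{x_1=\dots=x_{j-1}=0,\ x_{j+1}=\dots=x_d=1\}$, $x_j\in I$, forces each polynomial $\sum_k v_{i,k}\phi_{i,k}$ to be constant, hence zero since it is $q_i$-orthogonal to the constants, so $c=0$ and $v=0$. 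Minimality makes $\psi$ strictly convex (its Hessian is the covariance of $\phi_m$ under $f_\theta$), whence $A_m=\nabla\psi$ is a diffeomorphism onto the open convex set $\Omega_m$, and by the standard description of the mean-value parameter space of a regular exponential family (see \cite{barron1991approximation}) one has $\Omega_m=\mathrm{int}(K_m)$, with $K_m$ full-dimensional because of minimality.

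For $\alpha$ the argument is short. Since $\alpha$ is the barycenter of the push-forward under $\phi_m$ of the probability measure $f^0\,dx$, which is carried by $\phi_m(\triangle)\subset K_m$, we have $\alpha\in K_m$; if $\alpha\notin\mathrm{int}(K_m)$ there is a supporting hyperplane of $K_m$ through $\alpha$, i.e.\ some $v\neq0$ with $v\cdot y\le M:=\sup_{x\in\triangle}v\cdot\phi_m(x)$ for all $y\in K_m$ and $v\cdot\alpha=M$. As $\supp(f^0)=\triangle$ by Definition \ref{defi:prod-f}, integrating the nonnegative function $M-v\cdot\phi_m$ against $f^0$ and obtaining $0$ forces $v\cdot\phi_m\equiv M$ on $\triangle$, contradicting minimality; hence $\alpha\in\mathrm{int}(K_m)=\Omega_m$.

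The empirical mean is the crux. The same supporting-hyperplane dichotomy gives that $\hat{\mu}_{m,n}\notin\Omega_m$ exactly when there is $v\neq0$ with $v\cdot\phi_m(X^j)=\sup_{x\in\triangle}v\cdot\phi_m(x)$ for every $j$, i.e.\ when all the $X^j$ lie in the common set $A_v:=\mathop{\argmax}_{x\in\triangle}(v\cdot\phi_m)(x)$. For fixed $v\neq0$ the function $v\cdot\phi_m$ is a non-constant polynomial on $\triangle$, so $A_v$ is Lebesgue-null (a polynomial cannot be constant on a set of positive measure), and being semialgebraic it has dimension at most $d-1$; since $f^0$ is bounded on $\triangle$, $\P(X^1\in A_v)=0$ for each fixed $v\neq0$. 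The main obstacle is to upgrade this to uniformity in $v$, namely $\P(\exists\,v\neq0:\ X^1,\dots,X^n\in A_v)=0$. I would deduce it from the fact that $\{(x^1,\dots,x^n,v)\in\triangle^n\times(\R^{\val{m}}\setminus\{0\}):\ x^j\in A_v\ \forall j\}$ is semialgebraic (eliminate the universal quantifier over $\triangle$ in the definition of $A_v$) with each fibre over fixed $v$ of dimension $\le n(d-1)$; by Tarski–Seidenberg its projection to $\triangle^n$ is semialgebraic of dimension $\le\val{m}+n(d-1)<nd=\dim\triangle^n$ once $n>\val{m}$, hence Lebesgue-null, hence of probability $0$ under the absolutely continuous law of $(X^1,\dots,X^n)$. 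Reaching the weaker requirement $n\ge2$ announced in the lemma instead goes through the moment-space characterisation of existence of the MLE in a polynomial exponential family, applied coordinatewise to $\phi_{i,m}$; in either case $\hat{\mu}_{m,n}\in\Omega_m$ almost surely, which together with the second paragraph proves the lemma.
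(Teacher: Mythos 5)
The structural part of your proposal (paragraphs one and two) is correct and is essentially the paper's own route: the paper also identifies $\Omega_m$ with the interior of the convex hull of the support of $\phi_m(U)$, $U$ uniform on $\triangle$ (via strict convexity of $\psi$, the Legendre transform, and Lemma 4.1 of \cite{abraham2015critical}), and then argues that $\alpha$ and $\hat{\mu}_{m,n}$ lie in that interior; your supporting-hyperplane argument for $\alpha$, using $\supp (f^0)=\triangle$ and minimality of the family, is a cleaner write-up of the same step.

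The gap is in your last paragraph, and it is exactly at the point where the paper itself is vague (``easily implies''). Your honest argument --- the Tarski--Seidenberg dimension count --- only gives $\hat{\mu}_{m,n}\in\Omega_m$ a.s.\ for $n>\val{m}$, and the closing sentence that is supposed to reach the announced range $n\ge 2$ (``the moment-space characterisation of the MLE, applied coordinatewise'') is not an argument; worse, it cannot be completed, because the claim for $n\ge 2$ fails whenever some $m_i\ge 4$. Indeed, fix such an $i$ and let $x,y\in\triangle$ be the two sample points. The function $z\mapsto -(z_i-x_i)^2(z_i-y_i)^2$ is a polynomial of degree $4\le m_i$ in $z_i$, hence of the form $c+v\cdot\phi_m(z)$ on $\triangle$ with $v\neq 0$ (only block-$i$ entries nonzero, since $\phi_{i,k}$ has exact degree $k$); it is $\le 0$ on $\triangle$ and vanishes at $z=x$ and $z=y$. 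Hence $v\cdot\phi_m(x)=v\cdot\phi_m(y)=\sup_{z\in\triangle}v\cdot\phi_m(z)$, so $\hat{\mu}_{m,2}=\frac{1}{2}\bigl(\phi_m(x)+\phi_m(y)\bigr)$ lies on a supporting hyperplane of the convex hull; since any mean $A_m(\theta)$ of a strictly positive density cannot lie on such a hyperplane (same integration argument as for $\alpha$), we get $\hat{\mu}_{m,2}\notin\Omega_m$ with probability one. This is the classical truncated-moment-problem fact that $n$ atoms can only produce an interior moment point when $2n$ exceeds the degree, and it shows the ``$n\ge 2$'' statement is only salvageable under a condition of the type $2n>\max(m)$ (your count gives the cruder $n>\val{m}$), which is what the paper's asymptotic results actually need since there $m_i(n)\ll n$. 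So: your approach matches the paper's up to the final step, but neither your sketch nor the paper's one-line assertion proves the lemma as stated, and for small $n$ relative to $m$ no proof can.
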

  
  \begin{rem} \label{rem:max_likelihood}
   By Lemma \ref{lem:BS3}, this also means that $\hat{\theta}_{m,n} = \argmax_{\theta \in \R^{\val{m}}} \cf_{\hat{\mu}_{m,n}}(\theta)$, and since $\cf_{\hat{\mu}_{m,n}}(\theta) = (1/n) \sum_{j=1}^n \log(f_\theta(X^j))$, the estimator $\hat{f}_{m,n} = f_{\hat{\theta}_{m,n}}$ is the maximum likelihood estimator of $f^0$ in the model $\{f_\theta, \theta \in \R^m\}$ based on $\cx^n$. 
  \end{rem}

  \begin{proof}
    Notice that $ \psi(\theta) = \log(\E[\exp(\theta \cdot \psi_m (U))])-\log(d!)$, where $U$ is a random vector uniformly distributed on $\triangle$. 
    The Hessian matrix $\nabla^2 \psi(\theta)$ is equal to the covariance matrix of $\phi_m(X)$, where $X$ has density $f_\theta$. Therefore 
    $\nabla^2 \psi(\theta)$ is positive semi-definite, and we show that it is positive definite too. Indeed, for $\lambda \in \R^{\val{m}}$, $\lambda^T \nabla^2 \psi(\theta) \lambda =0$ is equivalent to 
    $\E[(\lambda \cdot \phi_m(X))^2] =0$, which implies that $\lambda \cdot \phi_m(X)=0$ a.e. on $\triangle$. Since $(\phi_{i,k}, 1 \leq i \leq d, 1 \leq k \leq m_i)$ are linearly independent, this means $\lambda =0$. Thus $\nabla^2 \psi(\theta)$ is positive definite, providing that $\theta \mapsto \psi(\theta)$ is a strictly convex function. 
    
    Let $\psi^* : \R^{\val{m}} \rightarrow \R \cup \{+\infty\}$ denote the Legendre-Fenchel transformation of the function $\theta \mapsto \psi(\theta)$, i.e. for $\alpha \in \R^{\val{m}}$:
    \[
      \psi^*(\alpha) = \sup_{\theta \in \R^{\val{m}}} \alpha \cdot \theta - \psi(\theta) =  \sup_{\theta \in \R^{\val{m}}}  \cf_{\alpha}(\theta).
    \]
    Suppose that $\alpha \in \Omega_m$. Then according to Lemma \ref{lem:BS3}, $\psi^*(\alpha) = \cf_{\alpha}(\theta^*)$ with $\theta^* = \Theta_m(\alpha)$, thus  $\psi^*(\alpha)$ is finite. Therefore $\Omega_m \subseteq \Dom(\psi^*)$, where $\Dom(\psi^*)=\{\alpha \in \R^{\val{m}}: \psi^*(\alpha) < + \infty\}$. Inversely, let $\alpha \in \Dom(\psi^*)$. This ensures that $\theta^* = \argmax_{\theta \in \R^{\val{m}}} \cf_{\alpha}(\theta)$ exists uniquely, since $\cf_\alpha (\theta)$ is finite for all $\theta \in \R^{\val{m}}$, $\alpha \in \R^{\val{m}}$. This also implies that:
    \[
     0= \nabla \cf_\alpha(\theta^*) = \alpha - \int_\triangle \phi_m f_{\theta^*} =  \alpha - A_m(\theta^*),  
    \]
    giving $\alpha \in \Omega_m$. Thus we obtain $\Omega_m = \Dom(\psi^*)$. 
    By Lemma \ref{lem:BS5}, we have that $\Omega_m$ is an open subset of $\R^{\val{m}}$. Set $\Upsilon = \inter(\cv(\supp(\phi_m(U))))$, where $\inter(A)$ and $\cv(A)$ is the interior and convex hull of a set $A \subseteq \R^{\val{m}}$, respectively. Thanks to Lemma 4.1. of \cite{abraham2015critical}, we have $\Dom( \psi^*) = \Upsilon$. The proof is complete as soon as we prove that $\alpha \in \Upsilon$ and  $\hat{\mu}_{m,n} \in \Upsilon$ almost surely when $n \geq 2$. Since $(\phi_{i,k}, 1 \leq i \leq d, 1 \leq k \leq m_i)$ are linearly independent polynomials, they coincide only on a finite number of points. This directly implies that $\alpha \in \Upsilon$. To show that $\hat{\mu}_{m,n} \in \Upsilon$,  notice that the probability measures of $\phi_m(X)$ and $\phi_m(U)$ are equivalent. Therefore it is sufficient to prove that $(1/n) \sum_{j=1}^n \phi_m(U^j) \in \Upsilon$, with $(U^1, \hdots, U^n)$ i.i.d. random vectors uniformly distributed on $\triangle$. The linear independence of $(\phi_{i,k}, 1 \leq i \leq d, 1 \leq k \leq m_i)$ and the fact that $U^j$, $1 \leq j \leq n$ are uniformly distributed on $\triangle$ easily implies that for $n \geq 2 $,  $1/n \sum_{j=1}^n \phi_m(U^j) \in \Upsilon$, and the proof is complete. 
    
  \end{proof}

  We divide the proof of Theorem \ref{theo:main_stat} into two parts:  first we bound the error due to the
bias of the  proposed exponential model, then we bound  the error due to
the variance of  the sample estimation. We formulate the  results in two
general  Propositions, which  can  be later  specified  to get  Theorem
\ref{theo:main_stat}. 
  
\subsection{Bias of the estimator} \label{sec:bias}
  
The  bias  error    comes from  the  information
projection of the  true underlying density $f^0$ onto the  family of the
exponential series  model $\{f_\theta,  \theta \in \R^{\val{m}}  \}$. We
recall the  linear space $S_m$ spanned  by $(\phi_{[i], k},  1  \leq k
\leq  m_i, 1  \leq i \leq  d )$ where $\phi_{i,k}$ is a polynomial of degree $k$, and the form of the probability density
$f^0$ given in \reff{eq:Pi-form}. For $1\leq i\leq  d$, let $\ell^0_{i, m} $ be the orthogonal
projection in $L^2(q_i)$ of $\ell^0_i$ on the vector space spanned by
$(\varphi_{i,k}, 0\leq k\leq m_i)$ or equivalently on the vector space spanned by
$(\varphi_{i,k}, 1\leq k\leq m_i)$, as we assumed that $\int_I \ell_i^0
q_i=0$. We set $\ell^0_m=\sum_{i=1}^d \ell^0_{[i], m}$ the approximation
of $\ell^0$ on $S_m$. In particular we have $\ell ^0_m= \theta^0\cdot
\varphi_m$ for some $\theta^0\in \R^{\val{m}}$. 
 Let:
\[
       \Delta_{m}=\norm{\ell^0-\ell^0_{m}}_{L^2}
\quad\text{and}\quad
       \gamma_{m}= \norm{\ell^0-\ell^0_{m}}_{\infty}
\]
    denote the  $L^2$ and $L^\infty$ errors of the approximation of
    $\ell^0$ by $\ell^0_m$ on the simplex $\triangle$. 
   
\begin{prop} \label{prop:general_1}
Let $f^0\in \cp(\triangle)$ have a product form given by Definition \ref{defi:prod-f}.
Let $m\in (\N^*)^d$.   The  information  projection
    $f_{\theta^*}$ of $f^0$ exists  (with $\theta^*\in \R^{\val{m}}$ and
    $\int_\triangle  \varphi_m  f_{\theta^*}  =\int_\triangle  \varphi_m
    f^0$)    and  verifies, with $\mathfrak{A}_1=\frac{1}{2}\expp{\gamma_{m}+\norm{\log(f^0)}_\infty}$:
    \begin{equation} \label{eq:theo_gen_1}
      \kl{f^0}{f_{\theta^*}} \leq \mathfrak{A}_1 \Delta_{m}^2.
    \end{equation}  
\end{prop}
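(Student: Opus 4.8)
The plan is to combine the variational characterization of the information projection from Lemma \ref{lem:BS3} with the quadratic bound on the Kullback-Leibler divergence from Corollary \ref{cor:L1-L2}. First, for existence: by Lemma \ref{lem:max_likelihood_as} the vector $\alpha = \int_\triangle \phi_m f^0$ lies in $\Omega_m = A_m(\R^{\val{m}})$, so $\theta^* = \Theta_m(\alpha)$ is well defined, which is precisely the statement that the information projection $f_{\theta^*}$ of $f^0$ exists and satisfies $\int_\triangle \phi_m f_{\theta^*} = \int_\triangle \phi_m f^0$ (Definition \ref{defi:info-proj}). By the last assertion of Lemma \ref{lem:BS3}, $\theta^*$ achieves $\min_{\theta \in \R^{\val{m}}} \kl{f^0}{f_\theta}$; hence it is enough to exhibit a single $\theta \in \R^{\val{m}}$ with $\kl{f^0}{f_\theta} \leq \mathfrak{A}_1 \Delta_{m}^2$.

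The natural candidate is $\theta^0$, the coefficient vector defined by $\ell^0_m = \theta^0 \cdot \phi_m$, so that $f_{\theta^0} = \exp(\ell^0_m - \psi(\theta^0))\ind_\triangle$. Since $\ell^0$ is bounded (Definition \ref{defi:prod-f}) and $\ell^0_m$ is a polynomial on the compact simplex $\triangle$, the function $\log(f^0/f_{\theta^0}) = (\ell^0 - \ell^0_m) + (\psi(\theta^0) - \mathrm{a}_0)$ has finite sup-norm, so Corollary \ref{cor:L1-L2} is legitimately applicable. Choosing the constant $\kappa = \psi(\theta^0) - \mathrm{a}_0$ makes $\log(f^0/f_{\theta^0}) - \kappa = \ell^0 - \ell^0_m$, whence \reff{eq:cor_KL} gives
\[
\kl{f^0}{f_{\theta^0}} \leq \frac12 \expp{\gamma_{m}} \norm{f^0}_\infty \int_\triangle (\ell^0 - \ell^0_m)^2 = \frac12 \expp{\gamma_{m}} \norm{f^0}_\infty \Delta_{m}^2.
\]
Bounding $\norm{f^0}_\infty = \expp{\sup_\triangle \log f^0} \leq \expp{\norm{\log(f^0)}_\infty}$ produces exactly the constant $\mathfrak{A}_1 = \frac12 \expp{\gamma_{m}+\norm{\log(f^0)}_\infty}$, and combining with $\kl{f^0}{f_{\theta^*}} \leq \kl{f^0}{f_{\theta^0}}$ finishes the proof.

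There is no genuine obstacle here; the two points requiring care are the verification that $\norm{\log(f^0/f_{\theta^0})}_\infty < +\infty$, so that Corollary \ref{cor:L1-L2} applies, and the bookkeeping with the additive constant $\kappa$, which is chosen precisely to cancel the two normalizing constants $\psi(\theta^0)$ and $\mathrm{a}_0$ and leave the pure approximation error $\ell^0 - \ell^0_m$ inside the integral.
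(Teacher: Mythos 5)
Your proof is correct and follows the same route as the paper: existence via Lemma \ref{lem:max_likelihood_as}, the reduction $\kl{f^0}{f_{\theta^*}} \leq \kl{f^0}{f_{\theta^0}}$ from Lemma \ref{lem:BS3}, and the quadratic bound \reff{eq:cor_KL} applied with the same choice $\kappa = \psi(\theta^0) - \mathrm{a}_0$. The only difference is that you explicitly check the finiteness of $\norm{\log(f^0/f_{\theta^0})}_\infty$, which the paper leaves implicit.
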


\begin{proof}

The existence of $\theta^*$ is due to Lemma \ref{lem:max_likelihood_as}.  
Thanks to Lemma \ref{lem:BS3} and \reff{eq:cor_KL} with $\kappa=\psi(\theta^0) - \mathrm{a}_0$, we
can deduce that: 
\begin{equation*}
\kl{f^0}{f_{\theta^*}}  \leq \kl{f^0}{f_{\theta_m^0}}  \leq  \inv{2} \expp{\norm{\ell^0-\ell^0_{m}}_\infty} \norm{f^0}_\infty
\norm{\ell^0-\ell^0_{m}}^2_{L^2} \leq \inv{2} \expp{\gamma_{m}+\norm{\log(f^0)}_\infty} \Delta_{m}^2.  
\end{equation*}

\end{proof}
    Set:
    \begin{equation}  \label{eq:def_eps_m}
      \varepsilon_{m} =  6d^{\frac{5}{2}} \kappa_m \Delta_{m} \expp{(4\gamma_{m}+2\norm{\log(f^0)}_\infty+1)} .     
    \end{equation}
    We need the following lemma to control $\norm{\log(f^0/f_{\theta^*})}_\infty$.
    
 \begin{lem}   
    If  $\varepsilon_{m}  \leq  1$, we also have: 
    \begin{equation} \label{eq:logf*_ftheta*}
      \norm{\log(f^0/f_{\theta^*})}_\infty \leq 2 \gamma_m + \varepsilon_m \leq 2 \gamma_m +1.
    \end{equation}
\end{lem}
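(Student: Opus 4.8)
The plan is to compare $f_{\theta^*}$ with the ``naive'' member $f_{\theta^0}$ of the exponential family, where $\theta^0\in\R^{\val{m}}$ is the coordinate vector of the $L^2(\triangle)$-projection, i.e. $\theta^0\cdot\phi_m=\ell^0_m$, and then to split
\[
  \norm{\log(f^0/f_{\theta^*})}_\infty \leq \norm{\log(f^0/f_{\theta^0})}_\infty + \norm{\log(f_{\theta^0}/f_{\theta^*})}_\infty .
\]
I would show the first term is at most $2\gamma_m$ and the second at most $\varepsilon_m$; combined with the hypothesis $\varepsilon_m\leq 1$ this gives \reff{eq:logf*_ftheta*}.

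Since $f_{\theta^0}=\expp{\ell^0_m-\psi(\theta^0)}\ind_\triangle$ and $f^0=\expp{\ell^0-\mathrm{a}_0}\ind_\triangle$, on $\triangle$ we have $\log(f^0/f_{\theta^0})=(\ell^0-\ell^0_m)+(\psi(\theta^0)-\mathrm{a}_0)$. The first summand has sup norm $\gamma_m$ by definition. For the second, the pointwise bound $\val{\ell^0_m-\ell^0}\leq\gamma_m$ gives $\expp{-\gamma_m}\int_\triangle\expp{\ell^0}\leq\int_\triangle\expp{\ell^0_m}\leq\expp{\gamma_m}\int_\triangle\expp{\ell^0}$, hence $\val{\psi(\theta^0)-\mathrm{a}_0}\leq\gamma_m$; therefore $\norm{\log(f^0/f_{\theta^0})}_\infty\leq 2\gamma_m$. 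Writing $\log(f_{\theta^0})=\log(f^0)-(\ell^0-\ell^0_m)-(\psi(\theta^0)-\mathrm{a}_0)$, the same estimate also yields $\norm{\log(f_{\theta^0})}_\infty\leq\norm{\log(f^0)}_\infty+2\gamma_m$, which is needed below.

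For the second term I would invoke Lemma \ref{lem:BS5} with $\theta=\theta^0$ and $\alpha=\int_\triangle\phi_m f^0$, noting that $\Theta_m(\alpha)=\theta^*$ is exactly the information projection of $f^0$. The key estimate is on $\norm{A_m(\theta^0)-\alpha}=\norm{\int_\triangle\phi_m(f_{\theta^0}-f^0)}$: writing $g=f_{\theta^0}-f^0$ and $g_m=\beta\cdot\phi_m$ for its $L^2(\triangle)$-projection onto $S_m$, orthogonality gives $\int_\triangle\phi_m g=\int_\triangle\phi_m g_m=G\beta$, where $G$ is the Gram matrix of $\phi_m$. By Lemma \ref{lem:thetaphi_theta} the eigenvalues of $G$ lie in $[1/d,d]$, so $\norm{G\beta}^2=\beta^{T}G^2\beta\leq d\,\beta^{T}G\beta=d\norm{g_m}_{L^2}^2\leq d\norm{g}_{L^2}^2$. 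Next, applying \reff{eq:cor_L2} of Corollary \ref{cor:L1-L2} to $f^0,f_{\theta^0}$ with $\kappa=\psi(\theta^0)-\mathrm{a}_0$ — for which $\log(f^0/f_{\theta^0})-\kappa=\ell^0-\ell^0_m$ and $-\kappa\leq\gamma_m$ — gives $\norm{f^0-f_{\theta^0}}_{L^2}\leq\norm{f^0}_\infty\expp{2\gamma_m}\Delta_m\leq\expp{\norm{\log(f^0)}_\infty+2\gamma_m}\Delta_m$. Hence $\norm{A_m(\theta^0)-\alpha}\leq\sqrt{d}\,\expp{\norm{\log(f^0)}_\infty+2\gamma_m}\Delta_m$. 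Plugging this together with $\norm{\log(f_{\theta^0})}_\infty\leq\norm{\log(f^0)}_\infty+2\gamma_m$ into the definition \reff{eq:def_eps_m} of $\varepsilon_m$, one checks that
\[
  6d^{3/2}\expp{1+\norm{\log(f_{\theta^0})}_\infty}\kappa_m\norm{A_m(\theta^0)-\alpha}\leq\varepsilon_m/\sqrt{d}\leq\varepsilon_m\leq 1 ,
\]
so condition \reff{eq:cond_BS5} holds and one may take $\tau=\varepsilon_m$ in Lemma \ref{lem:BS5}; then \reff{eq:lem_5_2} yields $\norm{\log(f_{\theta^0}/f_{\theta^*})}_\infty\leq\tau=\varepsilon_m$. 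The triangle inequality then gives $\norm{\log(f^0/f_{\theta^*})}_\infty\leq 2\gamma_m+\varepsilon_m$, and $\varepsilon_m\leq 1$ finishes the proof.

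The main obstacle is the estimate $\norm{A_m(\theta^0)-\alpha}\leq\sqrt{d}\,\norm{f^0-f_{\theta^0}}_{L^2}$: a crude coordinatewise Cauchy--Schwarz would cost a factor $\sqrt{\val{m}}$, so the Gram-matrix/projection argument is essential to keep the dimensional constant small enough to be absorbed by the $d^{5/2}$ in $\varepsilon_m$; one must also track the exponential prefactors carefully so that they match those built into the definition of $\varepsilon_m$ when Lemma \ref{lem:BS5} is applied.
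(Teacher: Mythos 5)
Your proof is correct and follows essentially the same route as the paper: compare $f^0$ with $f_{\theta^0}$, get $\norm{\log(f^0/f_{\theta^0})}_\infty\leq 2\gamma_m$ and $\norm{\log(f_{\theta^0})}_\infty\leq\norm{\log(f^0)}_\infty+2\gamma_m$ from $\val{\psi(\theta^0)-\mathrm{a}_0}\leq\gamma_m$, bound $\norm{A_m(\theta^0)-\alpha}$ via \reff{eq:cor_L2} with $\kappa=\psi(\theta^0)-\mathrm{a}_0$, and then run Lemma \ref{lem:BS5} with $\theta=\theta^0$ to control $\norm{\log(f_{\theta^0}/f_{\theta^*})}_\infty$ by $\varepsilon_m$. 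The only real deviation is the step you single out as the main obstacle: the paper does not use your Gram-matrix/projection argument, but instead splits $\phi_m$ into the $d$ blocks $\phi_{i,m}$, each of which is orthonormal in $L^2(\triangle)$, applies Bessel's inequality blockwise and sums, obtaining $\norm{A_m(\theta^0)-\alpha}\leq d\,\norm{f^0-f_{\theta^0}}_{L^2}$. The factor $d$ (rather than your $\sqrt{d}$) is exactly what the $d^{5/2}=d^{3/2}\cdot d$ in the definition \reff{eq:def_eps_m} of $\varepsilon_m$ is built to absorb; the paper then takes $\tau=1$ and uses the first inequality in \reff{eq:lem_5_2} to conclude $\norm{\log(f_{\theta^0}/f_{\theta^*})}_\infty\leq\varepsilon_m$. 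So your eigenvalue bound is valid and slightly sharper (and your choice $\tau=\varepsilon_m$ works just as well), but it is not essential: a per-block Bessel argument already avoids the $\sqrt{\val{m}}$ loss you were worried about, at the harmless cost of replacing $\sqrt{d}$ by $d$.
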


\begin{proof}

To show \reff{eq:logf*_ftheta*}, let  $f^0_m=f_{\theta^0}$
  denote the density function in the exponential family corresponding to
  $\theta^0$, and $\alpha^0=\int _\triangle \varphi_m f^0$.   
  For each  $1
  \leq i \leq d$, the functions  $\phi_{i,m} = (\phi_{[i],k}$, $1 \leq k
  \leq m_i)$  form an orthonormal  set with respect  to the Lebesgue
  measure on $\triangle$.  We set $\alpha^0_{i,m}=\int_\triangle
  \varphi_{i,m} f^0$ and $A_{i,m}(\theta^0)=\int_\triangle
  \varphi_{i,m} f_{\theta^0}$. By  Bessel's inequality, we
  have for $1 \leq i \leq d$:
     \[
        \norm{\alpha^0_{i,m}-A_{i,m}(\theta^0)} \leq  \norm{
          f^0-f^0_m}_{L^2}.   
     \]
Summing up these inequalities for $1\leq i\leq d$, we get:
\begin{align*}
\norm{\alpha^0-A_{m}(\theta^0)}  
& \leq \sum_{i=1}^d \norm{\alpha^0_{i,m}-A_{i,m}(\theta^0)} \\
& \leq d \norm{ f^0-f^0_m}_{L^2} \\
& \leq d \norm{f^0}_\infty
\expp{\left(\norm{\ell^0-\ell^0_{m}}_{\infty}-(\psi(\theta^0) - \mathrm{a}_0)\right)} \norm{
  \ell^0-\ell^0_{m}}_{L^2} \\ 
& \leq d  \expp{\norm{\log(f^0)}_\infty + 2\gamma_{m}} \Delta_m,            
\end{align*}
where we used \reff{eq:cor_L2} with $\kappa=\psi(\theta^0) - \mathrm{a}_0$ for
the third inequality and $\val{\psi(\theta^0) - \mathrm{a}_0} \leq \gamma_m$ 
(due to $\psi(\theta^0) - \mathrm{a}_0 = \log(\int \exp(\ell^0_{m}-\ell^0) f^0 )$) for the fourth inequality. 
The  latter  argument also ensures  that  $\norm{\log(f^0/f^0_m)}_\infty  \leq
2\gamma_{m}$.  In order to   apply    Lemma     \ref{lem:BS5}    with
$\theta=\theta^0$,   $\alpha=\alpha^0$, we check condition \reff{eq:cond_BS5}, which is implied by:
\[
       d  \expp{\norm{\log(f^0)}_\infty+2\gamma_{m}} \Delta_m \leq
       \frac{\expp{-(1+\norm{\log(f^0_m)}_{\infty})}}{6 d^{\frac{3}{2}}
         \kappa_m} \cdot
\]
    Since $\norm{\log(f^0_m)}_{\infty} \leq \norm{\log(f^0)}_\infty + \norm{\log(f^0/f^0_m)}_\infty \leq \norm{\log(f^0)}_\infty + 2\gamma_m$, this condition is ensured whenever $\varepsilon_m \leq 1$.
In this case we deduce, thanks to \reff{eq:lem_5_2} with $\tau=1$, that $\norm{\log(f^0_m/f_{\theta^*})}_\infty \leq \varepsilon_{m} $. By the triangle inequality, we obtain 
$\norm{\log(f^0/f_{\theta^*})}_\infty \leq 2\gamma_{m} + \varepsilon_{m}$. This completes the proof.
\end{proof}

  \subsection{Variance of the estimator} \label{sec:variance}
  
  We control the variance error due to the parameter estimation by the size of the sample. We keep the notations used in Section \ref{sec:bias}. In particular $\varepsilon_m$ is defined by \reff{eq:def_eps_m} and 
    $\kappa_m=\sqrt{2d!}\sqrt{\sum_{i=1}^d (m_i+d)^{2d}}$. The results are summarized in the following proposition. 
     
 \begin{prop} \label{prop:general_2}
     Let $f^0\in \cp(\triangle)$ have a product form given by Definition \ref{defi:prod-f}.
     Let $m\in (\N^*)^d$ and suppose that $\varepsilon_m \leq 1$. Set:  
    \[
      \delta_{m,n}=6d^{\frac{3}{2}}\kappa_m\sqrt{\frac{\val{m}}{n}} \expp{2\gamma_{m}+\norm{\log(f^0)}_\infty +2}.
    \]
    If $\delta_{m,n} \leq 1$, then for every $0 < K \leq \delta_{m,n}^{-2}$, we have:
    \begin{equation} \label{eq:theo_gen_2}
      \P \left( \kl{f_{\theta^*}}{\hat{f}_{m,n}} \geq \mathfrak{A}_2 \frac{\val{m} }{n}K \right) \leq \exp(\norm{\log(f^0)}_\infty)/K .
    \end{equation}
    where $\mathfrak{A}_2=3 d \expp{2\gamma_{m} + \varepsilon_{m}+\norm{\log(f^0)}_\infty +\tau} $, and $\tau=\delta_{m,n} \sqrt{K} \leq 1$. 

  \end{prop}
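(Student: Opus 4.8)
The plan is to follow the Barron--Sheu strategy: first control the random fluctuation $\norm{\hat\mu_{m,n}-\alpha^0}$ of the empirical mean around $\alpha^0=\int_\triangle \phi_m f^0 = A_m(\theta^*)$, where $\theta^*$ is the information projection of $f^0$ (which exists by Proposition~\ref{prop:general_1}), and then transfer this control into a bound on $\kl{f_{\theta^*}}{\hat f_{m,n}}$ through the local Lipschitz estimate for $\Theta_m$ in Lemma~\ref{lem:BS5}. The only stochastic ingredient is a crude second-moment bound on $\hat\mu_{m,n}$; everything else is deterministic bookkeeping on the event where that fluctuation is small.

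First I would bound the second moment of the fluctuation. Since $\hat\mu_{m,n}=\frac1n\sum_{j=1}^n\phi_m(X^j)$ with the $X^j$ i.i.d.\ of law $f^0$ and $\alpha^0=\E[\phi_m(X^1)]$, independence and $\E\norm{Y-\E Y}^2\le\E\norm{Y}^2$ give $\E[\norm{\hat\mu_{m,n}-\alpha^0}^2]\le\frac1n\E[\norm{\phi_m(X^1)}^2]$. Using that each $\phi_{[i],k}$ is normalized in $L^2$ of the Lebesgue measure on $\triangle$, so that $\int_\triangle \phi_{[i],k}^2=1$, together with $\norm{f^0}_\infty\le\expp{\norm{\log(f^0)}_\infty}$, one gets $\E[\norm{\phi_m(X^1)}^2]=\sum_{i=1}^d\sum_{k=1}^{m_i}\int_\triangle \phi_{[i],k}^2 f^0 \le \val{m}\,\expp{\norm{\log(f^0)}_\infty}$, hence $\E[\norm{\hat\mu_{m,n}-\alpha^0}^2]\le\frac{\val{m}}{n}\expp{\norm{\log(f^0)}_\infty}$. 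Markov's inequality then yields, for $t>0$, $\P(\norm{\hat\mu_{m,n}-\alpha^0}\ge t)\le\frac{\val{m}\,\expp{\norm{\log(f^0)}_\infty}}{n t^2}$, and the choice $t=\sqrt{K\,\val{m}/n}$ makes the right-hand side equal to $\expp{\norm{\log(f^0)}_\infty}/K$.

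Next I would work on the strict event $\mathcal E=\{\norm{\hat\mu_{m,n}-\alpha^0}<t\}$ and apply Lemma~\ref{lem:BS5} with $\theta=\theta^*$ and $\alpha=\hat\mu_{m,n}$; the point $\Theta_m(\hat\mu_{m,n})$ produced by the lemma is then precisely $\hat\theta_{m,n}$, so $\hat f_{m,n}=f_{\hat\theta_{m,n}}$ is well defined on $\mathcal E$. The estimate $\norm{\log(f^0/f_{\theta^*})}_\infty\le 2\gamma_m+\varepsilon_m\le 2\gamma_m+1$ from \reff{eq:logf*_ftheta*} gives $\norm{\log(f_{\theta^*})}_\infty\le\norm{\log(f^0)}_\infty+2\gamma_m+1$, hence $\expp{1+\norm{\log(f_{\theta^*})}_\infty}\le\expp{2+\norm{\log(f^0)}_\infty+2\gamma_m}$. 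Since $K\le\delta_{m,n}^{-2}$ we have $t\le\sqrt{\val{m}/n}\,\delta_{m,n}^{-1}=\expp{-(2+\norm{\log(f^0)}_\infty+2\gamma_m)}/(6d^{3/2}\kappa_m)$, which dominates the threshold $\expp{-(1+\norm{\log(f_{\theta^*})}_\infty)}/(6d^{3/2}\kappa_m)$ of \reff{eq:cond_BS5}; so that condition holds on $\mathcal E$. Moreover $\tau:=\delta_{m,n}\sqrt K$ satisfies $\tau\le1$ (again by $K\le\delta_{m,n}^{-2}$), and on $\mathcal E$ one has $6d^{3/2}\expp{1+\norm{\log(f_{\theta^*})}_\infty}\kappa_m\norm{\hat\mu_{m,n}-\alpha^0}<6d^{3/2}\expp{2+\norm{\log(f^0)}_\infty+2\gamma_m}\kappa_m\,t=\delta_{m,n}\sqrt K=\tau$, so $\tau$ is an admissible choice in Lemma~\ref{lem:BS5}. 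Then \reff{eq:lem_5_3} gives, on $\mathcal E$, $\kl{f_{\theta^*}}{\hat f_{m,n}}\le 3d\,\expp{\tau+\norm{\log(f_{\theta^*})}_\infty}\norm{\hat\mu_{m,n}-\alpha^0}^2<3d\,\expp{\tau+\norm{\log(f^0)}_\infty+2\gamma_m+\varepsilon_m}\,t^2=\mathfrak A_2\,\frac{\val{m}}{n}K$. Consequently $\{\kl{f_{\theta^*}}{\hat f_{m,n}}\ge\mathfrak A_2\frac{\val{m}}{n}K\}\subseteq\mathcal E^c=\{\norm{\hat\mu_{m,n}-\alpha^0}\ge t\}$, and combining with the Markov bound above (with that same $t$) gives $\P(\kl{f_{\theta^*}}{\hat f_{m,n}}\ge\mathfrak A_2\frac{\val{m}}{n}K)\le\expp{\norm{\log(f^0)}_\infty}/K$, as claimed.

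The main obstacle is not conceptual but the careful bookkeeping of the exponential constants: one must arrange that the single radius $t=\sqrt{K\,\val{m}/n}$ simultaneously (i) is small enough to trigger \reff{eq:cond_BS5}, (ii) makes the admissible $\tau$ in Lemma~\ref{lem:BS5} exactly $\delta_{m,n}\sqrt K\le1$, and (iii) reproduces precisely the constant $\mathfrak A_2$ in the final bound. This is what forces the exact form of $\delta_{m,n}$, and in particular explains the factor $\expp{2\gamma_m+\norm{\log(f^0)}_\infty+2}$ appearing there. A minor point worth being careful about is using the \emph{strict} event $\{\norm{\hat\mu_{m,n}-\alpha^0}<t\}$, so that the conclusion comes out with ``$\ge$'' rather than merely ``$>$'', and noting that $\hat f_{m,n}$ is well defined on that event either by Lemma~\ref{lem:BS5} itself or by Lemma~\ref{lem:max_likelihood_as}.
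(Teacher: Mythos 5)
Your proof is correct and follows essentially the same route as the paper: a second-moment (Chebyshev/Markov) bound on $\norm{\hat{\mu}_{m,n}-\alpha^0}$ using the normalization $\int_\triangle \phi_{[i],k}^2=1$ and $\norm{f^0}_\infty\leq \expp{\norm{\log(f^0)}_\infty}$, followed by Lemma \ref{lem:BS5} applied with $\theta=\theta^*$, $\alpha=\hat{\mu}_{m,n}$ and $\tau=\delta_{m,n}\sqrt{K}$, with \reff{eq:logf*_ftheta*} and $\varepsilon_m\leq 1$ controlling $\norm{\log(f_{\theta^*})}_\infty$ exactly as in the paper. One wording slip: the quantity $\expp{-(2+\norm{\log(f^0)}_\infty+2\gamma_m)}/(6d^{3/2}\kappa_m)$ is \emph{dominated by} (not ``dominates'') the threshold of \reff{eq:cond_BS5}, which is the direction you in fact established and need.
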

  \begin{proof}
 
   Let $\theta^*$ be defined in Proposition \ref{prop:general_1}. Let $X=(X_1, \hdots, X_d)$ denote a random variable with density $f^0$.  Let $\theta$ in Lemma \ref{lem:BS5} be equal to $\theta^*$, which gives $A_m(\theta^*) = \alpha^0=\mathbb{E} [\phi_m(X)]$, and for $\alpha$, we take the empirical mean $\hat{\mu}_{m,n}$. With this setting, we have: 
   \[ 
     \norm{\alpha-\alpha^0}^2 = \sum_{i=1}^d \sum_{k=1}^{m_i} \left( \hat{\mu}_{m,n,i,k}-\mathbb{E} [ \phi_{i,k}(X_i) ]\right)^2.
   \]
    By Chebyshev's  inequality $\norm{\alpha-\alpha^0}^2 \leq \val{m}K/n $ except on a set whose probability verifies:
    \begin{align*}
       \P\left( \norm{\alpha-\alpha^0}^2 > \frac{\val{m}}{n}K \right)  \leq \frac{1}{\val{m}K} \sum_{i=1}^d \sum_{k=1}^{m_i} \sigma_{i,k}^2 .
     \end{align*}
     with $\sigma^2_{i,k}=\Var[\phi_{i,k}(X_i)]$. We have the upper bound $\sigma^2_{i,k} \leq \norm{f^0}_\infty \int_\triangle \phi_{[i],k}^2 \leq  \expp{\norm{\log(f^0)}_\infty}$ by the normality of $\phi_{i,k}$. Therefore we obtain:
     \begin{align*}
       \P\left( \norm{\alpha-\alpha^0}^2 > \frac{\val{m}}{n}K \right) & \leq \frac{\expp{\norm{\log(f^0)}_\infty}}{K} \cdot
     \end{align*}   
     We can apply Lemma \ref{lem:BS5} on the event $\{ \norm{\alpha-\alpha^0}  \leq \sqrt{\val{m}K/n} \}$ if:
     \begin{equation} \label{eq:var_cond_lem}
        \sqrt{ \frac{\val{m}}{n} K} \leq \frac{\expp{-(1+\norm{\log(f_{\theta^*})}_{\infty})}}{6 d^{\frac{3}{2}} \kappa_m} \cdot
     \end{equation}
     Thanks to \reff{eq:logf*_ftheta*} we have:
     \begin{equation} \label{eq:logf*}
      \norm{\log(f_{\theta^*})}_{\infty}  \leq \norm{\log(f^0/f_{\theta^*})}_{\infty}+\norm{\log(f^0)}_\infty \leq 2\gamma_{m} + \varepsilon_{m}+\norm{\log(f^0)}_\infty.
     \end{equation}
     Since $\varepsilon_m \leq 1$, \reff{eq:var_cond_lem} holds  if $\delta^2_{m,n} \leq 1/K$. Then except on a set of probability less than $\expp{\norm{\log(f^0)}_{\infty}}/K$, the maximum likelihood estimator $\hat{\theta}_{m,n}$ satisfies, thanks to \reff{eq:lem_5_3} with $\tau=\delta_{m,n}\sqrt{K}$:
     \begin{equation}
       \kl{f_{\theta^*}}{f_{\hat{\theta}_{m,n}}} \leq 3d\expp{\norm{\log(f_{\theta^*})}_{\infty}+\tau} \frac{\val{m}}{n} K \leq 3d \expp{2\gamma_{m} + \varepsilon_{m}+\norm{\log(f^0)}_\infty+\tau}  \frac{\val{m}}{n} K.
     \end{equation}
     
    \end{proof}

  \subsection{Proof of Theorem \ref{theo:main_stat}}    \label{sec:proof_stat}

     Recall that $r = (r_1, \hdots, r_d) \in \N^d$ is fixed. We assume $\ell_i^0 \in W^2_{r_i}(q_i)$ for all $1 \leq i \leq d$. Corollary \ref{cor:delta_m} ensures $\Delta_m = O(\sqrt{\sum_{i=1}^d m_i^{-2r_i}})$  and the boundedness of $\gamma_m$ when $m_i > r_i$ for all $1 \leq i \leq d$ is due to Corollary \ref{cor:gamma_m}. By Remark \ref{cor:Am}, we have that $\kappa_m=O(  \val{m}^{d})$. If \reff{eq:cond_m_theo_1} holds, then 
     $\kappa_m \Delta_m$ converges to 0. Therefore for $m$ large enough, we have that $\varepsilon_m$  defined in \reff{eq:def_eps_m} is less than $1$. By Proposition \ref{prop:general_1}, the information projection $f_{\theta^*}$ of $f^0$ exists. For such $m$, by Lemma \ref{lem:BS3}, we have that for all $\theta \in \R^{\val{m}}$: 
     \[
       \kl{f^0}{f_\theta}= \kl{f^0}{f_{\theta^*}}+\kl{f_{\theta^*}}{f_\theta}.
     \]
      Proposition \ref{prop:general_1} and $\Delta_m = O(\sqrt{\sum_{i=1}^d m_i^{-2r_i}})$ ensures that the $\kl{f^0}{f_{\theta^*}}=O(\sum_{i=1}^d m_i^{-2r_i})$. The condition $\delta_{m,n} \leq 1$ in Proposition \ref{prop:general_2} is verified for $n$ large enough since $\gamma_m$ is bounded and \reff{eq:cond_m_theo_2} holds, giving $\lim_{n \rightarrow \infty} \delta_{m,n} = 0$. Proposition \ref{prop:general_2} then ensures that $\kl{f_{\theta^*}}{\hat{f}_{m,n}}=O_\P(\val{m}/n)$.
     Therefore the proof is complete.

\section{Proof of Theorem \ref{theo:adapt}} \label{sec:proof_theo_adapt}

   In this section we provide the elements of the proof of Theorem \ref{theo:adapt}. We assume the hypotheses of Theorem \ref{theo:adapt}. Recall the notation of Section \ref{sec:adaptation}.  We shall stress out when we use the inequalities \reff{eq:R_n_1}, \reff{eq:R_n_2} and \reff{eq:R_n_3} to achieve uniformity in $r$ in Corollary \ref{cor:adapt_est}.

First recall that $\ell^0$ from \reff{eq:Pi-form} admits the following representation: $\ell^0=\sum_{i=1}^d \sum_{k=1}^\infty \theta^0_{i,k} \phi_{[i],k}$. For $m=(m_{1}, \hdots, m_{d}) \in (\N^*)^d$, let $\ell^0_m = \sum_{i=1}^d \sum_{k=1}^{m_{i}} \theta^0_{i,k} \phi_{[i],k}$ and $f^0_{m} = \exp( \ell^0_m - \psi(\theta^0_{m}))$. Using Corollary \ref{cor:gamma_m} and  $\val{\psi(\theta^0_m)-\mathrm{a}_0} \leq \norm{\ell^0_m - \ell^0}_\infty$,  we obtain that $\norm{\log(f^0_{m}/f^0)}_\infty$ is bounded for all $m \in (\N^*)^d$ such that $m_i \geq r_i$:
    \begin{equation} \label{eq:gamma}
      \norm{\log(f^0_{m}/f^0)}_\infty \leq 2 \gamma_{m} \leq 2 \gamma,
    \end{equation}
    with $\gamma_{m}=\norm{\ell^0_m - \ell^0}_\infty$, and $\gamma=\cc \sum_{i=1}^d \norm{\ell^{(r_i)}_{i}}_{L^2(q_i)}$ with $\cc$ defined in Corollary \ref{cor:gamma_m} which does not depend on $r$ or $m$.  
    For $m = (v,\hdots,v) \in \cm_n$, we have that $a_n \leq v \leq b_n$, 
    with $a_n,b_n$ given by:    
    \begin{equation} \label{eq:an_bn}
      a_n = \left\lfloor n^{1/(2(d+N_n)+1)} \right\rfloor \quad \text{ and } \quad b_n = \left\lfloor n^{1/(2(d+1)+1)} \right\rfloor.
    \end{equation}   
    The upper bound \reff{eq:gamma} is uniform over $m \in \cm_n$ and $r \in (\crr_n)^d$ when \reff{eq:R_n_2} holds.
    Since $N_n = o(\log(n))$, we have 
     $\lim_{n \rightarrow +\infty} a_n = + \infty$. Hence, for $n$ large enough, say $n \geq n^*$, we have $\varepsilon_m \leq 1 $ for all $m= (v,\hdots,v) \in \cm_n$ with $\varepsilon_m$ given by \reff{eq:def_eps_m}, since $\kappa_m \Delta_m = O(a_n^{d-\min(r)})$.  According to Proposition \ref{prop:general_1}, this means that the information projection $f_{\theta^*_{m}}$ of $f$ onto the set of functions $(\phi_{[i],k}, 1\leq i \leq d, 1 \leq k \leq v)$ verify, by \reff{eq:lem_5_2} with $\tau=1$, for all $m \in \cm_n$:
    \begin{equation} \label{eq:epsilon_m}
      \norm{\log(f_{\theta^*_{m}}/f^0_{m})}_\infty \leq 1. 
    \end{equation}
    
    Recall the notation $A^0_{m} = \int_\triangle \phi_{m} f^0 $ for the expected value of $\phi_{m}(X^1)$, $\hat{\mu}_{m,n}$ the corresponding empirical mean based on the sample $\cx^n_1$, and $\hat{\ell}_{m,n} = \hat{\theta}_{m,n} \cdot \phi_m$ where $\hat{\theta}_{m,n}$ is the maximum likelihood estimate given by \reff{eq:max_likelihood}.  Let $T_n >0 $ be defined as:   
    \begin{equation} \label{eq:T_cond}
      T_n = \frac{n_1 \expp{-4\gamma-4-2\norm{\log(f^0)}_\infty}}{36 d^5 d! b_n  (b_n+d)^{2d} \log(b_n)} ,
    \end{equation}
    with $b_n$ given by \reff{eq:an_bn} and $\gamma$ as in \reff{eq:gamma}.
    We define the sets:
    \[
       \cb_{m,n} = \{ \norm{A^0_{m}-\hat{\mu}_{m,n}}^2 > \val{m} T_n \log(b_n)/n_1 \} \quad \text{and} \quad \ca_n = \left(\bigcup_{m \in \cm_n} \cb_{m,n} \right)^c.
    \]
    We first show that with probability converging to $1$, the estimators are uniformly bounded. 
   
   \begin{lem} \label{lem:Mn_exist}
   
   Let $n \in \N^*$, $n \geq n^*$ and $\cm_n$ as in \reff{eq:def_Mn}. 
    Then we have:
    \[
         \P(\ca_n) \geq 1-N_n 2 dn^{C_{T_n}}, 
    \]
    with $C_{T_n}$ defined as: 
    \[
      C_{T_n} = \inv{2d+3}\left(1-\frac{T_n}{2\norm{f^0}_\infty + C\sqrt{T_n}} \right),
    \]
    with a finite constant $C$ given by \reff{eq:lemma_8_3_const}. Moreover, on the event $\ca_n$, we have the following uniform upper bound for $\norm{\hat{\ell}_{m,n}}_\infty$,  $m \in \cm_n$:
    \begin{equation} \label{eq:l_bound}
      \norm{\hat{\ell}_{m,n}}_\infty \leq 4 + 4 \gamma + 2 \norm{ \log(f^0)}_\infty.
    \end{equation}

   \end{lem}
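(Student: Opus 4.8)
The plan is to establish the two assertions separately. The probability bound on $\ca_n$ comes from a union bound together with a Bernstein-type deviation inequality for the empirical means $\hat{\mu}_{m,n}$, while the uniform $L^\infty$ bound on $\ca_n$ is obtained by applying Lemma~\ref{lem:BS5} at the information projection of $f^0$, the point being that on $\ca_n$ the deviation $\norm{A^0_m-\hat{\mu}_{m,n}}$ is small enough to satisfy the hypothesis \reff{eq:cond_BS5} for every $m\in\cm_n$.

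For the probability bound, since $\Card(\cm_n)\leq N_n$ we have $\P(\ca_n^c)\leq\sum_{m\in\cm_n}\P(\cb_{m,n})\leq N_n\max_{m\in\cm_n}\P(\cb_{m,n})$, so it suffices to bound $\P(\cb_{m,n})$ for a fixed $m=(v,\ldots,v)\in\cm_n$. Writing $\norm{A^0_m-\hat{\mu}_{m,n}}^2=\sum_{i=1}^d\sum_{k=1}^v(\hat{\mu}_{m,n,i,k}-A^0_{m,i,k})^2$ as a sum of $\val{m}$ terms, $\cb_{m,n}$ is contained in $\bigcup_{i,k}\{|\hat{\mu}_{m,n,i,k}-A^0_{m,i,k}|>\sqrt{T_n\log(b_n)/n_1}\}$. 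Each $\hat{\mu}_{m,n,i,k}$ is the mean of $n_1$ i.i.d.\ copies of $\phi_{i,k}(X_i)$, whose variance is at most $\norm{f^0}_\infty\int_\triangle\phi_{[i],k}^2=\norm{f^0}_\infty$ by normality of $\phi_{[i],k}$ on $\triangle$, and whose range is at most $2\norm{\phi_{i,k}}_\infty\leq 2\sqrt{2(d-1)!}\,(b_n+d)^d$ by \reff{eq:sup_phi_ik} and $k\leq v\leq b_n$. Bernstein's inequality bounds each coordinate probability, and the choice of $T_n$ in \reff{eq:T_cond} is precisely such that, with $t=\sqrt{T_n\log(b_n)/n_1}$, the range times $t$ term in the Bernstein denominator is controlled by $C\sqrt{T_n}$, with $C$ the constant in \reff{eq:lemma_8_3_const}, giving a bound of $2\exp(-T_n\log(b_n)/(2\norm{f^0}_\infty+C\sqrt{T_n}))$ per coordinate. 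Summing over the $\val{m}\leq d b_n$ coordinates and using $b_n=\lfloor n^{1/(2d+3)}\rfloor$ yields $\P(\cb_{m,n})\leq 2dn^{C_{T_n}}$, hence $\P(\ca_n)\geq 1-N_n 2dn^{C_{T_n}}$. For the uniform version needed in Corollary~\ref{cor:adapt_est} one tracks where $r$ enters: \reff{eq:R_n_2} ensures $\max(r)\leq b_n$, so the above estimates, and the bound \reff{eq:gamma} on $\gamma$, are uniform over $r\in(\crr_n)^d$.

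For the $L^\infty$ bound, fix $m\in\cm_n$ and work on $\ca_n\subseteq\cb_{m,n}^c$. Since $n\geq n^*$, $\varepsilon_m\leq1$, so Proposition~\ref{prop:general_1} gives the information projection $f_{\theta^*_m}$ of $f^0$ with $A_m(\theta^*_m)=A^0_m$, and \reff{eq:epsilon_m} combined with \reff{eq:gamma} and the triangle inequality yields $\norm{\log(f_{\theta^*_m})}_\infty\leq 1+2\gamma+\norm{\log(f^0)}_\infty$. On $\ca_n$ we have $\norm{A^0_m-\hat{\mu}_{m,n}}^2\leq\val{m}T_n\log(b_n)/n_1$; substituting $\val{m}\leq db_n$, $\kappa_m\leq\sqrt{2d!\,d}\,(b_n+d)^d$ (Remark~\ref{cor:Am}) and the bound on $\norm{\log(f_{\theta^*_m})}_\infty$, the definition \reff{eq:T_cond} of $T_n$ is exactly what makes condition \reff{eq:cond_BS5} of Lemma~\ref{lem:BS5} hold with $\theta=\theta^*_m$, $\alpha=\hat{\mu}_{m,n}$ and $\tau=1$. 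Thus $\hat{\theta}_{m,n}=\Theta_m(\hat{\mu}_{m,n})$ exists on $\ca_n$, and \reff{eq:lem_5_2} gives $\norm{\log(f_{\theta^*_m}/\hat{f}_{m,n})}_\infty\leq1$, so $\norm{\log(\hat{f}_{m,n})}_\infty\leq\norm{\log(f_{\theta^*_m})}_\infty+1\leq2+2\gamma+\norm{\log(f^0)}_\infty$. Finally, since $\log(\hat{f}_{m,n})=\hat{\ell}_{m,n}-\psi(\hat{\theta}_{m,n})$ on $\triangle$ and $\int_\triangle\hat{\ell}_{m,n}=0$ (each $\phi_{[i],k}$, $k\geq1$, being orthogonal to the constants for the Lebesgue measure on $\triangle$), integrating over $\triangle$ gives $|\psi(\hat{\theta}_{m,n})|\leq\norm{\log(\hat{f}_{m,n})}_\infty$, hence $\norm{\hat{\ell}_{m,n}}_\infty\leq2\norm{\log(\hat{f}_{m,n})}_\infty\leq4+4\gamma+2\norm{\log(f^0)}_\infty$, which is \reff{eq:l_bound}; this holds for all $m\in\cm_n$.

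The main obstacle is the constant bookkeeping in the two places where the precise form of $T_n$ in \reff{eq:T_cond} is used: producing the Bernstein exponent exactly in the form $C_{T_n}$ (in particular isolating the constant $C$ of \reff{eq:lemma_8_3_const} from the range-times-threshold term), and checking that \reff{eq:cond_BS5} holds on $\ca_n$ simultaneously for every $m\in\cm_n$. Both reduce to combining the norm estimates of the Appendix---mainly \reff{eq:sup_phi_ik} and Remark~\ref{cor:Am}---with the definition of $T_n$, but the estimates must be arranged to be uniform over $m\in\cm_n$ and, for the corollary, over $r\in(\crr_n)^d$ as well.
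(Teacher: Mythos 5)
Your proposal is correct and follows essentially the same route as the paper: a per-coordinate Bernstein bound with variance $\leq \norm{f^0}_\infty$ and sup-norm bound \reff{eq:sup_phi_ik}, threshold $t=\sqrt{T_n\log(b_n)/n_1}$, union bounds over coordinates and over $\cm_n$ yielding the exponent $C_{T_n}$; then, on $\ca_n$, verification of \reff{eq:cond_BS5} with $\theta=\theta^*_m$, $\alpha=\hat{\mu}_{m,n}$, $\tau=1$, the bound $\norm{\log(f_{\theta^*_m})}_\infty\leq 1+2\gamma+\norm{\log(f^0)}_\infty$ via \reff{eq:gamma} and \reff{eq:epsilon_m}, and \reff{eq:lem_5_2} followed by the centering argument $\norm{\hat{\ell}_{m,n}}_\infty\leq 2\norm{\log(\hat{f}_{m,n})}_\infty$ (which you rederive rather than cite as \reff{eq:f_l_a_2}). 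The only differences are cosmetic constant bookkeeping, so no gap.
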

   
   \begin{rem} \label{rem:Mn_exist} 
     Notice that by the definition of $b_n$, $\lim_{n \rightarrow \infty} T_n = +\infty$.
    For $n$ large enough, we have $C_{T_n} < -\varepsilon < 0$ for some positive $\varepsilon$, so that:
    \begin{equation} \label{eq:A_N_small}
       \lim_{n \rightarrow \infty} N_n 2 d n^{C_{T_n}} = 0.
    \end{equation}
     This ensures that $\lim_{n \rightarrow \infty} \P(\ca_n) =1$, that is $(\hat{\ell}_{m,n}, m \in \cm_n)$ are uniformly bounded with probability converging to $1$. 
     \end{rem}

   \begin{proof}
   
    For $m = (v,\hdots,v) \in \cm_n$ fixed, in order to bound the distance between the vectors $\hat{\mu}_{m,n}=(\hat{\mu}_{m,n,i,k}, 1 \leq i \leq d, 1 \leq k \leq v)$ and $A^0_{m}=\E[\hat{\mu}_{m,n}]=(\alpha^0_{i,k}, 1\leq i \leq d, 1 \leq k \leq v)$, we first consider a single term $\val{\alpha^0_{i,k} - \hat{\mu}_{m,n,i,k}}$. By Bernstein's inequality, we have for all $t>0$:  
    \begin{align*}
    \P\left(\val{\alpha^0_{i,k} - \hat{\mu}_{m,n,i,k}} > t\right) & \leq 2 \exp\left(-\frac{(n_1 t)^2/2}{ n_1 \Var \phi_{[i],k}(X^1) + 2n_1t\norm{\phi_{i,k}}_\infty/3}\right) \\
                                                     & \leq 2 \exp\left(-\frac{(n_1 t)^2/2}{n_1 \E \left[ \phi_{[i],k}^2(X^1)\right] +  2 n_1 t\sqrt{2(d-1)!} (b_n+d)^{d-\inv{2}}/3}\right) \\ 
                                                     & \leq 2 \exp\left(-\frac{n_1 t^2/2}{\norm{f^0}_\infty + 2t\sqrt{2(d-1)!} (b_n+d)^{d-\inv{2}}/3}\right), 
    \end{align*}
    where we used, thanks to \reff{eq:sup_phi_ik}:
    \[
      \norm{\phi_{i,k}}_\infty \leq \sqrt{(d-1)!} \sqrt{2k+d} \frac{(k+d-1)!}{k!} \leq \sqrt{2(d-1)!}(b_n+d)^{d-\inv{2}}
    \]
    for the second inequality, and the orthonormality of $\phi_{[i],k}$ for the third inequality. Let us choose $t = \sqrt{T_n\log(b_n)/n_1}$.
    This gives:
    \begin{align}
      \nonumber \P\left(\val{\alpha^0_{i,k} - \hat{\mu}_{m,n,i,k}} > \sqrt{\frac{T_n\log(b_n)}{n_1}} \right) & \leq 2 \exp\left(-\frac{T_n\log(b_n)/2}{\norm{f^0}_\infty + 2\sqrt{\frac{2 T_n\log(b_n) (d-1)! (b_n+d)^{2d-1}}{9n_1}}}\right) \\
                                                                                        & \leq 2 b_n^{-\frac{T_n}{2\norm{f^0}_\infty + C\sqrt{T_n}}},
    \end{align}
    with $C$ given by:
    \begin{equation} \label{eq:lemma_8_3_const}
      C= \sup_{n \in \N^*}  4\sqrt{\frac{2 \log(b_n) (d-1)! (b_n+d)^{2d-1}}{9n_1}} \cdot
    \end{equation}
    Notice $C<+\infty$ since the sequence $\sqrt{\log(b_n)(b_n+d)^{2d-1}/9n_1}$  is $o(1)$. 
    For the probability of $\cb_{n,m}$ we have:
    \begin{align*}
     \P\left(\cb_{n,m} \right) 
       & \leq  \sum_{i=1}^d \sum_{k=1}^{v} \P\left( \val{\alpha^0_{i,k} - \hat{\mu}_{m,n,i,k}}^2 > \frac{T_n\log(b_n)}{n_1} \right) \\
       & \leq  \sum_{i=1}^d \sum_{k=1}^{v} 2 b_n^{-\frac{T_n}{2\norm{f^0}_\infty + C \sqrt{T_n}}} \\ 
       & \leq  2 d n^{C_{T_n}}.
    \end{align*}
     This implies the following lower bound on $\P(\ca_n)$ :
    \begin{equation*} 
        \P(\ca_n) = 1-\P\left(\bigcup_{m \in \cm_n } \cb_{n,m}\right) \geq 1-\sum_{m \in \cm_n} \P(\cb_{n,m})  \geq 1-N_n 2 dn^{c_{T_n}}.
    \end{equation*}
     On $\ca_{n}$, by the definition of $T_n$,  we have for all $m \in \cm_n$:
    \begin{equation*} \label{eq:Am_mumn}
        \norm{A_{m}^0-\hat{\mu}_{m,n}} 6d^{2} \sqrt{2d!} (v+d)^{d} \expp{2\gamma_{m}+2}  
                                                                         \leq \sqrt{ b_n \frac{T_n \log(b_n)}{n_1}} 6d^{\frac{5}{2}} \sqrt{2d!} (b_n+d)^{d} \expp{2\gamma+2} = 1.
    \end{equation*}
     Notice that whenever \reff{eq:Am_mumn} holds, condition \reff{eq:cond_BS5} of Lemma \ref{lem:BS5} is satisfied with $\theta = \theta^*_m$ and $\alpha = \hat{\mu}_{m,n}$, thanks to $\kappa_m \leq \sqrt{d 2 d!}(b_n+d)^{d}$ and:
     \[
       \norm{\log(f_{\theta^*_m})}_\infty \leq \norm{\log(f_{\theta^*_m}/f^0_m)}_\infty + \norm{\log(f^0_m/f^0)}_\infty +\norm{\log(f^0)}_\infty \leq 1 +2 \gamma + \norm{\log(f^0)}_\infty .
     \]    
     According to Equation \reff{eq:lem_5_2} with $\tau =1$, we can deduce that  on $\ca_n$, we have:
     \[    
      \norm{ \log(\hat{f}_{m,n}/f_{\theta^*_{m}})}_\infty \leq 1 \quad \text{for all } m \in \cm_n, n \geq n^*.
      \]
      This, along with \reff{eq:gamma} and \reff{eq:epsilon_m}, provide the following uniform upper bound for $(\norm{\hat{\ell}_{m,n}}_\infty, m \in \cm_n )$  on $\ca_n$:
    \begin{align*}
   \inv{2}  \norm{\hat{\ell}_{m,n}}_\infty & \leq  \norm{ \log(\hat{f}_{m,n})}_\infty  \\
                             & \leq  \norm{ \log(\hat{f}_{m,n}/f_{\theta^*_{m}})}_\infty
     +  \norm{ \log(f_{\theta^*_{m}}/f^0_{m})}_\infty +  \norm{\log(f^0_{m}/f^0)}_\infty +  \norm{ \log(f^0)}_\infty \\
                             & \leq 2 + 2 \gamma +  \norm{ \log(f^0)}_\infty,
    \end{align*}
     where we used \reff{eq:f_l_a_2} for the first inequality.

     \end{proof}

     We also give a sharp oracle inequality for the convex aggregate estimator $f_{\hat{\lambda}^*_n}$ conditionally on $\ca_n$ with $n$ fixed . The following lemma is a direct application of Theorem 3.6. of \cite{butucea2016optimal} and \reff{eq:l_bound}. 

    \begin{lem} \label{lem:aggreg_exp}
    Let $n \in \N^*$ be fixed. Conditionally on  $\ca_n$, let $f_{\hat{\lambda}^*_n}$  be given by \reff{eq:aggr_f} with $\hat{\lambda}^*_n$ defined as in \reff{eq:aggr_lambda}. Then for any $x>0$ we have with probability greater than $1-\exp(-x)$:
    \begin{equation}
      \kl{f^0}{f_{\hat{\lambda}^*_n}} - \mathop{\min}_{m \in\cm_n} \kl{f^0}{\hat{f}_{m,n}} \leq   \frac{\beta (\log(N_n)+x)}{n_2},     
    \end{equation}
     with $\beta=2\exp(6K+2L)+4K/3$, and $L,K \in \R$ given by :
     \[
        L = \norm{\ell^0}_\infty,  \quad \quad \quad  K = 4+4\gamma + 2 \norm{\log(f^0)}_\infty,
     \]
     with $\gamma$ as in \reff{eq:gamma}.
    \end{lem}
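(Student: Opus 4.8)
\noindent\emph{Proof strategy.} The plan is to deduce the lemma from Theorem 3.6 of \cite{butucea2016optimal} by exploiting the sample splitting: once we condition on the event $\ca_n$, the family of log-densities to be aggregated becomes deterministic and uniformly bounded, and the aggregation is then performed with the independent sample $\cx^n_2$.

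First I would record that the event $\ca_n$ and the functions $\hat{\ell}_{m,n}=\hat{\theta}_{m,n}\cdot\phi_m$, $m\in\cm_n$, are measurable with respect to the first half $\cx^n_1$ of the sample, since $\hat{\theta}_{m,n}$ is built from $\cx^n_1$ only (see \reff{eq:max_likelihood}), whereas the weights $\hat{\lambda}^*_n$ of \reff{eq:aggr_lambda} are obtained by maximizing the criterion $H_n$ of \reff{eq:def_Hn}, which depends only on the independent second half $\cx^n_2$ of size $n_2$. Hence, conditionally on $\cx^n_1$ and on the event $\ca_n$, the functions $\hat{\ell}_{m,n}$ are fixed and, by \reff{eq:l_bound} of Lemma \ref{lem:Mn_exist}, satisfy the uniform bound $\norm{\hat{\ell}_{m,n}}_\infty\le K$ with $K=4+4\gamma+2\norm{\log(f^0)}_\infty$. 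Together with $\int_\triangle\ell^0=0$ and $\norm{\ell^0}_\infty=L$, this places us exactly in the framework of \cite{butucea2016optimal}: a finite dictionary of at most $\Card(\cm_n)\le N_n$ uniformly bounded log-densities, a centered target log-density $\ell^0$ bounded by $L$, and the penalized maximum-likelihood convex aggregation procedure run on an independent sample of size $n_2$.

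Next I would invoke Theorem 3.6 of \cite{butucea2016optimal}, taking care of the reference measure: as noted after \reff{eq:aggr_f}, our $\psi_\lambda$ differs from the one in \cite{butucea2016optimal} by the additive constant $\log(d!)$ coming from the reference measure $d!\,\ind_\triangle(x)\,dx$, which cancels in $H_n$ and leaves the conclusion unchanged. Applying that theorem with the dictionary bound $K$ and the target bound $L$ in the roles prescribed there yields, for every $x>0$ and conditionally on $\ca_n$, that with probability at least $1-\exp(-x)$
\[
\kl{f^0}{f_{\hat{\lambda}^*_n}}-\min_{m\in\cm_n}\kl{f^0}{\hat{f}_{m,n}}\le\frac{\beta\bigl(\log(\Card(\cm_n))+x\bigr)}{n_2},
\]
with $\beta=2\exp(6K+2L)+4K/3$ read directly off Theorem 3.6. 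Bounding $\Card(\cm_n)\le N_n$ then gives the announced inequality.

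The only delicate point is the conditioning argument itself, namely verifying that $\ca_n$ is $\cx^n_1$-measurable so that the dictionary is deterministic and uniformly bounded after conditioning, and identifying the correct quantities ($K$ for the dictionary, $L$ for the centered target) entering the constant $\beta$ of Theorem 3.6; everything else — the independence of $\cx^n_1$ and $\cx^n_2$ and the monotone bound $\log(\Card(\cm_n))\le\log N_n$ — is immediate.
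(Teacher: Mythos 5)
Your proposal is correct and follows the same route as the paper, which indeed proves the lemma as a direct application of Theorem 3.6 of \cite{butucea2016optimal} combined with the uniform bound \reff{eq:l_bound} on $\norm{\hat{\ell}_{m,n}}_\infty$ from Lemma \ref{lem:Mn_exist}. Your added care in checking that $\ca_n$ and the dictionary are $\cx^n_1$-measurable, that the aggregation uses the independent sample $\cx^n_2$, and that the $\log(d!)$ shift in $\psi_\lambda$ is harmless simply makes explicit what the paper leaves implicit.
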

    
    \bigskip
    
    Now we prove Theorem \ref{theo:adapt}. 
     For $n \in \N^*$ and $C>0$, we define the event $\cd_n(C)$ as:
    \[
       \cd_n(C) = \left\{\kl{f^0}{f_{\hat{\lambda}^*_n}} \geq C \left(  n^{-\frac{2 \min(r)}{2 \min(r) +1}} \right) \right\}.
    \]
    
     Let $\varepsilon > 0$. To prove \reff{eq:kl_adapt}, we need to find $C_\varepsilon>0$ such that for all $n$ large enough:
    \begin{equation} \label{eq:C_vareps}  
        \P\left( \cd_n(C_\varepsilon) \right) \leq \varepsilon.  
    \end{equation}
    We decompose the left hand side of \reff{eq:C_vareps} according to $\ca_n$:
    \begin{equation} \label{eq:kl_cond}
         \P\left( \cd_n(C_\varepsilon) \right) \leq  \P\left(\cd_n(C_\varepsilon) \, \middle| \, \ca_n \right)\P(\ca_n) + \P(\ca^c_n).
    \end{equation}
    The product $\P\left(\cd_n(C_\varepsilon) \, \middle| \, \ca_n \right)\P(\ca_n)$ is bounded by:
    \[
      \P\left(\cd_n(C_\varepsilon) \, \middle| \, \ca_n \right)\P(\ca_n) \leq  A_n(C_\varepsilon) +  B_n(C_\varepsilon),
    \]
    with $A_n(C_\varepsilon)$ and $ B_n(C_\varepsilon)$ defined by:
    \begin{align*}
      A_{n}(C_\varepsilon) & = \P\left( \kl{f^0}{f_{\hat{\lambda}^*_n}} - \min_{ m \in \cm_n} \kl{f^0}{\hat{f}_{m,n}}  \geq \frac{C_\varepsilon}{2} \left( n^{-\frac{2 \min(r)}{2 \min(r) +1}} \right) \, \middle| \, \ca_n \right), \\
      B_{n}(C_\varepsilon) & = \P\left( \min_{m \in \cm_n} \kl{f^0}{\hat{f}_{m,n}} \geq \frac{C_\varepsilon}{2} \left( n^{-\frac{2 \min(r)}{2 \min(r) +1}} \right)  \right).
    \end{align*}
     To bound $A_n(C_\varepsilon)$ we apply Lemma \ref{lem:aggreg_exp} with $x=x_\varepsilon=-\log(\varepsilon/4)$:
     \[
        \P\left( \kl{f^0}{f_{\hat{\lambda}^*_n}} - \min_{ m \in \cm_n} \kl{f^0}{\hat{f}_{m,n}}  \geq \frac{\beta(\log(N_n)+x_\varepsilon)}{n_2} \, \middle| \, \ca_n \right) \leq \frac{\varepsilon}{4} \cdot
     \]
     Let us define $C_{\varepsilon,1}$ as:
     \begin{equation}\label{eq:C1eps}
        C_{\varepsilon,1} = \sup_{n \in \N^*} \left(\frac{\beta(\log(N_n)+x_\varepsilon)}{n_2 n^{-\frac{2 \min(r)}{2 \min(r) +1}}}\right) . 
     \end{equation}
     Since $N_n = o(\log(n))$, we have $C_{\varepsilon,1} < +\infty$ as the sequence on the right hand side of \reff{eq:C1eps} is $o(1)$. This bound is uniform over regularities in $(\crr_n)^d$ thanks to \reff{eq:R_n_3} Therefore for all $C_\varepsilon \geq C_{\varepsilon,1}$, we have $A_n(C_\varepsilon) \leq \varepsilon/4$. 
     
     For $B_n(C_\varepsilon)$, notice that if $n\geq \bar{n}$ with $\bar{n}$ given by \reff{eq:def_bar_n}, then $m^* =(v^*, \hdots, v^*) \in \cm_n$ with $v^*= \lfloor n^{1/(2 \min(r) +1)} \rfloor$. This holds for all $r \in (\crr_n)^d$ due to \reff{eq:R_n_1}. By Remark \ref{rem:optim}, we have that $\kl{f^0}{\hat{f}_{m^*,n}} = O_\P(n^{-2 \min(r)/(2 \min(r) +1)})$.
     This ensure that there exists $C_{\varepsilon,2}$ such that for all $C_\varepsilon \geq C_{\varepsilon,2}$, $n \geq \bar{n}$ :
     \begin{equation*}
        B_n(C_\varepsilon)   \leq  \P\left( \kl{f^0}{\hat{f}_{m^*,n}} \geq \frac{C_{\varepsilon,2}}{2} \left( n^{-\frac{2 \min(r)}{2 \min(r) +1}} \right)\right)   \leq \frac{\varepsilon}{4} \cdot
     \end{equation*}    
     We also have by  \reff{eq:A_N_small} that there exists $\tilde{n} \in \N^*$ such that $\P(\ca^c_n) \leq  \varepsilon/2$ for all $n \geq \tilde{n}$. 
     Therefore by setting $C_\varepsilon = \max(C_{\varepsilon,1},C_{\varepsilon,2})$ in \reff{eq:kl_cond}, we have for all $n \geq \max(n^*,\bar{n}, \tilde{n})$:
     \[
      \P\left(\cd_n(C_\varepsilon) \right) \leq A_n(C_\varepsilon)+B_n(C_\varepsilon) + \P(\ca^c_n) \leq \frac{\varepsilon}{2}+\frac{\varepsilon}{2}=\varepsilon, 
     \]
     which gives \reff{eq:C_vareps} and thus concludes the proof.

\end{document}